\documentclass{amsart}
\addtolength{\hoffset}{-1.5cm}
\addtolength{\textwidth}{3cm}

\usepackage[all]{xy}
\usepackage{amsmath,amsfonts,amssymb,amsthm,epsfig,amscd,comment,latexsym,psfrag}
\usepackage{nicefrac,xspace,tikz}
\usetikzlibrary{arrows}

\newtheorem{Theorem}{Theorem}[section]
\newtheorem{prop}[Theorem]{Proposition}
\newtheorem{lemma}[Theorem]{Lemma}
\newtheorem{cor}[Theorem]{Corollary}
\newtheorem{conj}[Theorem]{Conjecture}

\theoremstyle{definition}
\newtheorem{Remark}[Theorem]{Remark}

\def\dU{\dot{U}}
\def\hW{{\widehat{W}}}
\def\bA{{\mathbb{A}}}
\def\A{{\sf{A}}}
\def\B{{\sf{B}}}
\def\Cone{\mbox{Cone}}
\def\D{{\mathcal{D}}}
\def\U{\mathcal{U}}
\def\min{{\rm{min}}}

\def\I{{I}}
\def\hI{\hat{I}}
\def\hX{\widehat{X}}
\def\hY{\widehat{Y}}

\def\Kom{{\mathsf{Kom}}}

\def\hg{\widehat{\mathfrak{g}}}
\def\1{{\bf{1}}}
\def\fC{\mathsf{C}}
\def\E{\mathsf{E}}
\def\F{\mathsf{F}}
\def\P{\mathsf{P}}
\def\Q{\mathsf{Q}}

\def\sF{\mathcal{F}}

\def\la{\langle}
\def\ra{\rangle}

\def\k{\Bbbk}
\def\K{\mathcal{K}}

\def\sl{\mathfrak{sl}}

\def\H{\mathcal{H}}
\def\h{\widehat{\mathfrak{h}}}

\def\Z{\mathbb Z}
\def\N{\mathbb N} 
\def\C{\mathbb C}
\def\g{\mathfrak{g}}
\def\hg{\widehat{\mathfrak{g}}}
\def\G{\Gamma}
\def\l{\lambda}

\def\End{\mathrm{End}}
\def\Sym{\mathrm{Sym}}

\def\id{\mathrm{id}}

\newcommand{\Hom}{{\rm Hom}}

\newcommand{\Ind}{{\rm{Ind}}}

\psfrag{Qpm}{$Q_{+-}$} \psfrag{Qmp}{$Q_{-+}$}
\psfrag{Ione}{$\quad\mathbf{1}$}
\psfrag{Xiota}{$\iota$}\psfrag{Xiotap}{$\iota'$}
\psfrag{Xgi}{$g_i$}\psfrag{Xgim1}{$g_i^{-1}$}
\psfrag{Xsumi}{\large{$\sum_{i\in I}$}}
\psfrag{Xbeta}{$\overline{\beta}$}
\psfrag{Xgj}{$g_j$} \psfrag{ifinotj}{\large{if $i\not= j$}}
\psfrag{XSnLm}{$S^n_-\otimes \Lambda^m_+$} 
\psfrag{XLmSn}{$\Lambda^m_+\otimes S^n_-$}
\psfrag{XLm1Sn1}{$\Lambda^{m-1}_+\otimes S^{n-1}_-$}
\psfrag{XSumkb}{$-\sum_{b=0}^{k-2} (k-b-1)$}






\title{Vertex operators and 2-representations of quantum affine algebras}

\begin{document} 
\setcounter{tocdepth}{1}

\author{Sabin Cautis}
\email{cautis@math.ubc.ca}
\address{Department of Mathematics\\ University of British Columbia \\ Vancouver, Canada}

\author{Anthony Licata}
\email{amlicata@gmail.com}
\address{Mathematical Sciences Institute \\ The Australian National University \\ Canberra, Australia}


\begin{abstract}
We construct 2-representations of quantum affine algebras from 2-representations of quantum Heisenberg algebras. The main tool in this construction are categorical vertex operators, which are certain complexes in a Heisenberg 2-representation that recover vertex operators after passing to the Grothendieck group. As an application we categorify the Frenkel-Kac-Segal homogeneous realization of the basic representation of (simply laced) quantum affine algebras. This gives rise to categorical actions of quantum affine (and toroidal) algebras on derived categories of coherent sheaves on Hilbert schemes of points of ALE spaces. 
\end{abstract}

\maketitle

\begin{center}
Dedicated to our advisors Igor Frenkel and Joe Harris on the occasion of their sixtieth birthdays.
\end{center}

\tableofcontents

\section{Introduction}

Affine Lie algebras are central objects at the intersection of representation theory and mathematical physics. One feature which distinguishes them amongst the infinite dimensional Lie algebras is that their irreducible representations admit explicit realizations. Explicit constructions of the basic representation (the first of the irreducible highest weight representations) already connect affine Lie algebras to many other mathematical topics, including symmetric functions, modular forms, and cohomology of Hilbert schemes. 

Historically, one of the first explicit constructions of the basic representation was the homogeneous vertex operator construction due to Frenkel-Kac \cite{FK} and Segal \cite{S}. In this realization, one constructs the basic irreducible module for a simply-laced affine Lie algebra directly from an explicit direct sum of copies of the canonical Fock module of the Heisenberg algebra.  The key components in this construction are the homogeneous Heisenberg subalgebra of an affine Lie algebra, the Fock space representation of this Heisenberg algebra, and the associated vertex operators.

In this paper we categorify the Frenkel-Kac-Segal construction,  The first two ingredients, the homogeneous Heisenberg and its Fock space, were categorified in \cite{CLi1}. The main result in this paper is a categorification of the vertex operators. Thus our categorification allows one to construct categorical actions of quantum affine algebras from categorical actions of quantum Heisenberg algebras. 

Our categorification of the basic representation is closely related to a geometric realisations of the basic representation due to Nakajima \cite{Nak-ICM,Nak-book} and Grojnowski \cite{Groj}.  In the Nakajima-Grojnowski construction, the underlying vector space of the basic representation is the cohomology of the moduli spaces of rank one torsion-free sheaves on the resolution $\widehat{\C^2/\G}$ of a singularity of type $A,D$ or $E$.  In \cite{CLi1} we lift the Nakajima-Grojnowski Heisenberg action to a 2-representation of the {\emph {quantum}} Heisenberg algebra. This gives a 2-representation of the Heisenberg algebra on the derived category of $\C^\times$-equivariant coherent sheaves on the Hilbert schemes of points of $\widehat{\C^2/\G}$.  The categorical vertex operators in this paper extend this 2-representation of the quantum Heisenberg algebra to a 2-representation of the entire quantum affine algebra (in fact, to a 2-representation of quantum toroidal algebras). Subsequently, we recover quantum toroidal algebra actions on the $\C^\times$-equivariant K-theory of the moduli spaces of rank one sheaves, as conjectured by several mathematicians in the 1990s.

The presentation of the quantum affine algebra which appears naturally in this paper is essentially the loop (or Drinfeld) realization. This loop realization is important in relation to conformal field theory and low dimensional topology. We expect that our categorical vertex operators are the beginning of a larger categorification program for vertex algebras. For example, the entire vertex operator algebra structure in the basic representation, which contains not only the vertex operators of this paper but also the Virasoro algebra and other structure, should be categorified. The problem of categorifying vertex algebras was posed over ten years ago by Igor Frenkel.   

In the rest of this introduction, we briefly recall the Frenkel-Kac-Segal construction and explain how it categorifies.  We also explain the relationship between this categorification and the ``Kac-Moody" categorifications in the Khovanov-Lauda/Rouquier framework.

\subsection{Vertex operators and the Frenkel-Kac-Segal construction}

Let $\g$ be a finite dimensional Lie algebra of type $A,D$ or $E$, and let $\hg = \g\otimes \C[t,t^{-1}] \oplus \C c$ be its affinization.  The affine Lie algebra $\hg$ contains a Lie subalgebra known as the homogeneous Heisenberg Lie algebra; the enveloping algebra of this Lie subalgebra, which we refer to as a Heisenberg algebra and denote by $\h$, has essentially one irreducible representation $\mathcal{F}$, known as the Fock space.

Let $V_{\Lambda_0}$ be the basic representation of $\hg$, that is, the highest weight irreducible representation of highest weight $\Lambda_0$.  The basic representation is the ``simplest" of the representations of $\hg$ that can be integrated to the group. The Frenkel-Kac-Segal construction of $V_{\Lambda_0}$ begins with the observation that the restriction of $V_{\Lambda_0}$ from $\hg$ to the homogeneous Heisenberg algebra $\h$ decomposes as a direct sum of copies of the Fock space
$$ V_{\Lambda_0} = \bigoplus_{\l \in Y} \mathcal{F}, $$
with the summands indexed by the root lattice $Y$ of $\g$.  Thus, in order to give an explicit construction of $V_{\Lambda_0}$, one can take a direct sum of copies of the Fock space, one for each element of the root lattice, and explain how to extend the action on this space from 
$\h$ to $\hg$.

Frenkel-Kac \cite{FK} and Segal \cite{S} show that the action of $\hg$ can be constructed from the action of $\h$ and translation in the root lattice $Y$ via the use of vertex operators.  A basic example of a vertex operator is the formal series
$$X(i,z) = \exp\big(\sum_{n=1}^\infty \frac{z^n}{n}h_{i,-n}\big) \exp\big( - \sum_{n=1}^\infty \frac{z^{-n}}{n} h_{i,n} \big) \exp(\log z \cdot \alpha_i(0) + \alpha_i)$$
where $h_{i,n}$ are generators of $\h$ and $\alpha_i$ is a simple root of $\g$. The above expression contains three exponentials. The homogeneous components in $z$ of the first two exponentials are endomorphisms of the Fock space $\mathcal{F}$, while the homogeneous components of the term $\exp(\log z \cdot \alpha_i(0) + \alpha_i)$ are ``lattice translation" operators which moves the various copies of $\mathcal{F}$ along the lattice (we refer to \cite{FK} for the precise definition of the lattice translation operator).  Each homogeneous component of $X(i,z)$ is thus an endomorphism of $V_{\Lambda_0}$.  These endomorphisms, together with their adjoints, generate the action of the affine Lie algebra, giving an explicit construction of the basic representation.

It was later shown in \cite{FJ} that this construction admits a $q$-deformation. (The $q$-deformed Frenkel-Kac-Segal construction can also be extended from the quantum affine algebra to the quantum toroidal algebra.)  In this $q$-deformation, the Heisenberg algebra $\h$ is replaced by the quantum Heisenberg algebra, and the vertex operators $X(i,z)$ are replaced by $q$-vertex operators
$$X_q(i,z) = \exp\big(\sum_{n=1}^\infty \frac{z^n}{[n]}h_{i,-n}\big) \exp\big( - \sum_{n=1}^\infty \frac{z^{-n}}{[n]} h_{i,n} \big) \exp(\log z \cdot \alpha_i(0) + \alpha_i)$$
where $[n] = \frac{q^{n}-q^{-n}}{q-q^{-1}}$ is the quantum integer. The homogeneous components of the $X_q(i,z)$, together with their adjoints, then give the basic representation of the quantum affine (or toroidal) algebra $U_q(\hg)$.

In order to categorify the vertex operators $X_q(i,z)$ we consider new operators $P_i^{(n)}$ and $Q_i^{(1^n)}$ defined by the formulas
$$ \exp \left( \sum_{m \geq 1} \frac{h_{i,-m}}{[m]} z^m \right) = \sum_{n \geq 0} P_i^{(n)} z^n \text{ and } \exp \left(- \sum_{m\geq 1} \frac{h_{i,m}}{[m]} z^m \right) = \sum_{n \geq 0} (-1)^n Q_i^{(1^n)} z^n. $$
In terms of these operators, a single homogeneous component in $z$ of $X_q(i,z)$ becomes an expression of the form
\begin{equation}\label{eq:alt}
	\big[ \sum_{n \ge 0} (-1)^n P_i^{(n+k)} Q_i^{(1^{n})} \big] t_i,
\end{equation}
where $t_i$ acts only in the lattice and $\sum_{n \ge 0} (-1)^n P_i^{(n+k)} Q_i^{(1^{n})}$ acts only in the Fock space. Thus, the $q$-deformed Frenkel-Kac-Segal construction says that these alternating sums, together with their adjoints, generate an action of the quantum affine (or toroidal) algebra $U_q(\hg)$.

\subsection{Categorification of the Frenkel-Kac-Segal construction}

A categorification of the Heisenberg algebra $\h$ and its Fock space representation was given in \cite{CLi1}. We recall this definition in Section \ref{sec:hei}, where we also define a the notion of a 2-representations of $\h$. Very roughly, a 2-representation of $\h$ consists of a 2-category $\K$ where the objects are indexed by the natural numbers, the 1-morphisms are compositions of generating 1-morphisms $\P_i$ and $\Q_i$, and there are 2-morphisms with specified relations (these relations are described using a graphical calculus of planar diagrams).  The relations for 2-morphisms imply that the 1-morphisms $\P_i$ and $\Q_i$ satisfy categorical analogs of the relations in the Heisenberg algebra.  Thus a 2-representation of $\h$ is a categorification of a representation of the Heisenberg algebra.

To categorify the Frenkel-Kac-Segal construction of the basic representation of $\hg$ we need to lift the operators in (\ref{eq:alt}) from vector spaces to categories. In a 2-representation $\K$ of $\h$, the operators $P_i^{(n)}$ and $Q_i^{(1^n)}$ are lifted to indecomposable 1-morphisms $\P_i^{(n)}$ and $\Q_i^{(1^n)}$. It is natural that alternating sums like (\ref{eq:alt}) should lift to complexes 
\begin{equation}\label{eq:example}
 \dots \rightarrow \P_i^{(n+k)}\Q_i^{(1^n)}\la l \ra \rightarrow \P_i^{(n+k+1)} \Q_i^{(1^{n+1})} \la l+1 \ra \rightarrow \dots 
\end{equation}
in the homotopy category $\Kom(\K)$ of $\K$. In Section \ref{sec:vertexops} we define such complexes.

It is immediate from the definition that these complexes descend to the homogeneous components of vertex operators after passing to the Grothendieck group. In fact, this would be true regardless of the differentials we choose in our complex (even using the zero differential would work). The interesting content is that these complexes satisfy categorical relations of the quantum affine algebra inside $\Kom(\K)$ itself, before passing to the Grothendieck group. A summary of the relations we check is in the statement of Theorem \ref{thm:main1}. 

\subsection{Categorical $\hg$ actions and higher relations}\label{sec:cataction}

Categorical actions of quantum groups have been studied in several situations and flavours \cite{KL2,KL3,R,CK,CKL2}. The framework given in \cite{KL3,R} imposes a rather rigid structure on 2-representations by specifying explicitly in terms of generators and relations the algebras which should act as natural transformations of the generating 1-morphisms $\E_i, \F_i$ and their compositions.  These algebras are known as KLR algebras or quiver Hecke algebras.
In order to fit our categorification to the Khovanov-Lauda/Rouquier framework, we must therefore therefore explain how the KLR algebras appear in the endomorphism algebras of our vertex operator complexes.

Unfortunately, it turns out that defining the KLR generators directly, much less checking the required relations, is difficult.   Indeed, our defining 1-morphisms $\E_i$ are complexes in a homotopy category, and since it is difficult to study their endomorphism spaces directly it is not clear how best to go about defining the basic KLR generators and checking relations . For example, it follows as a consequence of the KLR algebra action that the composition $\E_i \E_i$ breaks up as a direct sum $\E_i^{(2)} \la -1 \ra \oplus \E_i^{(2)} \la 1 \ra$. This translates into the statement that the composition of two vertex operator complexes in the homotopy category breaks up as a direct sum of two smaller complexes (those corresponding to $\E_i^{(2)}$) which are identical up to a grading shift. However, even defining the complexes which correspond to $\E_i^{(2)}$ is a difficult task (see section \ref{sec:powers}).

It is thus useful to develop techniques which detect the presence of KLR algebras but which bypass the need to introduce the KLR generators and check the KLR relations directly.   To this end we introduced in \cite{Cau} the idea of a $(\g,\theta)$ action. The main result of that paper is that a $(\g,\theta)$ action induces an action of the corresponding KLR algebra on 2-morphisms.   In our current paper, the quantum affine algebra which acts appears not in its Kac-Moody presentation but rather in its Drinfeld presentation, and as a result the axioms of \cite{Cau} need to be slightly modified in order to be applied. Thus in this paper we give an affinization of the notion from \cite{Cau}, and define what we call $(\hg,\theta)$ actions. An immediate Corollary (Cor. \ref{cor:main1}) to the main result in this paper (Thm. \ref{thm:main1}) is that the complexes defined above induce a $(\hg,\theta)$ action. 

Now any $(\hg,\theta)$ action may be restricted to a $(\g,\theta)$ action. As a consequence of \cite{Cau} we then obtain an action of the KLR algebras associated to $\g$ on 2-morphisms in a $(\hg,\theta)$ action.  It is a striking indication of the ridigity of higher representation theory that the KLR algebras appear. The fact that there is an action of KLR algebras on our vertex operator complexes may be phrased more precisely as follows. 

\begin{Theorem}\label{thm:klr}
Given an integrable 2-representation $\K$ of the Heisenberg 2-category $\h$ there exists a 2-functor $\dot{\mathbb{U}}_q(\g) \rightarrow \Kom(\K)$. 
\end{Theorem}

Here $\dot{\mathbb{U}}_q(\g)$ is the 2-category defined in \cite{KL3} which categorifies the quantum group $U_q(\g)$ and whose 2-morphism spaces contain the KLR algebras. In some ways Theorem \ref{thm:klr} says that a large part of the higher representation theory of quantum groups is captured in the (much simpler) higher representation theory of an associated Heisenberg algebra.  

It is natural to conjecture that a $(\hg,\theta)$ induces an action of the affine type KLR algebras though we have not checked this.  In fact, in order to completely describe the endomorphism algebras of our vertex operators one would need to extend the KLR definitions one more step to the \emph{toroidal} setting, and as of yet the KLR-type definitions for categorical actions of quantum toroidal algebras have not been given.

\subsection{Generalization to other Kac-Moody algebras}

The essential idea at the core of our construction is that 2-representations of $\h$ give rise to 2-representation of $\hg$. On the other hand, the only input data used to define 2-representations of $\h$ is the finite dimensional algebra $B^\G := \C[\G] \ltimes \wedge^*(\C^2)$, where $\G \subset SL_2(\C)$ is the finite subgroup associated to the Dynkin diagram of $\hg$ using the McKay correspondence.  Thus we obtain what can be viewed as a categorical form of the McKay correspondence: starting from the finite subgroup $\G \subset SL_2(\C)$, we construct the finite-dimensional algebra $B^\G$, and using this algebra we construct 2-representations of the associated quantum affine algebra.  

The algebra $B^\G$ can be described directly in terms of the underlying affine Dynkin diagram without direct reference to the finite group $\G$. This description suggests how to generalize some of the constructions of this paper to other Kac-Moody type.  We hope to describe the details elsewhere, and for the time being content ourselves with briefly describing how to generalize the algebra $B^\G$ to other Dynkin types.

Fix a simply laced Dynkin diagram $D$ and choose an orientation of the edges. Let $\C[dD]$ denote the path algebra of the doubled quiver $dD$. Thus a path in $dD$ is described as a sequence of vertices $(a_1 | a_2 | \dots | a_k)$ where $a_i$ and $a_{i+1}$ are connected by an edge in $D$. We define $B^D$ to be the quotient of $\C[dD]$ by the two sided ideal generated by 
\begin{itemize}
\item $(a|b|c)$ if $a \ne c$ and
\item $(a|b|a) + (a|c|a)$ whenever $a$ is connected to both $b$ and $c$. 
\end{itemize}
The algebra $B^D$ can be used to define a 2-category which categorifies the Heisenberg algebra $\mathfrak{h}_D$ associated to the root lattice of $D$. Just like in \cite{CLi1}, one can also categorify the Fock space modeled on this lattice. Then the categorical vertex operators in this paper easily generalize and give 2-representations of the affine quantum algebra associated to $D$.

The algebra $B^D$ appears in \cite{HK}, where it is called the skew zig-zag algebra. In that paper $B^D$ is used to categorify the adjoint representation of $\g$ when $\g$ is of finite type. Our categorification of the basic representation is in fact an extension of theirs coming from the fact that the restriction of the basic representation $V_{\Lambda_0}$ of $\hg$ to $\g$ contains the adjoint representation of $\g$ as a direct summand. More precisely, if you restrict our categorification of $V_{\Lambda_0}$ from $\hg$ to the copy of the adjoint representation of $\g$ sitting inside it, our complexes recover the construction in \cite{HK}. In this special case, the complexes have length at most two, which makes checking the relations in the adjoint representation of $\g$ much easier than checking the relations in the basic representation of $\hg$.




\subsection{Work of Carlsson-Okounkov} 

In \cite{CO,Car} Carlsson and Okounkov describe an operator, denoted $\sf{W}$, on the cohomology of Hilbert schemes of points on a surface. This operator is defined using the Chern class of a virtual bundle over these Hilbert schemes, and their main theorem states that $\sf{W}$ can be expressed as a vertex operator.  It would be interesting to understand their result at the categorified level, and to define an analogue of their operator $\sf{W}$ as a functor between derived categories of coherent sheaves on Hilbert schemes.  For ALE spaces, the resulting functor should be related to the categorical vertex operators of this paper, but the details of the identification are not given yet.

\vspace{.5cm}

\noindent {\bf Acknowledgments:}
The authors benefited from discussions with Mikhail Khovanov, Aaron Lauda, Hiraku Nakajima, Raphael Rouquier, Travis Schedler and Joshua Sussan. S.C. was supported by NSF grants DMS-0964439, DMS-1101439 and the Alfred P. Sloan foundation. A.L. would like to thank the Institute for Advanced Study for support during the preliminary stages of this work.


\section{Notation and terminology} \label{sec:notation}
For the entirety of this paper we let $\k$ denote a field of characteristic zero. 
We let $\k(q)$ denote the field of rational functions of one variable, $q$.
We denote
$$[j] := q^{-j+1} + q^{-j+3} + \dots + q^{j-3} + q^{j-1}$$
the quantum integer. If $j \ge 0$ then $V_j$ denotes the graded vector space
\begin{equation}\label{eq:V_j}V_j := \k \la j \ra \oplus \k \la j-2 \ra \oplus \dots \oplus \k \la -j+2 \ra \oplus \k \la -j \ra
\end{equation}
where $\la 1 \ra$ is a shift of $1$ in the grading.

If $\l = (\l_1 \ge \l_2 \ge \dots \ge \l_k)$ is a partition then $|\l| := \sum_i \l_i$ denotes its size. We say that $\l' \subset \l$ if $\l'$ is contained in $\l$, meaning that $\l_i \ge \l'_i$ for all $i$. We denote by $\l^t$ the transposed partition of $\l$ (for example, $(n)^t = (1^n)$). 

\subsection{Dynkin data}\label{sec:data}
From now on fix a simply-laced Dynkin diagram of affine type and denote its vertex set by $\hI$. The special affine vertex in $\hI$ is labeled $0$ and we let $\I := \hI \setminus \{0\}$. The subdiagram whose vertex set is $\I$ is a Dynkin diagram of finite type $A,D,E$.  We denote the Lie algebras associated to these Dynkin diagrams by $\g$ and $\hg$. 

We denote the weight lattice of $\g$ by $X$ and the root lattice by $Y$. We denote $X_\k := X \otimes_\Z \k$ and $Y_\k = Y \otimes_\Z \k$. The standard pairing on the weight lattice is denoted with brackets $\la \cdot, \cdot \ra$, which should not be confused with the grading shift on categories. For $i \in \I$, $\alpha_i \in Y$ and $\Lambda_i \in X$ will denote the simple roots and fundamental weights, respectively. These satisfy the relation $C_{i,j} = \la \alpha_i, \alpha_j \ra$ where $C_{i,j}$ is the Cartan matrix of $\g$. In terms of the Dynkin diagram, we have 
$$\la \alpha_i, \alpha_j \ra = 
\begin{cases}
2 \text{ if } i=j \\
-1 \text{ if } i \ne j \text{ are joined by an edge } \\
0 \text{ if } i \ne j \text{ are not joined by an edge.}
\end{cases}$$
Moreover, $\la \Lambda_i, \alpha_j \ra = \delta_{i,j}$ for all $i, j \in \I$. We will write $\l_i$ and $\la i,j \ra$ instead of $\la \l, \alpha_i \ra$ and $\la \alpha_i, \alpha_j \ra$.   

Similarly, we denote by $\hX$ the weight lattice of $\hg$, $\hY$ the affine root lattice, etc.  We use the same notation $\la \cdot,\cdot \ra$ for the pairing on the affine weight lattice as for the finite weight lattice.  The imaginary root, which is denoted $\delta$, satisfies $\la \delta,\alpha_i \ra = 0$ for all $i$. The associated Weyl groups of $X$ and $\hX$ are denoted $W$ and $\hW$.  

Fix an orientation $\epsilon$ of the Dynkin diagram of $\g$. If $\la i,j \ra = -1$ then we set $\epsilon_{ij} = 1$ if the edge is oriented $i \rightarrow j$ by $\epsilon$ and $\epsilon_{ij} = -1$ if it oriented $j \rightarrow i$. If $\la i, j \ra = 0$ then we set $\epsilon_{ij} = 0$. Notice that in both cases we have $\epsilon_{ij} = -\epsilon_{ji}$.

\subsection{Graded 2-categories}

By a graded category we will mean a category equipped with an auto-equivalence $\la 1 \ra$. We denote by $\la l \ra$ the auto-equivalence obtained by applying $\la 1 \ra$ a total of $l$ times. The Grothendieck group $K_0(\mathcal{C})$ of an additive category $\mathcal{C}$ is the abelian group generated the set $\{[A]: A \in Ob(\mathcal{C})\}$ modulo the relation $[A]+[A']=[A'']$ if $A'' \cong A \oplus A'$. This group is a $\Z[q,q^{-1}]$-module where $q$ acts by the shift $\la -1 \ra$. We usually tensor this with the field $\k(q)$ to obtain a $\k(q)$-module.
 
A graded additive $\k$-linear 2-category $\K$ is a category enriched over graded additive $\k$-linear categories. This means that for any two objects $A,B \in \K$ the Hom category $\Hom_{\K}(A,B)$ is a graded additive $\k$-linear category. Moreover, the composition map $\Hom_{\K}(A,B) \times \Hom_{\K}(B,C) \to \Hom_{\K}(A,C)$ is a graded additive $\k$-linear functor. In a graded 2-category the 1-morphisms are equipped with an auto-equivalence $\la 1 \ra$. We denote by $\la l \ra$ the auto-equivalence obtained by applying $\la 1 \ra$ $l$ times. If $\A$ and $\B$ are 1-morphisms we also write $\Hom^l(\A,\B)$ for $\Hom(\A,\B \la l \ra)$ and $\End^l(\A)$ for $\Hom(\A,\A \la l \ra)$. 

A graded additive $\k$-linear 2-functor $F: \K \to \K'$ is a (weak) 2-functor that maps the Hom categories $\Hom_{\K}(A,B)$ to $\Hom_{\K'}(FA,FB)$ by additive functors that commute with the auto-equivalence $\la 1 \ra$.

If $\K$ is an additive 2-category, the Grothendieck group $K_0(\K)$ is a $\k(q)$-linear category whose objects are the same as those of $\K$ and whose morphism spaces are 
$$ \Hom_{K_0(\K)}(A,A') = K_0(\Hom_{\K}(A,A')).$$

{\bf Example:} Suppose $B_n$ is a sequence of graded $\k$-algebras indexed by $n \in \N$. Then one can define a 2-category $\K$ whose objects (0-morphisms) are indexed by $\N$, the 1-morphisms are graded $(B_m,B_n)$-bimodules and the 2-morphisms are maps of graded $(B_m,B_n)$-bimodules.

\subsubsection{Idempotent completeness}

An additive category $\mathcal{C}$ is said to be idempotent complete when every idempotent 1-morphism splits in $\mathcal{C}$. Similarly, we say that the additive 2-category $\K$ is idempotent complete when the Hom categories $\Hom_{\K}(A,B)$ are idempotent complete for any pair of objects $A, B \in \K$, (so that all idempotent 2-morphisms split). 

\subsubsection{Triangulated 2-categories}

A graded triangulated category is a graded category equipped with a triangulated structure where the autoequivalence $\la 1 \ra$ takes exact triangles to exact triangles. We denote the homological shift by $[\cdot]$ where $[1]$ denotes a downward shift by one. The Grothendieck group $K_0(\mathcal{C})$ of a graded triangulated category $\mathcal{C}$ is the abelian group generated the set $\{[A]: A \in Ob(\mathcal{C})\}$ modulo the relation $[A]+[A']=[A'']$ if there exists a distinguished triangle $A \rightarrow A'' \rightarrow A'$. As before, this is a $\Z[q,q^{-1}]$-module where $q$ acts by $\la -1 \ra$. 

A graded triangulated $\k$-linear 2-category ${\K'}$ is a category enriched over graded triangulated $\k$-linear categories. This means that for any two objects $A,B \in \K'$ the Hom category $\Hom_{\K'}(A,B)$ is a graded additive $\k$-linear triangulated category. Here are two examples to keep in mind. 

{\bf Example}: the homotopy category $\K' := \Kom(\K)$ of a graded additive $k$-linear 2-category ${\K}$. The objects of $\K'$ are the same as the objects of $\K$.  The 1-morphisms of $\K'$ are unbounded complexes of 1-morphisms in $\K$, and 2-morphisms are maps of complexes. Two complexes of 1-morphisms are then deemed isomorphic if they are homotopy equivalent.  This makes $\Hom_{\K'}(A,B)$ into a graded triangulated category. 



\section{Quantum Heisenberg algebras and 2-representations of $\h$}\label{hei}

\subsection{The quantum Heisenberg algebra}\label{sec:qhei}

The quantum Heisenberg algebra, which we denote by $\h$, plays a central role in all of the constructions to come. We begin by describing this algebra and its Fock space representation.

The traditional presentation for the quantum Heisenberg algebra is as a unital algebra generated by $h_{i,n}$, where $i \in \I$ and $n \in \Z \setminus \{0\}$.  The relations are
\begin{equation}\label{rel:as}
[h_{i,m}, h_{j,n}] = \delta_{m,-n} [n \la i,j \ra] \frac{[n]}{n}.
\end{equation}
When $q=1$, this presentation specializes to the standard presentation of the non-quantum Heisenberg algebra.  Sometimes, relation (\ref{rel:as}) appears in the literature with a minus sign on the right hand side, though this does not change the isomorphism class of the algebra itself (just replace $h_{i,m}$ with $-h_{i,m}$ if $m > 0$). 

Our preferred presentation of $\h$ is slightly less common.  We use generators for $P_i^{(n)}$ and $Q_i^{(n)}$ which are obtained from the standard generators $h_{i,m}$ via the generating functions
\begin{equation}\label{eq:gens1}
\exp \left( \sum_{m \geq 1} \frac{h_{i,-m}}{[m]} z^m \right) = \sum_{n \geq 0} P_i^{(n)} z^n 
\text{ and } 
\exp \left( \sum_{m\geq 1} \frac{h_{i,m}}{[m]} z^m \right) = \sum_{n \geq 0} Q_i^{(n)} z^n.
\end{equation}

\begin{lemma}\label{lem:PQrels}
The elements $\{P_i^{(n)},Q_i^{(n)} \}_{i \in \I, n \geq 0}$ also generate $\h$.  
They satisfy the following relations:
\begin{eqnarray*}
P_i^{(n)} P_j^{(m)} &=& P_j^{(m)} P_i^{(n)} \text{ and } Q_i^{(n)}Q_j^{(m)} = Q_j^{(m)} Q_i^{(n)} \text{ for all } i,j \in \I, \\
Q_i^{(n)} P_j^{(m)} &=& 
\begin{cases}
\sum_{k \ge 0} [k+1] P_i^{(m-k)} Q_i^{(n-k)} \text{ if } i=j, \\
P_j^{(m)} Q_i^{(n)} + P_j^{(m-1)} Q_i^{(n-1)} \text{ if } i \neq j \in I \text{ with } \la i,j \ra = -1 \\
P_j^{(m)} Q_i^{(n)} \text{ if } i \ne j \in I \text{ with } \la i,j \ra = 0.
\end{cases}
\end{eqnarray*}
\end{lemma}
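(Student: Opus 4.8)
The plan is to prove Lemma \ref{lem:PQrels} by direct manipulation of the defining generating functions in (\ref{eq:gens1}), working inside the completed algebra of formal power series in an auxiliary variable $z$ (and a second variable $w$ for the mixed relations). The first observation is that, since the $h_{i,-m}$ commute among themselves and the $h_{i,m}$ commute among themselves, the generating functions $P_i(z) := \sum_n P_i^{(n)} z^n$ are exponentials of elements lying in a commutative subalgebra, so $P_i^{(n)} P_j^{(m)} = P_j^{(m)} P_i^{(n)}$ and likewise for the $Q$'s are immediate from $\exp(X)\exp(Y) = \exp(Y)\exp(X)$ when $[X,Y]=0$. That the $P$'s and $Q$'s generate $\h$ follows because the generating function relations are triangular: $P_i^{(1)} = h_{i,-1}/[1] = h_{i,-1}$, and inductively each $h_{i,-m}$ is expressible in terms of $P_i^{(1)}, \dots, P_i^{(m)}$ (and symmetrically for $Q$ and positive modes), so the two generating sets span the same algebra.

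The heart of the proof is the mixed relation $Q_i(w) P_j(z)$, which I would obtain from the Baker–Campbell–Hausdorff / Weyl-exchange identity $e^X e^Y = e^{[X,Y]} e^Y e^X$, valid whenever $[X,Y]$ is central (here it is a scalar times the identity). Setting $X = \sum_{m\ge 1} \frac{h_{i,m}}{[m]} w^m$ and $Y = \sum_{n \ge 1} \frac{h_{j,-n}}{[n]} z^n$, the relation (\ref{rel:as}) gives
\begin{equation*}
[X,Y] = \sum_{m \ge 1} \frac{[m\la i,j\ra]}{[m]^2} \cdot \frac{[m]}{m} (wz)^m = \sum_{m \ge 1} \frac{[m \la i,j\ra]}{m[m]} (wz)^m.
\end{equation*}
Thus $Q_i(w) P_j(z) = e^{[X,Y]} P_j(z) Q_i(w)$, and the content of the lemma is the explicit evaluation of the scalar series $e^{[X,Y]}$ in the three cases $\la i,j\ra = 2, -1, 0$. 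The case $\la i,j\ra = 0$ is trivial: $[X,Y]=0$ so $Q_i(w)P_j(z) = P_j(z)Q_i(w)$, which at the level of coefficients is the third case. For $\la i,j\ra = -1$ one computes $\sum_{m\ge 1} \frac{[-m]}{m[m]}(wz)^m = -\sum_{m\ge 1}\frac{(wz)^m}{m} = \log(1-wz)$, hence $e^{[X,Y]} = 1 - wz$, and expanding $(1-wz) P_j(z) Q_i(w)$ in coefficients of $z^m w^n$ gives exactly $P_j^{(m)} Q_i^{(n)} - P_j^{(m-1)} Q_i^{(n-1)}$ — here I would need to be careful with the sign, and I suspect the correct reading is obtained once one is consistent about whether the $Q$-generating function is $\exp(+\sum)$ or $\exp(-\sum)$ (compare the two conventions in the introduction versus (\ref{eq:gens1})); modulo that bookkeeping the $i\ne j$ adjacent case comes out. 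For $i = j$, $\la i,i\ra = 2$, one gets $\sum_{m\ge 1}\frac{[2m]}{m[m]}(wz)^m$; using $[2m]/[m] = q^m + q^{-m}$ this is $\sum_{m\ge 1}\frac{q^m + q^{-m}}{m}(wz)^m = -\log(1-qwz) - \log(1-q^{-1}wz)$, so $e^{[X,Y]} = \frac{1}{(1-qwz)(1-q^{-1}wz)}$. Expanding the geometric series $\frac{1}{(1-qwz)(1-q^{-1}wz)} = \sum_{k\ge 0} [k+1] (wz)^k$ (using $[k+1] = q^{-k} + q^{-k+2} + \dots + q^k$) and reading off the coefficient of $z^m w^n$ in $\big(\sum_k [k+1](wz)^k\big) P_i(z) Q_i(w)$ yields $\sum_{k\ge 0}[k+1] P_i^{(m-k)} Q_i^{(n-k)}$, which is the first case.

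I would organize the write-up as: (1) a short paragraph on commutativity and on the two generating sets coinciding; (2) the BCH exchange identity and the reduction of all mixed relations to evaluating the central series $e^{[X,Y]}$; (3) the three explicit evaluations of that series and the extraction of coefficients. The main obstacle — really the only subtle point — is keeping the sign and convention bookkeeping straight: relation (\ref{rel:as}) is quoted up to a sign that the paper explicitly says is harmless, the $Q$-generating function convention in (\ref{eq:gens1}) differs by a sign of the exponent from the one appearing in the introduction's formula for $X_q(i,z)$, and the quantum-integer identities $[2m]/[m] = q^m+q^{-m}$ and $\prod (1-q^{\pm 1}t)^{-1} = \sum [k+1] t^k$ must be applied with the grading-shift convention $[j] = q^{-j+1}+\dots+q^{j-1}$ fixed in Section \ref{sec:notation}. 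Once a single consistent convention is pinned down at the outset, each of the three cases is a one-line formal power series computation, so I do not anticipate any genuine difficulty beyond this.
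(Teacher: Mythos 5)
Your overall strategy (generating functions, the Weyl exchange identity $e^Xe^Y=e^{[X,Y]}e^Ye^X$ for central $[X,Y]$, and evaluation of the scalar series) is the natural one — the paper itself only cites Lemma 1 of \cite{CLi1}, whose proof is of exactly this type — and your treatment of commutativity, of generation, and of the $i=j$ expansion $\frac{1}{(1-qwz)(1-q^{-1}wz)}=\sum_k[k+1](wz)^k$ is fine. But the adjacent case has a genuine gap, not a bookkeeping issue. Whatever single overall sign you attach to (\ref{rel:as}), the diagonal and off-diagonal brackets are tied together: to get the $i=j$ relation of the Lemma you must take $[h_{i,m},h_{j,-m}]=+\tfrac{[m\la i,j\ra][m]}{m}$ for $m>0$, and then for $\la i,j\ra=-1$ the same formula forces
$$[X,Y]=\sum_{m\ge 1}\frac{[-m]}{m[m]}(wz)^m=\log(1-wz),\qquad e^{[X,Y]}=1-wz,$$
so your computation yields $Q_i^{(n)}P_j^{(m)}=P_j^{(m)}Q_i^{(n)}-P_j^{(m-1)}Q_i^{(n-1)}$, with a minus sign, contradicting the Lemma (and contradicting the categorical statement $\Q_j^{(n)}\P_i^{(m)}\cong\P_i^{(m)}\Q_j^{(n)}\oplus\P_i^{(m-1)}\Q_j^{(n-1)}$ of Proposition \ref{prop:rels1}, whose decategorification must have a plus). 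Your proposed escape — choosing the other sign in the exponent of the $Q$-generating function — does not work: by (\ref{eq:gens2}) that replaces $Q_i^{(n)}$ by $(-1)^nQ_i^{(1^n)}$, i.e.\ it changes which elements you are commuting (and would simultaneously spoil the $i=j$ case, producing instead the $Q^{(1^m)}P^{(n)}$-type relations listed after the Lemma).

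The real resolution is that the pairing entering the $h$-presentation is not the Cartan form read off literally from (\ref{rel:as}) but its graded/super version coming from the zigzag algebra, in which the adjacent entries at level $m$ are evaluated at $(-q)^m$ rather than $q^m$; equivalently, one must first renormalize $h_{j,m}\mapsto(-1)^m h_{j,m}$ on one side of the bipartition of the (tree) Dynkin diagram. With that twist the adjacent series becomes $\sum_{m\ge1}\frac{(-1)^{m-1}}{m}(wz)^m=\log(1+wz)$, giving $e^{[X,Y]}=1+wz$ and hence the stated relation, while the diagonal case is unaffected. Without identifying and justifying this $(-1)^m$ twist, your argument proves a sign-variant of the Lemma rather than the Lemma itself, so this step needs to be added (or the presentation (\ref{rel:as}) restated in the twisted form) before the proof is complete.
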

\begin{proof}
This is proved in Lemma 1 of \cite{CLi1}. Note $P_i^{(0)} = Q_j^{(0)} = 1$ and $P_i^{(k)} = Q_i^{(k)} = 0$ when $k < 0$ so the summations in the relations above are all finite.
\end{proof}

There is an alternative generating set of $\h$ given by elements $P_i^{(1^n)}$ and $Q_i^{(1^n)}$. These are defined using generating functions similar to (\ref{eq:gens1}) as follows
\begin{equation}\label{eq:gens2}
\exp \left( -\sum_{m \geq 1} \frac{h_{i,-m}}{[m]} z^m \right) = \sum_{n \geq 0} (-1)^n P_i^{(1^n)} z^n
\text{ and } 
\exp \left( -\sum_{m \geq 1} \frac{h_{i,m}}{[m]} z^m \right) = \sum_{n \geq 0} (-1)^n Q_i^{(1^n)} z^n.
\end{equation}
The Heisenberg algebra admits an involution $\psi: \h \longrightarrow \h$ defined by
$$ 
P_i^{(n)} \mapsto P_i^{(1^n)}, \ \ Q_i^{(n)} \mapsto Q_i^{(1^n)}, \ \
P_i^{(1^n)} \mapsto P_i^{(n)}, \ \ Q_i^{(1^n)} \mapsto Q_i^{(n)}.
$$
In particular, the commutation relations among the $P_i^{(1^n)}$ and $Q_i^{(1^n)}$ are the same as those between the $P_i^{(n)}$ and $Q_i^{(n)}$ (just replace $(n)$ by $(1^n)$ everywhere). 

The involution $\psi$ is essentially the standard involution on symmetric functions. More precisely, fix $i \in I$ and let $\h_i^- \subset \h$ denote the subalgebra generated by the $\{P_i^{(n)}\}$. After setting $q=1$, there is an isomorphism of algebras 
$$ \h_i^- \cong \Sym = \Z[h_1,h_2,\hdots,h_n,\hdots]$$
which takes $P_i^{(n)}$ to the homogeneous symmetric function $h_n$. This isomorphism intertwines $\psi$ with the standard involution on symmetric functions which exchanges homogeneous and elementary symmetric functions.

We have the following relations in addition to those from Lemma \ref{lem:PQrels}.
\begin{eqnarray*}
P_i^{(m)} P_j^{(1^n)} &=& P_j^{(1^n)} P_i^{(m)} \text{ and } Q_i^{(m)} Q_j^{(1^n)} = Q_j^{(1^n)} Q_i^{(m)} \\
Q_i^{(1^m)} P_j^{(n)} &=&
\begin{cases}
P_i^{(n)} Q_i^{(1^m)} + [2] P_i^{(n-1)} Q_i^{(1^{m-1})} + P_i^{(n-2)} Q_i^{(1^{m-2})} \text{ if } i=j \\
\sum_{k \ge 0} P_j^{(n-k)} Q_i^{(1^{m-k})} \text{ if } \la i,j \ra = -1 \\
P_j^{(n)} Q_i^{(1^m)} \text{ if } \la i,j \ra = 0.
\end{cases}
\end{eqnarray*}
These relations can be checked in the same way as those in Lemma \ref{lem:PQrels}.

The unital algebra $\h$ also admits an idempotent modification where the unit is replaced by a collection of idempotents $1_n$, $n \in \Z$. The relations between these idempotents and the generators $P_i^{(n)}$ and $Q_j^{(n)}$ is 
$$1_{n+k} P_i^{(n)} = 1_{n+k} P_i^{(n)} 1_k = P_i^{(n)} 1_k \text{ and }
1_k Q_i^{(n)} = 1_k Q_i^{(n)} 1_{n+k} = Q_i^{(n)} 1_{n+k}$$
We also have that $1_m P_i^{(n)} 1_k$ and $1_k Q_i^{(n)} 1_m$ is zero unless $m = n+k$. Hence the idempotent Heisenberg algebra can be thought of as a category, where the objects are the integers and the morphisms from $n$ to $m$ are given by $1_m\h1_n$. From this point of view we should write $P_i 1_n$ or $1_n P_i$ to make clear the domain and codomain. However, the domain or codomain will often be obvious or irrelevant in which case we just write $P_i$.

A representation of $\h$ is said to be \emph{integrable} if the object $1_n$ is zero for $n \ll 0$. It is \emph{weakly integrable} if it is the direct sum of (possibly infinitely many) integrable representations. 

\subsubsection{The Fock space}

The Heisenbeg algebra $\h$ has a natural integrable representation $\sF$, known as the Fock space. Let $\h^+ \subset \h$ denotes the subalgebra generated by the $Q_i^{(n)}$ for all $i \in I$ and $n \ge 0$. Let ${\rm{triv}}_0$ denote the trivial representation of $\h^+$, where all $Q_i^{(n)}$ ($n>0$) act by zero. Then 
$$\sF := \Ind_{\h^+}^{\h}({\rm{triv}}_0)$$ 
is called the Fock space representation of $\h$. It inherits a $\Z$ grading $\sF = \oplus_{m \in \N} \sF(m)$ by declaring ${\rm{triv}}_0$ to have degree zero, $P_i^{(n)}$ degree $n$ and $Q_i^{(n)}$ degree $-n$.

\subsection{2-representations of $\h$}\label{sec:hei}
We now explain what it means to have a 2-representation of $\h$. This concept is closely related to the categorification of $\h$ defined in \cite{CLi1}. In that paper we defined a graded additive $\k$-linear 2-category $\H^\G$ together with an algebra isomorphism from $K_0(\H^\G)$ to $\h$ (here $\G\subset SL_2(\C)$ is the finite group associated to our affine Dynkin diagram by McKay correspondence). In essence, a 2-representation of $\h$ is a representation of the 2-category $\H^\G$ on a graded, $\k$-linear 2-category.  

A 2-representation of $\h$ consists of a graded, idempotent complete $\k$-linear category $\K$ where
\begin{itemize}
\item 0-morphisms (objects) are denoted $\D(n)$ and are indexed by $n \in \Z$.
\item 1-morphisms include the identity 1-morphisms $\1_n$ of $n \in \Z$ (these are mutually orthogonal idempotents) as well as 
$$ \P_i \1_n : \D(n) \rightarrow \D(n+1), \ \ \Q_i \1_n : \D(n) \rightarrow \D(n-1).$$
Other 1-morphisms are obtained from these by taking compositions, direct sums and grading shifts. 
\item 2-morphisms include the identity 2-morphisms, cups, caps and dots (see the pictures below). Other 2-morphisms are obtained from these by composition.
\end{itemize}

\subsubsection{2-morphisms in $\K$}

We require that the space of 2-morphisms between any two 1-morphisms be finite dimensional and that $\Hom(\1_n, \1_n \la \ell \ra)$ is zero if $\ell < 0$ and one-dimensinal if $\ell=0$. Moreover, the 2-morphisms must satisfy the defining relations in the Heisenberg 2-category defined in \cite{CLi1}. We now summarize these relations. 

The 2-morphisms are encoded by a graphical calculus similar to ones used in the categorifications of quatum groups and other Heisenberg algebras, for example \cite{L,KL1,KL2,KL3,K,LS}. 

Strands will be used to denote 1-morphisms. More precisely, an upward pointing strand labeled by $i$ denotes $\P_i$ while a downward pointing strand labeled $i$ denotes $\Q_i$. Composition of 1-morphisms is obtained by sideways concatenation of diagrams. The space of 2-morphisms between compositions of $\P_i$s and $\Q_j$s is a $\k$-algebra described by certain string diagrams with relations. By convention, composition of 2-morphisms is done vertically from the bottom and going up. 

We have the following generating 2-morphisms. For any $i,j \in \I$ with $\la i, j \ra = -1$ there is a 2-morphism $\P_i \rightarrow \P_j \la 1 \ra$ which is diagrammatically denoted by a solid dot: 
$$
\begin{tikzpicture}[scale=.75][>=stealth]
\draw (0,0) -- (0,1) [->][very thick];
\draw (0,-.25) node {$i$};
\filldraw [blue] (0,.5) circle (3pt);
\draw (0,1.25) node {$j$};
\end{tikzpicture}
$$
Note that such an $i-j$ dot is defined to have degree one.  For each $i \in \I$ there is also a 2-morphism $P_i \rightarrow P_i \la 2 \ra$ of degree two. 

The other generators are given by caps, cups and crossings. These, together with their gradings, are depicted below: 
$$
\begin{tikzpicture}[scale=.75][>=stealth]
\draw  (-.5,.5) node {$deg$};
\draw [->](0,0) -- (1,1) [very thick];
\draw [->](1,0) -- (0,1) [very thick];
\draw (1.5,.5) node{$ = 0$};
\end{tikzpicture}
$$
$$
\begin{tikzpicture}[scale=.75][>=stealth]
\draw  (-.5,-.25) node {$deg$};
\draw (0,0) arc (180:360:.5)[->] [very thick];
\draw (1.75,-.25) node{$ = deg$};
\draw (3.5,-.5) arc (0:180:.5) [->][very thick];
\draw (4.5,-.25) node{$ =-1$};
\end{tikzpicture}
$$
$$
\begin{tikzpicture}[scale=.75][>=stealth]
\draw  (-.5,-.25) node {$deg$};
\draw (0,0) arc (180:360:.5)[<-] [very thick];
\draw (1.75,-.25) node{$ = deg$};
\draw (3.5,-.5) arc (0:180:.5) [<-][very thick];
\draw (4.5,-.25) node{$ = 1$};
\end{tikzpicture}
$$

The diagrammatic relations include any planar isotopy which preserves the relative height of dots.
These planar isotopy relations imply that the caps and cups give canonical adjunctions, making $\P_i$ and $\Q_i$ biadjoint up to a shift. Explicitly, using the gradings above we see that the right and left adjoints of $\P_i$ are
$$(\P_i)_R \cong \Q_i \la -1 \ra \text{ and } (\P_i)_L \cong \Q_i \la 1 \ra.$$

In addition to the isotopy relations we have the following extra relations. First, dots are allowed to move freely through crossings.
Next, degree one dots on different strands supercommute when they pass each other, meaning that they pick up the sign $(-1)^{ab}$ where $a,b \in \{1,2\}$ denotes their degree. For example, if $i \neq j$ and $k \neq l$ then we have
$$
\begin{tikzpicture}[scale=.75][>=stealth]
\draw (0,0) -- (0,2)[->][very thick] ;
\filldraw [blue] (0,.66) circle (3pt);
\draw (.5,.5) node {$\hdots$};
\draw (1,0)--(1,2)[->] [very thick];
\filldraw [blue] (1,1.33) circle (3pt);
\draw (2.5,1) node {$=$};
\draw (3.5,1) node {$-$};
\draw [shift={+(2,0)}](2,0) --(2,2)[->][very thick] ;
\filldraw [shift={+(2,0)}][blue] (2,1.33) circle (3pt);
\draw [shift={+(2,0)}](2.5,.5) node {$\hdots$};
\filldraw [shift={+(2,0)}][blue] (3,.66) circle (3pt);
\draw [shift={+(2,0)}](3,0) -- (3,2)[->][very thick] ;

\draw (0,-.25) node {$i$};
\draw (1,-.25) node {$k$};
\draw (0,2.25) node {$j$};
\draw (1,2.25) node {$l$};

\draw (4,-.25) node {$i$};
\draw (5,-.25) node {$k$};
\draw (4,2.25) node {$j$};
\draw (5,2.25) node {$l$};
\end{tikzpicture},
$$
since each of these dots has degree one.
The above relation is technically not a local relation, since there may be any number of vertical strands in between the dotted strands. The remaining relations listed below are all local.

The relation which governs the composition of dots on the same strand is 
\begin{equation}\label{eq:rel0'}
\begin{tikzpicture}[scale=.75][>=stealth]
\draw (0,0) -- (0,2) [->][very thick];
\draw (0,-.25) node {$i$};
\filldraw [blue] (0,.66) circle (3pt);
\draw (-.25,1) node {$j$};
\filldraw [blue] (0,1.33) circle (3pt);
\draw (0,2.25) node {$k$};
\draw (.5,1) node {$=$};
\draw (1.33,1) node {$\delta_{ik}\epsilon_{ij}$} ;
\draw (2,0) -- (2,2) [->][very thick];
\draw (2,-.25) node {$i$};
\filldraw [blue] (2,1) circle (3pt);
\draw (2,2.25) node {$i$};
\end{tikzpicture}
\end{equation}

Next, for any $i,j,k \in \I$ we have 
\begin{equation}\label{eq:rel1'}
\begin{tikzpicture}[scale=.75][>=stealth]
\draw (0,0) .. controls (1,1) .. (0,2)[->][very thick] ;
\draw (1,0) .. controls (0,1) .. (1,2)[->] [very thick];
\draw (1.5,1) node {=};
\draw (2,0) --(2,2)[->][very thick] ;
\draw (3,0) -- (3,2)[->][very thick] ;
\draw (0,-.25) node {$i$};
\draw (1,-.25) node {$j$};
\draw (2,-.25) node {$i$};
\draw (3,-.25) node {$j$};

\draw [shift={+(5,0)}](0,0) -- (2,2)[->][very thick];
\draw [shift={+(5,0)}](2,0) -- (0,2)[->][very thick];
\draw [shift={+(5,0)}](1,0) .. controls (0,1) .. (1,2)[->][very thick];
\draw [shift={+(5,0)}](0,-.25) node {$i$};
\draw [shift={+(5,0)}](1,-.25) node {$j$};
\draw [shift={+(5,0)}](2,-.25) node {$k$};
\draw [shift={+(5,0)}](2.5,1) node {=};
\draw [shift={+(5,0)}](3,0) -- (5,2)[->][very thick];
\draw [shift={+(5,0)}](5,0) -- (3,2)[->][very thick];
\draw [shift={+(5,0)}](4,0) .. controls (5,1) .. (4,2)[->][very thick];
\draw [shift={+(5,0)}](3,-.25) node {$i$};
\draw [shift={+(5,0)}](4,-.25) node {$j$};
\draw [shift={+(5,0)}](5,-.25) node {$k$};
\end{tikzpicture}
\end{equation}

\begin{equation}\label{eq:rel2'}
\begin{tikzpicture}[scale=.75][>=stealth]
\draw [shift={+(2,1)}](0,0) arc (180:360:0.5cm) [very thick];
\draw [shift={+(2,1)}][->](1,0) arc (0:180:0.5cm) [very thick];
\filldraw [shift={+(2,1)}] [blue](1,0) circle (3pt);
\draw [shift={+(2,1)}](1.50,0) node{$= \mathbf{1}$};
\draw [shift={+(2,1)}](.5,-.75) node {$i$};
\draw [shift={+(2,1)}](.5,.75) node {$i$};

\draw  [shift={+(6,1)}](0,0) .. controls (0,.5) and (.7,.5) .. (.9,0) [very thick];
\draw  [shift={+(6,1)}](0,0) .. controls (0,-.5) and (.7,-.5) .. (.9,0) [very thick];
\draw  [shift={+(6,1)}](1,-1) .. controls (1,-.5) .. (.9,0) [very thick];
\draw  [shift={+(6,1)}](.9,0) .. controls (1,.5) .. (1,1) [->] [very thick];
\draw  [shift={+(6,1)}](1.5,0) node {$=$};
\draw  [shift={+(6,1)}](2,0) node {$0.$};
\draw (7,-0.25) node {$i$};
\end{tikzpicture}
\end{equation}

Finally, if $i \ne j$ then 
\begin{equation}\label{eq:rel3'}
\begin{tikzpicture}[scale=.75][>=stealth]
\draw (0,0) .. controls (1,1) .. (0,2)[<-][very thick];
\draw (1,0) .. controls (0,1) .. (1,2)[->] [very thick];
\draw (0,-.25) node {$i$};
\draw (1,-.25) node {$j$};
\draw (1.5,1) node {=};
\draw (2,0) --(2,2)[<-][very thick];
\draw (3,0) -- (3,2)[->][very thick];
\draw (2,-.25) node {$i$};
\draw (3,-.25) node {$j$};
\draw (3.75,1) node {$-\ \epsilon_{ij}$};

\draw (4,1.75) arc (180:360:.5) [very thick];
\draw (4,2) -- (4,1.75) [very thick];
\draw (5,2) -- (5,1.75) [very thick][<-];
\draw (5,.25) arc (0:180:.5) [very thick];
\filldraw [blue] (4.5,1.25) circle (3pt);
\filldraw [blue] (4.5,0.75) circle (3pt);
\draw (5,0) -- (5,.25) [very thick];
\draw (4,0) -- (4,.25) [very thick][<-];
\draw (4,-.25) node {$i$};
\draw (5,-.25) node {$j$};
\draw (4,2.25) node {$i$};
\draw (5,2.25) node {$j$};
\end{tikzpicture}
\end{equation}
while 
\begin{equation}\label{eq:rel4'}
\begin{tikzpicture}[scale=.75][>=stealth]
\draw (0,0) .. controls (1,1) .. (0,2)[<-][very thick];
\draw (1,0) .. controls (0,1) .. (1,2)[->] [very thick];
\draw (0,-.25) node {$i$};
\draw (1,-.25) node {$i$};
\draw (1.5,1) node {=};
\draw (2,0) --(2,2)[<-][very thick];
\draw (3,0) -- (3,2)[->][very thick];
\draw (2,-.25) node {$i$};
\draw (3,-.25) node {$i$};
\draw (3.5,1) node {$-$};
\draw (4,1.75) arc (180:360:.5) [very thick];
\draw (4,2) -- (4,1.75) [very thick];
\draw (5,2) -- (5,1.75) [very thick][<-];
\draw (5,.25) arc (0:180:.5) [very thick];
\filldraw [blue] (4.5,1.25) circle (3pt);
\draw (5,0) -- (5,.25) [very thick];
\draw (4,0) -- (4,.25) [very thick][<-];
\draw (4,-.25) node {$i$};
\draw (5,-.25) node {$i$};
\draw (4,2.25) node {$i$};
\draw (5,2.25) node {$i$};
\draw (5.5,1) node {$-$};
\filldraw [blue] (6.5,0.75) circle (3pt);
\draw (6,1.75) arc (180:360:.5) [very thick];
\draw (6,2) -- (6,1.75) [very thick];
\draw (7,2) -- (7,1.75) [very thick][<-];
\draw (7,.25) arc (0:180:.5) [very thick];
\draw (7,0) -- (7,.25) [very thick];
\draw (6,0) -- (6,.25) [very thick][<-];
\draw (6,-.25) node {$i$};
\draw (7,-.25) node {$i$};
\draw (6,2.25) node {$i$};
\draw (7,2.25) node {$i$};
\end{tikzpicture}
\end{equation}

Notice all the graphical relations are compatible with the grading assigned to generators. Relation \ref{eq:rel1'} above implies that there is a natural map $\k[S_n] \longrightarrow \End(\P_i^n)$, which will be important in the next section. 

\subsection{Idempotent completeness}

Since the underlying 2-category in a 2-representation of $\h$ is assumed to be idempotent complete, any idempotent 2-morphism $e$ of a 1-morphism $\A$ gives rise to a direct sum decomposition $\A \cong \A e \oplus \A (1-e)$.  

For example, since $\k[S_n] \rightarrow \End(\P_i^n)$, each idempotent in $\k[S_n]$ gives rise to a direct summand of $\P_i^n$.  We let $\P_i^{\mu}$ be the 1-morphism of $\K$ corresponding to a minimal idempotent of $\k[S_n]$ associated to the partition $\mu$ of $n$.  More explicitly, fix a labeling $T$ of the boxes in a Young diagram of $\mu$ with the numbers $1,\hdots,n$. Corresponding to $T$ there are the subgroups $S_{row(T)}$ and $S_{col(T)}$ of $S_n$ preserving the rows and columns respectively. These subgroups have associated Young symmetrizers
$$a_T = \sum_{g\in S_{row(T)}} g, \ \ \ \sum_{g\in S_{col(T)}} (-1)^{l(g)}g, $$
where $l(g)$ is the length of the permutation $g$.  

We set $c_T = \frac{1}{n_\mu}a_T b_T$, an idempotent in $\k[S_n]$. Here the scalar $n_\mu$ is defined as the cardinality of the set $\{(s_1,t_1,s_2,t_2) : s_i\in row(T), \ t_i\in col(T), \ \ (-1)^{l(t_1)} = (-1)^{l(t_2)}, \ \ s_1t_1s_2t_2 = 1\}$.
The idempotent $c_T$ is used to construct the irreducible $S_n$ representation $\k[S_n]c_T$ associated to the partition $\mu$. Note that $\k[S_n]c_T \cong \k[S_n]c_{T'}$ if $T$ and $T'$ are different fillings of the same partition.  Exchanging the roles of row and column, one can also use instead the idempotent $\tilde{c}_T = \frac{1}{n_\mu} b_Ta_T$ to construct the same irreducible representation (see Chapter 7 of \cite{F} for a discussion of the constructions of irreducible symmetric group representations from tableaux).

In our case, $c_T$ and $\tilde{c}_T$ also define 1-morphisms $\P_i^nc_T, \P_i\tilde{c}_T$ in $\K$. Just as for representations of the symmetric group, these 1-morphisms do not depend on the choice of labeling $T$ but only on the partition $\mu$ in the sense that $\P_i^nc_T \cong \P_i^nc_{T'}$ if $T$ and $T'$ are different labelings of $\mu$; moreover, both of these one-morphisms are isomorphic to $\P_i\tilde{c}_T$. We will abuse notation slightly and write $\P_i^\mu$ for $\P_i^nc_T$ for an arbitrary choice of $T$. The 1-morphism $\P_i^{\mu}$ will generally be drawn as
$$
\begin{tikzpicture}[scale=.75][>=stealth]
\shadedraw[gray]  (0,0) rectangle (4,.5);
\draw (2,.25) node {$(\mu)_i$};
\draw (2,.5) -- (2,1) [->][very thick];
\end{tikzpicture}
$$
When the strand label $i$ is understood, which is often the case in this paper, it will not be drawn. We define $\Q_i^{\mu} := \Q_i^n c_T$ similarly and draw this 1-morphism with downward pointing arrows. In a few cases we will need to emphasize the choice of the idempotent $c_T$, in which case we draw
$$
\begin{tikzpicture}[scale=.75][>=stealth]
\shadedraw[gray]  (-4.5,.5) rectangle (-1.5,1);
\draw (-3,.75) node {$(\mu)$};
\draw (-3,1) -- (-3,2) [->][very thick];
\draw (-.5,.75) node {$= \frac{1}{n_\mu}$};
\shadedraw[gray]  (.5,0) rectangle (3.5,.5);
\draw (2,.25) node {$b_T$};
\draw (2,.5) -- (2,1) [->][very thick];
\shadedraw[gray]  (.5,1) rectangle (3.5,1.5);
\draw (2,1.25) node {$a_T$};
\draw (2,1.5) -- (2,2) [->][very thick];
\end{tikzpicture}.
$$
Of particular importance are the elements $\P_i^{(n)}, \P_i^{(1^n)}, \Q_i^{(n)}$ and $\Q_i^{(1^n)}$ corresponding to the partitions $\mu = (n)$ and $\mu = (1^n) = (1,1,\hdots,1)$ (i.e. corresponding to the trivial and sign representations of $S_n$). To emphasize the difference between $\P_i^{(n)}$ and $\P_i^{(1^n)}$ in future calculations we will shade the box for $\P_i^{(1^n)}$ while leaving the box for $\P_i^{(n)}$ unshaded (this shading of the boxes is for visual convenience only). 

The $\P$s and the $\Q$s are biadjoint to each other in $\K$. Precisely, if $\mu$ is an arbitrary partition then the left and right adjoints are given by
$$(\P_i^{\mu})_R \cong \Q_i^{\mu} \la -|\mu| \ra \text{ and } (\P_i^{\mu})_L \cong \Q_i^{\mu} \la |\mu| \ra$$
where $|\mu|$ denotes the size of the partition. 

\subsubsection{Integrability}

A 2-representation of $\h$ is said to be \emph{integrable} if the object $\1_n$ is zero for $n \ll 0$. It is \emph{weakly integrable} if it is the direct sum of (possibly infinitely many) integrable 2-representations. 

In this paper, all 2-representations are assumed to be weakly integrable. An example of such a 2-representation of $\h$ was constructed in \cite{CLi1} on categories of coherent sheaves on Hilbert schemes of points on the surface $\widehat{\bA^2/\Gamma}$.

\subsubsection{A symmetry of 2-representations}\label{sec:psi}

A 2-representation of $\h$ admits a covariant involution $\Psi: \K \rightarrow \K$. It is defined as the identity on objects and 1-morphisms and also the identity on cups, caps, and dots while acting as multiplication by $-1$ on any crossing. This means that $\Psi$ takes the idempotent 2-morphism $c_{(1^n)}$ to $c_{(n)}$ and vice versa. Subsequently
$$ \Psi(\P_i^{(n)}) = \P_i^{(1^n)}, \ \ \Psi(\P_i^{(1^n)}) = \P_i^{(n)}, \ \ \Psi(\Q_i^{(n)}) = \Q_i^{(1^n)}, \ \ \Psi(\Q_i^{(1^n)}) = \Q_i^{(n)} $$
while more generally $\Psi(\P_i^{(\l)}) = \P_i^{(\l^t)}$ and $\Psi(\Q_i^{(\l)}) = \Q_i^{(\l^t)}$. Thus $\Psi$ categorifies the involution $\psi$ from Section \ref{sec:qhei}.

\subsubsection{Induced relations among 1-morphisms in $\K$}

The relations among 2-morphisms in a 2-representation of $\h$ imply certain isomorphisms between 1-morphisms. We recall some of these relations below. 

\begin{prop}\label{prop:rels1}\cite{CLi1} 
We have the following direct sum decompositions of 1-morphisms in $\H$:
\begin{enumerate}
\item $\P_i^{(m)}\P_j^{(n)} \cong 
\begin{cases} 
\P_i^{(n)}\P_i^{(m)} \cong \oplus_{k=0}^{\min(n,m)} \P_i^{(n+m-k,k)} \text{ if } i=j \\
\P_i^{(n)}\P_j^{(m)} \text{ if } i \ne j 
\end{cases}$
\item $\P_i^{(m)} \P_j^{(1^n)} \cong 
\begin{cases} 
\P_i^{(m,1^n)} \oplus \P_i^{(m+1,1^{n-1})} \text{ if } i=j \\
\P_j^{(1^n)} \P_i^{(m)} \text{ if } i \ne j
\end{cases}$
\item $\Q_j^{(n)} \P_i^{(m)} \cong 
\begin{cases}
\oplus_{k \ge 0} \P_i^{(m-k)} \Q_i^{(n-k)} \otimes_\k V_k \text{ if } i=j \\
\P_i^{(m)} \Q_j^{(n)} \oplus \P_i^{(m-1)} \Q_j^{(n-1)} \text{ if } \la i,j \ra = -1 \\
\P_i^{(m)} \Q_j^{(n)} \text{ if } \la i,j \ra = 0 
\end{cases}$ 
\item $\Q_j^{(1^n)} \P_i^{(m)} \cong 
\begin{cases} 
\P_i^{(m)} \Q_i^{(1^n)} \oplus \P_i^{(m-1)} \Q_i^{(1^{n-1})} \otimes_\k V_1 \oplus \P_i^{(m-2)} \Q_i^{(1^{n-2})} \text{ if } i=j \\
\oplus_{k \ge 0} \P_i^{(m-k)} \Q_j^{(1^{n-k})} \text{ if } \la i,j \ra = -1 \\
\P_i^{(m)} \Q_j^{(1^n)} \text{ if } \la i,j \ra = 0
\end{cases}$ 
\end{enumerate}
In each case the direct summands on the right hand side are indecomposable 1-morphisms in $\K$.
\end{prop}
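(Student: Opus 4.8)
These statements are established in \cite{CLi1}; here is the line of argument I would follow to prove them within a $2$-representation $\K$ of $\h$ using only the defining relations recalled above. All four items are categorical lifts of the algebra relations in Lemma \ref{lem:PQrels} (and of the relations displayed just after it), so the equalities they assert already hold in $K_0(\K)$ regardless of anything; the content is to promote them to genuine direct sum decompositions of $1$-morphisms. Two ingredients do all the work: (a) the homomorphism $\k[S_n]\to\End_\K(\P_i^n)$ coming from relation \ref{eq:rel1'} together with the ordinary representation theory of symmetric groups, and (b) a handful of ``basic'' length-one Heisenberg isomorphisms, from which the divided-power statements are bootstrapped by induction on the number of strands. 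Throughout, idempotent completeness of $\K$ is what guarantees that the summands exist as $1$-morphisms.

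\emph{The $\P\P$ decompositions (items (1)--(2)), case $i=j$.} Because the Young symmetrizers for the first $m$ and for the last $n$ of $m+n$ strands act on disjoint blocks, their images $c_{(m)},c_{(n)}$ commute inside $\End_\K(\P_i^{m+n})$, so $\P_i^{(m)}\P_i^{(n)}$ is the image of the idempotent $c_{(m)}c_{(n)}$, i.e.\ of the image in $\End_\K(\P_i^{m+n})$ of the symmetrizing idempotent of the Young subgroup $S_m\times S_n\subset S_{m+n}$ (for $\P_i^{(m)}\P_i^{(1^n)}$ one uses the symmetrizing-antisymmetrizing idempotent instead). That idempotent generates $\Ind_{S_m\times S_n}^{S_{m+n}}(\mathrm{triv}\boxtimes\mathrm{triv})$, respectively $\Ind_{S_m\times S_n}^{S_{m+n}}(\mathrm{triv}\boxtimes\mathrm{sgn})$, which by the Pieri rule is multiplicity-free with constituents indexed by the two-row partitions $(m+n-k,k)$ for $0\le k\le\min(m,n)$, respectively by $(m,1^n)$ and $(m+1,1^{n-1})$. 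Hence already in $\k[S_{m+n}]$ it is an orthogonal sum of idempotents each conjugate to the minimal idempotent $c_\mu$ of the corresponding $\mu$; pushing this equation into $\End_\K(\P_i^{m+n})$ and using that orthogonal idempotents split off direct summands while conjugate idempotents split off isomorphic ones yields the asserted decompositions (and the commutativity $\P_i^{(m)}\P_i^{(n)}\cong\P_i^{(n)}\P_i^{(m)}$, since the two Young subgroups are conjugate). For the $i\ne j$ cases, relation \ref{eq:rel1'} says the mixed crossing $\P_i\P_j\to\P_j\P_i$ squares to the identity, hence is invertible, and projecting onto the relevant idempotents gives $\P_i^{(m)}\P_j^{(n)}\cong\P_j^{(n)}\P_i^{(m)}$ and its $(1^n)$-variant.

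\emph{The $\Q\P$ decompositions (items (3)--(4)).} Here I would first prove the length-one ``commutation'' isomorphisms $\Q_i\P_i\cong\P_i\Q_i\oplus\1\otimes_\k V_1$ for $i=j$ (the two copies of $\1$, shifted by $V_1=\k\la1\ra\oplus\k\la-1\ra$, produced by the degree $\pm1$ cup and cap), $\Q_i\P_j\cong\P_j\Q_i\oplus\1$ for $\la i,j\ra=-1$, and $\Q_i\P_j\cong\P_j\Q_i$ for $\la i,j\ra=0$. Each is obtained by assembling an explicit $2$-morphism out of identities, a cup, a cap, and (in the adjacent case) the $i$--$j$ dots, writing down a candidate inverse, and checking the two composites are identities by isotopy and relations \ref{eq:rel2'}, \ref{eq:rel3'}, \ref{eq:rel4'} (the $\delta_{ik}$ and $\epsilon_{ij}$ bookkeeping coming from \ref{eq:rel0'}); equivalently one verifies these are isomorphisms by a graded-dimension count on the relevant $\Hom$ spaces. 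From here the general case follows by induction on $m+n$: realize $\Q_i^{(n)}$ and $\P_i^{(m)}$ as summands of $\Q_i^n$ and $\P_i^m$, slide single $\Q$'s past single $\P$'s using the basic isomorphism, recollect the resulting strands via items (1) and (2) and the Pieri rule, and project onto the trivial or the sign idempotent on the $\Q$-strands. For $i=j$ the trivial idempotent produces item (3) and the sign idempotent produces item (4); in both cases the graded multiplicity of the $k$-th summand is forced to be $V_k$, matching the quantum integer $[k+1]=\gdim V_k$ of Lemma \ref{lem:PQrels}. The $\la i,j\ra=-1$ cases run the same induction on the adjacent basic isomorphism and the dot relations, the $\la i,j\ra=0$ cases are immediate, and the companion decompositions for the $P_i^{(1^n)}$ and $Q_i^{(1^n)}$ follow by applying the involution $\Psi$ of Section \ref{sec:psi}.

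\emph{Indecomposability and the main obstacle.} It remains to see that each summand $\P_i^\mu$ (hence $\Q_i^\mu$) is indecomposable, i.e.\ that $\End_\K(\P_i^\mu)$ is local. This follows from the standing normalization that $2$-morphism spaces are finite dimensional with $\Hom(\1_n,\1_n\la\ell\ra)$ vanishing for $\ell<0$ and one-dimensional for $\ell=0$: using biadjointness to slide the $\Q$'s around, any degree-zero endomorphism of $\P_i^\mu$ is reduced to a scalar, so $\End_\K(\P_i^\mu)$ is finite dimensional, concentrated in non-negative degrees, with one-dimensional degree-zero part, hence local. The one genuinely hard input underlying everything above is ingredient (b): to know that these natural $2$-morphisms really are mutually inverse -- equivalently, that no unexpected $2$-morphisms intervene -- one needs a basis theorem for the $2$-morphism spaces of the Heisenberg $2$-category. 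In \cite{CLi1} this is supplied by exhibiting a faithful $2$-representation on module categories built from the algebra $B^\G$ (a $2$-category of the kind described in Example 1) and transporting the computations there; in the absence of faithfulness one must instead carry out the diagrammatic verifications directly, which is lengthy but entirely mechanical. Everything else -- the Pieri combinatorics and the two inductions -- is routine bookkeeping.
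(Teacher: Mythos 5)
The paper does not actually prove this proposition: it is quoted verbatim from \cite{CLi1}, so there is no in-text argument to compare yours against. Your reconstruction is essentially the right outline and matches the spirit of the proof in \cite{CLi1}: the $\P\P$ decompositions come from pushing the Pieri-rule decomposition of the Young-subgroup idempotents through $\k[S_{m+n}] \to \End(\P_i^{m+n})$, the $\Q\P$ decompositions come from the one-strand isomorphisms (which are exactly repackagings of relations (\ref{eq:rel2'})--(\ref{eq:rel4'})) plus induction, and indecomposability follows from locality of $\End(\P_i^\mu)$ using the axiom that $\Hom(\1_n,\1_n\la \ell\ra)$ vanishes for $\ell<0$ and is one-dimensional for $\ell=0$.

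Two points in your write-up deserve adjustment. First, the step where the graded multiplicities $V_k$ are ``forced'' by matching $[k+1]$ is not rigorous as stated: a $K_0$ or graded-dimension count only pins down the multiplicity spaces if one already knows the classes of the indecomposables $\P_i^{(\mu)}\Q_i^{(\nu)}\la\cdot\ra$ are linearly independent in the Grothendieck group, which is itself a nontrivial theorem of \cite{CLi1}; the self-contained route is to track explicit inclusion and projection $2$-morphisms (idempotent-decorated cups, caps and crossings, with dots in the $\la i,j\ra=-1$ case) through the induction and check $\pi_k\iota_l=\delta_{kl}\id$ and $\sum_k\iota_k\pi_k=\id$ diagrammatically, exactly as in the $\Q\P\cong\P\Q\oplus\1\otimes_\k V_1$ case. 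Second, your claim that a basis theorem or faithful $2$-representation is ``the one genuinely hard input'' is misplaced for the statement at hand: the proposition is asserted inside an arbitrary $2$-representation $\K$, where the Heisenberg relations hold by hypothesis, and all the verifications above use only those relations; faithfulness is needed to control the universal Heisenberg $2$-category (e.g.\ its $K_0$), not to prove these decompositions in $\K$. A minor further quibble: the involution $\Psi$ sends item (3) to a $\Q_i^{(1^n)}\P_i^{(1^m)}$-type relation and item (4) to a $\Q_i^{(n)}\P_i^{(1^m)}$-type relation, so it does not produce (4) from (3); your direct argument with the sign idempotent is what actually handles item (4).
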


\subsection{Technical lemmas}

We now discuss a few technical lemmas dealing with 2-representations of $\h$. These will be used later but can be skipped on a first reading of the paper. 

\begin{lemma}\label{lem:PQrels2}
For an arbitrary partition $\l$ we have
$$\Q_i^{(\l)} \P_i \cong \P_i \Q_i^{(\l)} \bigoplus_{\l' \subset \l} \Q_i^{(\l')} \otimes_\k V_1 \text{ and } \Q_i \P_i^{(\l)} \cong \P_i^{(\l)} \Q_i \bigoplus_{\l' \subset \l} \P_i^{(\l')} \otimes_\k V_1$$
where the sums are over all $\l' \subset \l$ with $|\l'| = |\l|-1$. 
\end{lemma}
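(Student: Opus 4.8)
The plan is as follows. First I would reduce the two isomorphisms to one. Passing to right adjoints and using $(\P_i^{(\mu)})_R \cong \Q_i^{(\mu)} \la -|\mu| \ra$, the biadjointness of $\P_i$ and $\Q_i$, and the self-duality of $V_1$, the right adjoint of the first isomorphism, shifted by $\la 1 - |\l| \ra$, is exactly the second. So it suffices to prove
$$ \Q_i \P_i^{(\l)} \cong \P_i^{(\l)} \Q_i \bigoplus_{\l' \subset \l} \P_i^{(\l')} \otimes_\k V_1 , $$
the sum over $\l' \subset \l$ with $|\l'| = |\l| - 1$; and since everything here is a formal consequence of the defining $2$-morphism relations, I would verify it inside $\H$, where $K_0(\H) \cong \h$.

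Next I would record the corresponding identity in the Grothendieck group. By Lemma \ref{lem:PQrels} the $\{P_i^{(k)}\}_{k \ge 1}$ generate a polynomial subalgebra of $\h$ on which $Q_i^{(1)}$, moved to the right, acts as the derivation $P_i^{(k)} \mapsto [2]\,P_i^{(k-1)}$; under the identification of this subalgebra with $\Sym$ ($P_i^{(k)} \leftrightarrow h_k$) this is $[2]\,\partial/\partial p_1$, so $Q_i^{(1)} P_i^{(\l)} = P_i^{(\l)} Q_i^{(1)} + [2] \sum_{\l'} P_i^{(\l')}$. As $[V_1] = [2]$, the class $[\Q_i \P_i^{(\l)}]$ equals that of the desired right hand side in $K_0(\H)$. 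Since the $\Hom$-categories here are Krull-Schmidt, $K_0(\H)$ is free over $\Z[q,q^{-1}]$ on the classes of indecomposable $1$-morphisms, so it is enough to realize the right hand side as an honest direct summand of $\Q_i\P_i^{(\l)}$: the complement then has class $0$, hence is $0$.

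To build this summand I would proceed as follows. With $n := |\l|$, the piece $\P_i^{(\l)}\Q_i$ splits off $\Q_i\P_i^{(\l)}$ because, pushing $\Q_i$ past the $n$ strands of $\P_i^{n}$ using the $\P_i$-$\Q_i$ crossing $2$-morphisms, one obtains maps $\Q_i\P_i^{n}\rightleftarrows\P_i^{n}\Q_i$ whose composites equal the respective identities modulo $2$-morphisms factoring through cups and caps (this is relation (\ref{eq:rel4'})), and these maps are equivariant for the $\k[S_n]$-action on $\P_i^{n}$, so applying the Young idempotent $c_T$ of shape $\l$ gives the splitting. For a removable box of $\l$, giving $\l'\subset\l$, I would compose an adjunction cup with the split surjection $\P_i\P_i^{(\l')} \twoheadrightarrow \P_i^{(\l)}$ furnished by the categorical Pieri rule $\P_i\P_i^{(\l')} \cong \bigoplus_{\l''\supset\l'} \P_i^{(\l'')}$ (itself obtained by decomposing $1\otimes c_{T'}\in\k[S_n]$ into primitive idempotents), together with its dual, to get a map $\P_i^{(\l')}\otimes_\k V_1 \to \Q_i\P_i^{(\l)}$. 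The hard part will be to check that these maps are \emph{jointly} split and that together with $\P_i^{(\l)}\Q_i$ they exhaust $\Q_i\P_i^{(\l)}$: concretely, the ``capped'' part of $\Q_i\P_i^{n}$ is $\Ind_{S_{n-1}}^{S_n}$ of the rank-$(n-1)$ strand object $\P_i^{n-1}\cong\bigoplus_\mu(\dim S^\mu)\,\P_i^{(\mu)}$, and one must see that $c_T$ extracts from it exactly $\bigoplus_{\l'}\P_i^{(\l')}$, which is the symmetric-group branching rule $\Res^{S_n}_{S_{n-1}} S^{\l} \cong \bigoplus_{\l'} S^{\l'}$. Granting this joint splitting --- whether argued this way or directly in the graphical calculus --- the Grothendieck-group computation of the previous paragraph forces the complementary summand to vanish, completing the proof. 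In either the adjoint or the original formulation, the combinatorial heart of the argument is the differential-poset identity $DU-UD=\id$ for Young's lattice.
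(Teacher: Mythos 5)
Your reductions are fine: the observation that the two isomorphisms are exchanged by passing to right adjoints (with the shift $\la 1-|\l|\ra$, using $(\P_i^{(\mu)})_R\cong\Q_i^{(\mu)}\la -|\mu|\ra$ and $V_1^*\cong V_1$) is correct and slightly slicker than the paper, which proves the first isomorphism and says the second is similar; the decategorified identity $[\Q_i\P_i^{(\l)}]=[\P_i^{(\l)}\Q_i]+[2]\sum_{\l'}[\P_i^{(\l')}]$ is also correct, and proving the statement in $\H$ and transporting it along the 2-functor to any 2-representation is legitimate. But there is a genuine gap at the decisive step, and you name it yourself: ``Granting this joint splitting.'' The whole content of the lemma is precisely that $\P_i^{(\l)}\Q_i\oplus\bigoplus_{\l'}\P_i^{(\l')}\otimes_\k V_1$ is an honest direct summand of $\Q_i\P_i^{(\l)}$; once that is known, the Grothendieck-group bookkeeping is the easy part. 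Writing down candidate inclusion maps (a crossing-induced map for the $\P_i^{(\l)}\Q_i$ piece, cup-plus-Pieri-projection maps for the $\P_i^{(\l')}$ pieces) and appealing to the branching rule identifies the expected answer but does not show the maps are jointly split: one must actually compute the composites through the idempotents $c_T$, control the dotted correction terms in relation (\ref{eq:rel4'}), and show the resulting Gram-type matrix of degree-zero maps is invertible. None of that is done, so as written the proposal does not prove the lemma.

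For comparison, the paper's proof avoids explicit splitting maps altogether: using indecomposability of the $\P_i\Q_i^{(\mu)}$ (Proposition \ref{prop:rels1}) and the basic relation $\Q_i\P_i\cong\P_i\Q_i\oplus\1\otimes_\k V_1$, it first shows abstractly that $\Q_i^{(\l)}\P_i\cong\P_i\Q_i^{(\l)}\oplus\bigoplus_\mu\Q_i^{(\mu)}\otimes_\k V_{\mu,\l}$ for some graded multiplicity spaces, then pins these down by adjunction computations of $\Hom(\Q_i^{(\l)}\P_i,\Q_i^{(\mu)}\la l\ra)$ and $\Hom(\Q_i^{(\mu)}\la l\ra,\Q_i^{(\l)}\P_i)$ (forcing $V_{\mu,\l}\subset V_1$ and $\mu\subset\l$ of codimension one), and finally gets equality by an inductive dimension count — all within the axioms of a 2-representation. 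Your route could be completed in two ways: either carry out these Hom-space computations (in which case your $K_0$ argument becomes superfluous), or strengthen your Krull--Schmidt step by noting that repeated use of Proposition \ref{prop:rels1}(3) already shows $\Q_i\P_i^{(\l)}$ decomposes into shifts of $\P_i^{(\mu)}$'s and $\P_i^{(\mu)}\Q_i$'s, so that linear independence of their classes in $K_0(\H)$ determines the multiplicities; but that linear independence is the substance of the $K_0(\H)\cong\h$ theorem of \cite{CLi1} (proved there via the Fock space 2-representation), together with the identification $[\P_i^{(\l)}]\leftrightarrow s_\l$, which is a much heavier input than the self-contained argument the paper gives here. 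Either way, more must be supplied than what is on the page.
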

\begin{proof}
We prove only the first relation (the second one is proved similarly). First, using the fact that $\P_i\Q_i^{(\l)}$ is indecomposable and that $\Q_i \P_i \1_n \cong \P_i \Q_i \1_n \oplus \1_n \otimes_\k V_1$, it follows by induction that 
$$\Q_i^{(\l)} \P_i \cong \P_i \Q_i^{(\l)} \bigoplus_{\mu \subset \l} \Q_i^{(\mu)} \otimes_\k V_{\mu, \l}$$ 
for some graded vector spaces $V_{\mu, \l}$. We will prove by induction on $|\l|$ that $V_{\mu,\l} = V_1$ if $\mu \subset \l$ and that 
$V_{\mu,\l} = 0$ otherwise. 

Note that if $\Q_i^{(\mu)} \la l \ra$ is a summand of $\Q_i^{(\l)} \P_i$ then we must have
$$\Hom(\Q_i^{(\l)} \P_i, \Q_i^{(\mu)} \la l \ra) \ne 0 \text{ and } 
\Hom(\Q_i^{(\mu)} \la l \ra, \Q_i^{(\l)} \P_i) \ne 0.$$
Now, by adjunction
\begin{eqnarray*}
\Hom(\Q_i^{(\l)} \P_i, \Q_i^{(\mu)} \la l \ra) 
&\cong& \Hom(\Q_i^{(\l)}, \Q_i^{(\mu)} \Q_i \la l+1 \ra) \\
&\cong& \Hom(\Q_i^{(\l)}, \oplus_{\mu \subset \mu'} \Q_i^{(\mu')} \la l+1 \ra) 
\end{eqnarray*}
and similarly
$$\Hom(\Q_i^{(\mu)} \la l \ra, \Q_i^{(\l)} \P_i) \cong \Hom( \oplus_{\mu \subset \mu'} \Q_i^{(\mu')}, \Q_i^{(\l)} \la -l+1 \ra).$$
One of these two morphism spaces is zero unless $l=-1$ or $l=1$ and $\mu \subset \l$ in which case one of these is one dimensional. Thus $V_{\mu,\l} = 0$ unless $\mu \subset \l$ in which case $V_{\mu,\l} \subset V_1$. 

It remains to show that $V_{\mu,\l}$ actually equals $V_1$. We do this by counting dimensions. Take $\nu \subset \l$ with $|\l| = |\nu|+1$. Then by induction
$$\Q_i^{(\nu)} \Q_i \P_i \cong \P_i \Q_i^{(\nu)} \Q_i \oplus \Q_i^{(\nu)} \otimes_\k V_1 \oplus_{\nu' \subset \nu} \Q_i^{(\nu')} \Q_i \otimes_\k V_1.$$
On the other hand, this equals 
$$\oplus_{\nu \subset \l'} \Q_i^{(\l')} \P_i \cong \oplus_{\nu \subset \l'} \P_i \Q_i^{(\l')} \oplus_{\nu, \l'' \subset \l'} \Q_i^{(\l'')} \otimes_\k V_{\l'',\l'}.$$  
Comparing summands involving only $\Q_i$'s one can check that indeed $\dim(V_{\mu,\l'}) = 2$ for any $\mu \subset \l'$. The result follows. 
\end{proof}

\begin{lemma}\label{lem:differentials}
Suppose $\l,\l',\mu$ and $\mu'$ are partitions such that $|\l| > |\l'|$ and $|\mu|>|\mu'|$. Then $\dim \Hom( \P_i^{(\l)} \Q_i^{(\mu)}, \P_i^{(\l')} \Q_i^{(\mu')} \la 1 \ra) \le 1$ with equality if and only if $\l' \subset \l$ and $\mu' \subset \mu$ with $|\l| = |\l'|+1$ and $|\mu|=|\mu'|+1$. In this case this space is spanned by the diagram given by a single cap. 

Likewise, if $|\l|<|\l'|$ and $|\mu|<|\mu'|$ then $\dim \Hom( \P_i^{(\l)} \Q_i^{(\mu)}, \P_i^{(\l')} \Q_i^{(\mu')} \la 1 \ra) \le 1$ with equality if and only if $\l' \supset \l$ and $\mu' \supset \mu$ with $|\l| = |\l'|-1$ and $|\mu|=|\mu'|-1$. In this case this space is spanned by the diagram given by a single cup.
\end{lemma}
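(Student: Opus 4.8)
\emph{Overview.} The plan is to compute the $\Hom$-space by repeatedly applying biadjunction, reducing it to a sum of $\Hom$-spaces between products of $\Q_i$'s, where the input $\Hom(\1_n,\1_n\la\ell\ra)=0$ for $\ell<0$ can be used. It suffices to treat the first statement: the second follows by applying the first to the biadjoints, since rotating by $180^\circ$ (passing to right adjoints, using $(\P_i^{(\sigma)})_R\cong\Q_i^{(\sigma)}\la-|\sigma|\ra$) gives a natural isomorphism $\Hom(\P_i^{(\l)}\Q_i^{(\mu)},\P_i^{(\l')}\Q_i^{(\mu')}\la1\ra)\cong\Hom(\P_i^{(\mu')}\Q_i^{(\l')},\P_i^{(\mu)}\Q_i^{(\l)}\la1\ra)$, which interchanges $\l\leftrightarrow\mu$, $\l'\leftrightarrow\mu'$, converts $\subseteq$ into $\supseteq$, and carries the cap to the cup. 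Note also that for the space to be nonzero the two $1$-morphisms must have the same source and target, which forces $d:=|\l|-|\l'|=|\mu|-|\mu'|$, and $d>0$ by hypothesis.

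\emph{Reduction.} First I would use the biadjunctions $(\P_i^{(\sigma)})_R\cong\Q_i^{(\sigma)}\la-|\sigma|\ra$, $(\P_i^{(\sigma)})_L\cong\Q_i^{(\sigma)}\la|\sigma|\ra$ to rotate
$$\Hom(\P_i^{(\l)}\Q_i^{(\mu)},\P_i^{(\l')}\Q_i^{(\mu')}\la1\ra)\;\cong\;\Hom\big(\Q_i^{(\mu)},\,\Q_i^{(\l)}\P_i^{(\l')}\Q_i^{(\mu')}\la1-|\l|\ra\big),$$
and then decompose the middle factor. Iterating Lemma \ref{lem:PQrels2} (moving one $\P_i$ at a time past $\Q_i^{(\l)}$, and using that $\P_i^{(\l')}$ is a direct summand of $\P_i^{|\l'|}$) gives
$$\Q_i^{(\l)}\P_i^{(\l')}\;\cong\;\bigoplus_{\nu\subseteq\l',\ \rho\subseteq\l}\P_i^{(\nu)}\Q_i^{(\rho)}\otimes_\k W_{\nu,\rho},\qquad k:=|\l'|-|\nu|=|\l|-|\rho|,$$
where each graded multiplicity space $W_{\nu,\rho}$ is supported in degrees $\le k$ (this bound is the crucial structural input; each single-box move contributes a tensor factor supported in $[-1,1]$, so $k$ contractions give a factor supported in $[-k,k]$). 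Pulling $\P_i^{(\nu)}$ back through the $\Hom$ by adjunction and applying the Littlewood--Richardson decompositions $\Q_i^{(\nu)}\Q_i^{(\mu)}\cong\bigoplus_\sigma(\Q_i^{(\sigma)})^{\oplus c^\sigma_{\nu\mu}}$, $\Q_i^{(\rho)}\Q_i^{(\mu')}\cong\bigoplus_\tau(\Q_i^{(\tau)})^{\oplus c^\tau_{\rho\mu'}}$ (in degree $0$, from Proposition \ref{prop:rels1}), the computation becomes a direct sum of terms $\Hom(\Q_i^{(\sigma)},\Q_i^{(\tau)}\la u\ra)$ with $|\sigma|=|\tau|=|\nu|+|\mu|$ and $u=1-|\l|-|\nu|+e$, $e$ ranging over the support of $W_{\nu,\rho}$. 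Since $\End(\Q_i^N)$ is non-negatively graded with degree-zero part $\k[S_N]$ (see \cite{CLi1}), each such term vanishes for $u<0$ and equals $\delta_{\sigma,\tau}\k$ for $u=0$.

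\emph{Conclusion.} Now the arithmetic finishes it: a term can be nonzero only if $u\ge0$, i.e. $e\ge|\l|+|\nu|-1$, while $e\le k=|\l'|-|\nu|$; combining these gives $2|\nu|\le|\l'|-|\l|+1=1-d$, which (as $d>0$) forces $d=1$ and then $\nu=\emptyset$. It follows that $\rho=(1)$, $e=|\l'|$, $u=0$, $\sigma=\mu$, and $\tau$ ranges over the partitions obtained from $\mu'$ by adding a box; hence the total dimension is $\dim(W_{\emptyset,(1)})_{|\l'|}$ when $\mu\supseteq\mu'$ with $|\mu|=|\mu'|+1$, and $0$ otherwise. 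To evaluate $\dim(W_{\emptyset,(1)})_{|\l'|}$ — the multiplicity of $\Q_i^{(1)}$ (with the extreme shift) in $\Q_i^{(\l)}\P_i^{(\l')}$ when $|\l|=|\l'|+1$ — I would run the same rotation once more together with the Pieri rule $\P_i^{(\l')}\P_i\cong\bigoplus_{\l''\supseteq\l',\,|\l''|=|\l'|+1}\P_i^{(\l'')}$, obtaining $\dim\bigoplus_{\l''}\Hom(\P_i^{(\l)},\P_i^{(\l'')})_0$, which is $1$ if $\l'\subseteq\l$ and $0$ otherwise (recall $|\l|=|\l''|$ and $\End(\P_i^N)_0=\k[S_N]$). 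Therefore $\dim\Hom(\P_i^{(\l)}\Q_i^{(\mu)},\P_i^{(\l')}\Q_i^{(\mu')}\la1\ra)$ equals $1$ precisely when $d=1$, $\l'\subseteq\l$, $\mu'\subseteq\mu$, and is $0$ in all other cases. When it is $1$, the evident cap morphism — include $\P_i^{(\l)}\hookrightarrow\P_i^{(\l')}\P_i$ and $\Q_i^{(\mu)}\hookrightarrow\Q_i\Q_i^{(\mu')}$ as direct summands, then apply the cap $\P_i\Q_i\to\1\la1\ra$ — is a nonzero element of this one-dimensional space, hence spans it.

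\emph{Main obstacle.} The real content is the structural input flagged in the second paragraph: the degree bound ``$W_{\nu,\rho}$ supported in degrees $\le|\l'|-|\nu|$'' in the decomposition of $\Q_i^{(\l)}\P_i^{(\l')}$, together with the facts that $\End(\P_i^N)$ and $\End(\Q_i^N)$ are non-negatively graded with degree-zero part $\k[S_N]$. These follow by carefully tracking the grading through the iterated single-box moves of Lemma \ref{lem:PQrels2} and Proposition \ref{prop:rels1}, using the basic relations of the Heisenberg $2$-category of \cite{CLi1}; once granted, the rest is adjunction bookkeeping plus the vanishing $\Hom(\1_n,\1_n\la\ell\ra)=0$ for $\ell<0$.
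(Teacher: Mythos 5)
Your route is genuinely different from the paper's and, in outline, it works. The paper argues locally with diagrams: it forces $|\l|=|\l'|+1$ by noting a degree-one map can contain only one cap, kills the case $\mu'\not\subset\mu$ by an induction on $|\mu|$ using adjunction and Lemma \ref{lem:PQrels2}, shows by diagram manipulation that any degree-one map is a multiple of the single-cap diagram, and finally pins the dimension to exactly $1$ by computing $\Hom(\P_i\P_i^{(\l')}\Q_i^{(\mu)},\P_i^{(\l')}\Q_i^{(\mu')}\la1\ra)$ two ways and using $\P_i\P_i^{(\l')}\cong\oplus_{\l'\subset\nu}\P_i^{(\nu)}$. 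You instead make one global reduction: rotate, decompose $\Q_i^{(\l)}\P_i^{(\l')}$ with graded multiplicity spaces $W_{\nu,\rho}$, and combine the degree bound $\le k=|\l'|-|\nu|$ with non-negativity and Schur orthogonality in degree zero for products of $\Q_i$'s. I checked your shift bookkeeping ($u=1-|\l|-|\nu|+e$, hence $2|\nu|\le 1-d$) and it is correct; the degree bound itself is provable exactly as you sketch, by iterating Lemma \ref{lem:PQrels2} one box at a time and noting that the decomposition of $\P_i^{N-k}$ into indecomposables happens entirely in degree zero. The positivity/degree-zero facts you invoke are of the same diagrammatic nature as the degree counts the paper itself uses, so they are fair inputs. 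Your approach buys a uniform argument with no induction and no normal-form analysis of diagrams; the paper's buys an explicit description of the Hom space that it reuses later.

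Two places are asserted rather than proven. First, the evaluation $\dim(W_{\emptyset,(1)})_{|\l'|}=\dim\bigoplus_{\l''}\Hom(\P_i^{(\l)},\P_i^{(\l'')})_0$: as written you equate a summand multiplicity with a Hom dimension without explaining why the other summands $\P_i^{(\nu)}\Q_i^{(\rho)}\la e\ra$ do not contaminate the pairing. This is fixable within your own framework, most cleanly by specializing the identity you have already derived, $\Hom(\P_i^{(\l)}\Q_i^{(\mu)},\P_i^{(\l')}\Q_i^{(\mu')}\la1\ra)\cong(W_{\emptyset,(1)})_{|\l'|}\otimes\Hom(\Q_i^{(\mu)},\Q_i\Q_i^{(\mu')})_0$, to $\mu=(1)$, $\mu'=\emptyset$ and rotating once more, which lands precisely on $\bigoplus_{\l''\supset\l'}\Hom(\P_i^{(\l)},\P_i^{(\l'')})_0$; alternatively, pair against $\Hom(\Q_i\la|\l'|\ra,\Q_i^{(\l)}\P_i^{(\l')})$ and observe that every other summand contributes only strictly negative-degree $\Q$-Homs, by the same bound $e\le k$. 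Either way the conclusion $[\l'\subset\l]$ is correct, but the step needs to be said. Second, the final clause of the lemma — that the space is spanned by the single-cap diagram — rests in your write-up on the unproved assertion that ``the evident cap morphism is a nonzero element.'' Since your dimension count never mentions this particular diagram, nonvanishing does not come for free; the paper obtains spanning by reducing an arbitrary degree-one diagram to the cap and then counting, and elsewhere verifies nonvanishing of such diagrams by the closing-off computation illustrated after Lemma \ref{lem:onedimhoms}. You should either run that closing-off argument for your cap, or trace the cap through your chain of adjunction isomorphisms and check it hits the identity component of the distinguished summand. With these two repairs your proof is complete.
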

\begin{proof}
We prove only the first assertion (as the second one follows similarly). If $|\l|-|\l'| \ge 2$ then the space of degree one maps is zero since any map requires at least two caps and thus has degree at least two. So from now on we assume that $|\l| = |\l'|+1$ and hence $|\mu|=|\mu'|+1$. 

First we show by induction on $|\mu|$ that if $\mu' \not\subset \mu$ then the space of maps is zero. Choose any $\l'' \subset \l'$ with $|\l'| = |\l''|+1$. Since $\Q_i \Q_i^{(\l'')}$ contains $\Q_i^{(\l')}$ as a direct summand it suffices to show that 
$$\Hom(\P_i^{(\l)} \Q_i^{(\mu)}, \P_i \P_i^{(\l'')} \Q_i^{(\mu')} \la 1 \ra) = 0.$$ 
By adjunction and Lemma \ref{lem:PQrels2} we have 
\begin{eqnarray*}
&& \Hom(\P_i^{(\l)} \Q_i^{(\mu)}, \P_i \P_i^{(\l'')} \Q_i^{(\mu')} \la 1 \ra) \\
&\cong& \Hom(\Q_i \P_i^{(\l)} \Q_i^{(\mu)}, \P_i^{(\l'')} \Q_i^{(\mu')} ) \\
&\cong& \Hom(\P_i^{(\l)} \Q_i \Q_i^{(\mu)}, \P_i^{(\l'')} \Q_i^{(\mu')} ) \bigoplus_{\l''' \subset \l'} \Hom(\P_i^{(\l''')} \Q_i^{(\mu)}, \P_i^{(\l'')} \Q_i^{(\mu')} \otimes_\k V_1) \\
&\cong& \bigoplus_{\mu'' \subset \mu} \Hom(\P_i^{(\l)} \Q_i^{(\mu'')}, \P_i^{(\l'')} \Q_i^{(\mu')} ) \bigoplus_{\l''' \subset \l'} \Hom(\P_i^{(\l''')} \Q_i^{(\mu)}, \P_i^{(\l'')} \Q_i^{(\mu')} \otimes_\k V_1).
\end{eqnarray*}
The terms in the first sum vanish since $\mu''$ and $\mu'$ are never equal, while those in the second sum vanish by induction. Likewise, one can show that the space of maps is also zero if $\l' \not\subset \l$. 

Now suppose $\l' \subset \l$ and $\mu' \subset \mu$. It follows from degree consideration that any map $\P_i^{(\l)} \Q_i^{(\mu)} \rightarrow \P_i^{(\l')} \Q_i^{(\mu')} \la 1 \ra$ consists of a single cap composed with a diagram without local minima or local maxima. Thus we have a diagram like the following
$$
\begin{tikzpicture}[scale=.75][>=stealth]
\shadedraw[gray]  (0,0) rectangle (2,.5);
\draw (1,.25) node {$(\l)$};
\shadedraw[gray]  (3,0) rectangle (5,.5);
\draw (4,.25) node {$(\mu)$};
\draw (1,.5) -- (1.25,3) [->][very thick];
\draw (.5,.5) -- (1.75,3) [->][very thick];
\draw (1.5,.5) -- (.25,3) [->][very thick];

\draw (3.75,.5) -- (4,3) [<-][very thick];
\draw (3.5,.5) -- (3.75,3) [<-][very thick];
\draw (4.5,.5) -- (3.25,3) [<-][very thick];
\draw (4.75,.5) -- (4.75,3) [<-][very thick];

\shadedraw[gray]  (0,3) rectangle (2,3.5);
\draw (1,3.25) node {$(\l')$};
\shadedraw[gray]  (3,3) rectangle (5,3.5);
\draw (4,3.25) node {$(\mu')$};

\draw[<-](3.25,.5) arc (0:180:.75cm) [very thick];

\draw (5.5,2) node {$=$};

\shadedraw[gray]  [shift = {+(6,-1)}](0,0) rectangle (2,.5);
\draw [shift = {+(6,-1)}](1,.25) node {$b_T$};

\draw [shift = {+(6,0)}](1.5,-.5) -- (1.5,0) [->][very thick];
\draw [shift = {+(6,0)}](.5,-.5) -- (.5,0) [->][very thick];
\draw [shift = {+(6,0)}](1,-.5) -- (1,0) [->][very thick];

\shadedraw[gray]  [shift = {+(6,-1)}](3,0) rectangle (5,.5);
\draw [shift = {+(6,-1)}](4,.25) node {$b_S$};

\draw [shift = {+(9,0)}](1.75,-.5) -- (1.75,0) [<-][very thick];
\draw [shift = {+(9,0)}](.75,-.5) -- (.75,0) [<-][very thick];
\draw [shift = {+(9,0)}](1.25,-.5) -- (1.25,0) [<-][very thick];
\draw [shift = {+(9,0)}](.25,-.5) -- (.25,0) [<-][very thick];
\draw [shift = {+(9,0)}](.5,-.5) -- (.5,0) [<-][very thick];

\shadedraw[gray]  [shift = {+(6,0)}](0,0) rectangle (2,.5);
\draw [shift = {+(6,0)}](1,.25) node {$a_T$};
\shadedraw[gray]  [shift = {+(6,0)}](3,0) rectangle (5,.5);
\draw [shift = {+(6,0)}](4,.25) node {$a_S$};
\draw [shift = {+(6,0)}](1,.5) -- (1.25,3) [->][very thick];
\draw [shift = {+(6,0)}](.5,.5) -- (1.75,3) [->][very thick];
\draw [shift = {+(6,0)}](1.5,.5) -- (.25,3) [->][very thick];

\draw [shift = {+(6,0)}](3.75,.5) -- (4,3) [<-][very thick];
\draw [shift = {+(6,0)}](3.5,.5) -- (3.75,3) [<-][very thick];
\draw [shift = {+(6,0)}](4.5,.5) -- (3.25,3) [<-][very thick];
\draw [shift = {+(6,0)}](4.75,.5) -- (4.75,3) [<-][very thick];

\shadedraw[gray]  [shift = {+(6,0)}](0,3) rectangle (2,3.5);
\draw [shift = {+(6,0)}](1,3.25) node {$b_{T'}$};
\shadedraw[gray] [shift = {+(6,0)}] (3,3) rectangle (5,3.5);
\draw [shift = {+(6,0)}](4,3.25) node {$b_{S'}$};

\draw[shift = {+(6,0)}][<-](3.25,.5) arc (0:180:.75cm) [very thick];

\shadedraw[gray]  [shift = {+(6,4)}](0,0) rectangle (2,.5);
\draw [shift = {+(6,4)}](1,.25) node {$a_{T'}$};
\shadedraw[gray]  [shift = {+(9,4)}](0,0) rectangle (2,.5);
\draw [shift = {+(9,4)}](1,.25) node {$a_{S'}$};

\draw [shift = {+(6,4)}](1.5,-.5) -- (1.5,0) [->][very thick];
\draw [shift = {+(6,4)}](.5,-.5) -- (.5,0) [->][very thick];
\draw [shift = {+(6,4)}](1,-.5) -- (1,0) [->][very thick];

\draw [shift = {+(9,4)}](1.75,-.5) -- (1.75,0) [<-][very thick];
\draw [shift = {+(9,4)}](.75,-.5) -- (.75,0) [<-][very thick];
\draw [shift = {+(9,4)}](1.25,-.5) -- (1.25,0) [<-][very thick];
\draw [shift = {+(9,4)}](.25,-.5) -- (.25,0) [<-][very thick];
\draw [shift = {+(9,4)}](.5,-.5) -- (.5,0) [<-][very thick];
\end{tikzpicture}
$$
After possibly replacing the filling $T'$ by the filling $gT'$ for some permutation $g$, we may assume that the upward pointing strands do not intersect. Similarly, replacing $S'$ by $hS'$ for some permutation $h$ we can assume the downward strands do not intersect. Thus we must show that the diagram
 \begin{equation}\label{eq:span}
 \begin{tikzpicture}[scale=.75]
 \shadedraw[gray]  [shift = {+(6,-1)}](0,0) rectangle (2,.5);
\draw [shift = {+(6,-1)}](1,.25) node {$b_T$};

\draw [shift = {+(6,0)}](1.5,-.5) -- (1.5,0) [->][very thick];
\draw [shift = {+(6,0)}](.5,-.5) -- (.5,0) [->][very thick];
\draw [shift = {+(6,0)}](1,-.5) -- (1,0) [->][very thick];

\shadedraw[gray]  [shift = {+(6,-1)}](3,0) rectangle (5,.5);
\draw [shift = {+(6,-1)}](4,.25) node {$b_S$};

\draw [shift = {+(9,0)}](1.75,-.5) -- (1.75,0) [<-][very thick];
\draw [shift = {+(9,0)}](.75,-.5) -- (.75,0) [<-][very thick];
\draw [shift = {+(9,0)}](1.25,-.5) -- (1.25,0) [<-][very thick];
\draw [shift = {+(9,0)}](.25,-.5) -- (.25,0) [<-][very thick];

\shadedraw[gray]  [shift = {+(6,0)}](0,0) rectangle (2,.5);
\draw [shift = {+(6,0)}](1,.25) node {$a_T$};
\shadedraw[gray]  [shift = {+(6,0)}](3,0) rectangle (5,.5);
\draw [shift = {+(6,0)}](4,.25) node {$a_S$};
\draw [shift = {+(6,0)}](1,.5) -- (1,3) [->][very thick];
\draw [shift = {+(6,0)}](.5,.5) -- (.5,3) [->][very thick];
\draw [shift = {+(6,0)}](1.5,.5) -- (1.5,3) [->][very thick];

\draw [shift = {+(9,0)}](1.75,.5) -- (1.75,3) [<-][very thick];
\draw [shift = {+(9,0)}](.75,.5) -- (.75,3) [<-][very thick];
\draw [shift = {+(9,0)}](1.25,.5) -- (1.25,3) [<-][very thick];
\draw [shift = {+(9,0)}](.5,.5) -- (.5,3) [<-][very thick];

\draw [shift = {+(9,0)}](.5,-.5) -- (.5,0) [<-][very thick];

\shadedraw[gray]  [shift = {+(6,0)}](0,3) rectangle (2,3.5);
\draw [shift = {+(6,0)}](1,3.25) node {$b_{T'}$};
\shadedraw[gray] [shift = {+(6,0)}] (3,3) rectangle (5,3.5);
\draw [shift = {+(6,0)}](4,3.25) node {$b_{S'}$};

\draw[shift = {+(6,0)}][<-](3.25,.5) arc (0:180:.75cm) [very thick];

\shadedraw[gray]  [shift = {+(6,4)}](0,0) rectangle (2,.5);
\draw [shift = {+(6,4)}](1,.25) node {$a_{T'}$};
\shadedraw[gray]  [shift = {+(9,4)}](0,0) rectangle (2,.5);
\draw [shift = {+(9,4)}](1,.25) node {$a_{S'}$};

\draw [shift = {+(6,4)}](1.5,-.5) -- (1.5,0) [->][very thick];
\draw [shift = {+(6,4)}](.5,-.5) -- (.5,0) [->][very thick];
\draw [shift = {+(6,4)}](1,-.5) -- (1,0) [->][very thick];

\draw [shift = {+(9,4)}](1.75,-.5) -- (1.75,0) [<-][very thick];
\draw [shift = {+(9,4)}](.75,-.5) -- (.75,0) [<-][very thick];
\draw [shift = {+(9,4)}](1.25,-.5) -- (1.25,0) [<-][very thick];
\draw [shift = {+(9,4)}](.25,-.5) -- (.25,0) [<-][very thick];
\draw [shift = {+(9,4)}](.5,-.5) -- (.5,0) [<-][very thick];

\end{tikzpicture}
\end{equation}
which spans the space of maps. 

It remains to show that $\dim \Hom(\P_i^{(\l)} \Q_i^{(\mu)}, \P_i^{(\l')} \Q_i^{(\mu')} \la 1 \ra) = 1$. Now 
\begin{eqnarray*}
&& \Hom(\P_i \P_i^{(\l')} \Q_i^{(\mu)}, \P_i^{(\l')} \Q_i^{(\mu')} \la 1 \ra) \\
&\cong& \Hom(\P_i^{(\l')} \Q_i^{(\mu)}, \Q_i \P_i^{(\l')} \Q_i^{(\mu')}) \\ 
&\cong& \Hom(\P_i^{(\l')} \Q_i^{(\mu)}, \P_i^{(\l')} \Q_i \Q_i^{(\mu')}) \bigoplus_{\l'' \subset \l'} \Hom(\P_i^{(\l')} \Q_i^{(\mu)}, \P_i^{(\l'')} \Q_i^{(\mu')} \otimes_\k V_1) 
\end{eqnarray*}
Now the left hand term is isomorphic to $\k$ while, by induction, the right hand term is isomorphic to $\k^\ell$ where $\ell$ is the number of $\l'' \subset \l'$. Thus 
$$\dim \Hom(\P_i \P_i^{(\l')} \Q_i^{(\mu)}, \P_i^{(\l')} \Q_i^{(\mu')} \la 1 \ra) = \ell+1.$$
On the other hand, $\P_i \P_i^{(\l')} \cong \oplus_{\l' \subset \nu} \P_i^{(\nu)}$. Since the number of such partitions $\nu$ is $\ell+1$ it follows that 
$$\dim \Hom(\P_i^{(\nu)} \Q_i^{(\mu)}, \P_i^{(\l')} \Q_i^{(\mu')} \la 1 \ra) = 1$$
for any $\nu$ containing $\l'$. The result follows since we can take $\nu = \l$.
\end{proof}

\begin{lemma}\label{lem:onedimhoms}
If $i \ne j$ then $\dim \Hom(\P_i^{(a)} \P_j^{(b)} \Q_i^{(1^c)} \Q_j^{(1^d)}, \P_i^{(a')} \P_j^{({b'})} \Q_i^{(1^{c'})} \Q_j^{(1^{d'})} \la 1 \ra) \le 1$. 
\end{lemma}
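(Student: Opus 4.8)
The plan is to bound this dimension by reducing, via biadjunction and the straightening isomorphisms of Proposition~\ref{prop:rels1} and Lemma~\ref{lem:PQrels2}, to monochrome $\Hom$-spaces of the kind already controlled by Lemma~\ref{lem:differentials}. Since $\langle i,j\rangle\in\{0,-1\}$ there are two cases, and they behave rather differently.

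Suppose first $\langle i,j\rangle = 0$. Then $\P_i,\P_j,\Q_i,\Q_j$ all mutually commute and there are no $i$-$j$ dots, so any $2$-morphism between the two given $1$-morphisms is a linear combination of diagrams that split into a diagram on $i$-coloured strands sitting beside a diagram on $j$-coloured strands. Consequently the $\Hom$-space is
\[
\bigoplus_{p+p'=1}\Hom\!\big(\P_i^{(a)}\Q_i^{(1^c)},\,\P_i^{(a')}\Q_i^{(1^{c'})}\la p\ra\big)\otimes\Hom\!\big(\P_j^{(b)}\Q_j^{(1^d)},\,\P_j^{(b')}\Q_j^{(1^{d'})}\la p'\ra\big).
\]
A parity count on turnbacks (degree $\pm1$) and dots (degree $2$) shows that each monochrome factor vanishes unless its grading shift is congruent mod~$2$ to the difference of the sizes of the two $\P$-partitions; coupling this with Lemma~\ref{lem:differentials}, with the one-dimensionality of degree-zero endomorphisms of an indecomposable, and with an analogous vanishing statement for these $\P\Q$-morphisms in non-positive degrees (itself of the Lemma~\ref{lem:differentials} type), one sees that at most one summand survives, of dimension $\le 1$.

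The case $\langle i,j\rangle = -1$ is the substantial one. Here I would argue by induction on $a+b+c+d+a'+b'+c'+d'$. Using $(\P_i^{(a')})_L\cong\Q_i^{(a')}\la a'\ra$ and its analogues, move the target's $\P_i^{(a')},\P_j^{(b')}$ onto the source side (as $\Q$'s) and move the source's $\Q_i^{(1^c)},\Q_j^{(1^d)}$ onto the target side (as $\P$'s), then straighten the resulting $\Q\P$-pairs using Proposition~\ref{prop:rels1}(3)--(4) and Lemma~\ref{lem:PQrels2}. This rewrites the $\Hom$-space as a finite direct sum of $\Hom$-spaces between $1$-morphisms of the shape $\P_i^{(\lambda_1)}\P_j^{(\lambda_2)}\Q_i^{(\mu_1)}\Q_j^{(\mu_2)}$ of strictly smaller total size, plus possibly monochrome terms covered by Lemma~\ref{lem:differentials}. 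One then shows that, in degree one, all but at most one of these summands vanish, the survivor being $\le 1$-dimensional by the inductive hypothesis (or by Lemma~\ref{lem:differentials}).

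I expect the last step to be the main obstacle. Straightening produces divided powers $\Q_i^{(\nu)}$ (and $\P_i^{(\nu)}$) indexed by partitions $\nu$ that need not be single rows or columns, and one must prove that $\Hom$ between two $\P\P\Q\Q$-type $1$-morphisms vanishes in the relevant degree once the partition shape on some colour is incompatible with the prescribed column partitions $(1^{c})$, $(1^{d})$ — so that such stray shapes can neither be produced from, nor map into, the target in degree one. These vanishing statements are themselves proved by the same adjunction-and-induction mechanism as Lemma~\ref{lem:differentials}, and the cleanest implementation is probably to fold them into one simultaneous induction together with the bound of the present lemma. Throughout one must track the grading shifts accumulated at each adjunction step, but this is routine.
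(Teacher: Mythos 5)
Your route is genuinely different from the paper's, but as it stands it has a real gap at its center. The pivotal assertion in your $\la i,j\ra=-1$ case --- that after moving everything across by adjunction and straightening with Proposition \ref{prop:rels1} and Lemma \ref{lem:PQrels2}, ``all but at most one of these summands vanish'' in the relevant degree --- is exactly the content of the lemma, and you give no argument for it; you defer it to unproved vanishing statements and propose to fold them into a ``simultaneous induction'' that is never set up. Moreover the induction as described is not well-founded: every straightening isomorphism $\Q_i^{(n)}\P_i^{(m)}\cong\bigoplus_k\P_i^{(m-k)}\Q_i^{(n-k)}\otimes_\k V_k$ has a leading ($k=0$) term of the \emph{same} total size, so ``strictly smaller total size'' fails for those terms, and they are not covered by your inductive hypothesis. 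Finally, once you apply $(\P_i^{(a')})_L\cong\Q_i^{(a')}\la a'\ra$ and its analogues, the grading shift is no longer $1$ but $1-a'-b'+\cdots$, and the multiplicity spaces $V_k$ spread the summands over many degrees; Lemma \ref{lem:differentials} only controls degree-one maps between $\P^{(\l)}\Q^{(\mu)}$'s with specific size relations, so it does not directly bound the reduced Hom-spaces. The auxiliary facts you invoke in the $\la i,j\ra=0$ case (vanishing in non-positive degrees, one-dimensionality of degree-zero endomorphisms of the indecomposables $\P_i^{(a)}\Q_i^{(1^c)}$) are likewise plausible but not proved and not among the cited lemmas, so even that case is incomplete.

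For comparison, the paper's argument is a short direct degree count on diagrams, which you may want to adopt: since all $\P$'s stand to the left of all $\Q$'s, any diagram representing such a $2$-morphism can be simplified so that it contains no right-pointing cups, no left-pointing caps, and no degree-zero crossings; every remaining generator (dots, the surviving cups and caps) has degree at least one, and the only degree-one generators are the $i$-$j$ and $j$-$i$ dots, the rightward cap and the leftward cup. A diagram of total degree one therefore consists of exactly one such generator sandwiched between the idempotents defining $\P_i^{(a)},\P_j^{(b)},\Q_i^{(1^c)},\Q_j^{(1^d)}$ and their targets, and such a diagram is unique up to scalar; this gives $\dim\Hom\le 1$ (and simultaneously identifies the finitely many quadruples $(a',b',c',d')$ for which the space can be nonzero), with no induction and no auxiliary vanishing lemmas needed.
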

\begin{proof}
Let $D$ be a diagram depicting a 2-morphism in 
$$\Hom(\P_i^{(a)} \P_j^{(b)} \Q_i^{(1^c)} \Q_j^{(1^d)}, \P_i^{(a')} \P_j^{({b'})} \Q_i^{(1^{c'})} \Q_j^{(1^{d'})} \la 1 \ra).$$  
Because all $P$'s occur to the left of all $Q$'s one can simplify $D$ so that it has no right-pointing cups or left-pointing caps. One can also get rid of all degree zero crossings. The remaining map is made up of dots, cups and caps all of which have positive degree. Of these only the the $ij$ dot, the $ji$ dot, the right cap and the left cup have degree one. It follows that $D$ is made up precisely of one such map and hence the $\Hom$ space is zero except in the following cases:
\begin{enumerate}
\item $(a',b',c',d') = (a\pm 1,b\mp 1,c,d)$,
\item $(a',b',c',d') = (a ,b,c\pm 1,d \mp 1)$
\item $(a',b',c',d') = (a \pm 1,b ,c \pm 1,d)$
\item $(a',b',c',d') = (a,b \pm 1,c,d \pm 1)$
\end{enumerate}
\end{proof}

The proof above shows, for example, that $\Hom(\P_i^{(a)} \P_j^{(b)} \Q_i^{(1^c)} \Q_j^{(1^d)}, \P_i^{(a-1)} \P_j^{({b+1})} \Q_i^{(1^{c})} \Q_j^{(1^d)} \la 1 \ra)$ is spanned by the diagram
\begin{equation}\label{eq:nonzeromap}
\begin{tikzpicture}[scale=.75]
\draw (-.25,1) rectangle (1.75,1.5);
\draw (.75,1.25) node {$(a)_i$};
\filldraw[gray] (2.25,1) rectangle (4.25,1.5);
\draw (3.25,1.25) node {$(b)_j$};

\filldraw [gray](4.4,1) rectangle (6.1,1.5);
\draw (5.25,1.25) node {$(1^{c})_i$};
\draw (6.5,1) rectangle (8.5,1.5);
\draw (7.5,1.25) node {$(1^d)_j$};

\draw [shift= {+(0,3)}] (-.25,1) rectangle (1.75,1.5);
\draw [shift= {+(0,3)}] (.75,1.25) node {$(a-1)_i$};
\filldraw[gray] [shift= {+(0,3)}] (2.25,1) rectangle (4.25,1.5);
\draw [shift= {+(0,3)}] (3.25,1.25) node {$({b+1})_j$};

\filldraw [gray][shift= {+(0,3)}] (4.4,1) rectangle (6.1,1.5);
\draw [shift= {+(0,3)}] (5.25,1.25) node {$(1^{c})_i$};
\draw [shift= {+(0,3)}] (6.5,1) rectangle (8.5,1.5);
\draw [shift= {+(0,3)}] (7.5,1.25) node {$(1^d)_j$};

\draw (.5,1.5) -- (.5,4) [->][very thick];
\draw (3.5,1.5) -- (3.5,4) [->][very thick];
\draw (5.25,1.5) -- (5.25,4) [<-][very thick];
\draw (7.5,1.5) -- (7.5,4) [<-][very thick];

\draw (1,1.5) -- (3,4) [->][very thick];
\filldraw [blue] (2,2.75) circle (3pt);
\end{tikzpicture}
\end{equation}
where the dot is an $i-j$ dot.  A basic question is how to check that the diagram above is a nonzero 2-morphism. There are two ways to do this. One way is to check directly in some 2-representation of $\H$ that this diagram is represents a nonzero 2-morphism. A representation of $\H$ which is faithful on 2-morphisms was defined in \cite{CLi1}, so in principle one can check that the above diagram becomes a nonzero map in that representation.

A second proof that the above diagram is nonzero proceeds by closing off the diagram, simplifying using the graphical relations, and then showing that the resulting diagram is nonzero. Since we use this technique several times in future sections we now explain in this example how this procedure works. First note that it suffices to show that the diagram
 $$
\begin{tikzpicture}[scale=.75]
\draw (-.25,1) rectangle (1.75,1.5);
\draw (.75,1.25) node {$(a)$};
\filldraw[gray] (2.25,1) rectangle (4.25,1.5);
\draw (3.25,1.25) node {$(b)$};

\draw [shift= {+(0,3)}] (-.25,1) rectangle (1.75,1.5);
\draw [shift= {+(0,3)}] (.75,1.25) node {$(a-1)$};
\filldraw[gray] [shift= {+(0,3)}] (2.25,1) rectangle (4.25,1.5);
\draw [shift= {+(0,3)}] (3.25,1.25) node {$({b+1})$};

\draw (.5,1.5) -- (.5,4) [->][very thick];
\draw (3.5,1.5) -- (3.5,4) [->][very thick];
\draw (1,1.5) -- (3,4) [->][very thick];
\filldraw [blue] (2,2.75) circle (3pt);

\end{tikzpicture}.
$$
is nonzero. To do this we close it off as shown in figure \ref{fig:close} below to get an endomorphism of the identity. Note that that there are two $i-j$ dots in the diagram (the ones on the two diagonal line segments) while the remaining ones are either $i-i$ or $j-j$ dots.  
\begin{equation}\label{fig:close} 
\begin{tikzpicture}[scale=.75][>=stealth]

\draw[->](-3.25,0) arc (180:360:0.75cm) [very thick];
\draw[->](-3.5,0) arc (180:360:1cm) [very thick];

\draw (2,.75) node {$\hdots$};

\draw[->](-4.5,0) arc (180:360:2cm) [very thick];
\draw[->](-4.75,0) arc (180:360:2.25cm) [very thick];

\draw[<-] (-4.75,0)--(-6.25,1.5)[very thick];
\draw[<-] (-6.5,0)--(-6.5,1.5)[very thick];
\draw[<-] (-7.5,0)--(-7.5,1.5)[very thick];

\filldraw[gray]  [shift={+(3,0)}](-.3,0) rectangle (-1.85,.5);
\draw  (-.2,0) rectangle (-1.75,.5);
\draw  (-.95,.25) node {$(a)$};
\draw  (2,.25) node {$({b})$};

\draw[->] (1.5,.5)--(1.5,1)[very thick];
\draw[->] (2.5,.5)--(2.5,1)[very thick];

\draw[->] (-.5,.5)--(-.5,1)[very thick];
\draw[->] (-.25,.5)--(1.25,1)[very thick];
\draw[->] (-1.5,.5)--(-1.5,1)[very thick];

\filldraw [blue] (.5,.75) circle (3pt);

\draw[shift={+(-6,0)}][<-] (1.5,0)--(1.5,1.5)[very thick];
\draw[shift={+(-6,0)}][<-] (2.5,0)--(2.5,1.5)[very thick];
\draw[shift={+(-6,0)}][<-] (2.75,0)--(2.75,1.5)[very thick];
\draw [shift={+(-3,0)}](-1,.75) node {$\hdots$};

\filldraw[gray]  [shift={+(3,0)}](-.3,1) rectangle (-1.85,1.5);
\draw  (-.2,1) rectangle (-1.75,1.5);
\draw  (-.95,1.25) node {$(a-1)$};
\draw  (2,1.25) node {$({b+1})$};

\draw (-1,.75) node {$\hdots$};

\draw[->][shift={+(-3,0)}](1.25,1.5) arc (0:180:0.75cm) [very thick];
\draw[->][shift={+(-3,0)}](1.5,1.5) arc (0:180:1cm) [very thick];
\draw[->][shift={+(-3,0)}](2.5,1.5) arc (0:180:2cm) [very thick];

\draw[->](-6.5,0) arc (180:360:4cm) [very thick];
\draw[->](-7.5,0) arc (180:360:5cm) [very thick];
\draw[<-](-6.5,1.5) arc (180:0:4cm) [very thick];
\draw[<-](-6.25,1.5) arc (180:0:3.75cm) [very thick];
\draw[<-](-7.5,1.5) arc (180:0:5cm) [very thick];

\draw (2,.75) node {$\hdots$};
\draw[->](-4.5,0) arc (180:360:2cm) [very thick];
\filldraw [blue] (-5.5,.75) circle (3pt);

\filldraw [blue] (-2.5,-1) circle (3pt);
\filldraw [blue] (-2.5,-2) circle (3pt);
\filldraw [blue] [shift={+(-3,0)}](.5,-.75) circle (3pt);

\filldraw[shift={+(0,-3)}] [blue] (-2.5,-1) circle (3pt);
\filldraw [shift={+(0,-3)}][blue] (-2.5,-2) circle (3pt);
\end{tikzpicture}
\end{equation}
This procedure can be thought of as defining a linear map
$$\Hom(\P_i^{(a)} \P_j^{(b)} \1_n, \P_i^{(a-1)} \P_j^{({b+1})} \1_n \la 1 \ra) \longrightarrow \Hom(\1_n,\1_n).$$
We now explain why this defines a nonzero multiple of the identity endomorphism of $\1_n$. First, the idempotent labeled $(a-1)$ can be absorbed into the idempotent $(a)$ and the idempotent $(b)$ can be absorbed into the idempotent $(b+1)$. Now, expanding the remaining idempotents $(a)$ and $(b+1)$ explicitly as a sum of permutations we see that almost all of the summands contain either two degree 2 dots on the same strand (giving zero) or a left-twist curl (which is also zero). Some of the remaining terms may have double crossings between strands but these double crossings can all be removed using the graphical relations. 

The only remaining diagrams are a collection of disjoint counterclockwise circles with a degree $2$ dot on them. Each such circle is equal to the identity and hence can be erased. This leaves us with the empty diagram, which is the identity endomorphism of $\1_n$ and is therefore nonzero.

\begin{lemma}\label{lem:square}
Suppose $\partial_1$ and $\partial_2$ are nonzero 2-morphisms
$$\partial_1: \P_i^{(a)} \P_j^{(b)} \Q_i^{(1^c)} \Q_j^{(1^d)} \rightarrow \P_i^{(a')} \P_j^{({b'})} \Q_i^{(1^{c'})} \Q_j^{(1^{d'})} \la 1 \ra
$$
$$\partial_2 : \P_i^{(a)} \P_j^{(b)} \Q_i^{(1^c)} \Q_j^{(1^d)} \rightarrow \P_i^{(a'')} \P_j^{({b''})} \Q_i^{(1^{c''})} \Q_j^{(1^{d''})} \la 1 \ra.
$$
Then there exist maps $\partial_1', \partial_2'$ that form a commutative square
\begin{equation*}
\begin{CD}
\P_i^{(a)} \P_j^{(b)} \Q_i^{(1^c)} \Q_j^{(1^d)} @>\partial_1>> \P_i^{(a')} \P_j^{({b'})} \Q_i^{(1^{c'})} \Q_j^{(1^{d'})} \la 1 \ra \\
@VV\partial_2 V @VV\partial_2'V \\
\P_i^{(a'')} \P_j^{({b''})} \Q_i^{(1^{c''})} \Q_j^{(1^{d''})} \la 1 \ra @>\partial_1'>> \P_i^{(a''')} \P_j^{({b'''})} \Q_i^{(1^{c'''})} \Q_j^{(1^{d'''})} \la 2 \ra
\end{CD}
\end{equation*}
so that the compositions $\partial_2' \circ \partial_1 = \partial_1' \circ \partial_2$ are nonzero. 
\end{lemma}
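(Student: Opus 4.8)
The plan is to reduce, via Lemma~\ref{lem:onedimhoms} and the analysis in its proof, to a short finite list of elementary configurations, and then to build the square and check non‑vanishing by exactly the closing‑off technique illustrated just after Lemma~\ref{lem:onedimhoms}. By that proof, each of $\partial_1$ and $\partial_2$ is, up to a nonzero scalar, a single degree‑one generator inserted between the appropriate symmetrizing idempotents: an $i$--$j$ or $j$--$i$ dot on the $\P$‑strands (which modifies the pair $(a,b)$), the analogous dot on the $\Q$‑strands (which modifies $(c,d)$), a degree‑one cup or cap joining the $i$‑strands of $\P$ and $\Q$ (which modifies $(a,c)$), or the corresponding cup/cap on the $j$‑strands (which modifies $(b,d)$). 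Thus there is only a finite list of pairs of types to handle.

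To construct the square I would take $(a''',b''',c''',d''')$ to be $(a',b',c',d')$ shifted by the change vector of $\partial_2$; since the change vectors of $\partial_1$ and $\partial_2$ commute under addition this agrees with $(a'',b'',c'',d'')$ shifted by the change vector of $\partial_1$ (in the few degenerate cases where this naive target would acquire a negative entry one instead chooses the common target among the already‑occurring tuples — a finite check). By Lemma~\ref{lem:onedimhoms} there is then, uniquely up to scalar, a nonzero $\partial_1'\colon \P_i^{(a'')}\P_j^{(b'')}\Q_i^{(1^{c''})}\Q_j^{(1^{d''})}\to \P_i^{(a''')}\P_j^{(b''')}\Q_i^{(1^{c'''})}\Q_j^{(1^{d'''})}\la 1\ra$, and likewise a nonzero $\partial_2'$, and I would take for these the corresponding single generators, normalized later.

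For commutativity I distinguish whether the generators underlying $\partial_1$ and $\partial_2$ act in disjoint regions of the diagram. If they do — e.g.\ $\partial_1$ modifying $(a,b)$ and $\partial_2$ modifying $(c,d)$, or one acting only on the $i$‑strands and the other only on the $j$‑strands — then $\partial_2'\circ\partial_1$ and $\partial_1'\circ\partial_2$ are the same diagram up to a scalar, since two generators in disjoint regions slide past each other by planar isotopy and the idempotents far from a generator commute with it; so after rescaling $\partial_1'$ by the ratio of the scalars one gets $\partial_2'\circ\partial_1=\partial_1'\circ\partial_2$. When the generators interact — both touching the $\P_i$ and $\Q_i$ strands, both being dots on the $\P$‑strands, a cup and a cap on the same strands, and so on — I would use the defining relations (dots slide through crossings, the adjunction relations \eqref{eq:rel2'}, the dot‑composition relation \eqref{eq:rel0'}, and \eqref{eq:rel3'} and \eqref{eq:rel4'}) to rewrite both composites as scalar multiples of a single normal‑form diagram, and again rescale $\partial_1'$ to obtain strict equality.

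The \emph{main obstacle} is to show the common composite, which lies in $\Hom\bigl(\P_i^{(a)}\P_j^{(b)}\Q_i^{(1^c)}\Q_j^{(1^d)},\,\P_i^{(a''')}\P_j^{(b''')}\Q_i^{(1^{c'''})}\Q_j^{(1^{d'''})}\la 2\ra\bigr)$, is nonzero. Being a degree‑two $2$‑morphism, closing it off against the empty diagram would produce a degree‑$(-2)$ endomorphism of $\1_n$, hence zero, so the argument after Lemma~\ref{lem:onedimhoms} does not apply verbatim. Instead I would close it off against a degree‑$(-2)$ configuration of cups and caps designed to pair non‑trivially with the two degree‑one generators occurring in the composite, and then simplify the resulting endomorphism of $\1_n$ exactly as in the model computation after Lemma~\ref{lem:onedimhoms}: absorb the smaller idempotents into the larger ones, expand the remaining symmetrizers as sums of permutations, discard the summands containing a repeated degree‑two dot on one strand or a left curl, remove double crossings using the graphical relations, and observe that what survives is a disjoint union of counterclockwise circles each carrying one degree‑two dot, i.e.\ a nonzero multiple of $\id_{\1_n}$. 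The delicate point, where most of the work lies, is to verify case by case over the finite list of interacting configurations that these circles occur with nonzero total coefficient; as an alternative one can evaluate the composite in the $2$‑representation of $\H$ from \cite{CLi1} that is faithful on $2$‑morphisms and check non‑vanishing there by an explicit bimodule computation. Once one composite is known to be nonzero, the proportionality established above gives $\partial_2'\circ\partial_1=\partial_1'\circ\partial_2\neq 0$ after the final rescaling of $\partial_1'$, completing the proof.
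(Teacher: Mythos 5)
Your proposal is correct and follows essentially the same route as the paper: reduce via Lemma \ref{lem:onedimhoms} to the case where each $\partial_1,\partial_2$ is a single degree-one generator (dot, cup or cap), define $\partial_1',\partial_2'$ as the corresponding generators on the shifted objects, get commutativity by sliding the two generators past one another (up to a scalar/sign absorbed into $\partial_1'$), and check non-vanishing of the composite by closing off the diagram as in the verification that (\ref{eq:nonzeromap}) is nonzero. Your worry about needing a degree $-2$ closure is already implicit in the paper's model computation, whose closure includes extra dots precisely to balance the degree, so this is the same device rather than a genuinely different argument.
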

\begin{proof}
This follows from Lemma \ref{lem:onedimhoms}. For example, suppose $\partial_1$ is the diagram consisting of an $i-j$ dot and $\partial_2$ is the diagram consisting of a right cap labeled $i$. Then $\partial_2'$ is defined as a right cap while $\partial_1'$ is now an $i-j$ dot.  

In the composition $\partial_2' \circ \partial_1$ the dot lies below the right cap while in $\partial_1' \circ \partial_2$ the right cap lies below the dot. These diagrams are equivalent just because of the isotopy relation which allows one to slide the portion of the diagram containing the dot past the portion containg the right cup. Similar equivalences can be checked for any other pair of nonzero 2-morphisms $\partial_1, \partial_2$.

Finally, one can check that these compositions are nonzero just like we proved that the map (\ref{eq:nonzeromap}) is nonzero. 
\end{proof}

\subsection{Complexes in a 2-representation of $\h$}\label{sec:Kom(H)}

Since the underlying 2-category $\K$ of a 2-representation of $\h$ is a graded, additive, $\k$-linear 2-category, its homotopy category $\Kom(\K)$ is a graded, additive, $\k$-linear triangulated 2-category. The objects of $\Kom(\K)$ are the objects of $\K$, the 1-morphisms are (possibly unbounded) complexes of 1-morphisms of $\K$ and 2-morphisms are chain maps up to homotopy.  

Many such complexes arise naturally in categorification of quantum affine algebras. We give an example of such a complex below. This complex will be relevant later. 

Fix $i \in \I$. We define complexes
\begin{eqnarray}
\label{eq:Pcpx}
\P_i^{[1^k]} &:=& 
\left[ \P_i^{(k)} \la -2(k-1) \ra \rightarrow \dots \rightarrow \P_i^{(3,1^{k-3})} \la -4 \ra \rightarrow \P_i^{(2,1^{k-2})} \la -2 \ra \rightarrow \P_i^{(1^k)} \right] [1] \la -1 \ra \\
\label{eq:Qcpx}
\Q_i^{[1^k]} &:=& 
\left[ \Q_i^{(1^k)} \rightarrow \Q_i^{(2,1^{k-2})} \la 2 \ra \rightarrow \Q_i^{(3,1^{k-3})} \la 4 \ra \rightarrow \dots \rightarrow \Q_i^{(k)} \la 2(k-1) \ra \right] [-1] \la 1 \ra 
\end{eqnarray}
where the right hand term in (\ref{eq:Pcpx}) and the left hand term in (\ref{eq:Qcpx}) are in cohomological degree zero. The differential in the above complexes is (essentially) defined by a single dot. More precisely, consider a filling of the hook partition $(a,1^{k-a})$ with numbers $1,\hdots,a$ in the row of the hook and $a,a+1,\hdots,k$ in the column (so that the unique box in the first row and first column is filled with $a$). Then the idempotent defining $\Q_i^{(a,k-a)}$ is the product of the trivial idempotent of $\C[S_a] \subset \C[S_k]$ (embedded as permutations which fix $a+1,\hdots,k$) and the sign idempotent of $\C[S_{k-a+1}]$ (embedded as permutations which fix $1,\hdots,a-1$). Diagrammatically we have
$$
\begin{tikzpicture}[scale=.75][>=stealth]
\shadedraw[gray]  (-5,.5) rectangle (-1,1);
\draw (-3,.75) node {$(a,k-a)$};
\draw (-3,1) -- (-3,2) [<-][very thick];
\draw (-.5,.75) node {$=$};
\filldraw[gray]  (3,0) rectangle (7,.5);
\draw (5,.25) node {$(1^{k-a+1})$};
\draw (5,.5) -- (5,2) [<-][very thick];
\draw (3.5,.5) -- (3.5,1) [<-][very thick];
\draw (3.5,1.5) -- (3.5,2) [<-][very thick];
\draw[gray]  (0,1) rectangle (4,1.5);
\draw (2,1.25) node {$(a)$};
\draw (2,1.5) -- (2,2) [<-][very thick];
\draw (2,0) -- (2,1) [<-][very thick];
\end{tikzpicture},
$$
where all strands are labeled by $i$. The differential $\Q_i^{(a,1^{k-a})} \rightarrow \Q_i^{(a+1,1^{k-a-1})}$ in $\Q_i^{[1^k]}$ is defined by the diagram
$$
\begin{tikzpicture}[scale=.75][>=stealth]
\filldraw[gray]  (3,0) rectangle (7,.5);
\draw (5,.25) node {$(1^{k-a+1})$};
\draw (5,.5) -- (5,2) [<-][very thick];
\draw (3.5,.5) -- (3.5,1) [<-][very thick];
\draw (3.5,1.5) -- (3.5,2) [<-][very thick];
\draw[gray]  (0,1) rectangle (4,1.5);
\draw (2,1.25) node {$(a)$};
\draw (2,1.5) -- (2,2) [very thick];
\draw (2,0) -- (2,1) [<-][very thick];

\filldraw [shift={+(0,2)}] [gray]  (2.5,0) rectangle (7,.5);
\draw [shift={+(0,2)}] (4.75,.25) node {$(1^{k-a+2})$};
\draw [shift={+(0,2)}](5,.5) -- (5,2) [<-][very thick];
\draw [shift={+(0,2)}](3.25,.5) -- (3.25,1) [<-][very thick];
\draw [shift={+(0,2)}](3.25,1.5) -- (3.25,2) [<-][very thick];
\draw[shift={+(0,2)}][gray]  (0,1) rectangle (3.5,1.5);
\draw [shift={+(0,2)}](1.75,1.25) node {$(a-1)$};
\draw [shift={+(0,2)}](2,1.5) -- (2,2) [<-][very thick];
\draw [shift={+(0,2)}](2,0) -- (2,1) [<-][very thick];

\filldraw [blue] (3.5,1.75) circle (3pt);
\end{tikzpicture},
$$
Notice that there is one degree two dot on the middle downward strand. To see that this defines a differential (i.e. $d^2=0$) note that the picture for $d^2$ is 
$$
\begin{tikzpicture}[scale=.75][>=stealth]
\filldraw[gray]  (3,0) rectangle (7,.5);
\draw (5,.25) node {$(1^{k-a+1})$};
\draw (5,.5) -- (5,2) [<-][very thick];
\draw (3.5,.5) -- (3.5,1) [<-][very thick];
\draw (3.5,1.5) -- (3.5,2) [<-][very thick];
\draw[gray]  (0,1) rectangle (4,1.5);
\draw (2,1.25) node {$(a)$};
\draw (2,1.5) -- (2,2) [very thick];
\draw (2,0) -- (2,1) [<-][very thick];

\filldraw [shift={+(0,2)}] [gray]  (2.5,0) rectangle (7,.5);
\draw [shift={+(0,2)}] (4.75,.25) node {$(1^{k-a+2})$};
\draw [shift={+(0,2)}](5,.5) -- (5,2) [<-][very thick];
\draw [shift={+(0,2)}](3.25,.5) -- (3.25,1) [<-][very thick];
\draw [shift={+(0,2)}](3.25,1.5) -- (3.25,2) [<-][very thick];
\draw[shift={+(0,2)}][gray]  (0,1) rectangle (3.5,1.5);
\draw [shift={+(0,2)}](1.75,1.25) node {$(a-1)$};
\draw [shift={+(0,2)}](1.5,1.5) -- (1.5,3) [<-][very thick];
\draw [shift={+(0,2)}](2,0) -- (2,1) [<-][very thick];

\filldraw [blue] (3.5,1.75) circle (3pt);

\filldraw [shift={+(0,4)}] [gray]  (2,0) rectangle (7,.5);
\draw [shift={+(0,4)}] (4.5,.25) node {$(1^{k-a+3})$};
\draw [shift={+(0,4)}](5,.5) -- (5,2) [<-][very thick];
\draw [shift={+(0,4)}](2.75,.5) -- (2.75,1) [<-][very thick];
\draw[shift={+(0,4)}][gray]  (0,1) rectangle (3,1.5);
\draw [shift={+(0,4)}](1.5,1.25) node {$(a-2)$};

\filldraw [shift={+(0,2)}][blue] (3.25,1.75) circle (3pt);

\end{tikzpicture},
$$
Expanding the middle idempotents $(1^{k-a+2})$ and $(a-1)$ explicitly as a sum of permutations we write the above as a sum of diagrams where there are three (at least) three strands connecting the top $(1^{k-a+3})$ box and the bottom $(a)$ box. Each of these three strands contains $0,1$ or $2$ dots but it is easy to check that for any such configuration one gets zero because of the idempotents $(1^{k-a+2})$ and $(a-1)$. Therefore all the expansion terms are zero and it follows that $d^2=0$.

Finally, we would like to check that $d \ne 0$. To see this consider the closure of the map with a dot on each strand except for the strand already containing a dot (meaning that if we erased the idempotents in the differential we would have a collection of counterclockwise circles each of which has a dot). Expanding all the idempotents and using that a dot squares to zero one sees that all diagrams vanish except for the ones containing $k$ counterclockwise circles, each with one dot. Since each such circle is the identity, it follows this map is nonzero and hence that $d \ne 0$. 

It turns out that these complexes (and their images under $\Psi$) lift the following elements in $\h$
\begin{eqnarray*}
Q_i^{[1^n]} := \sum_{m=0}^n (-q)^m [m] Q_i^{(1^{n-m})} Q_i^{(m)} &\text{ and }&
Q_i^{[n]} := \sum_{m=0}^n (-q)^m [m] Q_i^{(n-m)} Q_i^{(1^m)} \\
P_i^{[1^n]} := \sum_{m=0}^n (-q)^{-m} [m] P_i^{(1^{n-m})} P_i^{(m)} &\text{ and }&
P_i^{[n]} := \sum_{m=0}^n (-q)^{-m} [m] P_i^{(n-m)} P_i^{(1^m)} 
\end{eqnarray*}
These elements show up naturally in the definition of a quantum affine algebra in section \ref{sec:quantumaffine}. Note that $P_i^{[1]} = -q^{-1} P_i$ and $Q_i^{[1]} = -q^{-1} Q_i$.

\begin{lemma}\label{lem:decatPQ} 
At the level of Grothendieck groups we have $[\P_i^{[1^n]} \1_\l] = P_i^{[1_n]} 1_\l$ and $[\Q_i^{[1^n]} \1_\l] = Q_i^{[1^n]} 1_\l$ and likewise if we replace $[1^n]$ by $[n]$. 
\end{lemma}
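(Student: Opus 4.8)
The plan is to identify classes in the Grothendieck group $K_0(\K)$ by matching the definitions. Recall that $\Q_i^{[1^k]}$ is defined in (\ref{eq:Qcpx}) as the complex
$$\left[ \Q_i^{(1^k)} \rightarrow \Q_i^{(2,1^{k-2})} \la 2 \ra \rightarrow \cdots \rightarrow \Q_i^{(k)} \la 2(k-1) \ra \right] [-1] \la 1 \ra,$$
whose differential is a chain map regardless of homotopy; since the Grothendieck group of a triangulated category identifies the class of a bounded complex with the alternating sum of the classes of its terms (with appropriate signs coming from the homological shift $[-1]$ contributing a sign, and the grading shifts $\la 2j \ra$ contributing a factor $q^{2j}$), we obtain
$$[\Q_i^{[1^k]} \1_\l] = q \sum_{a=1}^{k} (-1)^{a-1} q^{2(a-1)} [\Q_i^{(a,1^{k-a})} \1_\l].$$
Here I am using the convention from the "Triangulated 2-categories" subsection that $[1]$ is a downward homological shift, so a shift by $[-1]$ contributes an overall sign, and that under the isomorphism $K_0(\K) \cong \h 1_\l$ (coming from the fact that the $\P$'s and $\Q$'s form a 2-representation of $\h$, condition (4) in the definition of a 2-representation of $\hg$), we have $[\Q_i^{(a,1^{k-a})} \1_\l] = Q_i^{(a,1^{k-a})} 1_\l$, the image of the Young symmetrizer idempotent.

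First I would express $Q_i^{(a,1^{k-a})}$ in terms of the generators $Q_i^{(n)}$ and $Q_i^{(1^n)}$. The hook Schur function factors as a product of a complete and an elementary symmetric function up to lower-order corrections; more precisely, using the description of the idempotent $c_T$ for the hook partition $(a,1^{k-a})$ as the product of the trivial idempotent of $\C[S_a]$ and the sign idempotent of $\C[S_{k-a+1}]$ overlapping in one box (exactly the diagrammatic description given just after (\ref{eq:Qcpx})), together with the Heisenberg relations of Lemma \ref{lem:PQrels} and its elementary-symmetric analogue, one gets a formula of Jacobi–Trudi type expressing $Q_i^{(a,1^{k-a})}$ as an integer (in fact $q$-integer) combination of products $Q_i^{(1^{k-m})} Q_i^{(m)}$. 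Substituting this into the alternating sum above and reorganizing, I expect the sum to collapse — via a standard telescoping or generating-function identity for hook characters — to precisely
$$\sum_{m=0}^{k} (-q)^m [m]\, Q_i^{(1^{k-m})} Q_i^{(m)},$$
which is by definition $Q_i^{[1^k]}$. The case of $\Q_i^{[k]}$ (i.e.\ replacing $[1^k]$ by $[k]$) is handled symmetrically, applying the involution $\Psi$ from Section \ref{sec:psi} which categorifies $\psi$ and exchanges the roles of row and column; the $\P$-versions follow from the biadjunction relations, which on the level of $K_0$ send $[\Q_i^{(\mu)}]$ to (a shift of) $[\P_i^{(\mu)}]$, together with the sign convention $q \leftrightarrow q^{-1}$ that distinguishes the $P$-formulas from the $Q$-formulas in the definition of $\dU_q(\hg)$.

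The main obstacle is the combinatorial identity in the middle step: verifying that the alternating sum of hook classes, after the Jacobi–Trudi substitution, equals the prescribed $q$-integer combination $\sum_m (-q)^m[m] Q_i^{(1^{k-m})}Q_i^{(m)}$. This is purely a computation with $q$-symmetric functions and does not involve the categorical structure, so in the write-up I would either cite the analogous classical identity (the expansion of a hook Schur function, or equivalently the behaviour of the Bernstein-type vertex operator generators) from \cite{FJ} or \cite{CLi2}, or record it as a short lemma and prove it by induction on $k$ using the recursion $\Q_i^{(a,1^{k-a})} \to \Q_i^{(a+1,1^{k-a-1})}$ already implicit in the differential of the complex. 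Everything else — the passage from complexes to alternating sums in $K_0$, the identification $K_0(\K)\cong \h 1_\l$, and the bookkeeping of grading and homological shifts — is routine given the results already established in the excerpt.
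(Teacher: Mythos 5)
Your overall strategy is the same as the paper's: identify $[\P_i^{[1^n]}\1_\l]$ (resp. $[\Q_i^{[1^n]}\1_\l]$) with the alternating, $q$-weighted sum of the classes of the terms of the complex (\ref{eq:Pcpx}) (resp. (\ref{eq:Qcpx})), and then match this against the algebraic definition of $P_i^{[1^n]}$ by a symmetric-function computation. The problem is that the step you defer as ``a formula of Jacobi--Trudi type'' followed by a collapse you ``expect'' to happen is precisely the entire content of the lemma, and you never verify it; as submitted, the central identity is an unproven claim. Moreover, the direction you choose (invert to write each hook $Q_i^{(a,1^{k-a})}$ as an alternating combination of products $Q_i^{(1^{k-m})}Q_i^{(m)}$, substitute, and reorganize) is harder than necessary. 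The paper goes the opposite way and needs only the two-term Pieri rule already available from Proposition \ref{prop:rels1}(2): starting from $P_i^{[1^n]}=\sum_m(-q)^{-m}[m]\,P_i^{(1^{n-m})}P_i^{(m)}$ and using $P_i^{(1^{n-m})}P_i^{(m)}=P_i^{(m,1^{n-m})}+P_i^{(m+1,1^{n-m-1})}$, the coefficient of $P_i^{(m,1^{n-m})}$ is $(-q)^{-m+1}[m-1]+(-q)^{-m}[m]=(-1)^m q^{-2m+1}$, so the sum equals $-q^{-1}\sum_{m\ge 1}(-1)^{m-1}q^{-2(m-1)}P_i^{(m,1^{n-m})}$, which is literally the class of the complex (\ref{eq:Pcpx}). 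No Jacobi--Trudi inversion, induction on $k$, or citation of \cite{FJ} is needed; if you insist on your direction you must actually carry out the collapse rather than assert it.

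One bookkeeping correction: with the paper's conventions the shift $[-1]\la 1\ra$ multiplies the class by $-q$, so
$$[\Q_i^{[1^k]}\1_\l] \;=\; -q\sum_{a=1}^{k}(-1)^{a-1}q^{2(a-1)}\,[\Q_i^{(a,1^{k-a})}\1_\l] \;=\; \sum_{a=1}^{k}(-1)^{a}q^{2a-1}\,[\Q_i^{(a,1^{k-a})}\1_\l];$$
your displayed formula omits this overall sign (you mention it in words but do not apply it), and the analogous factor on the $\P$ side is $-q^{-1}$ from $[1]\la -1\ra$. With the sign in place the algebraic side does match, since $(-q)^m[m]+(-q)^{m-1}[m-1]=(-1)^m q^{2m-1}$.
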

\begin{proof}
We prove that $[\P_i^{[1^n]} \1_\l] = P_i^{[1_n]} 1_\l$ (the other equalities follow similarly). First recall that $P_i^{(1^{n-m})} P_i^{(m)} = P_i^{(m,1^{n-m})} + P_i^{(m+1,1^{n-m-1})}$. So we get 
\begin{eqnarray*}
P_i^{[1^n]} 
&=& \sum_{m=1}^n (-q)^{-m} [m] \left( P_i^{(m,1^{n-m})} + P_i^{(m+1,1^{n-m-1})} \right) \\ 
&=& \sum_{m=1}^n P_i^{(m,1^{n-m})} \left( (-q)^{-m+1}[m-1] + (-q)^{-m}[m] \right) \\
&=& \sum_{m=1}^n (-1)^m q^{-2m+1} P_i^{(m,1^{n-m})} \\
&=& -q^{-1} \sum_{m=1}^n (-1)^{m-1} q^{-2(m-1)} P_i^{(m,1^{n-m})} \\
&=& [\P_i^{[1^n]}]
\end{eqnarray*}
where the last equality follows from the definition of $\P_i^{[1^n]}$. 
\end{proof}


\section{Quantum affine algebras and $(\hg,\theta)$ actions}\label{sec:affinelies}
\subsection{The idempotent vertex presentation}\label{sec:quantumaffine}

The quantum Heisenberg algebra $\h$ plays an important role in infinite dimensional representation theory in part because it occurs as a subalgebra of an associated quantum affine algebra. In this section we will define an idempotent version of this quantum affine Lie algebra, which we denote by $\dU_q(\hg)$. In general this definition involves a parameter $c$ known as the level. Since we are primarily concerned with the basic representation, which is level one, we have set $c=1$ in the definitions below.  For the general definition involving an arbitrary level, see \cite{CLi2}. 

We define the level one quantum affine algebra $\dU_q(\hg)$ to be the $\k(q)$ algebra generated by
$$E_{i,r} 1_\l, 1_\l F_{i,r}, P_i^{(n)} 1_\l, 1_\l Q_i^{(n)}, P_i^{(1^n)} 1_\l, 1_\l Q_i^{(1^n)} \text{ where } i \in \I, r \in \Z \text{ and } n \in \N.$$
The relations in $\dU_q(\hg)$ are as follows.

\begin{enumerate}
\item  $\{1_\l: \l \in \hX \}$ are mutually orthogonal idempotents, moreover 
\begin{eqnarray*}
& & E_{i,r} 1_\l = 1_\mu E_{i,r} 1_\l = 1_\mu E_{i,r} \\
& & F_{i,-r} 1_\mu = 1_\l F_{i,-r} 1_\mu = 1_\l F_{i,-r} 
\end{eqnarray*}
where $\mu = \l+ \alpha_i + r \delta$ 

\item We have 
\begin{eqnarray*}
P_i^{(n)} 1_\l = 1_\mu P_i^{(n)} 1_\l = 1_\mu P_i^{(n)} &\text{ and }& P_i^{(1^n)} 1_\l = 1_\mu P_i^{(1^n)} 1_\l = 1_\mu P_i^{(1^n)} \\
Q_i^{(n)} 1_\mu = 1_\l Q_i^{(n)} 1_\mu = 1_\l Q_i^{(n)} &\text{ and }& Q_i^{(1^n)} 1_\mu = 1_\l Q_i^{(1^n)} 1_\mu = 1_\l Q_i^{(1^n)} 
\end{eqnarray*}
where $\mu = \l + n \delta$. 

\item The subalgebra generated by $P$s and $Q$s is isomorphic to the quantum Heisenberg algebra $\h$. 

\item We have 
\begin{eqnarray*}
\label{eq:q'E1}
[Q_i^{[1^{a+1}]}, E_{i,b}] 1_\l &=&
\begin{cases}
q^2 Q_i^{[1^a]} E_{i,b+1} 1_\l - q^{-2} E_{i,b+1} Q_i^{[1^a]} 1_\l \text{ if } a > 0 \\
[2]E_{i,b+1} 1_\l \text{ if } a = 0.
\end{cases} \\
\label{eq:q'E2}
q^{-1} [Q_i^{[1^{a+1}]}, F_{i,b}] 1_\l &=&
\begin{cases}
q^{-2} Q_i^{[1^a]} F_{i,b+1} 1_\l - q^2 F_{i,b+1} Q_i^{[1^a]}  1_\l \text{ if } a > 0 \\
- [2]F_{i,b+1}1_\l \text{ if } a = 0.
\end{cases} \\
\label{eq:q'E3}
q [P_i^{[1^{a+1}]}, E_{i,b+1}]1_\l &=&
\begin{cases}
q^2 E_{i,b} P_i^{[1^{a}]} 1_\l - q^{-2} P_i^{[1^{a}]} E_{i,b}1_\l \text{ if } a > 0 \\
[2] E_{i,b}1_\l \text{ if } a = 0
\end{cases} \\
\label{eq:q'E4}
[P_i^{[1^{a+1}]}, F_{i,b+1}]1_\l &=&
\begin{cases}
q^{-2} F_{i,b} P_i^{[1^{a}]} 1_\l - q^2 P_i^{[1^{a}]} F_{i,b}1_\l \text{ if } a > 0 \\
- [2] F_{i,b}1_\l \text{ if } a = 0.
\end{cases}
\end{eqnarray*}
while if $\la i, j \ra = -1$ we have
\begin{eqnarray*}
\label{eq:q'E5}
[Q_j^{[1^{a+1}]}, E_{i,b}] 1_\l &=&
\begin{cases}
qE_{i,b+1} Q_j^{[1^a]}1_\l - q^{-1} Q_j^{[1^a]} E_{i,b+1}1_\l \text{ if } a > 0 \\
E_{i,b+1}1_\l \text{ if } a = 0.
\end{cases} \\
\label{eq:q'E6}
q^{-1} [Q_j^{[1^{a+1}]}, F_{i,b}]1_\l &=&
\begin{cases}
q^{-1} F_{i,b+1} Q_j^{[1^a]}1_\l - q Q_j^{[1^a]} F_{i,b+1}1_\l \text{ if } a > 0 \\
- F_{i,b+1}1_\l \text{ if } a = 0
\end{cases} \\
\label{eq:q'E7}
q [P_j^{[1^{a+1}]}, E_{i,b+1}]1_\l &=&
\begin{cases}
q^{-1}E_{i,b} P_j^{[1^{a}]}1_\l - q P_j^{[1^{a}]} E_{i,b}1_\l \text{ if } a > 0 \\
E_{i,b}1_\l \text{ if } a = 0
\end{cases} \\
\label{eq:q'E8}
[P_j^{[1^{a+1}]}, F_{i,b+1}]1_\l &=&
\begin{cases}
qF_{i,b} P_j^{[1^{a}]}1_\l - q^{-1} P_j^{[1^{a}]} F_{i,b}1_\l \text{ if } a > 0 \\
- F_{i,b}1_\l \text{ if } a = 0.
\end{cases}
\end{eqnarray*}
If $\la i,j \ra = 0$ then $P_j^{[1^a]} 1_\l$ and $Q_j^{[1^{a}]} 1_\l$ commute with both $E_{i,b} 1_\l$ and $F_{i,b} 1_\l$. 

\item We have
$$[E_{i,a}, F_{i,b}] 1_\l =
\begin{cases}
q^{-b} q^{\la \l, i \ra} Q_i^{[1^{a+b}]} 1_\l \text{ if } a+b > 0 \\
q^{-a} q^{- \la \l, i \ra} P_i^{[1^{-a-b}]} 1_\l \text{ if } a+b < 0 \\
[\la \l,i \ra + a] 1_\l \text{ if } a+b=0.

\end{cases}$$
while if $i \ne j$ then $[E_{i,a}, F_{j,b}] 1_\l = 0$.

\item For any $m,n \in \Z$ we have
\begin{eqnarray*}
E_{i,m}E_{i,n-1} 1_\l + E_{i,n}E_{i,m-1} 1_\l &=& q^2 \left( E_{i,m-1}E_{i,n} 1_\l + E_{i,n-1}E_{i,m} 1_\l \right) \\
F_{i,n-1}F_{i,m} 1_\l + F_{i,m-1}F_{i,n} 1_\l &=& q^2 \left( F_{i,n} F_{i,m-1} 1_\l + F_{i,m} F_{i,n-1} 1_\l \right).
\end{eqnarray*}

\item For any $m,n \in \Z$, if $\la i, j \ra = -1$ we have
\begin{eqnarray*}
E_{i,m}E_{j,n+1} 1_\l - q E_{j,n+1} E_{i,m} 1_\l &=& E_{j,n} E_{i,m+1} 1_\l - q E_{i,m+1} E_{j,n} 1_\l \\
F_{i,m+1}F_{j,n} 1_\l - q F_{j,n} F_{i,m+1} 1_\l &=& F_{j,n+1} F_{i,m} 1_\l - q F_{i,m} F_{j,n+1} 1_\l 
\end{eqnarray*}
while if $\la i, j \ra = 0$ then
$$E_{i,m}E_{j,n} 1_\l = E_{j,n}E_{i,m} 1_\l \text{ and } F_{i,m}F_{j,n} 1_\l = F_{j,n}F_{i,m} 1_\l.$$

\item If $\la i, j \ra = -1$ then
$$\sum_{\sigma \in S_2} \left( E_{j,n}E_{i,m_{\sigma(1)}}E_{i,m_{\sigma(2)}} 1_\l +  E_{i, m_{\sigma(1)}}E_{i,m_{\sigma(2)}}E_{j,n} 1_\l \right) = \sum_{\sigma \in S_2} [2]E_{i,m_{\sigma(1)}}E_{j,n}E_{i,m_{\sigma(2)}} 1_\l$$
and similarly if we replace all $E$s by $F$s.

\end{enumerate}

This definition is an idempotent modified version of Drinfeld's new realization of the quantum affine algebra. The modification is not entirely trivial. For example, we no longer have the generators $q^h$, $q^{\pm d}$ or $q^{\pm c/2}$ which are standard in the Drinfeld presentation. Our presentation also differs from Drinfeld's by a renormalization which includes certain sign changes. This presentation is described in the last section of \cite{CLi2} where we show that it is equivalent to Drinfeld's new realization.

The fact that $\dU_q(\hg)$ is idempotent means that we can think of it as a 1-category. The objects are labeled by the weights of $\hg$ while elements like $E_i 1_\l$ are 1-morphisms between $\l$ and $\l+\alpha_i$. 

\subsubsection{The basic representation of $U_q(\hg)$}\label{sec:vbasic}

The basic representation $V_{\Lambda_0}$ of the affine Lie algebra $\hg$ is characterized by the fact that it is irreducible and that there exists a vector $v \in V_{\Lambda_0}$ such that
$$ (\g\otimes \C[t])\cdot v = 0 \text{ and } c\cdot v = v. $$
It follows from this that the central element $c$ acts by the identity on all of $V_{\Lambda_0}$, so that $V_{\Lambda_0}$ is a level one irreducible representation. This representation deforms to give the basic representation of the quantum affine algebra $\dU_q(\hg)$.

An explicit construction of $V_{\Lambda_0}$ was given by Frenkel-Kac \cite{FK} and Segal \cite{S} in the case of the affine Lie algebra ($q=1$). The construction, which is known as the homogeneous realization of $V_{\Lambda_0}$, was extended to the quantum affine algebra in \cite{FJ}. It begins by restricting $V_{\Lambda_0}$ to the homogeneous Heisenberg subalgebra $\h \subset \g$. Each vector $u \in V_{\Lambda_0}$ such that $\h^+\cdot u = 0$ generates a copy of the Fock space $\mathcal{F}$. The space of such vectors has a basis given by $\widehat{ W}\cdot v$, namely the affine Weyl group orbit of the highest weight vector $v \in V_{\Lambda_0}(\Lambda_0)$. This orbit can be identified with the root lattice $Y$ of $\g$. Thus, as an $\h$ module, $V_{\Lambda_0}$ decomposes as
$$ {V_{\Lambda_0}}_{|_{\h}} \cong \mathcal{F}\otimes_{\k(q)} \k(q)[Y].$$
For $\alpha = w \cdot \Lambda_0 \in X$, the summand $\mathcal{F}(n)\otimes \alpha$ is the weight space in $V_{\Lambda_0}$ of weight $w\cdot \Lambda_0 - n \delta$.  This bijection between weight spaces of the basic representation and pairs $(\alpha,n) \in Y\times \N$ will be used in Section \ref{sec:vertexops}. The Frenkel-Kac-Segal construction, as described in \cite{FJ}, then uses vertex operators to extend the action of $\h$ on $V_{\Lambda_0}$ to the entire quantum affine algebra.

\subsection{A stripped down presentation}\label{sec:simple}

We now define a simplified version of the quantum affine algebra $\dU_q(\hg)$. The advantage of this definition is that it is much simpler to check in practice. Moreover, one can show that in the case of an integrable representation it is equivalent to an action of the quantum affine algebra in its Kac-Moody presentation. 

This presentation is generated by $E_{i,m} \1_\l, \1_\l F_{i,-m}$ where $i \in I$ and $m \in \{0,1\}$ together with the the following relations. 
\begin{enumerate}
\item  $\{1_\l: \l \in \hX \}$ are mutually orthogonal idempotents, moreover 
\begin{eqnarray*}
& & E_{i,m} 1_\l = 1_\mu E_{i,m} 1_\l = 1_\mu E_{i,m} \\
& & F_{i,-m} 1_\mu = 1_\l F_{i,-m} 1_\mu = 1_\l F_{i,-m} 
\end{eqnarray*}
where $\mu = \l+ \alpha_i + m \delta$. 

\item We have $[E_{i,m}, F_{i,-m}] 1_\l = [\l_i + m] 1_\l$ while if $i \ne j$ then $[E_{i,m}, F_{j,-n}] 1_\l = 0$.

\item We have $E_{i,1} \E_i 1_\l = q^2 E_i E_{i,1} 1_\l$ and $F_{i,-1} F_i 1_\l = q^2 F_i F_{i,-1} 1_\l$.

\item If $\la i, j \ra = -1$ we have
\begin{eqnarray*}
E_{i}E_{j,1} 1_\l - q E_{j,1} E_{i} 1_\l &=& E_{j} E_{i,1} 1_\l - q E_{i,1} E_{j} 1_\l \\
F_{i}F_{j,-1} 1_\l - q F_{j,-1} F_{i} 1_\l &=& F_{j} F_{i,-1} 1_\l - q F_{i,-1} F_{j} 1_\l 
\end{eqnarray*}
while if $\la i, j \ra = 0$ then
$$E_{i,m}E_{j,n} 1_\l = E_{j,n}E_{i,m} 1_\l \text{ and } F_{i,-m}F_{j,-n} 1_\l = F_{j,-n}F_{i,-m} 1_\l.$$
\end{enumerate}

Here are some somewhat surprising things to notice about this definition.
\begin{itemize}
\item There is no Heisenberg algebra explicitly present. However, one can recover $P_i$ as the commutator of $E_i$ and $F_{i,-1}$ (and similarly for $Q_i$). 
\item Although we only have $E_{i,m}$ and $F_{i,-m}$ explicitly defined when $m=0,1$, the missing generators can be obtained from these. 
\item There is no Serre relation because it is a formal consequence in an integrable representation. 
\end{itemize}

\subsection{$(\hg,\theta)$ actions}\label{sec:g-action}

For a Kac-Moody Lie algebra $\g$ there exist several notions of a categorical action of $\g$ \cite{KL3,R,CK,Cau}. Although the definitions in these papers are somewhat different, they are all closely related. More precisely, we will use the concept of a $(\g,\theta)$ action from \cite{Cau}. This action imitates the simplified quantum affine algebra action from section \ref{sec:simple} and hence has the advantage that it is much easier to check. Nevertheless, it still carries all the rich structure such as an action of the KLR algebras (as discussed in section \ref{sec:cataction} of the introduction). 

However, in this paper $\hg$ and its $q$-deformation $\dU_q(\hg)$ appear not in their Kac-Moody presentation, but rather in their loop presentation. Subsequently we extend in a simple way the definition from \cite{Cau} to a $(\hg,\theta)$ action where $\hg$ is a quantum affine algebra in its loop presentation. 

Since the basic representation has level one, we consider here only the notion of a level one action of $\hg$. Higher level representations are also natural and will be discussed in future work.

A level one $(\hg,\theta)$ action consists of a graded, triangulated, $\k$-linear idempotent complete 2-category $\K$ where:
\begin{itemize}
\item 0-morphisms (objects) are denoted $\D(\l)$ and are indexed by weights $\l \in \hX$,
\item 1-morphisms include $\E_{i,m} \1_\l = \1_{\l+\alpha_i-m\delta} \E_{i,m}$ and $\F_{i,-m} \1_{\l+\alpha_i-m\delta} = \1_\l \F_{i,-m}$ where $m=0,1$ and $\1_\l$ is the identity 1-morphism of $\l$.
\item 2-morphisms include, for each $\l \in \hX$, a linear map $\hY_\k \rightarrow \End^2(\1_\l)$.
\end{itemize}
\begin{Remark} 
We will abuse notation and denote by $\theta \in \End^2(\1_\l)$ the image of $\theta \in \hY_\k$ under the linear map above.
\end{Remark}

On this data we impose the following conditions.
\begin{enumerate}
\item \label{co:hom1} $\Hom_{\K}(\1_\l, \1_\l \la l \ra)$ is zero if $l < 0$ and one-dimensional if $l=0$ and $\1_\l \ne 0$. Moreover, the space of maps between any two 1-morphisms is finite dimensional.
\item $\E_i$ and $\F_i$ are left and right adjoints of each other up to specified shifts. More precisely
\begin{enumerate}
\item $(\E_{i,m} \1_\l)_R \cong \1_\l \F_{i,-m} \la \l_i+1+m \ra$, and
\item $(\E_{i,m} \1_\l)_L \cong \1_\l \F_{i,-m} \la -\l_i-1-m \ra$.
\end{enumerate}
\item \label{co:EF} We have
\begin{align*}
\F_{i,-m} \E_{i,m} \1_{\l} &\cong \E_{i,m} \F_{i,-m} \1_{\l} \oplus \1_\l \otimes_\k V_{- \l_i - m - 1}  \text{ if } \l_i + m \le 0, \\
\E_{i,m} \F_{i,-m} \1_{\l} &\cong \F_{i,-m} \E_{i,m} \1_{\l} \oplus \1_\l \otimes_\k V_{\l_i + m - 1} \text{ if } \l_i + m \ge 0.
\end{align*}

\item \label{co:EiFj} If $i \ne j$ then $\E_{i,m}$ and $\F_{j,-n}$ commute for $m,n \in \{0,1\}$. 

\item \label{co:theta} If $\l_i+m \ge 0$ then map $(I \theta I) \in \End^2(\E_{i,m} \1_\l \F_{i,-m})$ induces an isomorphism between $\l_i+m+1$ (resp. zero) of the $\l_i+m+2$ summands $\1_{\l+\alpha_i-m\delta}$ when $\la \theta, \alpha_i \ra \ne 0$ (resp. $\la \theta, \alpha_i \ra = 0$). If $\l_i+m \le 0$ then the analogous result holds for $(I \theta I) \in \End^2(\F_{i,-m} \1_\l \E_{i,m})$.

\item For $i \in \I$ we have
\begin{eqnarray*}
\E_{i} \E_{i,1} \1_\l \cong \E_{i,1} \E_{i} \la -2 \ra \1_\l \\
\F_{i,-1} \F_{i} \1_\l \cong \F_{i} \F_{i,-1} \la 2 \ra \1_\l.
\end{eqnarray*}

\item \label{co:triangle}
From the relations above it follows that if $\la i,j \ra = -1$ then
$$\Hom(\E_{i} \E_{j,1} \1_\l, \E_{j,1} \E_{i} \la 1 \ra \1_\l) \text{ and } 
\Hom(\E_{j} \E_{i,1} \1_\l, \E_{i,1} \E_{j} \la 1 \ra \1_\l)$$
are both one-dimensional. If $\alpha$ and $\beta$ are 2-morphisms which span these spaces, then we require the relation $\Cone(\alpha) \cong \Cone(\beta)$. 

\item \label{co:vanish1} If $\alpha = \alpha_i$ or $\alpha = \alpha_i + \alpha_j$ for some $i,j \in I$ with $\la i,j \ra = -1$ then $\1_{\l+r \alpha} = 0$ for $r \gg 0$ or $r \ll 0$.

\item \label{co:vanish2} If $\delta = \alpha_i + \alpha_j + \alpha_k$ with $i,j,k \in I$ a triangle in the Dynking diagram or $\delta = \alpha_i + \alpha_j + \alpha_k + \alpha_\l$ with $i,j,k,l \in I$ forming a square then $\1_{\l+r \delta} = 0$ for $r \gg 0$ and $\la \l, \delta \ra > 0$ if $\1_\l \ne 0$.

\item \label{co:new} Suppose $i \ne j \in I$ and $\l \in \hX$. If $\1_{\l+\alpha_i}$ and $\1_{\l+\alpha_j}$ are nonzero then $\1_{\l}$ and $\1_{\l+\alpha_i+\alpha_j}$ are also nonzero.
\end{enumerate}

\begin{Remark}
One of the new features in the definition above is the appearance of distinguished triangles in condition (\ref{co:triangle}).  
\end{Remark}

\subsection{Toroidal modifications}\label{sec:toroidal}

All definitions above can be extended from the affine case to the toroidal case as follows. First recall that the Heisenberg 2-category $\H^\G$ from \cite{CLi1} has 1-morphisms $\P_i$ and $\Q_i$ for $i \in \hI$ a node in the affine Dynkin diagram (not just the finite one); thus 2-morphisms in the 2category $\H^\G$ also include generators $\P_0$ and $\Q_0$, where $0 \in \hI$ is the affine node. The 2-category $\H^\G$ categorifies the quantum toroidal Heisenberg algebra, which is a subalgebra of the idempotent quantum toroidal algebra $\dU_q(\widehat{\hg})$.  

The definition of a 2-representation of $\widehat{\h}$ and $\widehat{\hg}$ is then essentially the same as for $\h$ and $\hg$, taking into account the existence of new 1-morphisms $\P_0$, $\Q_0$, $\E_{0,b}$, $\F_{0,b}$, etc. The categorified vertex operators of this paper then produce the basic 2-representation of $\widehat{\hg}$ from the Fock space 2-representations of $\widehat{\h}$.  

In particular, the structure of the basic representation of $\dU_q(\widehat{\hg})$ is very similar to that of the basic representation of $\dU_q(\hg)$. For example, the underlying vector space for the basic toroidal representation is
$$ V_{\Lambda_0} := \mathcal{\widehat{F}}\otimes_{\k(q)} \k(q)[\widehat{Y}]$$
where $\widehat{\sF}$ is the Fock space representation of the quantum toroidal Heisenberg algebra and $\hY$ is the affine root lattice, \cite{FJW}. In Section \ref{sec:applications} we will give algebraic and geometric categorifications of the basic representation of $\dU_q(\widehat{\hg})$. 

All the proofs to follow extend without extra work from the affine case to the toroidal case, with one exception. The definition of a 2-representation of $\widehat{\widehat{\sl_2}}$ should be changed slightly, essentially because the Dynkin diagram of $\widehat{\sl_2}$ is not simply-laced. The appropriate definition of Heisenberg category for this case is given in the appendix of \cite{CLi1}, and after using that definition, the results in the current paper then carry over.


\section{Categorical vertex operators and main results}
\subsection{Categorified vertex operators}\label{sec:vertexops}

Let $\K$ denote the underlying 2-category of a 2-representation of $\h$.
For any $i \in \I$ we define the following complexes in the homotopy category $\Kom(\K)$. We let 
\begin{eqnarray}
\label{eq:cpxC1}
\fC^-_i(k) &:=& \left[ \dots \rightarrow \P_i^{(-k+l)} \Q_i^{(1^l)} \la -l \ra \rightarrow \dots \rightarrow \P_i^{(-k+1)} \Q_i \la -1 \ra \rightarrow \P_i^{(-k)} \right] \la k \ra [-k]  \\
\label{eq:cpxC2}
\fC^-_i(k) &:=& 
\left[ \dots \rightarrow \P_i^{(l)} \Q_i^{(1^{k+l})} \la -l \ra \rightarrow \dots \rightarrow \P_i \Q_i^{(1^{k+1})} \la -1 \ra \rightarrow \Q_i^{(1^{k})} \right] 
\end{eqnarray}
depending on whether $k \le 0$ or $k \ge 0$ respectively. Here the right most term is in cohomological degree zero and the minus signs indicate that the complex is unbounded below. Likewise, we let
\begin{eqnarray}
\label{eq:cpxC3}
\fC^+_i(k) &:=&
\left[ \Q_i^{(-k)} \rightarrow \P_i \Q_i^{(-k+1)} \la 1 \ra \rightarrow \dots \rightarrow \P_i^{(1^l)} \Q_i^{(-k+l)} \la l \ra \rightarrow \dots \right] \la -k \ra [k] \\
\label{eq:cpxC4}
\fC^+_i(k) &:=& 
\left[ \P_i^{(1^{k})} \rightarrow \P_i^{(1^{k+1})} \Q_i \la 1 \ra \rightarrow \dots \rightarrow \P_i^{(1^{k+l})} \Q_i^{(l)} \la l \ra \rightarrow \dots \right] 
\end{eqnarray}
depending on whether $k \le 0$ or $k \ge 0$. Here the left most term is in cohomological degree zero and the plus signs indicate that the complex is unbounded above. The differentials in these complexes are either given by a cap or a cup. For example, the differentials
\begin{eqnarray*}
\P_i^{(l)} \Q_i^{(1^{k+l})} \la -l \ra &\longrightarrow& \P_i^{(l-1)} \Q_i^{(1^{k+l-1})} \la -l+1 \ra  \text{ and } \\
\P_i^{(1^{k+l})} \Q_i^{(l)} \la l \ra &\longrightarrow& \P_i^{(1^{k+l+1})} \Q_i^{(l+1)} \la l+1 \ra
\end{eqnarray*}
in the definition of $\fC_i^-(k)$ and $\fC^+_i(k)$ when $k \ge 0$ are given by 
$$
\begin{tikzpicture}[scale=.75][>=stealth]

\draw (2.25,1) rectangle (4.25,1.5);
\draw (3.25,1.25) node {$(l)$};
\filldraw[gray] (4.4,1) rectangle (7.1,1.5);
\draw (5.75,1.25) node {$(1^{k+l})$};

\draw [shift = {+(0,2)}](2.25,1) rectangle (4.25,1.5);
\draw [shift = {+(0,2)}](3.25,1.25) node {$(l-1)$};
\filldraw[gray] [shift = {+(0,2)}](4.4,1) rectangle (7.1,1.5);
\draw [shift = {+(0,2)}](5.75,1.25) node {$(1^{k+l-1})$};

\draw (3,1.5) -- (3,3) [->][very thick];
\draw (6,1.5) -- (6,3) [<-][very thick];
\draw (5.25,1.5) arc (0:180:.75cm and .5cm)[<-][very thick];

\draw (9.5,2.25) node {and};

\filldraw[gray] [shift = {+(10,0)}](2.25,1) rectangle (4.25,1.5);
\draw [shift = {+(10,0)}](3.25,1.25) node {$(1^{k+l})$};
\draw[shift = {+(10,0)}] (4.4,1) rectangle (7.1,1.5);
\draw [shift = {+(10,0)}](5.75,1.25) node {$(l)$};

\filldraw[gray][shift = {+(10,0)}] [shift = {+(0,2)}](2.25,1) rectangle (4.25,1.5);
\draw [shift = {+(10,0)}][shift = {+(0,2)}](3.25,1.25) node {$(1^{k+l+1})$};
\draw [shift = {+(10,0)}][shift = {+(0,2)}](4.4,1) rectangle (7.1,1.5);
\draw [shift = {+(10,0)}][shift = {+(0,2)}](5.75,1.25) node {$(l+1)$};

\draw [shift = {+(10,0)}](3,1.5) -- (3,3) [->][very thick];
\draw [shift = {+(10,0)}](6,1.5) -- (6,3) [<-][very thick];
\draw [shift = {+(10,0)}](3.75,3) arc (180:360:.75cm and .5cm)[<-][very thick];
\end{tikzpicture}.
$$
\begin{lemma} The maps defined above define a differential (i.e. they square to zero). 
\end{lemma}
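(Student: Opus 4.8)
The plan is to reduce to one representative complex and then to show that the composite $d^2$ lands in a space of $2$-morphisms that is forced to vanish. Applying the covariant involution $\Psi$ of Section~\ref{sec:psi} — which fixes caps and cups and interchanges $\P_i^{(n)}\leftrightarrow\P_i^{(1^n)}$, $\Q_i^{(n)}\leftrightarrow\Q_i^{(1^n)}$ — together with passage to right adjoints carries the four complexes $\fC^{\pm}_i(k)$ (in the ranges $k\le0$ and $k\ge0$) into one another, so it is enough to treat, say, $\fC^-_i(k)$ with $k\ge0$. There the relevant stretch of the differential is the degree one map $\P_i^{(l)}\Q_i^{(1^{k+l})}\la-l\ra\to\P_i^{(l-1)}\Q_i^{(1^{k+l-1})}\la-l+1\ra$ given by the single cap joining the rightmost strand of $\P_i^{(l)}$ to the (adjacent) leftmost strand of $\Q_i^{(1^{k+l})}$, sandwiched between the defining idempotents. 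In $d^2$ the intermediate idempotents $\P_i^{(l-1)}$, $\Q_i^{(1^{k+l-1})}$ sit between the two caps; being disjoint from the strand capped at the first step, they slide past that cap and are absorbed into $\P_i^{(l)}$, $\Q_i^{(1^{k+l})}$ (a symmetrizer on $l$ strands absorbs the symmetrizer on any $l-1$ of them, and likewise for antisymmetrizers), and the terminal idempotents are absorbed the same way. Hence
\[
d^2 \;=\; \bigl[\,\id\otimes(\text{nested double cap }\P_i\P_i\Q_i\Q_i\to\1)\otimes\id\,\bigr]\circ\bigl(\P_i^{(l)}\otimes\Q_i^{(1^{k+l})}\bigr),
\]
the double cap being applied to the two rightmost strands of $\P_i^{(l)}$ and the two leftmost strands of $\Q_i^{(1^{k+l})}$.

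It therefore suffices to prove the local statement that the nested double cap $\mathrm{dc}\colon\P_i\P_i\Q_i\Q_i\to\1$, precomposed with the symmetrizer $e^{+}$ on its two $\P_i$-strands and the antisymmetrizer $e^{-}$ on its two $\Q_i$-strands, is zero; indeed $\P_i^{(l)}$ (resp.\ $\Q_i^{(1^{k+l})}$) absorbs $e^{+}$ (resp.\ $e^{-}$), so $d^2=[\id\otimes(\mathrm{dc}\circ(e^{+}\otimes e^{-}))\otimes\id]\circ(\P_i^{(l)}\otimes\Q_i^{(1^{k+l})})$. Writing $e^{+}=\tfrac12(\id+s)$ and $e^{-}=\tfrac12(\id-t)$, with $s$ the crossing of the two $\P_i$-strands and $t$ the crossing of the two $\Q_i$-strands, one has
\[
\mathrm{dc}\circ(e^{+}\otimes e^{-})\;=\;\tfrac14\bigl(\mathrm{dc}+\mathrm{dc}\circ s-\mathrm{dc}\circ t-\mathrm{dc}\circ st\bigr).
\]
The key sub-identity is $\mathrm{dc}\circ s=\mathrm{dc}\circ t$: the nested double cap with the crossing of the two $\P_i$-strands at the bottom is planar-isotopic — sliding the crossing up one strand, over the outer cap, and down a $\Q_i$-strand, using relation~(\ref{eq:rel1'}) to pass the same-coloured crossing and relation~(\ref{eq:rel4'}) to absorb any mixed crossing that appears — to the double cap with the crossing of the two $\Q_i$-strands at the bottom. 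Granting this, $\mathrm{dc}\circ st=(\mathrm{dc}\circ s)\circ t=(\mathrm{dc}\circ t)\circ t=\mathrm{dc}\circ t^{2}=\mathrm{dc}$, so the four terms cancel in pairs, $\mathrm{dc}\circ(e^{+}\otimes e^{-})=0$, and therefore $d^2=0$.

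The remaining cases of the lemma are handled identically, with cups in place of caps for the complexes $\fC^{+}_i(k)$ (and with the roles of symmetrizer and antisymmetrizer on the two capped/cupped strands exchanged in the $k\le0$ ranges), or else deduced from the case above via $\Psi$ and adjunction. The one genuinely delicate point — and the main obstacle — is the sub-identity $\mathrm{dc}\circ s=\mathrm{dc}\circ t$ together with the attendant sign bookkeeping; this is a small four-strand computation, which can equally well be done by the ``close off and simplify'' method used for the $d^2=0$ verifications already carried out in Section~\ref{sec:Kom(H)}.
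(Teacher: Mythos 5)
Your proof is correct and follows essentially the same route as the paper: you absorb the intermediate idempotents into the outer ones (the paper's $c_{(r)}c_{(r+s)}=c_{(r+s)}$ step) and then kill the remaining nested double cap meeting the two-strand symmetrizer and antisymmetrizer, which is exactly the paper's appeal to $c_{(2)}c_{(1^2)}=0$, with your identity $\mathrm{dc}\circ s=\mathrm{dc}\circ t$ (a planar isotopy, so relations (\ref{eq:rel1'}) and (\ref{eq:rel4'}) are not actually needed) just making explicit how the symmetrizer slides through the caps to meet the antisymmetrizer. The only quibble is the parenthetical bookkeeping at the end: the exchange of symmetrizer and antisymmetrizer on the capped/cupped strands happens between $\fC^-$ and $\fC^+$, not between the $k\ge 0$ and $k\le 0$ ranges, but since the local vanishing statement is symmetric this is harmless.
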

\begin{proof}
We consider $\fC_i^-$. Applying the differential twice gives the following
$$
\begin{tikzpicture}[scale=.75][>=stealth]

\draw  [shift = {+(-7,0)}](2.25,1) rectangle (4.25,1.5);
\draw  [shift = {+(-7,0)}](3.25,1.25) node {$(l-1)$};
\filldraw[gray] [shift = {+(-7,0)}] (4.4,1) rectangle (7.1,1.5);
\draw [shift = {+(-7,0)}] (5.75,1.25) node {$(1^{k+l-1})$};

\draw  [shift = {+(-7,0)}][shift = {+(0,2)}](2.25,1) rectangle (4.25,1.5);
\draw  [shift = {+(-7,0)}][shift = {+(0,2)}](3.25,1.25) node {$(l-2)$};
\filldraw[gray]  [shift = {+(-7,0)}][shift = {+(0,2)}](4.4,1) rectangle (7.1,1.5);
\draw  [shift = {+(-7,0)}][shift = {+(0,2)}](5.75,1.25) node {$(1^{k+l-2})$};

\draw  [shift = {+(-7,0)}](3,1.5) -- (3,3) [->][very thick];
\draw  [shift = {+(-7,0)}](6,1.5) -- (6,3) [<-][very thick];
\draw  [shift = {+(-7,0)}](5.5,1.5) arc (0:180:1cm and .75cm)[<-][very thick];

\draw  [shift = {+(-7,0)}](7.75,2.25) node {$=$};

\draw  [shift = {+(-7,-2)}](2.25,1) rectangle (4.25,1.5);
\draw  [shift = {+(-7,-2)}](3.25,1.25) node {$(l)$};
\filldraw[gray] [shift = {+(-7,-2)}] (4.4,1) rectangle (7.1,1.5);
\draw [shift = {+(-7,-2)}] (5.75,1.25) node {$(1^{k+l})$};

\draw  [shift = {+(-7,-2)}](3,1.5) -- (3,3) [->][very thick];
\draw  [shift = {+(-7,-2)}](6,1.5) -- (6,3) [<-][very thick];
\draw  [shift = {+(-7,-2)}](5.25,1.5) arc (0:180:.75cm and .5cm)[<-][very thick];

\draw (2.25,1) rectangle (4.25,1.5);
\draw (3.25,1.25) node {$(l)$};
\filldraw[gray] (4.4,1) rectangle (7.1,1.5);
\draw (5.75,1.25) node {$(1^{k+l})$};

\draw [shift = {+(0,2)}](2.25,1) rectangle (4.25,1.5);
\draw [shift = {+(0,2)}](3.25,1.25) node {$(l-2)$};
\filldraw[gray] [shift = {+(0,2)}](4.4,1) rectangle (7.1,1.5);
\draw [shift = {+(0,2)}](5.75,1.25) node {$(1^{k+l-2})$};

\draw (3,1.5) -- (3,3) [->][very thick];
\draw (6,1.5) -- (6,3) [<-][very thick];
\draw (5.25,1.5) arc (0:180:.75cm and .5cm)[<-][very thick];
\draw (5.5,1.5) arc (0:180:1cm and .75cm)[<-][very thick];

\draw (7.75,2.25) node {$=$};

\draw [shift = {+(6,0)}](3.25,1) rectangle (4.25,1.5);
\draw  [shift = {+(6,0)}](3.75,1.25) node {$(2)$};
\filldraw[gray]  [shift = {+(6,0)}](4.75,1) rectangle (5.75,1.5);
\draw  [shift = {+(6,0)}](5.25,1.25) node {$(1^{2})$};

\draw  [shift = {+(6,0)}][shift = {+(0,2)}](2.25,1) rectangle (4.25,1.5);
\draw  [shift = {+(6,0)}][shift = {+(0,2)}](3.25,1.25) node {$(l-2)$};
\filldraw[gray]  [shift = {+(6,0)}][shift = {+(0,2)}](4.4,1) rectangle (7.1,1.5);
\draw  [shift = {+(6,0)}][shift = {+(0,2)}](5.75,1.25) node {$(1^{k+l-2})$};

\draw  [shift = {+(6,0)}](3,.5) -- (3,3) [->][very thick];
\draw  [shift = {+(6,0)}](6,.5) -- (6,3) [<-][very thick];
\draw  [shift = {+(6,0)}](5.25,1.5) arc (0:180:.75cm and .5cm)[<-][very thick];
\draw  [shift = {+(6,0)}](5.5,1.5) arc (0:180:1cm and .75cm)[<-][very thick];
\draw [shift = {+(6,-1)}](2.25,1) rectangle (4.25,1.5);
\draw [shift = {+(6,-1)}](3.25,1.25) node {$(l)$};
\filldraw[gray] [shift = {+(6,-1)}](4.4,1) rectangle (7.1,1.5);
\draw [shift = {+(6,-1)}](5.75,1.25) node {$(1^{k+l})$};
\draw  [shift = {+(6,-1)}](5.25,1.5) -- (5.25,2) [<-][very thick];
\draw  [shift = {+(6,-1)}](5.5,1.5) -- (5.5,2) [<-][very thick];
\draw  [shift = {+(6,-1)}](3.5,1.5) -- (3.5,2) [->][very thick];
\draw  [shift = {+(6,-1)}](3.75,1.5) -- (3.75,2) [->][very thick];

\draw[shift = {+(6,0)}] (7.75,2.25) node {$=0.$};
\end{tikzpicture}
$$
The first and second equalities above follows from the fact for $s \geq 0$, if we denote by $c_{(s)}$ and $c_{(1^s)}$ the idempotents in $\k[S_s]$ corresponding to the trivial and sign representation, then after embedding $\k[S_r]$ into $\k[S_{r+s}]$ in the natural way we have
$$c_{(r)}c_{(r+s)} = c_{(r+s)} = c_{(r+s)}c_{(r)} \text{ and }
c_{(1^r)}c_{(1^{r+s})} = c_{(1^{r+s})} = c_{(1^{r+s})}c_{(1^r)}.$$
Now last equality in the proof of the Lemma follows from the fact that $c_{(2)} c_{(1^2)} = 0 \in \k[S_2]$.  Thus $\fC_i^-$ is a chain complex. The proof that $\fC_i^+$ is a chain complex is the same. 
\end{proof}

\subsection{Defining a $(\hg,\theta)$ action}\label{sec:defs}

Suppose $\K$ is an integrable 2-representation of $\h$. Since the object labeled $n$ is zero for $n \ll 0$ we can relabel the objects of $\K$ so that $\1_n = 0$ if $n < 0$. We will assume this from now on. 

We now define a 2-representation of $\hg$ by describing the objects, 1-morphisms and 2-morphisms. If $\l \in \hX$ does not occur in the basic representation $V_{\Lambda_0}$ of $\hg$ then we define $\D(\l):=0$. On the other hand, any nonzero weight space of $V_{\Lambda_0}$ corresponds to a weight $\l \in \hX$ of the form $\l = w \cdot \Lambda_0 - n \delta$ where $w \in W_{\hg}$ and $n \in \N$ (this $w$ and $n$ are uniquely determined by $\l$). In this case we define $\D(\l) := \D(n)$. Since the $W_{\hg}$ orbit of $\Lambda_0$ is in bijection with the root lattice $Y$ of $\g$, it follows that nonzero objects of are in bijection with pairs $(\alpha, n)$ where $\alpha \in Y$ and $n \in \N$. 
 
Next we define 1-morphisms $\E_{i,m} \1_\l$ and $\1_\l \F_{i,m}$ as 1-morphisms in $\Kom(\K)$. We set
\begin{equation}\label{eq:Emaps1}
\E_{i,m} \1_\l \mapsto \fC_i^-(\la \l, \alpha_i \ra+1+m) \1_n \ \ \text{ and } \ \ 
\1_\l \F_{i,m} \mapsto \1_n \fC_i^+(\la \l, \alpha_i \ra+1-m).
\end{equation}

To define $\hY_\k \rightarrow \End^2(\1_\l)$ consider, for each $i$, the following 2-morphism in $\K$ 
\begin{equation}\label{eq:thetamap}
\begin{tikzpicture}[scale=.75][>=stealth]
\draw [shift={+(2,1)}](-.25,0) arc (180:360:0.5cm) [very thick];
\draw [shift={+(2,1)}][<-](.75,0) arc (0:180:0.5cm) [very thick];
\draw [shift={+(2.8,1)}](1.50,0) node{$: \1_n \rightarrow \1_n \la 2 \ra$};
\draw [shift={+(1.2,1.75)}](.25,-.75) node {$i$}; 
\end{tikzpicture}
\end{equation}
We define $\hY_\k \rightarrow \End^2(\1_\l)$ by taking $\alpha_i$ to the map above, $\delta$ to zero and extending linearly. 

\subsection{The main results}\label{sec:mainthms}

The main work in this paper is proving that the functors defined above induce a $(\hg,\theta)$ action. Below we summarize all the relations that we check directly.  

\begin{Theorem}\label{thm:main1} In the homotopy 2-category $\Kom(\K)$ we have the following relations. 
\begin{itemize}
\item {[Proposition \ref{prop:adjointrels}]}
For $i \in I$ and $m \in \Z$ we have
\begin{enumerate}
\item $(\E_{i,m} \1_\l)_R \cong \1_\l \F_{i,-m} \la \l_i + 1 + m \ra$
\item $(\E_{i,m} \1_\l)_L \cong \1_\l \F_{i,-m} \la - \l_i - 1 - m \ra$
\end{enumerate}
\item {[Proposition \ref{prop:sl2comm}]} 
For $i \in \I$ we have 
\begin{align*}
\F_{i,-m} \E_{i,m} \1_{\l} &\cong \E_{i,m} \F_{i,-m} \1_{\l} \oplus \1_\l \otimes_\k V_{- \la \l, \alpha_i \ra - m -1}  \text{ if } \la \l, \alpha_i \ra + m \le 0 \\
\E_{i,m} \F_{i,-m} \1_{\l} &\cong \F_{i,-m} \E_{i,m} \1_{\l} \oplus \1_\l \otimes_\k V_{\la \l, \alpha_i \ra + m -1} \text{ if } \la \l, \alpha_i \ra + m \ge 0.
\end{align*}

\item {[Proposition \ref{prop:theta}]} Under the two isomorphisms above, for $\theta \in Y_\k$ the maps
\begin{align*}
I \theta I: \F_{i,-m} \1_\l \E_{i,m} \rightarrow \F_{i,-m} \1_\l \E_{i,m} \la 2 \ra  &\text{ if } \l_i + m \le 2 \text{ and } \\
I \theta I: \E_{i,m} \1_\l \F_{i,-m} \rightarrow  \E_{i,m} \1_\l \F_{i,-m} \la 2 \ra &\text{ if } \l_i + m \ge -2
\end{align*}
induce an isomorphism between all (resp. none of the) summands $\1_\l \la \cdot \ra$ of the same degree on either side if $\la \theta, \alpha_i \ra \ne 0$ (resp. $\la \theta, \alpha_i \ra = 0$).  

\item {[Proposition \ref{prop:EiFjcomm}]} If $i \ne j \in \I$ then $\E_{i,m}$ commutes with $\F_{j,-n}$. 

\item {[Proposition \ref{prop:Escommute}]}
For $i \in \I$ we have $\E_{i} \E_{i,1} \1_\l \cong \E_{i,1} \E_{i} \la -2 \ra \1_\l$. 

\item {[Proposition \ref{prop:cones}]}
If $\la i,j \ra = -1$ then there exist unique (up to a multiple) nonzero maps 
$$\alpha \in \Hom(\E_{i} \E_{j,1} \1_\l, \E_{j,1} \E_{i} \la 1 \ra \1_\l) \text{ and } 
\beta \in \Hom(\E_{j} \E_{i,1} \1_\l, \E_{i,1} \E_{j} \la 1 \ra \1_\l)$$
and $\Cone(\alpha) \cong \Cone(\beta)$. Meanwhile, if $\la i,j \ra = 0$ then $\E_{i,m}$ and $\E_{j,n}$ commute. 

\end{itemize}
\end{Theorem}

\begin{cor}\label{cor:main1}
Maps (\ref{eq:Emaps1}) and (\ref{eq:thetamap}) define a $(\hg,\theta)$ action. 
\end{cor}
\begin{proof}
All but the last three conditions of having a $(\hg,\theta)$ action are checked in Theorem \ref{thm:main1}. The last three conditions are just statements about the basic representation of $\hg$ which are easy to check.
\end{proof}

\subsection{Divided powers}\label{sec:powers}

One of the implications of a $(\hg,\theta)$ action is that one has divided powers. In other words, there exist $\E_{i,m}^{(r)} \1_\l$ and $\1_\l \F_{i,-m}^{(r)}$ in $\Kom(\K)$ such that 
$$\E_{i,m}^r \1_\l \cong \bigoplus_{[n]!} \E_{i,m}^{(r)} \1_\l \ \ \text{ and } \ \ \1_\l \F_{i,-m}^r \cong \bigoplus_{[n]!} \1_\l \F_{i,-m}^{(r)}.$$
Although we know such complexes exist it is not clear what they are explicitly. We conjecture the following. 

If $k := -(\l_i+r+m) \ge 0$ then 
\begin{equation*}
\E_{i,m}^{(r)} \1_\l \cong 
\left[ \dots \rightarrow \bigoplus_{w(\mu) \le r, |\mu|=l} \P_i^{(k^r, \mu^t)} \Q_i^{(\mu)} \la -l \ra \rightarrow \dots \rightarrow \P_i^{(k^r,1)} \Q_i \la -1 \ra \rightarrow \P_i^{(k^r)} \right] \la -kr - \binom{r}{2} \ra [kr + \binom{r}{2}] 
\end{equation*}
where the sum is over all partitions $\mu$ of size $|\mu|$ which fit in a box of width $r$ ($w(\mu)$ denotes the width of $\mu$). 

Similarly, if $k := \l_i + r + m \ge 0$ then 
\begin{equation*}
\E_{i,m}^{(r)} \1_\l \cong 
\left[ \dots \rightarrow \bigoplus_{w(\mu) \le r, |\mu|=l} \P_i^{(\mu^t)} \Q_i^{(r^k, \mu)} \la -l \ra \rightarrow \dots \rightarrow \P_i \Q_i^{(r^k,1)} \la -1 \ra \rightarrow \Q_i^{(r^k)} \right] \la - \binom{r}{2} \ra [\binom{r}{2}].
\end{equation*}
The differentials are defined using caps as above. For example, in the first complex above there is a map 
$$\P_i^{(k^r, \mu^t)} \Q_i^{(\mu)} \la -l \ra \rightarrow \P_i^{(k^r, \nu^t)} \Q_i^{(\nu)} \la -l+1 \ra$$
if and only if $\mu$ is obtained from $\nu$ by adding a box. Moreover, in this case, this map is unique up to scalar and given by a cap. The scalar multiples need to be chosen so as to satisfy $\partial^2=0$, but it is not hard to see (using the same arguments as in Lemma \ref{lem:EEtcpx}) that any two such choices of scalars yield homotopic complexes.

Similarly, the divided powers $\1_\l \F_{i,-m}^{(r)}$ are defined as the appropriately shifted right (or equivalently left) adjoints of the complexes above. The shifts above are to ensure that 
\begin{enumerate}
\item $(\E^{(r)}_{i,m} \1_\l)_R \cong \1_\l \F^{(r)}_{i,-m} \la r(\l_i + r + m) \ra$
\item $(\E^{(r)}_{i,m} \1_\l)_L \cong \1_\l \F^{(r)}_{i,-m} \la -r(\l_i + r + m) \ra$
\end{enumerate}
We have checked the conjecture above in the case $r=2$. 

\subsection{Further relations}

One may wonder if any of the relations in the extended definition from section \ref{sec:quantumaffine} have categorical analogues in this case. It turns out that most of them do, as we now explain. 

In what follows we define the 1-morphisms $\P_i^{[1^n]} \1_\l$ and $\Q_i^{[1^n]} \1_\l$ as in (\ref{eq:Pcpx}) and (\ref{eq:Qcpx}) from section \ref{sec:Kom(H)}. The same techniques used to prove Theorem \ref{thm:main1} can be used to check the following relations in $\Kom(\K)$. Although we have checked all these relations we do not include a proof here because it is not strictly necessary and also to save space. 

\begin{enumerate}
\item $(\P_{i}^{[1^n]} \1_\l)_R \cong \Q_{i}^{[1^n]} \la -n \ra$ and $(\P_{i}^{[1^n]} \1_\l)_L \cong \Q_{i}^{[1^n]} \la n \ra$.

\item If $m+n \ne 0$ and $i \in \I$ then there exist distinguished triangles
\begin{align*}
\Q_i^{[1^{m+n}]} \1_\l \la \l_i - n \ra \rightarrow \E_{i,m} \F_{i,n} \1_\l \rightarrow \F_{i,n} \E_{i,m} \1_\l &\text{ if } m+n > 0, \\
\F_{i,n} \E_{i,m} \1_\l \rightarrow \E_{i,m} \F_{i,n} \1_\l \rightarrow \P_{i}^{[1^{-m-n}]} \1_\l \la - \l_i - m \ra &\text{ if } m+n < 0.
\end{align*}

\item For $i \in \I$ and $m \in \Z$ we have the distinguished triangles
\begin{align*}
& \E_{i,m-1} \otimes_\k V_1 \la -1 \ra \1_\l \rightarrow \P_i^{[1]} \E_{i,m} \1_\l \rightarrow \E_{i,m} \P_i^{[1]} \1_\l \\
& \E_{i,m} \Q_i^{[1]} \1_\l \rightarrow \Q_i^{[1]} \E_{i,m} \1_\l \rightarrow \E_{i,m+1} \otimes_\k V_1 \1_\l \\
& \Q_i^{[1]} \F_{i,m} \1_\l \rightarrow \F_{i,m} \Q_i^{[1]} \1_\l \rightarrow  \F_{i,m+1} \otimes_\k V_1 \la 1 \ra \1_\l \\
& \F_{i,m-1} \otimes_\k V_1 \1_\l \rightarrow \F_{i,m} \P_i^{[1]} \1_\l \rightarrow \P_i^{[1]} \F_{i,m} \1_\l.
\end{align*}

\item If $\la i, j \ra = -1$ then there exist distinguished triangles
\begin{align*}
& \E_{i,m-1} [1] \la -1 \ra \1_\l \rightarrow \E_{i,m} \P_j^{[1]} \1_\l \rightarrow \P_j^{[1]} \E_{i,m} \1_\l \\
& \E_{i,m+1} \1_\l \rightarrow \Q_j^{[1]} \E_{i,m} \1_\l \rightarrow \E_{i,m} \Q_j^{[1]} \1_\l \\
& \F_{i,m} \Q_j^{[1]} \1_\l \rightarrow \Q_j^{[1]} \F_{i,m} \1_\l \rightarrow \F_{i,m+1} [-1] \la 1 \ra \1_\l \\
& \P_j^{[1]} \F_{i,m} \1_\l \rightarrow \F_{i,m} \P_j^{[1]} \1_\l \rightarrow \F_{i,m-1} \1_\l 
\end{align*}
while if $\la i, j \ra = 0$ then $\E_{i,m}$ and $\F_{i,m}$ commute with $\P_j^{[1]}$ and $\Q_j^{[1]}$. 
\end{enumerate}

\begin{Remark} 
For any distinguished triangle above it is possible to find an example where the triangle does not split.
\end{Remark}


\section{The $\sl_2$ commutator relation}

In this section we prove the $\sl_2$ commutator relation which appear in Theorem \ref{thm:main1}. Since we are only dealing with $\sl_2$ we abbreviate $\P_i$ by $\P$, $\Q_i$ by $\Q$ while $\l$ is now just the integer $\l_i$. 

The following is a key tool we will repeatedly use to simplify complexes of 1-morphisms in the homotopy category of $\H$ (it is a minor generalization of a lemma of Bar-Natan).  

\begin{lemma}[Gaussian elimination]\label{lem:cancel}
Let $ X, Y, Z, W, U, V$ be six objects in an additive category and consider a complex 
\begin{equation}\label{eq:6.5}
\dots \rightarrow U \xrightarrow{u} X \oplus Y \xrightarrow{f} Z \oplus W \xrightarrow{v} V \rightarrow \dots
\end{equation}
where $f = \left( \begin{matrix} A & B \\ C & D \end{matrix} \right)$ and $u,v$ are arbitrary morphisms. If $D: Y \rightarrow W$ is an isomorphism, then (\ref{eq:6.5}) is homotopic to a complex 
\begin{eqnarray}\label{eq:7}
\dots \rightarrow U \xrightarrow{u} X \xrightarrow{A-BD^{-1}C} Z \xrightarrow{v|_Z} V \rightarrow \dots
\end{eqnarray}
\end{lemma}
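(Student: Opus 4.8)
The plan is to write down the homotopy equivalence explicitly and check it directly. Since $D\colon Y \to W$ is an isomorphism, the idea is that the summand $Y$ in degree $k$ cancels against the summand $W$ in degree $k+1$, and one must track what happens to the surrounding differentials $u$ and $v$. First I would define the two chain maps. In one direction, $g\colon (\ref{eq:6.5}) \to (\ref{eq:7})$ is the identity on all terms away from the three displayed spots; on $X \oplus Y$ it is the projection $(x,y) \mapsto x$, it is the identity on $U$, and on $Z \oplus W$ it is the map $(z,w) \mapsto z - BD^{-1}w$ (this last twist is forced by the requirement that $g$ commute with the differentials, since the component $v$ out of $Z\oplus W$ is allowed to be nonzero on $W$). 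In the other direction, $h\colon (\ref{eq:7}) \to (\ref{eq:6.5})$ is the identity on $U$, the inclusion $x \mapsto (x, -D^{-1}Cx)$ into $X \oplus Y$, and the inclusion $z \mapsto (z,0)$ into $Z \oplus W$; away from these spots it is the identity.

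The key steps are then: (1) verify $g$ is a chain map — the only nontrivial squares are the one involving $f$, where one computes $g \circ f = (A - BD^{-1}C, \ast)$ and checks the $\ast$ component vanishes after the projection, plus the squares involving $u$ (trivial, since $g$ is the identity on $U$) and $v$ (here one uses that $v$ restricted along the new differential into $Z$ matches $v|_Z$, which works because of the $-BD^{-1}w$ twist); (2) verify $h$ is a chain map, similarly, the crucial point being that $f \circ h$ on $X$ lands in $Z \oplus W$ with $W$-component $Cx + D(-D^{-1}Cx) = 0$; (3) check $g \circ h = \id$ on the nose; and (4) construct the homotopy $s$ with $h \circ g - \id = ds + sd$. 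This homotopy is supported in a single spot: $s\colon Z \oplus W \to X \oplus Y$ is given by $(z,w) \mapsto (0, D^{-1}w)$ and is zero elsewhere. One then computes $h g - \id$ on $X \oplus Y$ and on $Z \oplus W$ and matches it with $ds + sd$; the verification on $Z\oplus W$ forces exactly the $-D^{-1}C$ in the formula for $h$ and the $-BD^{-1}$ twist in $g$, so everything is consistent.

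I do not expect any genuine obstacle here — this is a routine diagram chase, and the only mild subtlety is bookkeeping the off-diagonal corrections ($-D^{-1}C$ in $h$, $-BD^{-1}$ in $g$, and the sign conventions for the homotopy) so that all four verifications close up simultaneously. The ``slight generalization'' over Bar-Natan's original statement is precisely the presence of the flanking terms $U$ and $V$ with arbitrary maps $u,v$; the point to emphasize is that $g, h, s$ are all the identity (resp. zero) on $U, V$, so the chain-map and homotopy identities at the $U$ and $V$ spots reduce immediately to the corresponding identities at $X \oplus Y$ and $Z \oplus W$, which is why adding these terms costs nothing. I would close by noting that the same argument applies verbatim when (\ref{eq:6.5}) is an unbounded complex, which is the setting in which we apply it inside $\Kom(\K)$.
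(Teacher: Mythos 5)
Your proof is correct, but it packages the argument differently from the paper. The paper's proof is a change of basis: it conjugates the middle differential by the invertible block-triangular maps $\alpha = \left(\begin{smallmatrix} 1 & 0 \\ D^{-1}C & 1 \end{smallmatrix}\right)$ and $\beta = \left(\begin{smallmatrix} 1 & -BD^{-1} \\ 0 & 1 \end{smallmatrix}\right)$, producing an isomorphic complex whose middle differential is block diagonal $\left(\begin{smallmatrix} A-BD^{-1}C & 0 \\ 0 & D \end{smallmatrix}\right)$; since $d^2=0$ and $D$ is invertible, the flanking maps of this new complex automatically have no component into $Y$ or out of $W$, so it splits as (\ref{eq:7}) direct sum the contractible piece $Y \xrightarrow{D} W$, which is discarded. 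You instead exhibit the retraction data explicitly (both chain maps and the homotopy); this is equivalent — your $g$, $h$, $s$ are precisely what one gets by composing the paper's isomorphism with the projection onto, and inclusion of, the non-contractible summand — and it buys an explicit homotopy equivalence rather than leaving the final splitting as a "straightforward check," at the cost of four verifications instead of one conjugation. Your checks do close up: the squares at $U$ and at $V$ use exactly the relations $Cu_1+Du_2=0$ and $v_1B+v_2D=0$ coming from $d^2=0$ in the ambient complex (writing $u=\left(\begin{smallmatrix} u_1 \\ u_2 \end{smallmatrix}\right)$, $v=(v_1 \ v_2)$), which is what your appeals to the $-D^{-1}C$ and $-BD^{-1}$ twists amount to. The one correction is a sign: with $s(z,w)=(0,D^{-1}w)$ one gets $\id - hg = ds+sd$, so either negate $s$ or state the homotopy identity in that form — the bookkeeping issue you already anticipated.
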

\begin{proof}
The key is the following commutative diagram: 
\begin{equation*}
\xymatrix{
\dots \ar[r] & U \ar[r]^-{u} \ar[d]^{\id} & X \oplus Y \ar[r]^f \ar[d]^{\alpha} & Z \oplus W \ar[d]^{\beta} \ar[r]^-{v} & V \ar[r] \ar[d]^{\id} & \dots \\
\dots \ar[r] & U \ar[r]^-{\alpha u} & X \oplus Y \ar[r]^g & Z \oplus W \ar[r]^-{v \beta^{-1}} & V \ar[r] & \dots 
}
\end{equation*}
where 
$$g = \left( \begin{matrix} A-BD^{-1}C & 0 \\ 0 & D \end{matrix} \right) \hspace{.5cm} \alpha = \left( \begin{matrix} 1 & 0 \\ D^{-1}C & 1 \end{matrix} \right) \hspace{.5cm} \beta = \left( \begin{matrix} 1 & -BD^{-1} \\ 0 & 1 \end{matrix} \right).$$
The vertical map of complexes is a homotopy equivalence and it is straightforward to check that the bottom row is homotopic to (\ref{eq:7}). 
\end{proof}

\begin{prop}\label{prop:adjointrels} 
The left and right adjoints of $\E_{i,m}^{(r)} \1_\l$ are given by
$$(\E_{i,m} \1_\l)_R \cong \1_\l \F_{i,-m} \la \l+1+m \ra \text{ and } (\E_{i,m} \1_\l)_L \cong  \1_\l \F_{i,-m} \la -\l-1-m \ra.$$
\end{prop}
\begin{proof}
This is a consequence of the fact that $\P_R \cong \Q \la -1 \ra$ and $\P_L \cong \Q \la 1 \ra$. 
\end{proof}

The following is the main result of this section. 

\begin{prop}\label{prop:sl2comm} We have 
\begin{eqnarray}
\label{eq:comm1}
\F_{i,-m} \E_{i,m} \1_{\l} &\cong& \E_{i,m} \F_{i,-m} \1_{\l} \oplus \1_\l \otimes_\k V_{- \l - m -1}  \text{ if } \l + m \le 0 \\
\label{eq:comm2}
\E_{i,m} \F_{i,-m} \1_{\l} &\cong& \F_{i,-m} \E_{i,m} \1_{\l} \oplus \1_\l \otimes_\k V_{\l + m - 1} \text{ if } \l + m \ge 0.
\end{eqnarray}
\end{prop}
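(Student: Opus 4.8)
The plan is to prove (\ref{eq:comm1}) directly by computing the tensor product of the two complexes $\fC_i^+$ and $\fC_i^-$ that define $\E_{i,m}$ and $\F_{i,-m}$, and then simplifying the resulting double complex using Gaussian elimination (Lemma \ref{lem:cancel}); equation (\ref{eq:comm2}) will then follow by applying the adjunction/symmetry of Proposition \ref{prop:adjointrels} (or by the analogous computation with the roles of $\E$ and $\F$ reversed). Let me write $k := \la \l, \alpha_i \ra + 1 + m$, so that $\E_{i,m} \1_\l = \fC_i^-(k) \1_n$ and $\F_{i,-m} \1_n = \1_n \fC_i^+(k)$ with the appropriate shifts; note that the hypothesis $\l + m \le 0$ means $k \le 0$, so both complexes are the ``$k\le 0$'' versions (\ref{eq:cpxC1}) and (\ref{eq:cpxC3}), which are unbounded in opposite cohomological directions. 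First I would write out $\F_{i,-m}\E_{i,m}\1_\l$ as the total complex of the bicomplex whose $(p,q)$ term is a grading-shifted copy of $\P_i^{(1^a)}\Q_i^{(\text{something})}\P_i^{(-k+b)}\Q_i^{(1^b)}$, and similarly write $\E_{i,m}\F_{i,-m}\1_\l$; the goal is to show the first total complex is homotopy equivalent to the direct sum of the second with a trivial complex (zero differential) supported entirely in 1-morphisms $\1_\l\la\cdot\ra$.

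The key technical input is Proposition \ref{prop:rels1}(3), which gives $\Q_j^{(n)}\P_i^{(m)} \cong \oplus_{k\ge 0} \P_i^{(m-k)}\Q_i^{(n-k)} \otimes_\k V_k$ when $i=j$. Applying this to each $\Q\P$ pair appearing in the middle of the bicomplex terms, one sees that the terms of $\F_{i,-m}\E_{i,m}\1_\l$ split into (a) a part that reassembles, term by term, into the terms of $\E_{i,m}\F_{i,-m}\1_\l$, plus (b) a ``remainder'' part involving the factors $V_k$ with $k\ge 1$ — and, crucially, when the relation is iterated along the whole complex, these remainder terms collapse, with most of them cancelling in pairs across the differential. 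I would make this precise by setting up the cancellation as a sequence of applications of Lemma \ref{lem:cancel}: one identifies, at each cohomological degree, an isomorphism between a summand of the source and a summand of the target of a differential (the relevant map being built from a cap or cup composed with the structure maps of the $V_k$), and cancels it. After all such cancellations, what survives is exactly $\E_{i,m}\F_{i,-m}\1_\l$ together with a complex built only out of $\1_\l\la\cdot\ra$'s; a dimension count — comparing graded dimensions of $\Hom(\1_\l,\1_\l\la\ell\ra)$ on both sides, which are finite and computable since $V_j$ has graded dimension $[j+1]$ — forces the leftover complex to have zero differential and to equal $\1_\l\otimes_\k V_{-\l-m-1}$. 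The shift $\la -\l -m-1\ra$ range is exactly what is predicted by the classical $\mathfrak{sl}_2$ commutator $[e,f] = h$ at the decategorified level, which is a useful consistency check.

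The main obstacle I anticipate is bookkeeping the cancellations in these \emph{unbounded} complexes: one must be sure that the Gaussian eliminations can be carried out simultaneously (or as a convergent sequence) without running into issues at ``infinity,'' and that the surviving terms are precisely identified rather than merely bounded in number. Concretely, the hard part is tracking which copy of $V_k$ attaches to which term after iterating Proposition \ref{prop:rels1}(3), and verifying that the induced differentials between the ``good'' part and the surviving $\1_\l$ part vanish up to homotopy — equivalently, that no residual $\Hom$ from a $\P\Q$-term to a $\1_\l\la\cdot\ra$-term of degree $1$ obstructs the splitting. This is where I would lean on the $\Hom$-space computations in the technical lemmas (Lemma \ref{lem:differentials} in particular, which bounds $\Hom(\P_i^{(\l)}\Q_i^{(\mu)}, \P_i^{(\l')}\Q_i^{(\mu')}\la 1\ra)$), together with the fact that in a 2-representation $\Hom(\1_n,\1_n\la\ell\ra)$ vanishes for $\ell<0$, to show that all potentially obstructing maps are forced to be zero. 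Once the splitting is established abstractly, the precise identification of the trivial summand as $\1_\l\otimes_\k V_{-\l-m-1}$ is a matter of matching graded Euler characteristics against the known decategorified identity.
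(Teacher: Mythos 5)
Your overall strategy --- normal-order the middle $\Q\P$ pairs via Proposition \ref{prop:rels1}, cancel with Lemma \ref{lem:cancel}, and control stray differentials with Lemma \ref{lem:differentials} and the vanishing of negative-degree endomorphisms of $\1_\l$ --- is the same machinery the paper uses, but two steps in your plan do not work as described. First, after applying Proposition \ref{prop:rels1}(3) inside $\F_{i,-m}\E_{i,m}\1_\l$, the $V_0$-part does \emph{not} reassemble term by term into the terms of $\E_{i,m}\F_{i,-m}\1_\l$: the terms of the latter are of the form $\P^{(i)}\Q^{(1^{a})}\P^{(1^{b})}\Q^{(c)}$, whereas what actually survives the cancellation in $\F\E$ is a complex built from hook-shaped indecomposables $\P_i^{(i_0+1,1^{\bullet})}\Q_i^{(l+i_0+1,1^{\bullet})}$ (the paper's complex (\ref{eq:FE})), which coincides with neither composition on the nose, and the grading/cohomological shifts of the two compositions do not line up termwise. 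So the same decomposition-and-cancellation analysis must also be run on $\E\F$, and --- the genuinely delicate point --- one must check that the differentials induced on the common hook complex by the two computations agree up to homotopy (the paper handles this with Lemma \ref{lem:differentials} plus a uniqueness-up-to-homotopy argument for the choice of scalars). Your plan, which simplifies only one side and matches against the other side's literal terms, omits this.

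Second, identifying the leftover piece by matching graded Euler characteristics against the decategorified identity is not sufficient. The complexes here are unbounded, so $K_0$ classes are already delicate; and even granting the class of the leftover complex $T$, its homotopy type is not determined by that class: since $\Hom(\1_\l,\1_\l\la 2\ra)$ may be nonzero (e.g. the map $\theta_i$), a two-term complex $\1_\l\rightarrow\1_\l\la 2\ra$ with nonzero differential consists only of shifted identities yet is not homotopy equivalent to their direct sum. What the paper actually does is show that the surviving $\1_\l\la\cdot\ra$ summands arise from a single term (the decomposition of $\Q^{(\l-1)}\P^{(\l-1)}$, the case $a=0$ in Proposition \ref{prop:EFcomp}) and sit in one cohomological degree, so no internal differentials are possible, and then verifies by the adjunction computation at the end of that proof that no differentials connect this chunk to the neighbouring $\P\Q$ terms. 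You would need these explicit identifications; the Euler-characteristic shortcut does not replace them. A minor further point: (\ref{eq:comm2}) does not follow from (\ref{eq:comm1}) by adjunction, since $\F_{i,-m}\E_{i,m}\1_\l$ is, up to shift, its own biadjoint rather than being adjoint to $\E_{i,m}\F_{i,-m}\1_\l$; one really does redo the computation with the roles of $\E$ and $\F$ reversed, as you mention only as a fallback.
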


We will only prove (\ref{eq:comm2}) as (\ref{eq:comm1}) follows similarly. Because of the way $\E_{i,m}$ is defined it suffices to prove the case $m=0$. To do this we identify in the following two propositions explicit expressions for $\F_i \E_i \1_\l$ and $\E_i \F_i \1_\l$ which we then compare. For simplicity we will write $\E$ and $\F$ for $\E_i$ and $\F_i$. 

\begin{prop}\label{prop:FEcomp}
The composition $\F \E \1_\l$ is homotopic to the complex
\begin{equation}\label{eq:FE}
\dots \rightarrow \bigoplus_{i \ge 0,-l} \P^{(i+1, 1^{\l+i+l})} \Q^{(l+i+1, 1^{\l+i})} \la l \ra \rightarrow \bigoplus_{i \ge 0,-l-1} \P^{(i+1, 1^{\l+i+l+1})} \Q^{(l+i+2, 1^{\l+i})} \la l+1 \ra \rightarrow \dots
\end{equation}
where the nonzero part of the differential maps the summand $\P^{(i+1, 1^{\l+i+l})} \Q^{(l+i+1, 1^{\l+i})} \la l \ra$ to the two terms 
$$\P^{(i+1,1^{\l+i+l+1})} \Q^{(l+i+2,1^{\l + i})} \la l+1 \ra \text{ and } \P^{(i,1^{\l + i + l})} \Q^{(l+i+1,1^{\l+i-1})} \la l+1 \ra$$
using a cup and cap as in figures (\ref{fig:1}) and (\ref{fig:2}) below. 
\end{prop}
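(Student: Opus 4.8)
The plan is to compute the composition $\F\E\1_\l$ explicitly as a total complex and then to strip it down to (\ref{eq:FE}) by repeated Gaussian elimination. Write $\l$ for the integer $\la\l,\alpha_i\ra$ and assume $\l\ge 0$ (if $\l<0$ one uses the complexes (\ref{eq:cpxC1}) and (\ref{eq:cpxC3}) in place of (\ref{eq:cpxC2}) and (\ref{eq:cpxC4}), and the argument is the same). By the definitions (\ref{eq:Emaps1}) with $m=0$, the $1$-morphism $\E\1_\l$ is the complex $\fC_i^-(\l+1)$ of (\ref{eq:cpxC2}), whose term in cohomological degree $-a$ is $C_a:=\P^{(a)}\Q^{(1^{\l+1+a})}\la -a\ra$ with cap differential $d^-\colon C_a\to C_{a-1}$; and the $\F$ occurring in $\F\E\1_\l$ is $\1_\l\F_{i,0}=\fC_i^+(\l+1)$ of (\ref{eq:cpxC4}), whose term in degree $b$ is $F_b:=\P^{(1^{\l+1+b})}\Q^{(b)}\la b\ra$ with cup differential $d^+\colon F_b\to F_{b+1}$. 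Hence $\F\E\1_\l$ is the total complex of the horizontal composition: in cohomological degree $b-a$ it has the summand
$$\P^{(1^{\l+1+b})}\,\Q^{(b)}\,\P^{(a)}\,\Q^{(1^{\l+1+a})}\la b-a\ra\qquad(a,b\ge 0),$$
and its differential combines $d^+$ (which raises $b$) and $d^-$ (which lowers $a$), with the usual sign.

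The next step is to decompose each such summand into indecomposable $1$-morphisms. Applying Proposition \ref{prop:rels1}(3) to the inner $\Q^{(b)}\P^{(a)}$ and then Proposition \ref{prop:rels1}(2) (together with its evident $\Q$-analogue) to the resulting products $\P^{(\cdot)}\P^{(1^{\cdot})}$ and $\Q^{(\cdot)}\Q^{(1^{\cdot})}$, one obtains a direct sum decomposition over $k\ge 0$ and $\ep,\ep'\in\{0,1\}$, with summands
$$\P^{(a-k+\ep,\,1^{\l+1+b-\ep})}\,\Q^{(b-k+\ep',\,1^{\l+1+a-\ep'})}\otimes_\k V_k\,\la b-a\ra .$$
Setting $i:=a$ and $l:=b-a$, the piece with $k=0,\ \ep=\ep'=1$ is precisely the $i$-th summand of (\ref{eq:FE}) in cohomological degree $l$. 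Call the union of these the \emph{staircase part} and everything else (the summands with $k\ge 1$, or with $\ep\ep'\ne 1$) the \emph{auxiliary part}.

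The heart of the argument is then to produce enough isomorphism components of the total differential to cancel the auxiliary part. Here I would use Lemma \ref{lem:differentials}: a degree-one $2$-morphism between hook-times-hook $1$-morphisms vanishes unless the two partition sizes each drop (or each rise) by one, in which case the space is one-dimensional and spanned by a single cap (resp. cup); and the relevant structure maps are verified to be nonzero multiples of these generators by the closing-off-and-simplifying technique of Lemma \ref{lem:onedimhoms}. Feeding the explicit caps and cups making up $d^-$ and $d^+$ into this description, one finds (i) that on the staircase part $d^+$ restricts to the cup of figure (\ref{fig:1}) and $d^-$ to the cap of figure (\ref{fig:2}), which is exactly the differential of (\ref{eq:FE}); and (ii) that every summand of the auxiliary part is joined by an isomorphism component of the total differential to another auxiliary summand in an adjacent cohomological degree — roughly, the $V_k$-copy of one hook-times-hook cancels against a $V_{k\mp 1}$-copy of a neighbouring one, and the $(\ep,\ep')\ne(1,1)$ families cancel among themselves. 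Applying Lemma \ref{lem:cancel} to each such pair (in a fixed cohomological degree the auxiliary part is a locally finite direct sum, so the resulting infinite Gaussian elimination is well defined), and checking via Lemma \ref{lem:differentials} that the correction terms $BD^{-1}C$ neither resurrect an auxiliary summand nor disturb the staircase differential, leaves exactly the complex (\ref{eq:FE}).

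The main obstacle, I expect, is the bookkeeping of the last paragraph: organizing the infinitely many auxiliary summands — with their $V_k$-multiplicities for all $k\ge 0$ — into cancelling pairs, and verifying that the Gaussian elimination stabilizes in each cohomological degree with no residual differential. Lemma \ref{lem:differentials} reduces everything to a combinatorial problem about hook partitions, but the number of cases is large, and a secondary nuisance is tracking the nonzero scalars attached to the surviving cup and cap in figures (\ref{fig:1}) and (\ref{fig:2}).
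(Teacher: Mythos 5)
Your proposal follows essentially the same route as the paper: expand $\F\E\1_\l$ as the bicomplex of $\P^{(1^{\cdot})}\Q^{(\cdot)}\P^{(\cdot)}\Q^{(1^{\cdot})}$ terms, decompose via Proposition \ref{prop:rels1} into hook-times-hook indecomposables with $V_k$-multiplicities, cancel everything except the $k=0$, $(\ep,\ep')=(1,1)$ ``staircase'' by Gaussian elimination, and pin down the surviving cup/cap differentials using Lemma \ref{lem:differentials} and the closing-off argument. The only difference is organizational: the paper groups the auxiliary summands into three-term subcomplexes $V_{j_0+1}\la-1\ra\to V_{j_0}\oplus V_{j_0+2}\to V_{j_0+1}\la1\ra$ and proves their exactness in Lemma \ref{lem:1} (which is where the nonvanishing of the degree-zero components you defer to the closing-off technique is actually established), which amounts to the same pairing-off you describe.
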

\begin{equation}\label{fig:1}
\begin{tikzpicture}[scale=.75][>=stealth]
\shadedraw[gray]  (0,0) rectangle (4,.5);
\draw (2,.25) node {$(i+1,1^{\l + i+ l})$};
\shadedraw[gray]  (5,0) rectangle (9,.5);
\draw (7,.25) node {$(l+i+1,1^{\l + i})$};
\draw (1,.5) -- (1,1) [->][very thick];
\draw (3,.5) -- (3,1) [->][very thick];
\draw (6,.5) -- (6,1) [<-][very thick];
\draw (8,.5) -- (8,1) [<-][very thick];

\draw (-.25,1) rectangle (1.75,1.5);
\draw (.75,1.25) node {$(i)$};
\filldraw [gray] (2.25,1) rectangle (4.25,1.5);
\draw (3.25,1.25) node {$(1^{\l+i+l+1})$};
\draw (4.4,1) rectangle (7.1,1.5);
\draw (5.75,1.25) node {$(l+i)$};
\filldraw[gray] (7.5,1) rectangle (9.5,1.5);
\draw (8.5,1.25) node {$(1^{\l + i+1})$};

\draw [shift = {+(0,2)}](-.25,1) rectangle (1.75,1.5);
\draw [shift = {+(0,2)}](.75,1.25) node {$(i)$};
\filldraw[gray] [shift = {+(0,2)}](2.25,1) rectangle (4.25,1.5);
\draw [shift = {+(0,2)}](3.25,1.25) node {$(1^{\l + i + l + 2})$};
\draw [shift = {+(0,2)}](4.4,1) rectangle (7.1,1.5);
\draw [shift = {+(0,2)}](5.75,1.25) node {$(l+i+1)$};
\filldraw[gray] [shift = {+(0,2)}](7.5,1) rectangle (9.5,1.5);
\draw [shift = {+(0,2)}](8.5,1.25) node {$(1^{\l + i + 1})$};

\draw (1,1.5) -- (1,3) [->][very thick];
\draw (3,1.5) -- (3,3) [->][very thick];
\draw (6,1.5) -- (6,3) [<-][very thick];
\draw (8,1.5) -- (8,3) [<-][very thick];
\draw (3.75,3) arc (180:360:.75cm and .5cm)[<-][very thick];

\shadedraw[gray] [shift = {+(0,4)}](0,0) rectangle (4,.5);
\draw [shift = {+(0,4)}](2,.25) node {$(i+1,1^{\l + i+ l+1})$};
\shadedraw[gray]  [shift = {+(0,4)}](5,0) rectangle (9,.5);
\draw [shift = {+(0,4)}](7,.25) node {$(l+i+2,1^{\l +i})$};
\draw [shift = {+(0,3)}](1,.5) -- (1,1) [->][very thick];
\draw [shift = {+(0,3)}](3,.5) -- (3,1) [->][very thick];
\draw [shift = {+(0,3)}](6,.5) -- (6,1) [<-][very thick];
\draw [shift = {+(0,3)}](8,.5) -- (8,1) [<-][very thick];

\end{tikzpicture}
\end{equation}

\begin{equation}\label{fig:2}
\begin{tikzpicture}[scale=.75][>=stealth]
\shadedraw[gray]  (0,0) rectangle (4,.5);
\draw (2,.25) node {$(i+1,1^{\l + i+ l})$};
\shadedraw[gray]  (5,0) rectangle (9,.5);
\draw (7,.25) node {$(l+i+1,1^{\l + i})$};
\draw (1,.5) -- (1,1) [->][very thick];
\draw (3,.5) -- (3,1) [->][very thick];
\draw (6,.5) -- (6,1) [<-][very thick];
\draw (8,.5) -- (8,1) [<-][very thick];

\filldraw [gray] (-.25,1) rectangle (1.75,1.5);
\draw (.75,1.25) node {$(1^{\l+i+l+1})$};
\draw (2.25,1) rectangle (4.25,1.5);
\draw (3.25,1.25) node {$(i)$};
\filldraw[gray] (4.4,1) rectangle (7.1,1.5);
\draw (5.75,1.25) node {$(1^{\l + i + 1})$};
\draw (7.5,1) rectangle (9.5,1.5);
\draw (8.5,1.25) node {$(l+i)$};

\filldraw[gray] [shift = {+(0,2)}](-.25,1) rectangle (1.75,1.5);
\draw [shift = {+(0,2)}](.75,1.25) node {$(1^{\l+i+l+1})$};
\draw [shift = {+(0,2)}](2.25,1) rectangle (4.25,1.5);
\draw [shift = {+(0,2)}](3.25,1.25) node {$(i-1)$};
\filldraw[gray] [shift = {+(0,2)}](4.4,1) rectangle (7.1,1.5);
\draw [shift = {+(0,2)}](5.75,1.25) node {$(1^{\l+i})$};
\draw [shift = {+(0,2)}](7.5,1) rectangle (9.5,1.5);
\draw [shift = {+(0,2)}](8.5,1.25) node {$(l+i)$};

\draw (1,1.5) -- (1,3) [->][very thick];
\draw (3,1.5) -- (3,3) [->][very thick];
\draw (6,1.5) -- (6,3) [<-][very thick];
\draw (8,1.5) -- (8,3) [<-][very thick];
\draw (5.25,1.5) arc (0:180:.75cm and .5cm)[<-][very thick];

\shadedraw[gray] [shift = {+(0,4)}](0,0) rectangle (4,.5);
\draw [shift = {+(0,4)}](2,.25) node {$(i,1^{\l + i+ l})$};
\shadedraw[gray]  [shift = {+(0,4)}](5,0) rectangle (9,.5);
\draw [shift = {+(0,4)}](7,.25) node {$(l+i+1,1^{\l +i-1})$};
\draw [shift = {+(0,3)}](1,.5) -- (1,1) [->][very thick];
\draw [shift = {+(0,3)}](3,.5) -- (3,1) [->][very thick];
\draw [shift = {+(0,3)}](6,.5) -- (6,1) [<-][very thick];
\draw [shift = {+(0,3)}](8,.5) -- (8,1) [<-][very thick];
\end{tikzpicture}
\end{equation}
\begin{proof}
The composition $\F \E \1_\l$ is equal to the complex
\begin{equation}\label{eq:1}
\dots \rightarrow \bigoplus_{i \ge 0, -l} \P^{(1^{\l+1+i+l})} \Q^{(l+i)} \P^{(i)} \Q^{(1^{\l+1+i})} \la l \ra \rightarrow \dots
\end{equation}
where the terms occurs in cohomological degree $l$. One can decompose the $\Q^{(l+i)} \P^{(i)}$ part of the expression to obtain 
\begin{equation}\label{eq:2}
\dots \rightarrow \bigoplus_{i \ge 0,-l \ge j-i} \P^{(1^{\l+1+i+l})} \P^{(i-j)} \Q^{(l+i-j)} \Q^{(1^{\l+1+i})} \otimes_\k V_j \la l \ra \rightarrow \dots
\end{equation}
where $V_j$ is the graded vector space in (\ref{eq:V_j}). Now, by Proposition \ref{prop:rels1}, each term above breaks up into four terms. Let us consider the term $\P^{(i_0-j_0, 1^{\l+i_0+l_0+1})} \Q^{(l_0+i_0-j_0, 1^{\l+1+i_0})}$ for some fixed $(i_0,j_0,l_0)$.  This indecomposable 1-morphism 
occurs four times, namely, when 
\begin{equation}\label{eq:temp}
(i,j,l) \text{ equals } (i_0,j_0,l_0), (i_0+1,j_0+2,l_0), (i_0+1,j_0+1,l_0-1) \text{ or } (i_0,j_0+1,l_0+1).
\end{equation}
When $-1 \le j_0 < \min(i_0,i_0+l)$, these terms taken together form a subcomplex 
\begin{equation}\label{eq:6}
\P^{(i_0-j_0, 1^{\l+1+i_0+l_0})} \Q^{(l_0+i_0-j_0, 1^{\l+1+i_0})} \la l_0 \ra \otimes_\k \left( V_{j_0+1} \la -1 \ra \rightarrow V_{j_0} \oplus V_{j_0+2} \rightarrow V_{j_0+1} \la 1 \ra \right)
\end{equation}
We empose the restriction $-1 \le j_0 < \min(i_0,i_0+l_0)$ because otherwise the term in (\ref{eq:2}) disappears for at least one of the choices of $(i,j,l)$ in (\ref{eq:temp}). In Lemma \ref{lem:1} below, we show that the complex in equation (\ref{eq:6}) exact. Thus we can cancel out such terms using the cancellation Lemma \ref{lem:cancel}. 

If $j_0 > \min(i_0,i_0+l_0)$ then the term $\P^{(i_0-j_0, 1^{\l+i_0+l_0+1})} \Q^{(\l+i_0-j_0, 1^{\l+1+i_0})}$ vanishes because (\ref{eq:2}) disappears for all choices of $(i,j,l)$ in (\ref{eq:temp}). So it remains to study the case $j_0 = \min(i_0,i_0+l_0)$. For convenience let us assume $l_0 \ge 0$ so that $j_0 = i_0$ (the case $l_0 \le 0$ is similar). 

{\bf Case $j_0 = i_0$, $|l_0| \ge 2$.} Here we are interested in terms of the form $\P^{(1^{\l+1+i_0+l_0})} \Q^{(l_0,1^{\l+1+i_0})}$ when $l_0 \ge 2$. Looking again at (\ref{eq:2}), such terms occur when 
$$(i,j,l) \text{ equals } (i_0,i_0,l_0), (i_0,i_0-1,l_0-1), (i_0+1,i_0+1,l_0-1) \text{ or } (i_0+1,i_0,l_0-2).$$ 
So the resulting complex looks like 
$$\P^{(1^{\l+1+i_0+l_0})} \Q^{(l_0,1^{\l+1+i_0})} \la l_0-1 \ra \otimes_\k \left(V_{i_0} \la -1 \ra \rightarrow V_{i_0+1} \oplus V_{i_0-1} \rightarrow V_{i_0} \la 1 \ra \right).$$
The same type of argument used in Lemma \ref{lem:1} works to show that it is exact. 

{\bf Case $j_0 = i_0$, $l_0=0,1$.} Here we are looking at terms of the form $\P^{(1^a)} \Q^{(1^a)}$. Such terms occur in (\ref{eq:2}) when 
$$(i,j,l) \text{ equals } (a-\l,a-\l-1,-1), (a-\l,a-\l,0), (a-\l-1,a-\l-2,0) \text{ or } (a-\l-1,a-\l-1,1)$$ 
and again we get an exact complex. 

{\bf Case $j_0=-2$.} So it seems like every complex is exact, but we missed one case, namely terms of the form $\P^{(i+1, 1^{\l+i+l})} \Q^{(l+i+1, 1^{\l+i})}$ which occur when $j_0=-2$. Then three of the four terms in the complex in (\ref{eq:6}) are zero and we are left with just one term $\P^{(i+1, 1^{\l+i+l})} \Q^{(l+i+1, 1^{\l+i})} \la l \ra$ in degree $l$. Such indecomposable terms only occur once and hence cannot cancel out with anything else. Putting all these terms together we obtain a complex like in (\ref{eq:FE}). 

Finally, we need to determine the differential 
$$\P^{(i+1, 1^{\l+i+l})} \Q^{(l+i+1, 1^{\l+i})} \la l \ra \rightarrow \P^{(i+1,1^{\l+i+l+1})} \Q^{(l+i+2,1^{\l+i})} \la l+1 \ra.$$ 
For degree reasons it is not hard to see that the cancellation above does not alter the original differential between such terms. This differential is given by the following composition 
$$
\begin{tikzpicture}[scale=.75][>=stealth]
\shadedraw[gray]  (0,0) rectangle (4,.5);
\draw (2,.25) node {$(i+1,1^{\l + i+ l})$};
\shadedraw[gray]  (5,0) rectangle (9,.5);
\draw (7,.25) node {$(l+i+1,1^{\l + i})$};
\draw (1,.5) -- (1,1) [->][very thick];
\draw (3,.5) -- (3,1) [->][very thick];
\draw (6,.5) -- (6,1) [<-][very thick];
\draw (8,.5) -- (8,1) [<-][very thick];

\filldraw [gray](-.25,1) rectangle (1.75,1.5);
\draw (.75,1.25) node {$(1^{\l+i+l})$};
\draw (2.25,1) rectangle (4.25,1.5);
\draw (3.25,1.25) node {$(i+1)$};
\draw (4.4,1) rectangle (7.1,1.5);
\draw (5.75,1.25) node {$(l+i+1)$};
\filldraw[gray] (7.5,1) rectangle (9.5,1.5);
\draw (8.5,1.25) node {$(1^{\l + i})$};

\draw [shift = {+(0,2)}](1,-.5) -- (1,1) [->][very thick];
\draw [shift = {+(0,2)}](3,-.5) -- (6,1) [->][very thick];
\draw [shift = {+(0,2)}](6,-.5) -- (3,1) [<-][very thick];
\draw [shift = {+(0,2)}](8,-.5) -- (8,1) [<-][very thick];

\filldraw [gray][shift = {+(0,2)}](-.25,1) rectangle (1.75,1.5);
\draw [shift = {+(0,2)}](.75,1.25) node {$(1^{\l+i+l})$};
\draw [shift = {+(0,2)}](2.25,1) rectangle (4.25,1.5);
\draw [shift = {+(0,2)}](3.25,1.25) node {$(l+i+1)$};
\draw [shift = {+(0,2)}](4.4,1) rectangle (7.1,1.5);
\draw [shift = {+(0,2)}](5.75,1.25) node {$(i+1)$};
\filldraw[gray] [shift = {+(0,2)}](7.5,1) rectangle (9.5,1.5);
\draw [shift = {+(0,2)}](8.5,1.25) node {$(1^{\l + i})$};

\filldraw [gray][shift = {+(0,4)}](-.25,1) rectangle (1.75,1.5);
\draw [shift = {+(0,4)}](.75,1.25) node {$(1^{\l+i+l+1})$};
\draw [shift = {+(0,4)}](2.25,1) rectangle (4.25,1.5);
\draw [shift = {+(0,4)}](3.25,1.25) node {$(l+i+2)$};
\draw [shift = {+(0,4)}](4.4,1) rectangle (7.1,1.5);
\draw [shift = {+(0,4)}](5.75,1.25) node {$(i+1)$};
\filldraw[gray] [shift = {+(0,4)}](7.5,1) rectangle (9.5,1.5);
\draw [shift = {+(0,4)}](8.5,1.25) node {$(1^{\l + i})$};

\draw (1.25,5) arc (180:360:.75cm and .5cm)[<-][very thick];
\draw [shift = {+(0,4)}](1,-.5) -- (1,1) [->][very thick];
\draw [shift = {+(0,4)}](3,-.5) -- (3,1) [<-][very thick];
\draw [shift = {+(0,4)}](6,-.5) -- (6,1) [->][very thick];
\draw [shift = {+(0,4)}](8,-.5) -- (8,1) [<-][very thick];

\draw [shift = {+(0,6)}](1,-.5) -- (1,1) [->][very thick];
\draw [shift = {+(0,6)}](3,-.5) -- (6,1) [<-][very thick];
\draw [shift = {+(0,6)}](6,-.5) -- (3,1) [->][very thick];
\draw [shift = {+(0,6)}](8,-.5) -- (8,1) [<-][very thick];

\filldraw [gray][shift = {+(0,6)}](-.25,1) rectangle (1.75,1.5);
\draw [shift = {+(0,6)}](.75,1.25) node {$(1^{\l+i+l+1})$};
\draw [shift = {+(0,6)}](2.25,1) rectangle (4.25,1.5);
\draw [shift = {+(0,6)}](3.25,1.25) node {$(i+1)$};
\draw [shift = {+(0,6)}](4.4,1) rectangle (7.1,1.5);
\draw [shift = {+(0,6)}](5.75,1.25) node {$(l+i+2)$};
\filldraw[gray] [shift = {+(0,6)}](7.5,1) rectangle (9.5,1.5);
\draw [shift = {+(0,6)}](8.5,1.25) node {$(1^{\l + i})$};

\shadedraw[gray] [shift = {+(0,8)}](0,0) rectangle (4,.5);
\draw [shift = {+(0,8)}](2,.25) node {$(i+1,1^{\l + i+ l+1})$};
\shadedraw[gray]  [shift = {+(0,8)}](5,0) rectangle (9,.5);
\draw [shift = {+(0,8)}](7,.25) node {$(l+i+2,1^{\l +i})$};
\draw [shift = {+(0,7)}](1,.5) -- (1,1) [->][very thick];
\draw [shift = {+(0,7)}](3,.5) -- (3,1) [->][very thick];
\draw [shift = {+(0,7)}](6,.5) -- (6,1) [<-][very thick];
\draw [shift = {+(0,7)}](8,.5) -- (8,1) [<-][very thick];
\end{tikzpicture}
$$
where the cup in the middle of the diagram comes from the differential in the complex for $\F$. It is not hard to check that this diagram simplifies to give the map in (\ref{fig:1}). Likewise, one has such a diagram which simplifies to give the map in (\ref{fig:2}). Finally, all the other possible differential have to be zero by Lemma \ref{lem:differentials}. Thus, in the end, we get the complex in (\ref{eq:FE}). 
\end{proof}

\begin{lemma}\label{lem:1}
The complex in (\ref{eq:6}) is exact. 
\end{lemma}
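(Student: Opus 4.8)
The complex in \eqref{eq:6} is obtained by tensoring the fixed indecomposable 1-morphism $\P^{(i_0-j_0,1^{\l+1+i_0+l_0})}\Q^{(l_0+i_0-j_0,1^{\l+1+i_0})}\la l_0\ra$ with the three-term complex of graded vector spaces
$$
V_{j_0+1}\la -1\ra \to V_{j_0}\oplus V_{j_0+2} \to V_{j_0+1}\la 1\ra,
$$
so it suffices to show that \emph{this} complex of graded vector spaces is acyclic (equivalently, that its graded Euler characteristic vanishes and the maps have the right rank). The plan is first to identify the maps: the two nonzero components of the differential come from a cap and a cup (as pictured in figures \eqref{fig:1} and \eqref{fig:2}), and by Lemma \ref{lem:differentials} these are, up to a nonzero scalar, the unique degree-one maps between the relevant terms. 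So the differentials are honest (nonzero) maps once we know the relevant $\Hom$ spaces are one-dimensional, which is exactly Lemma \ref{lem:differentials}.

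Next I would reduce to pure bookkeeping with graded dimensions. Using the defining formula \eqref{eq:V_j}, one has the generating-function identity $\gdim V_j = [j+1] = q^{j}+q^{j-2}+\dots+q^{-j}$ for $j\ge 0$ and $V_j=0$ for $j<0$. The required exactness of
$$
V_{j_0+1}\la -1\ra \xrightarrow{\ d_1\ } V_{j_0}\oplus V_{j_0+2} \xrightarrow{\ d_2\ } V_{j_0+1}\la 1\ra
$$
then amounts to the two statements: (a) $d_1$ is injective and $d_2$ is surjective, and (b) the graded Euler characteristic is zero, i.e. $q^{-1}[j_0+2] - [j_0+1] - [j_0+3] + q[j_0+2] = 0$, which is the elementary identity $(q+q^{-1})[j_0+2] = [j_0+1]+[j_0+3]$. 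Given (a) and (b), a dimension count in each graded piece forces $\ker d_2 = \operatorname{im} d_1$, so the complex is exact. For (a), the map $d_1$ (resp.\ $d_2$) is, after unwinding figures \eqref{fig:1}–\eqref{fig:2}, built from a single cap (resp.\ cup) composed with an injection (resp.\ projection) of idempotents of the symmetric group, and injectivity/surjectivity can be checked by the same closing-off technique used around \eqref{eq:nonzeromap}: precompose $d_1$ with the relevant cup (or postcompose $d_2$ with the relevant cap) and verify the resulting endomorphism of $\1_n$ is a nonzero multiple of the identity by expanding idempotents and cancelling curls and double crossings via the graphical relations.

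The main obstacle I anticipate is not the numerology in (b)—that is a one-line quantum-integer identity—but rather pinning down that the two components of the differential that survive the cancellation in the proof of Proposition \ref{prop:FEcomp} really are the cap and the cup of figures \eqref{fig:1} and \eqref{fig:2} with nonzero coefficients, and that no further (possibly cancelling) contributions appear. This is handled by a degree argument: any 2-morphism between the relevant summands of $\F\E\1_\l$ that could contribute has degree $1$, and by Lemma \ref{lem:differentials} (applied to the $\P$-parts and $\Q$-parts separately) the space of such maps is at most one-dimensional and spanned by a single cap or cup; hence the induced differential is determined up to scalar, and the scalar is nonzero because, as above, closing it off yields a nonzero endomorphism of $\1_n$. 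Once the maps are so identified, exactness follows formally from the graded-dimension computation, completing the proof.
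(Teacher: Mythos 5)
There is a genuine gap, and it begins with a misidentification of the differentials of \eqref{eq:6}. The single caps and cups of figures (\ref{fig:1}) and (\ref{fig:2}), and Lemma \ref{lem:differentials}, describe the differentials of the complex \eqref{eq:FE} that \emph{survives} the cancellation in Proposition \ref{prop:FEcomp}: those are degree-one maps between distinct indecomposables whose partitions differ by one box. The three-term complex \eqref{eq:6} is of a different nature: all of its terms are grading shifts of one and the same indecomposable $A = \P^{(i_0-j_0,1^{\l+1+i_0+l_0})}\Q^{(l_0+i_0-j_0,1^{\l+1+i_0})}$, and its differentials are the components, induced from the cup/cap differentials of $\E$ and $\F$ after decomposing $\Q^{(l+i)}\P^{(i)}$, between these copies. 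These components are compositions of a cup and a cap carrying dots; as maps between shifted copies of $A$ they are endomorphisms of degree $0$ or $2$, Lemma \ref{lem:differentials} does not apply (it requires strict inequalities of partition sizes), and they are \emph{not} of the form $\id_A\otimes d$ for maps $d$ of graded vector spaces. Hence your opening reduction -- ``it suffices to show this complex of graded vector spaces is acyclic'' -- is not available, and the identity $(q+q^{-1})[j_0+2]=[j_0+1]+[j_0+3]$, while true, carries no force on its own; exactness here means the subcomplex of 1-morphisms cancels completely via Lemma \ref{lem:cancel}, which is a statement about the actual 2-morphism entries, not about graded dimensions.

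The more serious gap is in your step (a). The source of the first differential is a direct sum of $\dim V_{j_0+1}$ shifted copies of $A$, so the (split) injectivity needed to run the cancellation requires control of the entire matrix of 2-morphisms; closing off one diagram to obtain a nonzero multiple of the identity of $\1_n$ only shows that a single entry is nonzero, which is far from injectivity. This matrix computation is precisely the content of the paper's proof: counting dots on the cups and caps (using that a dot squares to zero, that an undotted counterclockwise circle vanishes, and that a left curl vanishes) shows that only two kinds of entries survive, an identity $f_1$ and a degree-two endomorphism $f_2$, arranged in a staircase pattern whose identity entries make the component $V_{j_0+1}\la -1\ra\otimes A \to V_{j_0+2}\otimes A$ split injective; even the identification $f_1=\id$ rests on a symmetric-group computation with the idempotents $a_T,b_T$. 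A parallel computation gives surjectivity of the component onto $V_{j_0+1}\la 1\ra\otimes A$, and only then does the dimension count (your step (b), which does coincide with the paper's final step) close the argument. Without this explicit matrix analysis your proposal does not establish the exactness claimed in the lemma.
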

\begin{proof}
We need to show that the complex 
\begin{equation}\label{eq:6'}
\P^{(i_0-j_0, 1^{\l+1+i_0+l_0})} \Q^{(l+i_0-j_0, 1^{\l+1+i_0})} \la l_0 \ra \otimes_\k \left( V_{j_0+1} \la -1 \ra \rightarrow V_{j_0} \oplus V_{j_0+2} \rightarrow V_{j_0+1} \la 1 \ra \right)
\end{equation}
is exact. To do this we first show that the map $\alpha: V_{j_0+1}\la -1 \ra \rightarrow V_{j_0+2}$ at the bottom left of the complex is injective. This map is given as a sum of the following compositions: 

$$
\begin{tikzpicture}[scale=.92][>=stealth]
\shadedraw[gray]  (0,0) rectangle (4,.5);
\draw (2,.25) node {$(i_0-j_0,1^{\l + 1 + i_0 + l_0})$};
\shadedraw[gray]  (5,0) rectangle (9,.5);
\draw (7,.25) node {$(l_0+i_0-j_0,1^{\l + 1 + l_0})$};
\draw (1,.5) -- (1,1) [->][very thick];
\draw (3,.5) -- (3,1) [->][very thick];
\draw (6,.5) -- (6,1) [<-][very thick];
\draw (8,.5) -- (8,1) [<-][very thick];

\filldraw[gray] (-.25,1) rectangle (1.75,1.5);
\draw (.75,1.25) node {$(1^{\l + 1 + i_0 + l_0})$};
\draw (2.25,1) rectangle (4.25,1.5);
\draw (3.25,1.25) node {$(i_0-j_0)$};
\draw (4.4,1) rectangle (7.1,1.5);
\draw (5.75,1.25) node {$(l_0+i_0-j_0-1)$};
\filldraw[gray] (7.5,1) rectangle (9.5,1.5);
\draw (8.5,1.25) node {$(1^{\l + 2 + i_0})$};

\draw (1,1.5) -- (1,3)[->][very thick];
\draw (8.5,1.5) -- (8.5,3)[<-][very thick];
\draw (2.75,1.5) -- (6.75,3) [->] [very thick];
\draw (6.25,1.5) -- (2.75,3) [<-] [very thick];
\draw (3.75,3) arc (180:360:.75cm and .5cm)[->][very thick];
\draw (3.95,3) arc (180:360:.5cm and .3cm)[->][very thick];
\filldraw [blue](4.5,2.5) circle (3pt);

\filldraw[gray] [shift = {+(0,2)}](-.25,1) rectangle (1.75,1.5);
\draw [shift = {+(0,2)}](.75,1.25) node {$(1^{\l + 1 + i_0 + l_0})$};
\draw [shift = {+(0,2)}](2.25,1) rectangle (4.25,1.5);
\draw [shift = {+(0,2)}](3.25,1.25) node {$(l_0+i_0)$};
\draw [shift = {+(0,2)}](4.4,1) rectangle (7.1,1.5);
\draw [shift = {+(0,2)}](5.75,1.25) node {$(i_0+1)$};
\filldraw[gray] [shift = {+(0,2)}](7.5,1) rectangle (9.5,1.5);
\draw [shift = {+(0,2)}](8.5,1.25) node {$(1^{\l + 2 + i_0})$};

\filldraw[gray] [shift = {+(0,4)}](-.25,1) rectangle (1.75,1.5);
\draw [shift = {+(0,4)}](.75,1.25) node {$(1^{\l + 2 + i_0 + l_0})$};
\draw [shift = {+(0,4)}](2.25,1) rectangle (4.25,1.5);
\draw [shift = {+(0,4)}](3.25,1.25) node {$(l_0+i_0+1)$};
\draw [shift = {+(0,4)}](4.4,1) rectangle (7.1,1.5);
\draw [shift = {+(0,4)}](5.75,1.25) node {$(i_0+1)$};
\filldraw[gray] [shift = {+(0,4)}](7.5,1) rectangle (9.5,1.5);
\draw [shift = {+(0,4)}](8.5,1.25) node {$(1^{\l + 2 + i_0})$};

\draw [shift = {+(0,2)}](1,1.5) -- (1,3)[->][very thick];
\draw [shift = {+(0,2)}](8.5,1.5) -- (8.5,3)[<-][very thick];
\draw  [shift = {+(0,4)}](3,-.5) -- (3,1) [<-][very thick];
\draw  [shift = {+(0,4)}](6,-.5) -- (6,1) [->][very thick];
\draw (1.25,5) arc (180:360:.75cm and .5cm)[<-][very thick];

\draw [shift = {+(4,5)}](1.25,.5) arc (0:180:.75cm and .5cm)[->][very thick];
\draw [shift = {+(4,5)}](1,.5) arc (0:180:.5cm and .4cm)[->][very thick];
\draw [shift = {+(4,5)}](.75,.5) arc (0:180:.3cm and .25cm)[->][very thick];
\draw  [shift = {+(0,4)}](1,1.5) -- (1,3)[->][very thick];
\draw  [shift = {+(0,4)}](8.5,1.5) -- (8.5,3)[<-][very thick];
\draw  [shift = {+(0,4)}](2.75,1.5) -- (6.75,3) [<-] [very thick];
\draw  [shift = {+(0,4)}](6.25,1.5) -- (2.75,3) [->] [very thick];
\filldraw [blue](4.5,5.9) circle (3pt);
\filldraw [blue](4.5,5.75) circle (3pt);

\filldraw[gray] [shift = {+(0,6)}](-.25,1) rectangle (1.75,1.5);
\draw [shift = {+(0,6)}](.75,1.25) node {$(1^{\l + 2 + i_0 + l_0})$};
\draw [shift = {+(0,6)}](2.25,1) rectangle (4.25,1.5);
\draw [shift = {+(0,6)}](3.25,1.25) node {$(i_0-j_0-1)$};
\draw [shift = {+(0,6)}](4.4,1) rectangle (7.1,1.5);
\draw [shift = {+(0,6)}](5.75,1.25) node {$(l_0+i_0-j_0-1)$};
\filldraw[gray] [shift = {+(0,6)}](7.5,1) rectangle (9.5,1.5);
\draw [shift = {+(0,6)}](8.5,1.25) node {$(1^{\l + 2 + i_0})$};

\shadedraw[gray] [shift = {+(0,8)}](0,0) rectangle (4,.5);
\draw [shift = {+(0,8)}](2,.25) node {$(i_0-j_0,1^{\l + 1 + i_0 + l_0})$};
\shadedraw[gray]  [shift = {+(0,8)}](5,0) rectangle (9,.5);
\draw [shift = {+(0,8)}](7,.25) node {$(l_0+i_0-j_0,1^{\l + 1 + l_0})$};
\draw [shift = {+(0,7)}](1,.5) -- (1,1) [->][very thick];
\draw [shift = {+(0,7)}](3,.5) -- (3,1) [->][very thick];
\draw [shift = {+(0,7)}](6,.5) -- (6,1) [<-][very thick];
\draw [shift = {+(0,7)}](8,.5) -- (8,1) [<-][very thick];
\end{tikzpicture}
$$
Here there are $j_0+1$ right pointing cups in the bottom part of the above picture, one left pointing cup in the middle, and $j_0 + 2$ left pointing caps in the top part. The sum is taken over a basis of the space of solid dots on the right pointing cups (this space is $V_{j_0+1}$ given by $0, 1, \dots, j_0+1$ dots) and a basis for the space of solid dots on the left cups (this space is $V_{j_0+2}$ given by $0,1, \dots, j_0+2$ dots).

If there are $k$ dots on the right pointing cups then the diagram evaluates to zero unless the left pointing caps have either $j_0+1-k$ or $j_0+2-k$ dots. This is because of the following three facts:
\begin{itemize}
\item two dots on the same strand is zero 
\item a counter-clockwise circle with no dots is also zero
\item a counter-clockwise loop on an upward strand is zero.
\end{itemize} 
Now let us look at the two cases when the diagram is nonzero.

{\bf Case 1.} In the first case, when the total number of dots is $j_0+1$ the diagram simplifies to 
$$
\begin{tikzpicture}[scale=.85][>=stealth]
\shadedraw[gray]  (0,0) rectangle (4,.5);
\draw (2,.25) node {$(i_0-j_0,1^{\l + 1 + i_0 + l_0})$};
\shadedraw[gray]  (5,0) rectangle (9,.5);
\draw (7,.25) node {$(l_0+i_0-j_0,1^{\l + 1 + l_0})$};
\draw (1,.5) -- (1,1) [->][very thick];
\draw (3,.5) -- (3,1) [->][very thick];
\draw (6,.5) -- (6,1) [<-][very thick];
\draw (8,.5) -- (8,1) [<-][very thick];

\filldraw[gray] (-.25,1) rectangle (1.75,1.5);
\draw (.75,1.25) node {$(1^{\l + 1 + i_0 + l_0})$};
\draw (2.25,1) rectangle (4.25,1.5);
\draw (3.25,1.25) node {$(i_0-j_0)$};
\draw (4.4,1) rectangle (7.1,1.5);
\draw (5.75,1.25) node {$(l_0+i_0-j_0-1)$};
\filldraw[gray] (7.5,1) rectangle (9.5,1.5);
\draw (8.5,1.25) node {$(1^{\l + 2 + i_0})$};

\filldraw[gray] [shift = {+(0,2)}](-.25,1) rectangle (1.75,1.5);
\draw [shift = {+(0,2)}](.75,1.25) node {$(1^{\l + 2 + i_0 + l_0})$};
\draw [shift = {+(0,2)}](2.25,1) rectangle (4.25,1.5);
\draw [shift = {+(0,2)}](3.25,1.25) node {$(i_0-j_0-1)$};
\draw [shift = {+(0,2)}](4.4,1) rectangle (7.1,1.5);
\draw [shift = {+(0,2)}](5.75,1.25) node {$(l_0+i_0-j_0-1)$};
\filldraw[gray] [shift = {+(0,2)}](7.5,1) rectangle (9.5,1.5);
\draw [shift = {+(0,2)}](8.5,1.25) node {$(1^{\l + 2 + i_0})$};

\draw (1,1.5) -- (1,3) [->][very thick];
\draw (2.75,1.5) -- (1.25,3) [->][very thick];
\draw (3,1.5) -- (3,3) [->][very thick];
\draw (6,1.5) -- (6,3) [<-][very thick];
\draw (8,1.5) -- (8,3) [<-][very thick];

\shadedraw[gray] [shift = {+(0,4)}](0,0) rectangle (4,.5);
\draw [shift = {+(0,4)}](2,.25) node {$(i_0-j_0,1^{\l + 1 + i_0 + l_0})$};
\shadedraw[gray]  [shift = {+(0,4)}](5,0) rectangle (9,.5);
\draw [shift = {+(0,4)}](7,.25) node {$(l_0+i_0-j_0,1^{\l + 1 + l_0})$};
\draw [shift = {+(0,3)}](1,.5) -- (1,1) [->][very thick];
\draw [shift = {+(0,3)}](3,.5) -- (3,1) [->][very thick];
\draw [shift = {+(0,3)}](6,.5) -- (6,1) [<-][very thick];
\draw [shift = {+(0,3)}](8,.5) -- (8,1) [<-][very thick];
\end{tikzpicture}.
$$
In this simplification we use the fact that a counter-clockwise circle with a degree two dot is equal to the identity and hence can be erased. Let us call the composition in the above diagram $f_1$. Now, the left part of the diagram, the part which involves only $\P$s, is the composition  
$$\P^{(a,1^b)} \rightarrow \P^{(a)} \P^{(1^b)} \rightarrow \P^{(a-1)} \P \P^{(1^b)} \rightarrow \P^{(a-1)} \P^{(1^{b+1})} \rightarrow \P^{(a,1^b)}$$
where $a = i_0-j_0$ and $b = \l+1+i_0+l_0$.  It is an exercise in the representation theory of the symmetric group that this composition is a nonzero multiple of the identity. Thus $f_1$ is (a nonzero multiple of) the identity.

{\bf Case 2.} In the second case, when the total number of dots is $j_0+2$ the diagram simplifies to almost the same thing, namely:
$$
\begin{tikzpicture}[scale=.92][>=stealth]
\shadedraw[gray]  (0,0) rectangle (4,.5);
\draw (2,.25) node {$(i_0-j_0,1^{\l + 1 + i_0 + l_0})$};
\shadedraw[gray]  (5,0) rectangle (9,.5);
\draw (7,.25) node {$(l_0+i_0-j_0,1^{\l + 1 + l_0})$};
\draw (1,.5) -- (1,1) [->][very thick];
\draw (3,.5) -- (3,1) [->][very thick];
\draw (6,.5) -- (6,1) [<-][very thick];
\draw (8,.5) -- (8,1) [<-][very thick];

\filldraw[gray] (-.25,1) rectangle (1.75,1.5);
\draw (.75,1.25) node {$(1^{\l + 1 + i_0 + l_0})$};
\draw (2.25,1) rectangle (4.25,1.5);
\draw (3.25,1.25) node {$(i_0-j_0)$};
\draw (4.4,1) rectangle (7.1,1.5);
\draw (5.75,1.25) node {$(l_0+i_0-j_0-1)$};
\filldraw[gray] (7.5,1) rectangle (9.5,1.5);
\draw (8.5,1.25) node {$(1^{\l + 2 + i_0})$};

\filldraw[gray] [shift = {+(0,2)}](-.25,1) rectangle (1.75,1.5);
\draw [shift = {+(0,2)}](.75,1.25) node {$(1^{\l + 2 + i_0 + l_0})$};
\draw [shift = {+(0,2)}](2.25,1) rectangle (4.25,1.5);
\draw [shift = {+(0,2)}](3.25,1.25) node {$(i_0-j_0-1)$};
\draw [shift = {+(0,2)}](4.4,1) rectangle (7.1,1.5);
\draw [shift = {+(0,2)}](5.75,1.25) node {$(l_0+i_0-j_0-1)$};
\filldraw[gray] [shift = {+(0,2)}](7.5,1) rectangle (9.5,1.5);
\draw [shift = {+(0,2)}](8.5,1.25) node {$(1^{\l + 2 + i_0})$};

\draw (1,1.5) -- (1,3) [->][very thick];
\draw (2.75,1.5) -- (1.25,3) [->][very thick];
\filldraw [blue] (2,2.25) circle (3pt);
\draw (3,1.5) -- (3,3) [->][very thick];
\draw (6,1.5) -- (6,3) [<-][very thick];
\draw (8,1.5) -- (8,3) [<-][very thick];

\shadedraw[gray] [shift = {+(0,4)}](0,0) rectangle (4,.5);
\draw [shift = {+(0,4)}](2,.25) node {$(i_0-j_0,1^{\l + 1 + i_0 + l_0})$};
\shadedraw[gray]  [shift = {+(0,4)}](5,0) rectangle (9,.5);
\draw [shift = {+(0,4)}](7,.25) node {$(l_0+i_0-j_0,1^{\l + 1 + l_0})$};
\draw [shift = {+(0,3)}](1,.5) -- (1,1) [->][very thick];
\draw [shift = {+(0,3)}](3,.5) -- (3,1) [->][very thick];
\draw [shift = {+(0,3)}](6,.5) -- (6,1) [<-][very thick];
\draw [shift = {+(0,3)}](8,.5) -- (8,1) [<-][very thick];
\end{tikzpicture}
$$
Let us denote this map by $f_2$. 

In terms of $f_1 = \id$ and $f_2$ the matrix for $\alpha: V_{j_0+1} \rightarrow V_{j_0+2}$ is
$$\alpha = 
\left[ \begin{matrix}  
0 & \hdots & 0 & f_2 \\
0 & \hdots & f_2 & \id \\ 
\hdots & \hdots & \hdots & \hdots \\ 
f_2 & \id  & 0 & \hdots \\ 
\id & 0 &\hdots & 0.
\end{matrix} \right]$$
Note that there are $j_0+2$ rows and $j_0+1$ columns in the above matrix. Since this matrix has rank $j_0+1$, it follows that $\alpha: V_{j_0+1} \la -1\ra\rightarrow V_{j_0+2}$ is injective.  

An almost identical analysis shows that the second map $\beta: V_{j_0+2} \rightarrow V_{j_0+1} \la 1 \ra$ in (\ref{eq:6'}) is surjective. Thus the complex in (\ref{eq:6'}) is exact, since the first map is injective, the second is surjective and the dimension of the middle term is the sum of the dimensions of the right and left terms.
\end{proof}

\begin{prop}\label{prop:EFcomp}
The composition $\E \F \1_\l$ is homotopic to the complex in (\ref{eq:FE}) direct sum $\1_\l \otimes_\k V_{\l-1}$. 
\end{prop}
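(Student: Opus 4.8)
The plan is to follow the proof of Proposition~\ref{prop:FEcomp} almost line for line; the only genuinely new phenomenon is that the final bookkeeping leaves behind a copy of $\1_\l\otimes_\k V_{\l-1}$. As in Proposition~\ref{prop:sl2comm} it suffices to treat the case $m=0$, and I would assume $\l\ge 1$ (when $\l=0$ the statement reads $\E\F\1_\l\cong\F\E\1_\l$ since $V_{-1}=0$, and this degenerate case follows from the same computation). With $m=0$ the complex $\F\1_\l$ is $\fC_i^+(\l-1)$, with terms $\P^{(1^{\l-1+r})}\Q^{(r)}\la r\ra$ for $r\ge 0$, and the copy of $\E$ applied afterwards is $\fC_i^-(\l-1)$, with terms $\P^{(p)}\Q^{(1^{\l-1+p})}\la -p\ra$ for $p\ge 0$. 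So the first step is to write $\E\F\1_\l$ as the corresponding (doubly unbounded) total complex, whose cohomological degree $n$ piece is $\bigoplus_{r-p=n,\ p,r\ge 0}\P^{(p)}\Q^{(1^{\l-1+p})}\P^{(1^{\l-1+r})}\Q^{(r)}\la r-p\ra$, with differential assembled from the cups and caps of the two factors.

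Next I would decompose each such term into indecomposables exactly as in Proposition~\ref{prop:FEcomp}. The one change is that the middle product to be simplified is now $\Q^{(1^{\l-1+p})}\P^{(1^{\l-1+r})}$ rather than one of the form $\Q^{(n)}\P^{(m)}$: applying the involution $\Psi$ to Proposition~\ref{prop:rels1}(3) gives $\Q^{(1^{\l-1+p})}\P^{(1^{\l-1+r})}\cong\bigoplus_{k\ge 0}\P^{(1^{\l-1+r-k})}\Q^{(1^{\l-1+p-k})}\otimes_\k V_k$, and then Proposition~\ref{prop:rels1}(2) (for the $\P$-part) together with its $\Psi$-transpose (for the $\Q$-part) breaks the resulting products into hooks. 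As before one fixes an indecomposable $\P^{(a,1^b)}\Q^{(c,1^d)}$, checks that away from certain boundary values of the indices it occurs exactly four times, and that the four copies form a subcomplex of the shape $V_{j+1}\la -1\ra\to V_j\oplus V_{j+2}\to V_{j+1}\la 1\ra$. The dot-counting/symmetric-group computation of Lemma~\ref{lem:1} applies verbatim to show this subcomplex is exact, so Lemma~\ref{lem:cancel} cancels all such terms; what remains is the boundary contribution.

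The surviving terms come in two kinds. The hook-times-hook boundary terms reassemble, with differentials given up to scalar by figures~(\ref{fig:1}) and~(\ref{fig:2}) (the scalars being pinned down by Lemma~\ref{lem:differentials} and $\partial^2=0$), into a copy of the complex~(\ref{eq:FE}), just as in Proposition~\ref{prop:FEcomp}. In addition, tracing the indices shows that $\1_\l$ now occurs as a surviving summand: it first appears inside the $k=\l-1$ piece of $\Q^{(1^{\l-1})}\P^{(1^{\l-1})}$ coming from the $p=r=0$ term, which sits in cohomological degree $0$, and it contributes $\1_\l\otimes_\k V_{\l-1}$ there; a short check of cohomological degrees $\pm 1$ (using $\P^{(k)}=\Q^{(k)}=0$ for $k<0$) shows no further pure trivial summand arises. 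Together with Proposition~\ref{prop:FEcomp} and the corresponding statement for $\F\E\1_\l$ this yields $\E\F\1_\l\cong\F\E\1_\l\oplus\1_\l\otimes_\k V_{\l-1}$, which is~(\ref{eq:comm2}).

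The hard part is the last step: showing that $\1_\l\otimes_\k V_{\l-1}$ genuinely splits off as a direct summand complex, carrying exactly the grading $V_{\l-1}$, rather than being glued to the rest by a nonzero differential. Since the differential on $\E\F\1_\l$ raises cohomological degree, the only worry is a nonzero component out of the degree $0$ summand $\1_\l\otimes_\k V_{\l-1}$ into the degree $1$ part (or into it from degree $-1$), and the degree $1$ summands such a component could hit are precisely the $\P^{(1^a)}\Q^{(1^a)}$-type summands. As in Proposition~\ref{prop:FEcomp} those summands are themselves cancelled by Gaussian elimination --- they are the ``small partition'' boundary cases there --- so after simplification no target for such a differential remains; what requires care, exactly as in Proposition~\ref{prop:FEcomp}, is verifying that the various Gaussian eliminations are mutually compatible, that is, that carrying them out in the Koszul blocks does not reintroduce a differential touching the trivial summand. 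This compatibility is again a pure degree obstruction, checked either by a $\Hom$-dimension count using Lemma~\ref{lem:differentials} or, if needed, by the ``close off the diagram'' technique used in the discussion following Lemma~\ref{lem:onedimhoms}, and it is the only real work in the proof.
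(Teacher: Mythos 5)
Your proposal is correct and follows essentially the same route as the paper: rewrite $\E\F\1_\l$ using the $(1^n)$-commutation relation, cancel the generic four-fold occurrences of each indecomposable via exact subcomplexes of the form $V_{j-1}\la -1\ra \to V_j \oplus V_{j-2} \to V_{j-1}\la 1\ra$ as in Lemma \ref{lem:1}, and observe that the surviving hook terms reassemble into (\ref{eq:FE}) while the $a=0$ boundary case contributes $\1_\l\otimes_\k V_{\l-1}$ in cohomological degree zero. The paper settles your ``hard part'' exactly in the way you indicate, by a short adjunction/degree computation showing $\Hom(\P^{(i+1,1^{\l-1+i})}\Q^{(i,1^{\l+i})}\la -1\ra, \1_\l\otimes_\k V_{\l-1}) = 0 = \Hom(\1_\l\otimes_\k V_{\l-1}, \P^{(i,1^{\l+i})}\Q^{(i+1,1^{\l-1+i})}\la 1\ra)$ for $i\ge 1$, which makes your intermediate claim about $\P^{(1^a)}\Q^{(1^a)}$-type targets unnecessary: the trivial summand splits off because no differential can connect it to the surviving adjacent terms at all.
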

\begin{proof}
The composition $\E \F \1_\l$ is given by 
\begin{equation}\label{eq:3} 
\dots \rightarrow \bigoplus_{i \ge 0} \P^{(i)} \Q^{(1^{\l-1+i})} \P^{(1^{\l-1+i+l})} \Q^{(i+l)} \la l \ra \rightarrow \dots 
\end{equation}
which we can rewrite as 
\begin{equation}\label{eq:4}
\dots \rightarrow \bigoplus_{i,j \ge 0} \P^{(i)} \P^{(1^{\l-1+i+l-j})} \Q^{(1^{\l-1+i-j})} \Q^{(i+l)} \otimes_\k V_j \la l \ra \rightarrow \dots
\end{equation}
Now consider the term $\P^{(i_0,1^{\l-1+i_0+l_0-j_0})} \Q^{(i_0+l_0,1^{\l-1+i_0-j_0})}$. Such a term occurs when 
$$(i,j,k) \text{ equals } (i_0,j_0,l_0), (i_0-1,j_0-2,l_0), (i_0,j_0-1,l_0-1) \text{ or } (i_0-1,j_0-1,l_0+1).$$ 
Thus we end up with a complex 
\begin{equation}\label{eq:5}
\P^{(i_0,1^{\l-1+i_0+l_0-j_0})} \Q^{(i_0+l_0,1^{\l-1+i_0-j_0})} \la l_0 \ra \otimes_k \left( V_{j_0-1} \la -1 \ra \rightarrow V_{j_0} \oplus V_{j_0-2} \rightarrow V_{j_0-1} \la 1 \ra \right).
\end{equation}
The same argument as in Lemma \ref{lem:1} shows that this complex is exact with the following possible exceptions: $i_0 = 0$ or $j_0 = 0$. 

{\bf Case $i_0 = 0$, $j_0 > 0$, $l_0 \ge 2$.} Here we have terms of the form $\P^{(1^{\l-1+l_0-j_0})} \Q^{(l_0,1^{\l-1-j_0})}$. If $l_0 \ge 2$ then one can check again by looking at (\ref{eq:4}) that there are four cases when such a term occurs, namely when 
$$(i,j,l) \text{ equals } (0,j_0,l_0), (0,j_0-1,l_0-1), (0,j_0+1,l_0-1) \text{ or } (0,j_0,l_0-2).$$ 
Thus we end up with a complex 
$$\P^{(1^{\l-1+l_0-j_0})} \Q^{(l_0,1^{\l-1-j_0})} \la l_0-1 \ra \otimes_\k \left( V_{j_0} \la -1 \ra \rightarrow V_{j_0-1} \oplus V_{j_0+1} \rightarrow V_{j_0} \la 1 \ra \right)$$
and one can prove like before that if $j_0 > 0$ then it is exact. Likewise, one obtains an exact sequence if $l_0 \le -2$. 

{\bf Case $i_0=0$, $j_0 > 0$, $l_0 = -1,0,1$.} This time we end up with terms of the form $\P^{(1^a)} \Q^{(1^a)}$ for some $a$. Such terms occur when 
$$(i,j,l) \text{ equals } (0,\l-1-a,0), (1,\l+1-a,0), (0,\l-a,1) \text{ or } (1,\l-a,-1).$$ 
So again, if $a > 0$ we obtain a complex 
$$\P^{(1^a)} \Q^{(1^a)} \otimes_\k \left( V_{\l-a} \la -1 \ra \rightarrow V_{\l-1-a} \oplus V_{\l+1-a} \rightarrow V_{\l-a} \la 1 \ra \right)$$
which, by the same type of argument as in Lemma \ref{lem:1}, is exact. The reason $a=0$ is special is that we require that $j_0 \le \min(\l-1+i, \l-1+i+l)$. So if $a=0$ this condition is violated for three of the terms in the complex above and we end up with only $\1_\l \otimes_\k V_{\l-1}$. 

{\bf Case $j_0=0$.} Finally, if $j_0=0$ then three of the four terms in the complex (\ref{eq:5}) become zero and we end up with 
$$\P^{(i_0,1^{\l-1+i_0+l_0})} \Q^{(i_0+l_0,1^{\l-1+i_0})} \la l_0 \ra \otimes_k \left( 0 \rightarrow V_{0} \oplus 0 \rightarrow 0 \right) \cong \P^{(i_0,1^{\l-1+i_0+l_0})} \Q^{(i_0+l_0,1^{\l-1+i_0})} \la l_0 \ra.$$ 
Putting these terms together leaves us with a complex 
$$\dots \rightarrow \bigoplus_{i \ge 1,-l+1} \P^{(i,1^{\l-1+i+l})} \Q^{(i+l,1^{\l-1+i})} \la l \ra \rightarrow \dots$$
Notice that after replacing $i$ by $i+1$ the terms are the same terms as those in the complex (\ref{eq:FE}). Tracing through the differentials as before it is not hard to see that they are the same as those in (\ref{fig:1}) and (\ref{fig:2}), at least up to a nonzero multiple. Although these nonzero multiples may differ, one can show that the particular choice of multiples does not matter since any two such complexes must be homotopic. 

Finally, one can check that the term $\1_\l \otimes_\k V_{\l-1}$ is a direct summand (i.e. there are no differentials into it or out of it). In fact, by adjunction one can check that 
$$\Hom(\P^{(i+1,1^{\l-1+i})} \Q^{(i,1^{\l+i})} \la -1 \ra, \1_\l \otimes_\k V_{\l-1}) = 0 = \Hom(\1_\l \otimes_\k V_{\l-1}, \P^{(i,1^{\l+i})} \Q^{(i+1,1^{\l-1+i})} \la 1 \ra)$$
if $i \ge 1$. For example, the left hand space is equal to 
\begin{eqnarray*}
\Hom(\Q^{(i,1^{\l+i})} \la -1 \ra, (\P^{(i+1,1^{\l-1+i})})_R \otimes_\k V_{\l-1}) 
&=& \Hom(\Q^{(i,1^{\l+i})}, \Q^{(i+1,1^{\l-1+i})} \otimes_\k V_{\l-1} \la -\l-2i+1 \ra) \\
&=& \oplus_{j=0}^{\l-1} \Hom(\Q^{(i,1^{\l+i})}, \Q^{(i+1,1^{\l-1+i})} \la -2i-2j \ra)
\end{eqnarray*}
which is zero since $2i+2j > 0$ if $j \ge 0$ and $i \ge 1$. 

It follows that $\E \F \1_\l$ is a direct sum of the complex in (\ref{eq:FE}) and $\1_\l \otimes_\k V_{\l-1}$ which is what we needed to prove. 
\end{proof}

\begin{Remark} If $\l \le 0$ the argument is the same. In that case one shows that $\E\F \1_\l$ is the complex 
$$\dots \rightarrow \bigoplus_{i \ge 0,-l} \P^{(-\l+i+2,1^{i+l-1})} \Q^{(-\l+i+l+2, 1^{i-1})} \la l \ra \rightarrow \bigoplus_{i \ge 0,-l-1} \P^{(-\l+i+2,1^{i+l})} \Q^{(-\l+i+l+3, 1^{i-1})} \la l+1 \ra \rightarrow \dots$$
where the differentials are given by cups and caps like the ones in (\ref{fig:1}) and (\ref{fig:2}). 
\end{Remark}

\section{The map $\theta$}

For $\theta \in Y_\k$ we have the induced map 
\begin{equation}\label{eq:theta}
I \theta I : \Q_i^{(n)} \1_{m+n} \P_i^{(n)} \longrightarrow \Q_i^{(n)} \1_{m+n} \P_i^{(n)} \la 2 \ra \cong  \Q_i^{(n)} \P_i^{(n)} \1_m \la 2 \ra.
\end{equation}
Now recall that $\Q_i^{(n)} \P_i^{(n)} \1_m \cong \oplus_{k=0}^n \P_i^{(n-k)} \Q_i^{(n-k)} \otimes_\k V_k \1_m$. 

\begin{lemma}\label{lem:theta}
If $\theta \in \End^2(\1_{m+n})$ is the image of $\alpha_i \in Y_\k$ then the map $I \theta I \in \End^2(\Q_i^{(n)} \1_{m+n} \P_i^{(n)})$ induces an isomorphism between $n$ summands of the form $\1_m \la \cdot \ra$ on either side (in other words between all summands $\1_m \la \cdot \ra$ of the same degree on either side).  

On the other hand, if $\la \theta, \alpha_i \ra = 0$ then $I \theta I \in \End^2(\Q_i^{(n)} \1_{m+n} \P_i^{(n)})$ does not induce any isomorphism between any summands $\1_m \la \cdot \ra$. 
\end{lemma}
\begin{proof}
Suppose $\theta$ is the image of $\alpha_i$. Note that we have a canonical inclusion and projection
$$\iota: \1_m \la n \ra \rightarrow \Q_i^{(n)} \P_i^{(n)} \1_m, \ \  \pi: \Q_i^{(n)} \P_i^{(n)} \1_m \rightarrow \1_m \la n \ra$$
given by a cup and cap. The result follows if we can show that the composition $\pi (I \theta I)^n \iota: \1_m \rightarrow \1_m$ is (some nonzero multiple of) the identity. 

Diagrammatically, $\pi (I \theta I)^n \iota$ is given by the picture
$$
\begin{tikzpicture}[scale=.75][>=stealth]
\draw[<-](1,1) arc (0:360:0.5cm) [very thick];
\draw[->](-.25,0) arc (180:360:0.75cm) [very thick];
\draw[->](-.5,0) arc (180:360:1cm) [very thick];

\draw (2,1) node {$\hdots$};

\draw[->](-1.5,0) arc (180:360:2cm) [very thick];

\draw  [shift={+(3,0)}](-.3,0) rectangle (-1.85,.5);
\draw  (-1,.25) node {$(n)$};
\draw  (-.2,0) rectangle (-1.75,.5);
\draw  (2,.25) node {$(n)$};
\draw  (0.5,1) node {$n$};

\draw[->] (1.5,.5)--(1.5,1.5)[very thick];
\draw[->] (1.25,.5)--(1.25,1.5)[very thick];
\draw[->] (2.5,.5)--(2.5,1.5)[very thick];

\draw[<-] (-.5,.5)--(-.5,1.5)[very thick];
\draw[<-] (-.25,.5)--(-.25,1.5)[very thick];
\draw[<-] (-1.5,.5)--(-1.5,1.5)[very thick];

\draw (-1,1) node {$\hdots$};

\draw[->](1.25,1.5) arc (0:180:0.75cm) [very thick];
\draw[->](1.5,1.5) arc (0:180:1cm) [very thick];
\draw[->](2.5,1.5) arc (0:180:2cm) [very thick];

\end{tikzpicture}
$$ 
where every strand is labeled $i$ and the $n$ in the middle of the center circle indicates that there is a union of $n$ disjoint circles corresponding to $(I \theta I)^n$. Now, slide each of these circles from the inside towards the outside using that
\begin{equation}\label{eq:slide}
\begin{tikzpicture}[scale=.75][>=stealth]
\draw[<-](0,0) arc (0:360:0.5cm) [very thick];
\draw[->](.25,-1) -- (.25,1) [very thick];
\draw  (1,0) node {$=$};

\draw[<-](3,0) arc (0:360:0.5cm) [very thick];
\draw[->](1.5,-1) -- (1.5,1) [very thick];
\draw  (4,0) node {$+ \ \ 2$};

\draw[->](5,-1) -- (5,1) [very thick];
\filldraw [blue] (5,0) circle (3pt);
\end{tikzpicture}
\end{equation}
This fact is an easy consequence of the relations among 2-morphisms in a Heisenberg 2-representation. After moving all the circles to the outside we end up with a bunch of circles and solid dots. Using that 
\begin{itemize}
\item a dot squares to zero 
\item a counter-clockwise circle with no dots is zero
\end{itemize}
this simplifies to give some nonzero multiple of 
$$
\begin{tikzpicture}[scale=.75][>=stealth]
\draw[->](.25,0) arc (0:360:0.75cm) [very thick];
\draw[->](.5,0) arc (0:360:1cm) [very thick];

\draw (1,0) node {$\hdots$};

\draw[->](1.5,0) arc (0:360:2cm) [very thick];

\filldraw [blue] (-.5,1) circle (3pt);
\filldraw [blue] (-.5,.75) circle (3pt);
\filldraw [blue] (-.5,2) circle (3pt);
\end{tikzpicture}
$$ 
where there are $n$ circles. The result follows since each of these circles evaluates to the identity. 

Finally, suppose $\la \theta, \alpha_i \ra = 0$ where $\theta$ is some linear combination of clockwise circles. Then the analogue of (\ref{eq:slide}) states that this linear combination slides through upper pointing strands. This means that 
$$(I \theta II) = (III \theta) \in \End^2(\Q_i^{(n)} \1_{m+n} \P_i^{(n)} \1_m)$$ 
and hence $I \theta I \in \End^2(\Q_i^{(n)} \1_{m+n} \P_i^{(n)})$ cannot induce an isomorphism between any summands $\1_m$. 
\end{proof}

\begin{prop}\label{prop:theta}
For $\theta \in Y_\k$ the maps
\begin{align*}
I \theta I: \F_{i,-m} \1_\l \E_{i,m} \rightarrow \F_{i,-m} \1_\l \E_{i,m} \la 2 \ra  &\text{ if } \l_i + m \le 2 \text{ and } \\
I \theta I: \E_{i,m} \1_\l \F_{i,-m} \rightarrow  \E_{i,m} \1_\l \F_{i,-m} \la 2 \ra &\text{ if } \l_i + m \ge -2
\end{align*}
induce an isomorphism between all (resp. none of the) summands $\1_\l \la \cdot \ra$ of the same degree on either side if $\la \theta, \alpha_i \ra \ne 0$ (resp. $\la \theta, \alpha_i \ra = 0$).
\end{prop}
\begin{proof}
We consider the case $\l + m \ge 0$ (the other case is very similar). Examining the proof of Proposition \ref{prop:EFcomp} shows that the direct sum $\1_\l \otimes_\k V_{\l-1+m}$ inside $\E_{i,m} \1_\l \F_{i,-m}$ comes from the term $\Q^{(\l-3+m)} \P^{(\l-3+m)} \cong \1_\l \otimes_\k V_{\l-3} \bigoplus \A$ where the precise form of $\A$ is not important. The map $I \theta I$ then restricts to the endomorphism $I \theta I \in \End^2(\Q^{(\l-3+m)} \1_n \P^{(\l-3+m)})$ for some $n \in \Z$. The result now follows by applying Lemma \ref{lem:theta}.
\end{proof}


\section{The commutation relation of $\E_i$ and $\E_{i,1}$}\label{sec:commE}

In general $\E_{i,m}$ and $\E_{i,n}$ do not commute. But in the simplest case they do commute up to a shift. 

\begin{prop}\label{prop:Escommute}
For any $n \in \Z$ we have 
\begin{equation*}
\E_{i,n-1} \E_{i,n} \1_\l \cong \E_{i,n} \E_{i,n-1} \la -2 \ra \1_\l \text{ and } 
\F_{i,n-1} \F_{i,n} \1_\l \cong \F_{i,n} \F_{i,n-1} \la 2 \ra \1_\l. 
\end{equation*}
\end{prop}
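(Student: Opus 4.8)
The plan is to prove the first isomorphism; the statement for the $\F$'s then follows by applying the (say, right) adjoint functor and using Proposition~\ref{prop:adjointrels} to replace each $(\E_{i,m})_R$ by the appropriate shift of $\F_{i,-m}$, which converts the identity to be proved into the $\F$-version with reindexed subscripts. For the $\E$'s, as a complex in $\Kom(\K)$ the composite $\E_{i,m'}\E_{i,m}\1_\l$ is, up to the labelling of objects, determined by the integers $\la\l,\alpha_i\ra+m$ and $\la\l,\alpha_i\ra+m'$; writing $\l$ for $\la\l,\alpha_i\ra$ as in the rest of this section and tracking the shifts through (\ref{eq:Emaps1}), (\ref{eq:cpxC1})--(\ref{eq:cpxC2}), one gets
\[
\E_{i,n-1}\E_{i,n}\1_\l \;=\; \fC_i^-(\l+n+2)\cdot\fC_i^-(\l+n+1),\qquad
\E_{i,n}\E_{i,n-1}\1_\l \;=\; \fC_i^-(\l+n+3)\cdot\fC_i^-(\l+n).
\]
So it suffices to prove $\fC_i^-(k+1)\cdot\fC_i^-(k)\cong\fC_i^-(k+2)\cdot\fC_i^-(k-1)\la-2\ra$ for every $k$; I would carry out the case $k\ge1$, the remaining cases being identical after replacing the relevant copies of $\fC_i^-$ by their $k\le0$ form.

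The computation then runs parallel to the proof of Proposition~\ref{prop:EE}. First I would expand both composites term by term: each term has the shape $\P^{(a)}\Q^{(1^b)}\P^{(c)}\Q^{(1^d)}$, and applying Proposition~\ref{prop:rels1}(4) to the inner factor $\Q^{(1^b)}\P^{(c)}$ rewrites it as a direct sum of terms $\P^{(\cdot)}\P^{(\cdot)}\Q^{(1^\cdot)}\Q^{(1^\cdot)}\otimes_\k V_j$. Collecting, on each side, the summands carrying a fixed grading shift, one sees that they sit in three consecutive cohomological degrees and assemble into short pieces of the form $(\ast)\otimes_\k\big(V_{j}\la-1\ra\to V_{j-1}\oplus V_{j+1}\to V_{j}\la1\ra\big)$, with the outer maps built out of cups, caps and idempotents exactly as in (\ref{eq:6}). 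The symmetric-group argument of Lemma~\ref{lem:1} then shows that the left-hand map of each such piece is injective and the right-hand one surjective (the diagrams involved simplify to nonzero scalar multiples of the identity), so each piece is acyclic and Lemma~\ref{lem:cancel} removes all but one indecomposable summand per grading shift on each side.

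It remains to match what is left. Tracking the surviving summand in each shift, one checks that the complex obtained from $\fC_i^-(k+1)\cdot\fC_i^-(k)$, shifted by $\la2\ra$, has exactly the same terms (in the same cohomological degrees) as the one obtained from $\fC_i^-(k+2)\cdot\fC_i^-(k-1)$, and that in both complexes every nonzero map between consecutive terms is a degree-one cap. By Lemma~\ref{lem:differentials} such a map is unique up to a scalar, and any two complexes built from these terms with nonzero cap differentials are homotopic (the analog of Lemma~\ref{lem:EEcpx}); so the two sides agree in $\Kom(\K)$ provided no extra ``cross'' differential of degree $\ge 2$ survives the cancellations. This last point is automatic, since in the two original composite complexes every map between indecomposable summands is an isomorphism or a degree-one cap, and Gaussian elimination never produces a map of higher degree.

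The main obstacle will be the combinatorial bookkeeping in the middle step: one must verify that the two orderings, after the decomposition of Proposition~\ref{prop:rels1}(4) and the cancellations, genuinely land on the same list of indecomposable 1-morphisms with the correct shift in each degree, and that the connecting maps asserted to be injective or surjective really have the claimed rank. As in Lemma~\ref{lem:1}, that rank computation comes down to showing that certain explicit compositions of Young symmetrizers in $\k[S_m]$ are nonzero; everything else is a formal application of Lemma~\ref{lem:cancel} together with the homotopy-uniqueness of the resulting complex.
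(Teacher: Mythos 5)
Your global strategy coincides with the paper's: reduce both orderings to a common reduced complex via Proposition \ref{prop:rels1} and the cancellation Lemma \ref{lem:cancel}, then conclude by a homotopy-uniqueness statement, and your normalization $\E_{i,n-1}\E_{i,n}\1_\l=\fC_i^-(\l+n+2)\fC_i^-(\l+n+1)$, $\E_{i,n}\E_{i,n-1}\1_\l=\fC_i^-(\l+n+3)\fC_i^-(\l+n)$ is the right starting point. However, the mechanism you describe for the middle step is imported from the wrong computation. In the $\E\E$ composite the inner factor is $\Q^{(1^b)}\P^{(c)}$, so Proposition \ref{prop:rels1}(4) contributes only $V_1$ multiplicities; the summands with a fixed shift do lie in three consecutive cohomological degrees, but they do not assemble into exact pieces $(\ast)\otimes_\k\bigl(V_j\la-1\ra\to V_{j-1}\oplus V_{j+1}\to V_j\la1\ra\bigr)$, and Lemma \ref{lem:1} is not the relevant tool -- that pattern occurs in the $\E\F/\F\E$ computation, where the inner factor is $\Q^{(m)}\P^{(n)}$. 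Here the cancellation is governed by the maps (\ref{eq:A})--(\ref{eq:D}) between large direct sums, whose injectivity/surjectivity is proved summand-by-summand (an upper-triangularity argument using the decompositions of $\Q^{(1^a)}\Q^{(1^b)}$ and of $\P^{(a)}\P^{(b)}$), and what survives in each cohomological degree is not one summand per shift but the whole family of terms of (\ref{eq:EEtcpx}) indexed by $k_1\ge k_2$.

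The genuine gap is in identifying the differentials of the reduced complexes. The delicate point is not the absence of degree-$\ge 2$ cross maps but the \emph{nonvanishing} of the surviving degree-one differentials: after many applications of Lemma \ref{lem:cancel} a component that ``should'' be a cap could in principle have become zero, the reduced complex could then split, and no uniqueness lemma would apply. The paper devotes an explicit diagrammatic argument to exactly this, tracking the maps $g_1,g_3$ through the cancellations and showing they cannot be killed because the competing differential contains no matching summand; your outline never addresses it. Moreover your claim that ``Gaussian elimination never produces a map of higher degree'' is not correct: the correction term $BD^{-1}C$ in Lemma \ref{lem:cancel} has degree equal to the sum of the degrees of $B$ and $C$, so degree-two components can appear, and ruling out unwanted ones requires the kind of degree count made in the proof of Proposition \ref{prop:EE}. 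Finally, the uniqueness statement you invoke (``the analog of Lemma \ref{lem:EEcpx}'') is subtler than ``each cap is unique up to scalar'': out of each summand of (\ref{eq:EEtcpx}) there are three candidate caps, some of the relevant Hom spaces are two-dimensional (so Lemma \ref{lem:differentials} does not apply directly), and one must show that the mixed components $a_2,b_2$ are forced to vanish while $a_1,a_3,b_1,b_3$ are nonzero -- this is precisely Lemma \ref{lem:EEtcpx}, a nontrivial ingredient your proposal takes for granted.
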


We use the rest of this section to prove the first relation above (the second relation is obtained by taking the adjoint of the first relation). The idea of the proof is to show that both sides are homotopic to a complex
\begin{equation}\label{eq:EEtcpx}
\left[ \dots \rightarrow 
\begin{matrix} \P^{(2)} \Q^{(2^{a+1},1^3)} \la -2 \ra \\ \oplus \P^{(1^2)} \Q^{(2^{a+2},1)} \la -2 \ra \\ \oplus \P^{(2)} \Q^{(2^{a+2},1)} \la -2 \ra \end{matrix} 
\rightarrow \begin{matrix} \P \Q^{(2^{a+1},1^2)} \la -1 \ra \\ \oplus \P \Q^{(2^{a+2})} \la -1 \ra \end{matrix}
\rightarrow \Q^{(2^{a+1},1)} \right] [1] \la -2 \ra
\end{equation}
where $a := \l+n \ge 0$. In the equation above, the term in cohomological degree $(-l+1)$ is 
$$
\bigoplus_{\begin{matrix} \scriptstyle k_1+k_2=l-1 \\ \scriptstyle k_1 \ge k_2 
\end{matrix}}\P^{(k_1,k_2-1)} \Q^{(2^{a+1+k_2}, 1^{k_1-k_2})} \la -l \ra
\bigoplus_{\begin{matrix} \scriptstyle k_1+k_2=l-1 \\ \scriptstyle k_1 \ge k_2+
1 \end{matrix}} 
\P^{(k_1-1,k_2)} \Q^{(2^{a+1+k_2}, 1^{k_1-k_2})} \la -l \ra. 
$$
Using Lemma \ref{lem:differentials} one checks that there are three possible nonzero maps out of each summand above, all of which are given by a cap:
\begin{eqnarray*}
&f_1& : \P^{(k_1,k_2-1)} \Q^{(2^{a+1+k_2}, 1^{k_1-k_2})} \longrightarrow \P^{(k_1-1,k_2-1)} \Q^{(2^{a+1+k_2}, 1^{k_1-k_2-1})} \\
&f_2& : \P^{(k_1,k_2-1)} \Q^{(2^{a+1+k_2}, 1^{k_1-k_2})} \longrightarrow \P^{(k_1-1,k_2-1)} \Q^{(2^{a+k_2}, 1^{k_1-k_2+1})} \\
&f_3& : \P^{(k_1,k_2-1)} \Q^{(2^{a+1+k_2}, 1^{k_1-k_2})} \longrightarrow \P^{(k_1,k_2-2)} \Q^{(2^{a+k_2}, 1^{k_1-k_2+1})} \\
&g_1& : \P^{(k_1-1,k_2)} \Q^{(2^{a+1+k_2}, 1^{k_1-k_2})} \longrightarrow \P^{(k_1-1,k_2-1)} \Q^{(2^{a+1+k_2}, 1^{k_1-k_2-1})} \\
&g_2& : \P^{(k_1-1,k_2)} \Q^{(2^{a+1+k_2}, 1^{k_1-k_2})} \longrightarrow \P^{(k_1-1,k_2-1)} \Q^{(2^{a+k_2}, 1^{k_1-k_2+1})} \\
&g_3& : \P^{(k_1-1,k_2)} \Q^{(2^{a+1+k_2}, 1^{k_1-k_2})} \longrightarrow \P^{(k_1-2,k_2)} \Q^{(2^{a+1+k_2}, 1^{k_1-k_2-1})}.
\end{eqnarray*}
The differential in (\ref{eq:EEtcpx}) is of the form 
\begin{equation}\label{eq:diff}
\partial = a_1f_1+a_2f_2+a_3f_3+b_1g_1+b_2g_2+b_3g_3 \ \ \text{ for some } \ \ a_1,a_2,a_3,b_1,b_2,b_3 \in \k.
\end{equation}
Using the following lemma we will check that $a_2=0=b_2$ and $a_1,a_3,b_1,b_3 \in \k^\times$ which determines the complex uniquely up to homotopy. 

\begin{lemma}\label{lem:EEtcpx} 
Consider a complex as in (\ref{eq:EEtcpx}) with differential $\partial$ as in (\ref{eq:diff}). If $b_1,b_3 \in \k^\times$ then $a_1,a_3 \in \k^\times$ while $a_2=0=b_2$. Moreover, any two such complexes with this property are homotopy equivalent. 
\end{lemma}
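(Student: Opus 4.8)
The plan is to first pin down which tuples $(a_1,a_2,a_3,b_1,b_2,b_3)$ actually give a complex, and then to exploit the resulting rigidity to transport any one such complex to any other by a term-wise rescaling, exactly as in the proof of Lemma~\ref{lem:EEcpx}. By Lemma~\ref{lem:differentials}, between two consecutive terms of (\ref{eq:EEtcpx}) every degree-one $2$-morphism is a scalar multiple of one of the cap diagrams $f_1,\dots,g_3$ and the relevant $\Hom$ space is at most one-dimensional; so the most general differential of the shape (\ref{eq:EEtcpx}) is indeed (\ref{eq:diff}) with the six scalars independent of the cohomological degree, and the only constraint is $\partial^2=0$. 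I would organize the summands into the two families $\P^{(k_1,k_2-1)}\Q^{(2^{a+1+k_2},1^{k_1-k_2})}$ (call these type $F$) and $\P^{(k_1-1,k_2)}\Q^{(2^{a+1+k_2},1^{k_1-k_2})}$ (type $S$), indexed so that $k_1+k_2$ is constant in each cohomological degree, and check from the partition shapes that $f_1$ and $f_3$ move within type $F$ (lowering $k_1$, resp.\ $k_2$), that $g_2$ and $g_3$ move within type $S$, and that $f_2$ (resp.\ $g_1$) sends type $F$ to type $S$ (resp.\ back).

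I would then expand $\partial^2$ into its components, one for each pair of summands two cohomological steps apart, and evaluate each two-step cap composition using the graphical relations. Two facts should carry the argument. First, a composition of two caps attaching to two strands of the same antisymmetrized block of the $\Q$-idempotent vanishes outright --- the familiar expand-the-antisymmetrizer argument used for $d^2=0$ of $\fC_i^{\pm}$ --- and this kills several of the components. Second, when a summand carries two outgoing caps attaching to disjoint pairs of strands, those caps slide past one another by planar isotopy, so the two orders of composition agree up to the sign dictated by (\ref{eq:rel0'}) and the supercommutation of dots; this is precisely the normalization $\beta_1\circ\alpha_1=-\beta_2\circ\alpha_2$ used in the proof of Lemma~\ref{lem:EEcpx}. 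Collecting the equations not made automatic by these two facts, I expect to be left with relations of the type $a_2b_1=a_2b_3=\dots=0$ and $b_2(\cdot)=0$, together with relations coupling the surviving scalars of the form $b_1(a_1\pm b_3)=0$ and $b_3(a_3\pm b_1)=0$. Since $b_1,b_3\in\k^\times$ by hypothesis, these force $a_2=b_2=0$ and $a_1=\mp b_3$, $a_3=\mp b_1$, so in particular $a_1,a_3\in\k^\times$; this is the first assertion of the lemma.

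Once $a_2=b_2=0$, every summand of (\ref{eq:EEtcpx}) has at most two outgoing nonzero maps and the diagram of summands and caps is assembled from squares whose two compositions to a common target differ by a sign, so the argument of Lemma~\ref{lem:EEcpx} applies essentially verbatim: given two complexes $C,C'$ of the stated form, one builds an isomorphism $C\to C'$ acting by a scalar on each indecomposable summand, choosing the scalars inductively starting from the degree-zero end, and the ratio conditions to be checked around each square are consistent precisely because of the relations $a_1=\mp b_3$, $a_3=\mp b_1$ together with the triviality of the commuting type-$F$ squares; this gives $C\simeq C'$.

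The hard part will be the middle step --- tracking, through the hook/near-column combinatorics, which of the finitely many two-step cap compositions vanish and which coincide up to sign, so as to verify that the $\partial^2=0$ equations really do isolate $a_2,b_2$ and couple the pairs $(a_1,b_3)$ and $(a_3,b_1)$ rather than some other pattern. This bookkeeping is in the same spirit as the $d^2=0$ verifications already carried out in the paper and in Lemmas~\ref{lem:1} and~\ref{lem:EEcpx}, but it is where essentially all of the content lies; everything else is formal or a rerun of an earlier argument.
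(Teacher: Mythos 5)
Your overall architecture matches the paper's: derive constraints on the six scalars from $\partial^2=0$ together with $b_1,b_3\in\k^\times$, then get uniqueness up to homotopy by the same term-wise rescaling argument as in Lemma \ref{lem:EEcpx}, working inductively from the right end. The last step is fine, and your identification of how $f_1,f_3$, $g_2,g_3$, $f_2,g_1$ move between the two families of summands is correct. But the step you defer as ``the hard part'' is exactly where the lemma lives, and the mechanism you propose for it would not deliver the conclusion. You expect the relations isolating $a_2$ and $b_2$ to arise either from vanishing of two-step cap compositions (same antisymmetrizer block) or from two routes agreeing up to sign. In fact the relevant compositions $f_2\circ g_1$ and $g_3\circ g_2$, both landing in $\Hom\bigl(\P^{(k_1-1,k_2)}\Q^{(2^{a+1+k_2},1^{k_1-k_2})},\,\P^{(k_1-2,k_2-1)}\Q^{(2^{a+k_2},1^{k_1-k_2})}\la 2\ra\bigr)$, neither vanish nor coincide up to sign: the paper's key point is that this $\Hom$ space is \emph{two}-dimensional (the two boxes being added lie in different rows and columns), and the two compositions are linearly independent in it. Only because of this does the $\partial^2=0$ equation $a_2b_1'(f_2\circ g_1)+b_3b_2'(g_3\circ g_2)+b_2b_3'(g_2\circ g_3)=0$, after killing the last term by induction on cohomological degree, split into the separate monomial relations $a_2b_1'=0$ and $b_3b_2'=0$, whence $a_2=0=b_2'$. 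If the two compositions happened to be proportional, as your ``agree up to sign'' picture suggests, you would only get one linear relation and could not conclude $a_2=0$. So your proposal is missing the decisive ingredient (the two-dimensionality/linear-independence computation plus the induction that removes the third term), and this is not a routine bookkeeping rerun of the $d^2=0$ checks you cite.

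Two smaller points. For $a_1,a_3\in\k^\times$ the paper does use one-dimensional $\Hom$ spaces where the two routes are proportional and nonzero, giving relations of the shape $a_1b_1'=c\,b_1b_3'$ with $c\ne 0$; this yields $a_1\ne 0$ (and similarly $a_3\ne0$, again by an induction), but your stronger claims $a_1=\mp b_3$, $a_3=\mp b_1$ presume the proportionality constants are exactly $\pm1$ and that the scalars are uniform across the complex, neither of which you verify nor need. Also note that the scalars may a priori depend on the position in the complex, which is why the inductive structure (primed versus unprimed coefficients in the paper) matters and should appear explicitly in your argument.
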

\begin{proof}
Suppose we have a complex as in (\ref{eq:EEtcpx}) where $b_1,b_3 \in \k^\times$. 

The first step is to show that $a_2=0=b_2$. To see this consider the following three compositions
$$
\xymatrix{
& \P^{(k_1-1,k_2-1)} \Q^{(2^{a+1+k_2}, 1^{k_1-k_2-1})} \la 1 \ra \ar[dr]^{a_2f_2} & \\
\P^{(k_1-1,k_2)} \Q^{(2^{a+1+k_2},1^{k_1-k_2})} \ar[ur]^{b'_1g_1} \ar[r]^{b'_2g_2} \ar[dr]^{b'_3g_3} & 
\P^{(k_1-1,k_2-1)} \Q^{(2^{a+k_2}, 1^{k_1-k_2+1})} \la 1 \ra \ar[r]^{b_3g_3} & 
\hspace{-.1cm} \P^{(k_1-2,k_2-1)} \Q^{(2^{a+k_2}, 1^{k_1-k_2})} \la 2 \ra \\
& \P^{(k_1-2,k_2)} \Q^{(2^{a+1+k_2}, 1^{k_1-k_2-1})} \la 1 \ra \ar[ur]^{b_2g_2} & 
}$$
By induction we can assume that $b_2 = 0$. Looking at the top two compositions, this means that $a_2f_2 \circ b'_1g_1 + b_3g_3 \circ b'_2g_2 = 0$. But one can check that the compositions $f_2 \circ g_1$ and $g_3 \circ g_2$ are linearly independent since they span the two dimensional vector space
$$\Hom(\P^{(k_1-1,k_2)} \Q^{(2^{a+1+k_2}, 1^{k_1-k_2})}, \P^{(k_1-2,k_2-1)} \Q^{(2^{a+k_2}, 1^{k_1-k_2})} \la 2 \ra) \cong \k^2.$$
This space is 2-dimensional since the two boxes we add to go from partition $(k_1-2,k_2-1)$ to $(k_1-1,k_2)$ and from $(2^{a+k_2},1^{k_1-k_2})$ to $(2^{a+1+k_2}, 1^{k_1-k_2})$ occur in different columns and rows. This implies that $a_2=0$ and $b'_2=0$. Thus, by induction, we always have $a_2=0=b_2$.

Next we show that $a_1,a_3 \in \k^\times$. First consider the following two compositions
$$
\xymatrix{
& \P^{(k_1-1,k_2-1)} \Q^{(2^{a+1+k_2}, 1^{k_1-k_2-1})} \la 1 \ra \ar[rd]^{a_1f_1} & \\
\P^{(k_1-1,k_2)} \Q^{(2^{a+1+k_2}, 1^{k_1-k_2})} \ar[ru]^{b'_1g_1} \ar[rd]^{b'_3g_3} & & \hspace{-1cm} \P^{(k_1-2,k_2-1)} \Q^{(2^{a+1+k_2}, 1^{k_1-k_2-2})} \la 2 \ra \\
& \P^{(k_1-2,k_2)} \Q^{(2^{a+1+k_2}, 1^{k_1-k_2-1})} \la 1 \ra \ar[ru]^{b_1g_1} &
  }
$$
These two compositions both span the one dimensional vector space 
$$\Hom (\P^{(k_1-1,k_2)} \Q^{(2^{a+1+k_2}, 1^{k_1-k_2})}, \P^{(k_1-2,k_2-1)} \Q^{(2^{a+1+k_2}, 1^{k_1-k_2-2})} \la 2 \ra) \cong \k.$$
Since $b_1,b'_3$ are both nonzero this means $a_1 \ne 0$. 

Similarly, we can consider the two compositions 
$$
\xymatrix{
& \P^{(k_1-1,k_2-1)} \Q^{(2^{a+1+k_2}, 1^{k_1-k_2-1})} \la 1 \ra \ar[dr]^{a_3f_3} & \\
\P^{(k_1,k_2-1)} \Q^{(2^{a+1+k_2}, 1^{k_1-k_2})} \ar[ur]^{a'_1f_1} \ar[dr]^{a'_3f_3} & & \hspace{-1cm} \P^{(k_1-1,k_2-2)} \Q^{(2^{a+k_2}, 1^{k_1-k_2})} \la 2 \ra. \\
& \P^{(k_1,k_2-2)} \Q^{(2^{a+k_2}, 1^{k_1-k_2+1})} \la 1 \ra \ar[ur]^{a_1f_1} & 
}$$
These two compositions both span the one dimensional vector space 
$$\Hom(\P^{(k_1,k_2-1)} \Q^{(2^{a+1+k_2}, 1^{k_1-k_2})}, \P^{(k_1-1,k_2-2)} \Q^{(2^{a+k_2}, 1^{k_1-k_2})} \la 2 \ra) \cong \k.$$
We know that $a_1,a'_1$ are nonzero and by induction we can assume $a_3 \ne 0$. This implies that $a'_3 \ne 0$ and hence, by induction, all $a_3$ are nonzero. 

Finally we show that any two such complexes are homotopy equivalent to each other. The idea is very simple. Suppose you have a complex 
\begin{eqnarray}\label{eq:littlecpx}
A \overset{\alpha_1}{\underset{\alpha_2}{\rightrightarrows}} \begin{matrix} B_1 \\ B_2 \end{matrix} \overset{\beta_1}{\underset{\beta_2}{\rightrightarrows}} C
\end{eqnarray}
where $\Hom(A,B_1) \cong \Hom(A,B_2) \cong \Hom(B_1,C) \cong \Hom(B_2,C) \cong \Hom(A,C) \cong \k$ are spanned by $\alpha_1,\alpha_2,\beta_1,\beta_2$ and $\beta_1 \circ \alpha_1 = - \beta_2 \circ \alpha_2$ respectively. Then any other complex where the four maps above are nonzero is homotopic to it via a map which acts by certain multiples of the identity on $A,B_1,B_2$ and $C$. This is a simple exercise which we leave to the reader. 

If we now look at (\ref{eq:EEtcpx}) and recall that each differential is made up of maps $f_1,f_3$ or $g_1,g_3$ it follows that (\ref{eq:EEtcpx}) is made up of little complexes like (\ref{eq:littlecpx}). Thus starting from the far right, we can repeatedly apply the homotopy above to show that any two such complexes are homotopy equivalent. 
\end{proof}

{\bf Computation of $\E_{i,n-1} \E_{i,n} \1_\l$.} This composition is isomorphic to 
$$\left[ \dots \rightarrow \P^{(2)} \Q^{(1^{a+4})} \la -2 \ra \rightarrow \P \Q^{(1^{a+3})} \la -1 \ra \rightarrow \Q^{(1^{a+2})} \right] \left[ \dots \rightarrow \P^{(2)} \Q^{(1^{a+3})} \la -2 \ra \rightarrow \P \Q^{(1^{a+2})} \la -1 \ra \rightarrow \Q^{(1^{a+1})} \right]$$
which means that the term in cohomological degree $-l$ is 
\begin{eqnarray*}
& & \bigoplus_{k_1+k_2=l} \P^{(k_2)} \Q^{(1^{a+2+k_2})} \P^{(k_1)} \Q^{(1^{a+1+k_1})} \la -l \ra \\ 
&\cong& \bigoplus_{k_1+k_2=l} \P^{(k_2)} \left( \P^{(k_1)} \Q^{(1^{a+2+k_2})} \oplus \P^{(k_1-1)} \Q^{(1^{a+1+k_2})} \otimes_\k V_1 \oplus \P^{(k_1-2)} \Q^{(1^{a+k_2})} \right) \Q^{(1^{a+1+k_1})} \la -l \ra.
\end{eqnarray*}
If we collect terms with a shift of $\la -l \ra$ they must occur in cohomological degrees $(-l-1), -l$ and $(-l+1)$. These are 
\begin{equation}\label{eq:A}
\bigoplus_{k_1+k_2=l} \P^{(k_2)} \P^{(k_1-2)} \Q^{(1^{a+k_2})} \Q^{(1^{a+1+k_1})} \longrightarrow \bigoplus_{k_1+k_2=l-1} \P^{(k_2)} \P^{(k_1-1)} \Q^{(1^{a+1+k_2})} \Q^{(1^{a+1+k_1})}
\end{equation}
in cohomological degree $-l$ and $(-l+1)$ and
\begin{equation}\label{eq:B}
\bigoplus_{k_1+k_2=l+1} \P^{(k_2)} \P^{(k_1-1)} \Q^{(1^{a+1+k_2})} \Q^{(1^{a+1+k_1})} \longrightarrow \bigoplus_{k_1+k_2=l} \P^{(k_2)} \P^{(k_1)} \Q^{(1^{a+2+k_2})} \Q^{(1^{a+1+k_1})} 
\end{equation}
occuring in cohomological degree $(-l-1)$ and $-l$. 

There are two parts to the differentials in (\ref{eq:A}). To describe the first we rewrite (\ref{eq:A}) as 
\begin{equation}\label{eq:A'}
\bigoplus_{k_1+k_2=l-1} \P^{(k_2)} \P^{(k_1-1)} \Q^{(1^{a+k_2})} \Q^{(1^{a+2+k_1})} \longrightarrow \bigoplus_{k_1+k_2=l-1} \P^{(k_2)} \P^{(k_1-1)} \Q^{(1^{a+1+k_2})} \Q^{(1^{a+1+k_1})}.
\end{equation}
Then the first part of the differential is given by the composition

$$
\begin{tikzpicture}[scale=.75][>=stealth]

\draw (-.25,1) rectangle (1.75,1.5);
\draw (.75,1.25) node {$(k_2)$};
\draw (2.25,1) rectangle (4.25,1.5);
\draw (3.25,1.25) node {$(k_1-1)$};
\filldraw [gray](4.4,1) rectangle (7.1,1.5);
\draw (5.75,1.25) node {$(1^{a+k_2})$};
\filldraw[gray] (7.5,1) rectangle (9.5,1.5);
\draw (8.5,1.25) node {$(1^{a + 2+k_1})$};

\draw (1,1.5) -- (1,3)[->][very thick];
\draw (8.5,1.5) -- (8.5,3)[<-][very thick];
\draw (2.75,1.5) -- (6.75,3) [->] [very thick];
\draw (6.25,1.5) -- (2.75,3) [<-] [very thick];

\draw [shift = {+(0,2)}](-.25,1) rectangle (1.75,1.5);
\draw [shift = {+(0,2)}](.75,1.25) node {$(k_2)$};
\filldraw[gray] [shift = {+(0,2)}](2.25,1) rectangle (4.25,1.5);
\draw [shift = {+(0,2)}](3.25,1.25) node {$(1^{a+2+k_2})$};
\draw [shift = {+(0,2)}](4.4,1) rectangle (7.1,1.5);
\draw [shift = {+(0,2)}](5.75,1.25) node {$(k_1+1)$};
\filldraw[gray] [shift = {+(0,2)}](7.5,1) rectangle (9.5,1.5);
\draw [shift = {+(0,2)}](8.5,1.25) node  {$(1^{a + 2+k_1})$};

\draw (3.75,3) arc (180:360:.75cm and .5cm)[->][very thick];
\filldraw [blue] (4.5,2.5) circle (3pt);

\draw  [shift = {+(0,2)}](1,1.5) -- (1,3)[->][very thick];
\draw  [shift = {+(0,2)}](8.5,1.5) -- (8.5,3)[<-][very thick];
\draw  [shift = {+(0,2)}](2.75,1.5) -- (6.75,3) [<-] [very thick];
\draw  [shift = {+(0,2)}](6.25,1.5) -- (2.75,3) [->] [very thick];

\draw (3.75,3.5) arc (180:0:.75cm and .5cm)[<-][very thick];
\draw (6.5,3.5) arc (180:0:.75cm and .5cm)[->][very thick];

\draw[shift = {+(0,4)}](-.25,1) rectangle (1.75,1.5);
\draw [shift = {+(0,4)}](.75,1.25) node {$(k_2)$};
\draw [shift = {+(0,4)}](2.25,1) rectangle (4.25,1.5);
\draw [shift = {+(0,4)}](3.25,1.25) node {$(k_1-1)$};
\filldraw [gray][shift = {+(0,4)}](4.4,1) rectangle (7.1,1.5);
\draw [shift = {+(0,4)}](5.75,1.25) node {$(1^{a+k_2+1})$};
\filldraw[gray] [shift = {+(0,4)}](7.5,1) rectangle (9.5,1.5);
\draw [shift = {+(0,4)}](8.5,1.25) node {$(1^{a+k_1+1})$};

\end{tikzpicture}
$$
which is equal to a scalar multiple of
$$
\begin{tikzpicture}\label{figure1}[scale=.75][>=stealth]

\draw (-.25,1) rectangle (1.75,1.5);
\draw (.75,1.25) node {$(k_2)$};
\draw (2.25,1) rectangle (4.25,1.5);
\draw (3.25,1.25) node {$(k_1-1)$};
\filldraw [gray](4.4,1) rectangle (7.1,1.5);
\draw (5.75,1.25) node {$(1^{a+k_2})$};
\filldraw[gray] (7.5,1) rectangle (9.5,1.5);
\draw (8.5,1.25) node {$(1^{a + 2+k_1})$};

\draw[shift = {+(0,2)}](-.25,1) rectangle (1.75,1.5);
\draw [shift = {+(0,2)}](.75,1.25) node {$(k_2)$};
\draw [shift = {+(0,2)}](2.25,1) rectangle (4.25,1.5);
\draw [shift = {+(0,2)}](3.25,1.25) node {$(k_1-1)$};
\filldraw [gray][shift = {+(0,2)}](4.4,1) rectangle (7.1,1.5);
\draw [shift = {+(0,2)}](5.75,1.25) node {$(1^{a+k_2+1})$};
\filldraw[gray] [shift = {+(0,2)}](7.5,1) rectangle (9.5,1.5);
\draw [shift = {+(0,2)}](8.5,1.25) node {$(1^{a+1+k_1})$};

\draw (1,1.5) -- (1,3) [->][very thick];
\draw (3,1.5) -- (3,3) [->][very thick];
\draw (5,1.5) -- (5,3) [<-][very thick];
\draw (9,1.5) -- (9,3) [<-][very thick];
\draw (8,1.5) -- (6,3) [<-][very thick];

\end{tikzpicture}
$$

To see the other part of the differential we can also rewrite (\ref{eq:A}) as 
\begin{equation}\label{eq:A''}
\bigoplus_{k_1+k_2=l-1} \P^{(k_2)} \P^{(k_1-1)} \Q^{(1^{a+k_2})} \Q^{(1^{a+2+k_1})} \longrightarrow \bigoplus_{k_1+k_2=l-1} \P^{(k_2-1)} \P^{(k_1)} \Q^{(1^{a+k_2})} \Q^{(1^{a+2+k_1})}
\end{equation}
and then there is a similar map given by 
$$
\begin{tikzpicture}\label{figure2}[scale=.75]
\draw (-.25,1) rectangle (1.75,1.5);
\draw (.75,1.25) node {$(k_2)$};
\draw (2.25,1) rectangle (4.25,1.5);
\draw (3.25,1.25) node {$(k_1-1)$};
\filldraw [gray](4.4,1) rectangle (6.1,1.5);
\draw (5.25,1.25) node {$(1^{a+k_2})$};
\filldraw[gray] (6.5,1) rectangle (8.5,1.5);
\draw (7.5,1.25) node {$(1^{a + 2+k_1})$};

\draw[shift = {+(0,2)}](-.25,1) rectangle (1.75,1.5);
\draw [shift = {+(0,2)}](.75,1.25) node {$(k_2-1)$};
\draw [shift = {+(0,2)}](2.25,1) rectangle (4.25,1.5);
\draw [shift = {+(0,2)}](3.25,1.25) node {$(k_1)$};
\filldraw [gray][shift = {+(0,2)}](4.4,1) rectangle (6.1,1.5);
\draw [shift = {+(0,2)}](5.25,1.25) node {$(1^{a+k_2})$};
\filldraw[gray] [shift = {+(-1,2)}](7.5,1) rectangle (9.5,1.5);
\draw [shift = {+(-1,2)}](8.5,1.25) node {$(1^{a+2+k_1})$};

\draw (.5,1.5) -- (.5,3) [->][very thick];
\draw (3.5,1.5) -- (3.5,3) [->][very thick];
\draw (5.25,1.5) -- (5.25,3) [<-][very thick];
\draw (7.5,1.5) -- (7.5,3) [<-][very thick];

\draw (1,1.5) -- (3,3) [->][very thick];

\end{tikzpicture}
$$

{\bf Claim.} The map in (\ref{eq:A}) is injective. 

Let us consider a general indecomposable term on the left side of (\ref{eq:A'}) or (\ref{eq:A''}). Using Proposition \ref{prop:rels1} such a term is of the form $\P^{(m,n)} \Q^{(2^{a+m'},1^{a+n'})}$ where $m \le n$ and $m+n=l-1=2m'+n'$. There is one such summand for each $(k_1,k_2)$ where 
$$m \le \min(k_1-1,k_2) \text{ and } m' \le \min(k_1+2,k_2).$$
On the right hand side of (\ref{eq:A'}) there is one such summand corresponding to each pair $(k_1,k_2)$ where 
$$m \le \min(k_1-1,k_2) \text{ and } m' \le \min(k_1+1,k_2+1)$$
and the map in figure \ref{figure1} induces an isomorphism between any two such summands corresponding to the same pair $(k_1,k_2)$. Likewise, on the right side of (\ref{eq:A''}) there is one such summand corresponding to each $(k_1,k_2)$ where 
$$m \le \min(k_1,k_2-1) \text{ and } m' \le \min(k_1+2,k_2)$$
and the map in \ref{figure2} also induces an isomorphism between summands corresponding to the same pair $(k_1,k_2)$. 

Using the inequalities above and looking at (\ref{eq:A'}), we see that the map in (\ref{figure1}) is injective on summands $\P^{(m,n)} \Q^{(2^{a+m'},1^{a+n'})}$ unless $k_1+2 \le k_2$ in which case there is precisely one term, namely the one corresponding to $k_1+2=m'$ on the left, which maps to zero. Notice that for such a term to exist on the left side of (\ref{eq:A'}) we must also have $m \le \min(m'-3,l+1-m')$ (and in particular $m \le m'-3$). 

On the other hand, looking at (\ref{eq:A''}), we see that the map in (\ref{figure2}) is an isomorphism between all summands $\P^{(m,n)} \Q^{(2^{a+m'},1^{a+n'})}$ unless $k_2 \le k_1-1$ in which case there is precisely one term mapped to zero, namely the one corresponding to $k_2=m$. This time such a term exists on the left hand side of (\ref{eq:A''}) only if $m' \le \min(l+1-m,m)$ (and in particular $m' \le m$). 

Since we cannot have both $m \le m'-3$ and $m' \le m$ either (\ref{figure1}) or (\ref{figure2}) is injective on all summands of the form $\P^{(m,n)} \Q^{(2^{a+m'},1^{a+n'})}$. The map in (\ref{eq:A}) is upper triangular and hence also injective. 

Now we need to figure out what terms remain on the right hand side of (\ref{eq:A}) after cancelling terms. We can replace (\ref{eq:A'}) by 
\begin{equation}\label{eq:A'''}
\bigoplus_{{\begin{matrix} \scriptstyle k_1+k_2=l-1 \\ \scriptstyle k_2 \ge k_1+2 \end{matrix}}} \P^{(k_2)} \P^{(k_1-1)} \Q^{(2^{a+2+k_1}, 1^{k_2-k_1-2})} \longrightarrow \bigoplus_{{\begin{matrix} \scriptstyle k_1+k_2=l-1 \\ \scriptstyle k_2 \le k_1 \end{matrix}}} \P^{(k_2)} \P^{(k_1-1)} \Q^{(2^{a+1+k_2}, 1^{k_1-k_2})}.
\end{equation}
since, using proposition ref{prop:rels1}, we have 
\begin{eqnarray*}
\Q^{(1^{a+k_2})} \Q^{(1^{a+2+k_1})} &\cong& \Q^{(1^{a+1+k_2})} \Q^{(1^{a+1+k_1})} \oplus \Q^{(2^{a+2+k_1},1^{k_2-k_1-2})} \text{ if } k_2 \ge k_1+2 \\
\Q^{(1^{a+1+k_2})} \Q^{(1^{a+1+k_1})} &\cong& \Q^{(1^{a+k_2})} \Q^{(1^{a+2+k_1})} \oplus \Q^{(2^{a+1+k_2}, 1^{k_1-k_2})} \text{ if } k_2 \le k_1 \\
\Q^{(1^{a+k_2})} \Q^{(1^{a+2+k_1})} &\cong& \Q^{(1^{a+1+k_2})} \Q^{(1^{a+1+k_1})} \text{ if } k_2 = k_1+1.
\end{eqnarray*}
Now, switching the roles of $k_1$ and $k_2$ on the left hand side and replacing the new $k_1$ by $k_1+1$ and the new $k_2$ by $k_2-1$ we get that (\ref{eq:A'''}) is equivalent to 
\begin{equation}\label{eq:A''''}
\bigoplus_{{\begin{matrix} \scriptstyle k_1+k_2=l-1 \\ \scriptstyle k_1 \ge k_2 \end{matrix}}} \P^{(k_1+1)} \P^{(k_2-2)} \Q^{(2^{a+1+k_2}, 1^{k_1-k_2})} \longrightarrow \bigoplus_{{\begin{matrix} \scriptstyle k_1+k_2=l-1 \\ \scriptstyle k_1 \ge k_2 \end{matrix}}} \P^{(k_2)} \P^{(k_1-1)} \Q^{(2^{a+1+k_2}, 1^{k_1-k_2})}.
\end{equation}
Again, we can cancel terms using that 
\begin{eqnarray*}
\P^{(k_1-1)} \P^{(k_2)} &\cong& \P^{(k_1+1)} \P^{(k_2-2)} \oplus \P^{(k_1,k_2-1)} \oplus \P^{(k_1-1,k_2)} \text{ if } k_1 \ge k_2+1 \\
\P^{(k_1-1)} \P^{(k_2)} &\cong& \P^{(k_1+1)} \P^{(k_2-2)} \oplus \P^{(k_1,k_2-1)} \text{ if }  k_1 = k_2 
\end{eqnarray*}
to obtain
$$
\bigoplus_{\begin{matrix} \scriptstyle k_1+k_2=l-1 \\ \scriptstyle k_1 \ge k_2 \end{matrix}} 
\P^{(k_1,k_2-1)} \Q^{(2^{a+1+k_2}, 1^{k_1-k_2})} \la -l \ra 
\bigoplus_{\begin{matrix} \scriptstyle k_1+k_2=l-1 \\ \scriptstyle k_1 \ge k_2+1 \end{matrix}}
 \P^{(k_1-1,k_2)} \Q^{(2^{a+1+k_2}, 1^{k_1-k_2})} \la -l \ra 
$$
in cohomological degree $(-l+1)$ (where we have added back the $\la - l \ra$ shift). Notice that these are the same as the terms in the complex (\ref{eq:EEtcpx}). 

Now we also need to examine (\ref{eq:B}). Fortunately, things are much simpler here. We rewrite (\ref{eq:B}) as 
\begin{equation}\label{eq:B'}
\bigoplus_{k_1+k_2=l} \P^{(k_2)} \P^{(k_1)} \Q^{(1^{a+1+k_2})} \Q^{(1^{a+2+k_1})} \longrightarrow \bigoplus_{k_1+k_2=l} \P^{(k_2)} \P^{(k_1)} \Q^{(1^{a+2+k_2})} \Q^{(1^{a+1+k_1})}.
\end{equation}
Then the part of the differential which looks like that in (\ref{figure1}) induces an isomorphism between the two sides. The total differential is upper triangular and hence also induces an isomorphism. Thus all the terms in (\ref{eq:B}) cancel out. 

{\bf The differentials.} Finally, we need to compute the differentials. In light of Lemma \ref{lem:EEtcpx} it suffices to show that the differentials of the form $g_1$ and $g_3$ are nonzero. 

This is trickier than it looks since the cancellation lemma was applied many times. Let us consider the map $g_3$. In the original complex for $\E_{i,n-1} \E_{i,n} \1_\l$ we see this map show up as the composition 

$$
\begin{tikzpicture}[scale=.75][>=stealth]
\shadedraw[gray]  (0,0) rectangle (4,.5);
\draw (2,.25) node {$(k_1-1,k_2)$};
\shadedraw[gray]  (5,0) rectangle (9,.5);
\draw (7,.25) node {$(2^{a+1+k_2},1^{k_1-k_2})$};
\draw (1,.5) -- (1,1) [->][very thick];
\draw (3,.5) -- (3,1) [->][very thick];
\draw (6,.5) -- (6,1) [<-][very thick];
\draw (8,.5) -- (8,1) [<-][very thick];

\draw (-.25,1) rectangle (1.75,1.5);
\draw (.75,1.25) node {$(k_2)$};
\draw (2.25,1) rectangle (4.25,1.5);
\draw (3.25,1.25) node {$(k_1-1)$};
\filldraw [gray](4.4,1) rectangle (7.1,1.5);
\draw (5.75,1.25) node {$(1^{a+1+k_2})$};
\filldraw[gray] (7.5,1) rectangle (9.5,1.5);
\draw (8.5,1.25) node {$(1^{a + 1+k_1})$};

\draw (1,1.5) -- (1,3)[->][very thick];
\draw (8.5,1.5) -- (8.5,3)[<-][very thick];
\draw (2.75,1.5) -- (6.75,3) [->] [very thick];
\draw (6.25,1.5) -- (2.75,3) [<-] [very thick];

\draw [shift = {+(0,2)}](-.25,1) rectangle (1.75,1.5);
\draw [shift = {+(0,2)}](.75,1.25) node {$(k_2)$};
\filldraw[gray] [shift = {+(0,2)}](2.25,1) rectangle (4.25,1.5);
\draw [shift = {+(0,2)}](3.25,1.25) node {$(1^{a+2+k_2})$};
\draw [shift = {+(0,2)}](4.4,1) rectangle (7.1,1.5);
\draw [shift = {+(0,2)}](5.75,1.25) node {$(k_1)$};
\filldraw[gray] [shift = {+(0,2)}](7.5,1) rectangle (9.5,1.5);
\draw [shift = {+(0,2)}](8.5,1.25) node  {$(1^{a + 1+k_1})$};

\draw (3.75,3) arc (180:360:.75cm and .5cm)[->][very thick];
\filldraw [blue] (4.5,2.5) circle (3pt);

\draw  [shift = {+(0,2)}](1,1.5) -- (1,3)[->][very thick];
\draw  [shift = {+(0,2)}](8.5,1.5) -- (8.5,3)[<-][very thick];
\draw  [shift = {+(0,2)}](2.75,1.5) -- (6.75,3) [<-] [very thick];
\draw  [shift = {+(0,2)}](6.25,1.5) -- (2.75,3) [->] [very thick];

\draw (3.75,3.5) arc (180:0:.75cm and .5cm)[<-][very thick];
\draw (6.5,3.5) arc (180:0:.75cm and .5cm)[->][very thick];

\draw[shift = {+(0,4)}](-.25,1) rectangle (1.75,1.5);
\draw [shift = {+(0,4)}](.75,1.25) node {$(k_2)$};
\draw [shift = {+(0,4)}](2.25,1) rectangle (4.25,1.5);
\draw [shift = {+(0,4)}](3.25,1.25) node {$(k_1-2)$};
\filldraw [gray][shift = {+(0,4)}](4.4,1) rectangle (7.1,1.5);
\draw [shift = {+(0,4)}](5.75,1.25) node {$(1^{a+k_2+1})$};
\filldraw[gray] [shift = {+(0,4)}](7.5,1) rectangle (9.5,1.5);
\draw [shift = {+(0,4)}](8.5,1.25) node {$(1^{a+k_1})$};

\shadedraw[gray] [shift = {+(0,6)}](0,0) rectangle (4,.5);
\draw [shift = {+(0,6)}](2,.25) node {$(k_1-2,k_2)$};
\shadedraw[gray]  [shift = {+(0,6)}](5,0) rectangle (9,.5);
\draw [shift = {+(0,6)}](7,.25) node {$(2^{a+1+k_2},1^{k_1=k_2-1})$};
\draw [shift = {+(0,5)}](1,.5) -- (1,1) [->][very thick];
\draw [shift = {+(0,5)}](3,.5) -- (3,1) [->][very thick];
\draw [shift = {+(0,5)}](6,.5) -- (6,1) [<-][very thick];
\draw [shift = {+(0,5)}](8,.5) -- (8,1) [<-][very thick];

\end{tikzpicture}
$$
which is equal to a scalar multiple of
$$
\begin{tikzpicture}[scale=.75][>=stealth]
\shadedraw[gray]  (0,0) rectangle (4,.5);
\draw (2,.25) node {$(k_1-1,k_2)$};
\shadedraw[gray]  (5.5,0) rectangle (9.5,.5);
\draw (7.5,.25) node {$(2^{a+1+k_2},1^{k_1-k_2})$};
\draw (1,.5) -- (1,1) [->][very thick];
\draw (3,.5) -- (3,1) [->][very thick];
\draw (6,.5) -- (6,1) [<-][very thick];
\draw (9,.5) -- (9,1) [<-][very thick];

\draw (-.25,1) rectangle (1.75,1.5);
\draw (.75,1.25) node {$(k_2)$};
\draw (2.25,1) rectangle (4.25,1.5);
\draw (3.25,1.25) node {$(k_1-1)$};
\filldraw [gray](4.4,1) rectangle (7.1,1.5);
\draw (5.75,1.25) node {$(1^{a+1+k_2})$};
\filldraw[gray] (7.5,1) rectangle (9.5,1.5);
\draw (8.5,1.25) node {$(1^{a + 1+k_1})$};

\draw[shift = {+(0,4)}](-.25,1) rectangle (1.75,1.5);
\draw [shift = {+(0,4)}](.75,1.25) node {$(k_2)$};
\draw [shift = {+(0,4)}](2.25,1) rectangle (4.25,1.5);
\draw [shift = {+(0,4)}](3.25,1.25) node {$(k_1-2)$};
\filldraw [gray][shift = {+(0,4)}](4.4,1) rectangle (7.1,1.5);
\draw [shift = {+(0,4)}](5.75,1.25) node {$(1^{a+k_2+1})$};
\filldraw[gray] [shift = {+(0,4)}](7.5,1) rectangle (9.5,1.5);
\draw [shift = {+(0,4)}](8.5,1.25) node {$(1^{a+k_1})$};

\shadedraw[gray] [shift = {+(0,6)}](0,0) rectangle (4,.5);
\draw [shift = {+(0,6)}](2,.25) node {$(k_1-2,k_2)$};
\shadedraw[gray]  [shift = {+(0,6)}](5.5,0) rectangle (9.5,.5);
\draw [shift = {+(0,6)}](7.5,.25) node {$(2^{a+1+k_2},1^{k_1-k_2-1})$};
\draw [shift = {+(0,5)}](1,.5) -- (1,1) [->][very thick];
\draw [shift = {+(0,5)}](3,.5) -- (3,1) [->][very thick];
\draw [shift = {+(0,5)}](6,.5) -- (6,1) [<-][very thick];
\draw [shift = {+(0,5)}](9,.5) -- (9,1) [<-][very thick];

\draw (1,1.5) -- (1,5) [->][very thick];
\draw (3,1.5) -- (3,5) [->][very thick];
\draw (6,1.5) -- (6,5) [<-][very thick];
\draw (9,1.5) -- (9,5) [<-][very thick];
\draw (3.5,1.5) arc (180:0:2.25cm and 1.5cm)[->][very thick];

\end{tikzpicture}
$$
The right hand side is clearly equal to the map 
$$g_3: \P^{(k_1-1,k_2)} \Q^{(2^{a+1+k_2}, 1^{k_1-k_2})} \longrightarrow \P^{(k_1-1,k_2)} \Q^{(2^{a+1+k_2}, 1^{k_1-k_2-1})} \la 1 \ra.$$
Thus $g_3$ shows up as the map induced by the differential
$$\P^{(k_2)} \Q^{(1^{a+1+k_2})} \P^{(k_1)} \Q^{(1^{a+1+k_1})} 
\xrightarrow{ 
\;\; \vcenter{\xy (-2,-1)*{}; (2,-1)*{} **\crv{(-2,3) & (2,3)}?(1)*\dir{>}; (2,-3)*{};(-2,3)*{}; \endxy} \;\; 
{\xy {\ar (0,-3)*{};(0,3)*{} };(1.5,0)*{};(-1.5,0)*{};\endxy}
{\xy {\ar (0,-3)*{};(0,3)*{} };(1.5,0)*{};(-1.5,0)*{};\endxy}
}
\P^{(k_2)} \Q^{(1^{a+1+k_2})} \P^{(k_1-1)} \Q^{(1^{a+k_1})} \la 1 \ra.$$
However, it is possible that in the cancellation process this map becomes zero. A little bit of reflection convinces one that this can only happen if the other differential 
$$\P^{(k_2)} \Q^{(1^{a+1+k_2})} \P^{(k_1)} \Q^{(1^{a+1+k_1})} 
\xrightarrow{
{\xy {\ar (0,-3)*{};(0,3)*{} };(1.5,0)*{};(-1.5,0)*{};\endxy}
{\xy {\ar (0,-3)*{};(0,3)*{} };(1.5,0)*{};(-1.5,0)*{};\endxy}
\;\; \vcenter{\xy (-2,-1)*{}; (2,-1)*{} **\crv{(-2,3) & (2,3)}?(1)*\dir{>}; (2,-3)*{};(-2,3)*{}; \endxy} \;\; 
} 
\P^{(k_2-1)} \Q^{(1^{a+k_2})} \P^{(k_1)} \Q^{(1^{a+1+k_1})} \la 1 \ra$$
also induces the map $g_3$ since then in the process of applying the cancellation lemma these two maps could cancel. Fortunately, the right hand side $\P^{(k_2-1)} \Q^{(1^{a+k_2})} \P^{(k_1)} \Q^{(1^{a+1+k_1})}$ does not contain any summand $\P^{(k_1-1,k_2)} \Q^{(2^{a+1+k_2}, 1^{k_1-k_2-1})}$ so this does not happen. 

Similarly, one can show that $g_1$ also occurs in the differential of (\ref{eq:EEtcpx}). Thus $\E_{i,n-1} \E_{i,n} \1_\l$ is indeed homotopic to the (unique up to homotopy) complex (\ref{eq:EEtcpx}) with nonzero multiples of $g_1$ and $g_3$ in the differential. 

{\bf Computation of $\E_{i,n} \E_{i,n-1} \1_\l$.} This is isomorphic to
$$\left[ \dots \rightarrow \P^{(2)} \Q^{(1^{a+5})} \la -2 \ra \rightarrow \P \Q^{(1^{a+4})} \la -1 \ra \rightarrow \Q^{(1^{a+3})} \right] \left[ \dots \rightarrow \P^{(2)} \Q^{(1^{a+2})} \la -2 \ra \rightarrow \P \Q^{(1^{a+1})} \la -1 \ra \rightarrow \Q^{(1^a)} \right]$$
which simplifies to give
$$\bigoplus_{k_1+k_2=l} \P^{(k_2)} \left[\P^{(k_1)} \Q^{(1^{a+k_2+3})} \oplus \P^{(k_1-1)} \Q^{(1^{a+k_2+2})} \otimes_\k V_1 \oplus \P^{(k_1-2)} \Q^{(1^{a+k_2+2})} \right] \Q^{(1^{a+k_1})} \la -l \ra$$
in cohomological degree $-l$. This computation is similar so we just sketch it. Again we collect terms with a grading shift $\la -l \ra$ to obtain
\begin{equation}\label{eq:C}
\bigoplus_{k_1+k_2=l} \P^{(k_2)} \P^{(k_1-2)} \Q^{(1^{a+k_2+1})} \Q^{(1^{a+k_1})} \longrightarrow \bigoplus_{k_1+k_2=l-1} \P^{(k_2)} \P^{(k_1-1)} \Q^{(1^{a+k_2+2})} \Q^{(1^{a+k_1})}
\end{equation}
and
\begin{equation}\label{eq:D}
\bigoplus_{k_1+k_2=l-1} \P^{(k_2)} \P^{(k_1-1)} \Q^{(1^{a+k_2+2})} \Q^{(1^{a+k_1})} \longrightarrow \bigoplus_{k_1+k_2=l} \P^{(k_2)} \P^{(k_1)} \Q^{(1^{a+k_2+3})} \Q^{(1^{a+k_1})}.
\end{equation}
It turns out that (\ref{eq:C}) is an isomorphism and that (\ref{eq:D}) is injective. 

{\bf Calculation of (\ref{eq:C}).} We rewrite (\ref{eq:C}) as
$$
\bigoplus_{k_1+k_2=l-1} \P^{(k_2)} \P^{(k_1-1)} \Q^{(1^{a+k_2+1})} \Q^{(1^{a+k_1+1})} \longrightarrow \bigoplus_{k_1+k_2=l-1} \P^{(k_2)} \P^{(k_1-1)} \Q^{(1^{a+k_2+2})} \Q^{(1^{a+k_1})}
$$
and then cancel to get
$$
\bigoplus_{\begin{matrix} \scriptstyle k_1+k_2=l-1 \\ \scriptstyle k_2 \ge k_1 \end{matrix}}
\P^{(k_2)} \P^{(k_1-1)} \Q^{(2^{a+k_1+1}, 1^{k_2-k_1})} \longrightarrow
\bigoplus_{\begin{matrix} \scriptstyle k_1+k_2=l-1 \\ \scriptstyle k_2+2 \le k_1 \end{matrix}}
\P^{(k_2)} \P^{(k_1-1)} \Q^{(2^{a+k_2+2}, 1^{k_1-k_2-2})}.
$$
We then rewrite this as 
$$
\bigoplus_{\begin{matrix} \scriptstyle k_1+k_2=l-1 \\ \scriptstyle k_2 \ge k_1 \end{matrix}}
\P^{(k_2)} \P^{(k_1-1)} \Q^{(2^{a+k_1+1}, 1^{k_2-k_1})} \longrightarrow
\bigoplus_{\begin{matrix} \scriptstyle k_1+k_2=l-1 \\ \scriptstyle k_2 \ge k_1 \end{matrix}}
\P^{(k_1-1)} \P^{(k_2)} \Q^{(2^{a+k_1+1}, 1^{k_2-k_1})}
$$
which turns out to be an isomorphism. So (\ref{eq:C}) is homotopic to zero.

{\bf Calculation of (\ref{eq:D}).} One can check using the same argument as before that the map in equation (\ref{eq:D}) is surjective. We do not repeat this argument but instead just keep track of the terms left over after cancellation. First we rewrite (\ref{eq:D}) as 
$$
\bigoplus_{k_1+k_2=l} \P^{(k_2)} \P^{(k_1)} \Q^{(1^{a+k_2+2})} \Q^{(1^{a+k_1+1})} \longrightarrow \bigoplus_{k_1+k_2=l} \P^{(k_2)} \P^{(k_1)} \Q^{(1^{a+k_2+3})} \Q^{(1^{a+k_1})}
$$
which simplifies to 
$$
\bigoplus_{\begin{matrix} \scriptstyle k_1+k_2=l \\ \scriptstyle k_2 \ge k_1-1 \end{matrix}}
\P^{(k_2)} \P^{(k_1)} \Q^{(2^{a+k_1+1},1^{k_2-k_1+1})} \longrightarrow
\bigoplus_{\begin{matrix} \scriptstyle k_1+k_2=l \\ \scriptstyle k_2+3 \le k_1 \end{matrix}}
\P^{(k_2)} \P^{(k_1)} \Q^{(2^{a+k_2+3}, 1^{k_1-k_2-3})}.
$$
We then rewrite both sides to obtain
$$
\bigoplus_{\begin{matrix} \scriptstyle k_1+k_2=l-1 \\ \scriptstyle k_1 \ge k_2 \end{matrix}}
\P^{(k_2+1)} \P^{(k_1)} \Q^{(2^{a+k_2+2},1^{k_1-k_2})} \longrightarrow
\bigoplus_{\begin{matrix} \scriptstyle k_1+k_2=l-1 \\ \scriptstyle k_1 \ge k_2 \end{matrix}}\P^{(k_2-1)} \P^{(k_1+2)} \Q^{(2^{a+k_2+2}, 1^{k_1-k_2})}.
$$
This in turn simplifies to give
$$
\bigoplus_{\begin{matrix} \scriptstyle k_1+k_2=l-1 \\ \scriptstyle k_1 \ge k_2 \end{matrix}}
\P^{(k_1+1,k_2)} \Q^{(2^{a+k_2+2},1^{k_2-k_1})} \la -l \ra 
\bigoplus_{\begin{matrix} \scriptstyle k_1+k_2=l-1 \\ \scriptstyle k_1 \ge k_2+1 \end{matrix}}
\P^{(k_1,k_2+1)} \Q^{(2^{a+k_2+2},1^{k_2-k_1})} \la -l \ra
$$
in cohomological degree $(-l-1)$ (where we have added back the $\la - l \ra$ shift). Replacing $l,k_1,k_2$ by $l-2,k_1-1,k_2-1$ we see that these are the same terms as those in the complex (\ref{eq:EEtcpx}) with the extra shift $\la 2 \ra$. One can check as before that $g_1$ and $g_3$ appear in the differentials and hence $\E_{i,n} \E_{i,n-1} \la -2 \ra \1_\l$ is also homotopic to (\ref{eq:EEtcpx}). Thus we are done. 


\section{Commutation of $\E_i$ and $\F_j$}

\begin{prop}\label{prop:EiFjcomm} For any $i \ne j \in I$ we have $\F_{j,n} \E_{i,m} \1_\l \cong \E_{i,m} \F_{j,n} \1_\l$. 
\end{prop}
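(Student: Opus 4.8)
The proof splits according to whether $\la i,j\ra=0$ or $\la i,j\ra=-1$. When $\la i,j\ra=0$ the statement is essentially formal. By the definitions in Section~\ref{sec:defs}, $\E_{i,m}\1_\l$ and $\F_{j,n}\1_\l$ are complexes in $\Kom(\K)$ whose terms are built only from $\P_i,\Q_i$ (respectively only from $\P_j,\Q_j$), and by Proposition~\ref{prop:rels1} every one of $\P_i^{(a)},\P_i^{(1^a)},\Q_i^{(a)},\Q_i^{(1^a)}$ commutes, up to canonical isomorphism, with every one of $\P_j^{(b)},\P_j^{(1^b)},\Q_j^{(b)},\Q_j^{(1^b)}$. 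Hence, after rebracketing, the composite double complexes $\F_{j,n}\E_{i,m}\1_\l$ and $\E_{i,m}\F_{j,n}\1_\l$ are isomorphic term by term and with matching grading shifts; since the differentials on both sides are built from single cups and caps on the $i$- and $j$-strands, and (by the argument of Lemma~\ref{lem:onedimhoms}) the relevant spaces of degree-one $2$-morphisms between these terms are at most one-dimensional, the two differentials agree after rescaling the terms, giving an isomorphism of complexes.

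When $\la i,j\ra=-1$ a computation in the style of Propositions~\ref{prop:sl2comm} and~\ref{prop:othersl2comm} is needed. I would first write $\F_{j,n}\E_{i,m}\1_\l$ as the totalization of a double complex whose terms are $\P_j^{(1^b)}\Q_j^{(d)}\P_i^{(a)}\Q_i^{(1^c)}\la\cdot\ra$, move the middle pair past each other by Proposition~\ref{prop:rels1}(3), using $\Q_j^{(d)}\P_i^{(a)}\cong \P_i^{(a)}\Q_j^{(d)}\oplus \P_i^{(a-1)}\Q_j^{(d-1)}$, and then reorder the remaining (commuting) factors, so that each term becomes $\P_i^{(a)}\P_j^{(1^b)}\Q_i^{(1^c)}\Q_j^{(d)}\la\cdot\ra\oplus \P_i^{(a-1)}\P_j^{(1^b)}\Q_i^{(1^c)}\Q_j^{(d-1)}\la\cdot\ra$. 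Then I would compute $\E_{i,m}\F_{j,n}\1_\l$ the same way, starting from terms $\P_i^{(a)}\Q_i^{(1^c)}\P_j^{(1^b)}\Q_j^{(d)}\la\cdot\ra$ and using the $\Psi$-transpose of Proposition~\ref{prop:rels1}(3), namely $\Q_i^{(1^c)}\P_j^{(1^b)}\cong \P_j^{(1^b)}\Q_i^{(1^c)}\oplus \P_j^{(1^{b-1})}\Q_i^{(1^{c-1})}$ (where $\Psi$ is the involution of Section~\ref{sec:psi}), so that each term becomes $\P_i^{(a)}\P_j^{(1^b)}\Q_i^{(1^c)}\Q_j^{(d)}\la\cdot\ra\oplus \P_i^{(a)}\P_j^{(1^{b-1})}\Q_i^{(1^{c-1})}\Q_j^{(d)}\la\cdot\ra$. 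Thus both composites acquire the same ``main'' terms $\P_i^{(a)}\P_j^{(1^b)}\Q_i^{(1^c)}\Q_j^{(d)}$ together with ``extra'' terms differing only in which pair of exponents is decremented.

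Next I would apply the Gaussian-elimination Lemma~\ref{lem:cancel} repeatedly to both sides. On each side the ``extra'' summands assemble into acyclic subquotient complexes --- the same exactness phenomenon that let the complex~(\ref{eq:6}) be cancelled in the proof of Proposition~\ref{prop:FEcomp}, and which one verifies here by the argument of Lemma~\ref{lem:1} --- so they cancel in pairs against neighbouring terms, leaving on each side a reduced complex supported on the ``main'' terms. Tracking cohomological degrees and grading shifts one checks that the two reduced complexes have the same terms with the same shifts, so it remains to match their differentials. Each such differential lies in a $\Hom$ space of dimension at most one (again by the argument of Lemma~\ref{lem:onedimhoms}), hence is determined up to a scalar once known to be nonzero; the ``close off the diagram and evaluate in $\End(\1_n)$'' technique used for the map~(\ref{eq:nonzeromap}) shows the surviving differentials are nonzero, and the square-completion argument of Lemma~\ref{lem:square} lets one rescale the identifications of the terms so that the two reduced complexes become isomorphic. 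This gives $\F_{j,n}\E_{i,m}\1_\l\cong \E_{i,m}\F_{j,n}\1_\l$.

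The main obstacle is the bookkeeping in the case $\la i,j\ra=-1$: after all the decompositions of Proposition~\ref{prop:rels1} have been applied, one must keep track of exactly which indecomposables $\P_i^{(\mu)}\P_j^{(\nu)}\Q_i^{(\rho)}\Q_j^{(\sigma)}$ occur in each composite, with which grading shifts and in which cohomological degrees, and then check that the resulting chain of cancellations is globally consistent (no summand cancelled twice, and no surviving differential accidentally killed by an application of the cancellation lemma). This is the same combinatorial difficulty that dominates the $\sl_2$ and loop relations, but with four tensor factors and two strand colours the case analysis is correspondingly longer; a secondary point to confirm is that the slightly asymmetric weight shifts with which $\fC_i^-$ and $\fC_j^+$ enter the two composites cancel against each other, so that the two reduced complexes genuinely coincide rather than differing by an overall shift.
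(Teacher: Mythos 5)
Your case $\la i,j\ra=0$ is fine, and in the case $\la i,j\ra=-1$ your opening move (decompose $\Q_j^{(d)}\P_i^{(a)}\cong \P_i^{(a)}\Q_j^{(d)}\oplus\P_i^{(a-1)}\Q_j^{(d-1)}$ on one side and its $\Psi$-transpose $\Q_i^{(1^c)}\P_j^{(1^b)}\cong \P_j^{(1^b)}\Q_i^{(1^c)}\oplus\P_j^{(1^{b-1})}\Q_i^{(1^{c-1})}$ on the other) produces exactly the right list of summands. But the next step is where the argument breaks: there is nothing for Lemma \ref{lem:cancel} to cancel. Gaussian elimination needs a component of the differential that is an isomorphism between a summand in one cohomological degree and a summand in the adjacent degree, and here no two indecomposable summands of either composite are isomorphic (even up to shift): a term $\P_j^{(1^l)}\P_i^{(k-1)}\Q_j^{(b+l-1)}\Q_i^{(1^{a+k})}$ never coincides with any $\P_j^{(1^{l'})}\P_i^{(k')}\Q_j^{(b+l')}\Q_i^{(1^{a+k'})}$, since matching the $\P_j$- and $\Q_j$-exponents forces the contradiction $l'=l$ and $l'=l-1$. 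This is the essential difference from the $i=j$ computations you are modelling the argument on: there the multiplicity spaces $V_k$ in $\Q^{(n)}\P^{(m)}\cong\oplus_k\P^{(m-k)}\Q^{(n-k)}\otimes_\k V_k$ create repeated summands in adjacent degrees, and those are what get cancelled; for $\la i,j\ra=-1$ the commutation is multiplicity-free and every summand occurs exactly once, so the ``extra'' terms are genuine and survive in any homotopy-equivalent reduction. Moreover, even if one could discard them, your final matching would still fail, because the ``main'' terms of the two composites are not isomorphic to each other: $\P_j^{(1^l)}\P_i^{(k)}\Q_j^{(b+l)}\Q_i^{(1^{a+k})}$ versus $\P_i^{(k')}\P_j^{(1^{l'})}\Q_i^{(1^{a+1+k'})}\Q_j^{(b-1+l')}$ can never agree.

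The correct (and simpler) conclusion, which is what the paper does, is that no cancellation is needed at all: the full set of terms of $\F_{j,n}\E_{i,m}\1_\l$ matches bijectively the full set of terms of $\E_{i,m}\F_{j,n}\1_\l$, with the ``main'' summands of one side pairing with the ``extra'' summands of the other. Concretely, in the paper's indexing the summands of $\E_{i,m}\F_{j,n}\1_\l$ at $(k',l')$ with $h'=h+1$ and $(k',l')=(k,l+1)$ or $(k-1,l)$ reproduce exactly the two summands of $\F_{j,n}\E_{i,m}\1_\l$ at $(k,l)$, in the same cohomological degree and with the same grading shift (the asymmetry you worried about is absorbed by the shift $\la b-1\ra[-b+1]$ versus $\la b\ra[-b]$ in the two instances of $\F_{j,n}$). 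One then checks directly that the differentials, which are single cups and caps transported through the commutation isomorphisms of Proposition \ref{prop:rels1}, coincide under this identification; your appeal to the one-dimensionality of the relevant degree-one Hom spaces is a reasonable way to organize that last check, but the cancellation step preceding it must be removed.
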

\begin{proof}
There are two cases, depending of whether $i$ and $j$ are connected by an edge. If $\la i,j \ra = 0$ then $\P_i$ and $\Q_i$ commute with $\P_j$ and $\Q_j$ and the result follows immediately. So suppose $\la i,j \ra = -1$. There are several cases to consider. 

{\bf Case 1.} Suppose that $\la \l, \alpha_i \ra + m \ge -1$ and $\la \l, \alpha_j \ra - n \le -1$. This means that
\begin{eqnarray*}
\E_{i,m} \1_\l &\cong& 
\left[ \dots \rightarrow \P_i^{(k)} \Q_i^{(1^{a+k})} \la -k \ra \rightarrow \dots \rightarrow \P_i \Q_i^{(1^{a+1})} \la -1 \ra \rightarrow \Q_i^{(1^{a})} \right] \\
\F_{j,n} \1_{\l+\alpha_i+m\delta} &\cong& 
\left[ \Q_j^{(b)} \rightarrow \P_j \Q_j^{(b+1)} \la 1 \ra \rightarrow \dots \rightarrow \P_j^{(1^l)} \Q_j^{(b+l)} \la l \ra \rightarrow \dots \right] \la b \ra [-b] \\
\F_{j,n} \1_\l &\cong& 
\left[ \Q_j^{(b-1)} \rightarrow \P_j \Q_j^{(b)} \la 1 \ra \rightarrow \dots \rightarrow \P_j^{(1^{l'})} \Q_j^{(b-1+l')} \la l' \ra \rightarrow \dots \right] \la b-1 \ra [-b+1] \\ 
\E_{i,m} \1_{\l-\alpha_j+n\delta} &\cong& 
\left[ \dots \rightarrow \P_i^{(k')} \Q_i^{(1^{a+1+k'})} \la -k' \ra \rightarrow \dots \rightarrow \P_i \Q_i^{(1^{a+2})} \la -1 \ra \rightarrow \Q_i^{(1^{a+1})} \right]
\end{eqnarray*}
where $a = \la \l, \alpha_i \ra + 1 + m$ and $b = - \la \l, \alpha_j \ra + n$.  

Now, the terms in $\F_{j,n} \E_{i,m} \1_\l$ in cohomological degree $(h+b)$ are 
$$\bigoplus_{l-k=h} \P_j^{(1^l)} \Q_j^{(b+l)} \P_i^{(k)} \Q_i^{(1^{a+k})} \1_\l \la h+b \ra.$$
Since $\Q_j^{(b+l)} \P_i^{(k)} \cong \P_i^{(k)} \Q_j^{(b+l)} \oplus \P_i^{(k-1)} \Q_j^{(b+l-1)}$ this simplifies to give 
\begin{equation}\label{eq:9}
\bigoplus_{l-k=h} 
\left[ \P_j^{(1^l)} \P_i^{(k)} \Q_j^{(b+l)} \Q_i^{(1^{a+k})} \1_\l \oplus \P_j^{(1^l)} \P_i^{(k-1)} \Q_j^{(b+l-1)} \Q_i^{(1^{a+k})} \1_\l \right] \la h+b \ra.
\end{equation}
Likewise, the terms in $\E_{i,m} \F_{j,n} \1_\l$ in cohomological degree $(h'+b-1)$ are 
$$\bigoplus_{l'-k'=h'} \P_i^{(k')} \Q_i^{(1^{a+1+k'})} \P_j^{(1^{l'})} \Q_j^{(b-1+l')} \1_\l \la h'+b-1 \ra$$
which simplifies to give 
\begin{equation}\label{eq:9'}
\bigoplus_{l'-k'=h'} 
\left[ \P_i^{(k')} \P_j^{(1^{l'})} \Q_i^{(1^{a+1+k'})} \Q_j^{(b-1+l')} \oplus \P_i^{(k')} \P_j^{(1^{l'-1})} \Q_i^{(1^{a+k'})}  \Q_j^{({b-1+l'})} \right] \la h'+b-1 \ra.
\end{equation}
Now it is easy to see that the terms in (\ref{eq:9'}) match up with the terms in (\ref{eq:9}) when $h'=h+1$ and $(k',l')=(k,l+1)$ or $(k',l')=(k-1,l)$. Thus the complexes for $\E_{i,m} \F_{j,n} \1_\l$ and $\F_{j,n} \E_{i,m} \1_\l$ match up term by term and it is not hard to check that the differentials are the same. 

{\bf Case 2.} Now, consider the case that $\la \l, \alpha_i \ra + m \ge -1$ but $\la \l, \alpha_j \ra -n \ge 0$. This means that:
\begin{eqnarray*}
\E_{i,m} \1_\l &\cong& 
\left[ \dots \rightarrow \P_i^{(k)} \Q_i^{(1^{a+k})} \la -k \ra \rightarrow \dots \rightarrow \P_i \Q_i^{(1^{a+1})} \la -1 \ra \rightarrow \Q_i^{(1^{a})} \right] \\
\F_{j,n} \1_{\l+\alpha_i+m\delta} &\cong&
\left[ \P_j^{(1^b)} \rightarrow \P_j^{(1^{b+1})} \Q_j \la 1 \ra \rightarrow \dots \rightarrow \P_j^{(1^{b+l})} \Q_j^{(l)} \la l \ra \rightarrow \dots \right] \\
\F_{j,n} \1_\l &\cong&
\left[ \P_j^{(1^{b+1})} \rightarrow \P_j^{(1^{b+2})} \Q_j \la 1 \ra \rightarrow \dots \rightarrow \P_j^{(1^{b+1+l'})} \Q_j^{({l'})} \la l' \ra \rightarrow \dots \right] \\
\E_{i,m} \1_{\l-\alpha_j+n\delta} &\cong& 
\left[ \dots \rightarrow \P_i^{(k')} \Q_i^{(1^{a+1+k'})} \la -k' \ra \rightarrow \dots \rightarrow \P_i \Q_i^{(1^{a+2})} \la -1 \ra \rightarrow \Q_i^{(1^{a+1})} \right] 
\end{eqnarray*}
where $a = \la \l, \alpha_i \ra + 1 + m$ and $b = \la \l, \alpha_j \ra - n$. The terms in $\F_{j,n} \E_{i,m} \1_\l$ in cohomological degree $h$ are
$$\bigoplus_{l-k=h} \P_j^{(1^{b+l})} \Q_j^{(l)} \P_i^{(k)} \Q_i^{(1^{a+k})} \la h \ra$$
which simplifies to give 
\begin{equation}\label{eq:10}
\bigoplus_{l-k=h} \left[ \P_j^{(1^{b+l})} \P_i^{(k)} \Q_j^{(l)} \Q_i^{(1^{a+k})} \1_\l \oplus \P_j^{(1^{b+l})} \P_i^{(k-1)} \Q_j^{({l-1})} \Q_i^{(1^{a+k})} \1_\l \right] \la h \ra.
\end{equation}
Likewise, the terms in $\E_{i,m} \F_{j,n} \1_\l$ in cohomological degree $h'$ are
$$\bigoplus_{l'-k'=h'} \P_i^{(k')} \Q_i^{(1^{a+1+k'})} \P_j^{(1^{b+1+l'})} \Q_j^{({l'})} \1_\l \la h' \ra$$
which simplifies to give
\begin{equation}\label{eq:10'}
\bigoplus_{l'-k'=h'} \left[  \P_i^{(k')} \P_j^{(1^{b+1+l'})} \Q_i^{(1^{a+1+k'})} \Q_j^{({l'})} \1_\l \oplus \P_i^{(k')} \P_j^{(1^{b+l'})} \Q_i^{(1^{a+k'})} \Q_j^{({l'})} \1_\l \right] \la h' \ra.
\end{equation}
It is easy to see that the terms in (\ref{eq:10}) match up with those in (\ref{eq:10'}) via the identification $(k',l')=(k,l)$ or $(k',l')=(k-1,l-1)$. Again, it is not hard to check that the differentials also match up. 

{\bf Case 3 and 4.} There are two further cases which to consider, namely when $\la \l, \alpha_i \ra + m \le 0$ and either $\la \l, \alpha_j \ra - n \le -1$ or $\la \l, \alpha_j \ra - n \ge 0$. These are proven in exactly the same way as above and so we omit the details. 
\end{proof}

\section{The commutation relation of $\E_i$ and $\E_{j,1}$}

\begin{prop}\label{prop:cones}
If $\la i, j \ra = -1$ then there exist unique nonzero maps
$$\E_{i} \E_{j,1} \1_\l \xrightarrow{\alpha} \E_{j,1} \E_{i} \la 1 \ra \1_\l \ \ \text{ and } \ \  \E_{j} \E_{i,1} \1_\l \xrightarrow{\beta} \E_{i,1} \E_{j} \la 1 \ra \1_\l$$
in $\K$ and we have $\Cone(\alpha) \cong \Cone(\beta)$. Meanwhile, if $\la i,j \ra = 0$ then $\E_{i,m}$ and $\E_{j,n}$ commute for any $m,n \in \{0,1\}$. 
\end{prop}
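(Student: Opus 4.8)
The plan is to treat the two cases $\la i,j\ra=0$ and $\la i,j\ra=-1$ separately. When $i$ and $j$ are not joined by an edge, $\P_i,\Q_i$ commute with $\P_j,\Q_j$ as $1$-morphisms in any $2$-representation of $\h$ (Proposition \ref{prop:rels1}), so the complexes computing $\E_{i,m}\E_{j,n}\1_\l$ and $\E_{j,n}\E_{i,m}\1_\l$ are assembled from commuting tensor factors. Repeating the term-by-term comparison from the proof of Proposition \ref{prop:EiFjcomm} then identifies the two complexes on the nose, and in particular shows $\E_{i,m}$ and $\E_{j,n}$ commute.

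So assume $\la i,j\ra=-1$. As in the other loop-relation proofs we may take $m=0$, so that we are looking at the maps $\alpha\colon\E_{i,0}\E_{j,1}\1_\l\to\E_{j,1}\E_{i,0}\la 1\ra\1_\l$ and $\beta\colon\E_{j,0}\E_{i,1}\1_\l\to\E_{i,1}\E_{j,0}\la 1\ra\1_\l$. First I would pin down $\alpha$ and $\beta$. Composing the two defining complexes $\fC$ and expanding the mixed products $\Q_i^{(1^\bullet)}\P_j^{(\bullet)}$ via Proposition \ref{prop:rels1}(4), every term of $\E_{i,0}\E_{j,1}\1_\l$ and of $\E_{j,1}\E_{i,0}\1_\l$ becomes a direct sum of $1$-morphisms of the form $\P_i^{(a)}\P_j^{(b)}\Q_i^{(1^c)}\Q_j^{(1^d)}$ up to grading shift. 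By Lemma \ref{lem:onedimhoms} the space of degree-one maps between two such $1$-morphisms is at most one-dimensional, and in the relevant case is spanned by an $i$-$j$ dot as in (\ref{eq:nonzeromap}). Hence a chain map between the two complexes is prescribed, homological degree by homological degree, by a scalar multiple of such a dot; Lemma \ref{lem:square} furnishes a coherent choice of these scalars making all squares commute, and the closing-off computation used to check that (\ref{eq:nonzeromap}) is nonzero shows the resulting map is not null-homotopic. A direct count of chain maps against homotopies (again using Lemma \ref{lem:onedimhoms}) shows the corresponding $\Hom$ space in $\Kom(\K)$ is one-dimensional, giving uniqueness; the same argument applies verbatim to $\beta$.

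The substantive step is $\Cone(\alpha)\cong\Cone(\beta)$. Here I would compute $\E_{i,0}\E_{j,1}\1_\l$ and $\E_{j,1}\E_{i,0}\1_\l$ explicitly, in the style of Propositions \ref{prop:FEcomp}, \ref{prop:EFcomp}, \ref{prop:Escommute} and \ref{prop:EE}: expand all $\Q\P$-products, split via Proposition \ref{prop:rels1}, and repeatedly apply the Gaussian elimination Lemma \ref{lem:cancel} to cancel contractible summands, arriving at a reduced complex in each case whose terms all have the shape $\P_i^{(a)}\P_j^{(b)}\Q_i^{(1^c)}\Q_j^{(1^d)}$ and whose differentials are single caps (or zero, by Lemma \ref{lem:differentials}). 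One then checks, exactly as in Lemma \ref{lem:3}, that $\alpha$ survives all these cancellations as an honest $i$-$j$-dot map between the reduced complexes --- this uses parity of degrees to see the reductions neither kill nor alter it --- so that $\Cone(\alpha)$ is the totalization of the resulting double complex, and likewise for $\Cone(\beta)$. Finally I would identify the two totalizations: after one further round of Gaussian elimination each cone collapses to a complex whose terms and nonzero differentials are forced, and by the rigidity of Lemmas \ref{lem:EEcpx} and \ref{lem:EEtcpx} any two complexes with a given list of terms and nonzero single-cap/dot differentials are homotopy equivalent. Matching the term lists of $\Cone(\alpha)$ and $\Cone(\beta)$, which should coincide --- consistently with both Grothendieck classes equalling the common value of the two sides of relation (7) in the presentation of $\dU_q(\hg)$ --- then yields $\Cone(\alpha)\cong\Cone(\beta)$.

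The step I expect to be the main obstacle is this last one: carrying out the two $\Q\P$-expansions and the successive applications of Lemma \ref{lem:cancel} while tracking the mixed colours $i$ and $j$, and then verifying that the $i$-$j$-dot differential of $\alpha$ (and of $\beta$) is preserved under every cancellation. This is lengthy but mechanical, of the same nature as the one-colour loop-relation computations already in the paper, only with two colours simultaneously in play.
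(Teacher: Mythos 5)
Your overall strategy---write out the composed complexes, define $\alpha$ and $\beta$ by dot-type components, match the terms of the two cones, and invoke a rigidity statement to identify the differentials---is in outline the same as the paper's, but two steps are genuinely incomplete. First, the construction and uniqueness of $\alpha$ and $\beta$. Lemma \ref{lem:square} only completes a given pair of maps out of one object to a single commutative square with nonzero composite; it does not supply a globally coherent system of scalars making \emph{every} square in the unbounded grid of components commute, which is what your term-by-term prescription needs in order to assemble an actual chain map. The paper avoids this by writing $\alpha$ down explicitly, on the unexpanded terms $\P_i^{(\cdot)}\Q_i^{(1^\cdot)}\P_j^{(\cdot)}\Q_j^{(1^\cdot)}$, as the crossing-plus-$i$-$j$-dot diagrams (\ref{eq:diagrams1}), (\ref{eq:diagrams2}), and verifying the chain-map identity by a short graphical computation (the obstruction term vanishes because the two-dot subdiagram is zero). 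Likewise your uniqueness argument, ``a direct count of chain maps against homotopies,'' is asserted rather than carried out; for unbounded complexes with the four families of degree-one maps of Lemma \ref{lem:onedimhoms} this count is not routine. The paper instead proves $\Hom(\E_i\E_{j,1}\1_\l,\E_{j,1}\E_i\la 1\ra\1_\l)\cong\k\cong\End(\E_i\E_{j,1}\1_\l)$ formally, by adjunction from integrability, biadjointness, the commutation of $\E_i$ with $\F_j$ and the $[\E,\F]$ relations (following Lemma 4.5 of \cite{CK}); you need some replacement for this step.

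Second, the rigidity step. Note that Gaussian elimination does essentially nothing to $\E_{i,m}\E_{j,m+1}\1_\l$: expanding $\Q_i^{(1^c)}\P_j^{(k)}$ for $\la i,j\ra=-1$ produces no $V_k$-factors, so all components of the differential have $q$-degree one and there are no isomorphism components to cancel; the paper compares the cones unreduced, after a direct matching of terms. More importantly, the principle you invoke---that any two complexes with the same list of terms and nonzero single-cap/dot differentials are homotopy equivalent---is false in that generality, and Lemmas \ref{lem:EEcpx} and \ref{lem:EEtcpx} are proved only for their specific complexes. What is actually needed is the separate rigidity statement Lemma \ref{lem:2}, whose hypotheses (one-dimensionality of the Hom spaces between consecutive terms, the factoring of pairs of components through a common nonzero degree-two map, and the existence of a nonzero map out of every term in nonzero cohomological degree) must each be verified for $\Cone(\alpha)$ and $\Cone(\beta)$, using Lemmas \ref{lem:onedimhoms} and \ref{lem:square} together with a closure computation showing the relevant components are nonzero. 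Without these verifications the identification of the differentials, and hence $\Cone(\alpha)\cong\Cone(\beta)$, does not follow.
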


The commutation of $\E_{i,m}$ and $\E_{j,n}$ when $\la i,j \ra = 0$ is obvious since $\P_i$ commutes with $\Q_j$ in this case. It remains to prove the first assertion when $\la i,j \ra = -1$. 

First we need to show that $\alpha$ and $\beta$ are unique (up to rescaling). 

\begin{lemma} If $\la i, j \ra = -1$ then $\Hom(\E_i \E_{j,1} \1_\l, \E_{j,1} \E_i \la 1 \ra \1_\l) \cong \k \cong \End(\E_i \E_{j,1} \1_\l)$. 
\end{lemma}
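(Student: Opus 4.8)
The statement to prove is that, for $\la i,j\ra = -1$, both $\Hom(\E_i\E_{j,1}\1_\l, \E_{j,1}\E_i\la 1\ra\1_\l)$ and $\End(\E_i\E_{j,1}\1_\l)$ are one-dimensional. The strategy is to reduce both $\Hom$-spaces to $\Hom$-spaces between finite summands of the form $\P_i^{(a)}\P_j^{(b)}\Q_i^{(1^c)}\Q_j^{(1^d)}$ (possibly with grading and homological shifts) and then invoke the dimension estimate of Lemma \ref{lem:onedimhoms} together with degree bookkeeping. The first step is to write $\E_i\1_\l$ and $\E_{j,1}\1_{\l'}$ (with the appropriate target weights) as the explicit complexes $\fC_i^-(\cdot)$ from Section \ref{sec:vertexops}, so that each of $\E_i\E_{j,1}\1_\l$ and $\E_{j,1}\E_i\1_\l$ becomes a double complex whose terms are products $\P_i^{(k)}\Q_i^{(1^\bullet)}\P_j^{(1^l)}\Q_j^{(\bullet)}$; using that $\la i,j\ra=-1$, each such term simplifies via Proposition \ref{prop:rels1}(3) (the $i\ne j$ case with $\la i,j\ra=-1$, where $\Q_j^{(n)}\P_i^{(m)}\cong\P_i^{(m)}\Q_j^{(n)}\oplus\P_i^{(m-1)}\Q_j^{(n-1)}$) into a sum of at most two terms of the standard shape $\P_i^{(a)}\P_j^{(1^b)}\Q_i^{(1^c)}\Q_j^{(d)}$.

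Having both sides expressed as complexes with terms of this shape, I would compute $\Hom$ in the homotopy category $\Kom(\K)$ by the usual double-complex spectral sequence / direct term-by-term analysis. The key input is that a degree-one, homologically-compatible map between two such standard terms is unique up to scalar when it exists (this is exactly Lemma \ref{lem:onedimhoms}, after allowing for the swap between $\Q_j^{(1^d)}$ and $\Q_j^{(d)}$ — note the $j$-strand here carries the partition $(d)$ rather than $(1^d)$, but the same ``count the degree-one generators'' argument from the proof of Lemma \ref{lem:onedimhoms} applies verbatim, and the relevant degree-one generators are the $i$-$j$ dot, the $j$-$i$ dot, and single $i$- or $j$-caps/cups). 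One must track the grading shift $\la 1\ra$ on the target and the homological degrees carefully: the claim is that among all pairs of matching terms, exactly one contributes, and contributes a one-dimensional space, so that the total $\Hom$ in the homotopy category — which is the relevant cohomology of the $\Hom$ double complex — is $\k$. For the endomorphism statement $\End(\E_i\E_{j,1}\1_\l)\cong\k$, the same analysis with target equal to source (no shift) shows the identity spans; here one uses condition (2) in the definition of a 2-representation of $\h$, namely $\Hom(\1_n,\1_n)=\k$, to pin down the degree-zero piece, and Lemma \ref{lem:onedimhoms}-type vanishing to kill everything off-diagonal.

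Concretely, I would split into the cases $\la\l,\alpha_i\ra+m \gtrless 0$ and $\la\l,\alpha_j\ra \gtrless 0$ (as in the proof of Proposition \ref{prop:EiFjcomm}) so that the complexes $\fC_i^-$, $\fC_j^-$ take their various explicit forms, and in each case exhibit the term-by-term matching. In every case the combinatorics is the same: the two double complexes have the ``same'' terms up to a shift by $\la 1\ra$ and homological degree $[1]$ (this is precisely the shift appearing in the statement $\E_{j,1}\E_i\la 1\ra$), and there is a canonical chain map realized diagram-by-diagram by a single $i$-$j$ dot, with all other potential components forced to vanish for degree reasons by Lemma \ref{lem:differentials} and Lemma \ref{lem:onedimhoms}. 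Nonvanishing of the resulting scalar is checked by the closing-off technique used to prove that the map (\ref{eq:nonzeromap}) is nonzero and in Lemma \ref{lem:square}: close the diagram to an endomorphism of $\1_n$, expand the idempotents, discard terms with a doubled dot on a strand or a wrong-orientation curl, and observe the survivor is a nonzero multiple of a union of counterclockwise dotted circles, each evaluating to the identity.

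\textbf{Main obstacle.} The delicate point is not the existence of the nonzero map $\alpha$ (that is the closing-off computation, which is routine by now) but the \emph{uniqueness}, i.e. showing the $\Hom$-space is \emph{at most} one-dimensional rather than merely nonzero. This requires a complete accounting of which pairs of terms in the two double complexes can support a nonzero degree-one chain-homotopy class, and ruling out the possibility that several such contributions survive in the cohomology of the $\Hom$ complex after taking into account the differentials (chain maps modulo homotopy). I expect the bookkeeping — matching up the indices $(k,l)$ on the two sides, keeping the grading shifts $\la -k\ra$, $\la l\ra$ straight, and verifying that the induced differentials on the $\Hom$ complex leave exactly a one-dimensional cohomology in the relevant total degree — to be the part that needs genuine care; once the terms are lined up, Lemma \ref{lem:onedimhoms} does the rest. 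For $\End(\E_i\E_{j,1}\1_\l)$, the analogous obstacle is showing there are no off-diagonal degree-zero endomorphisms, which again reduces to a (this time easier) vanishing statement of the same flavor.
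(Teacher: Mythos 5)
Your plan is a genuinely different route from the paper's, but as written it has a real gap at exactly the point the lemma requires. The paper does \emph{not} compute with the complexes at all: it observes that the statement is a formal consequence of relations already established — integrability, the biadjointness of $\E_i$ and $\E_{j,1}$ (Proposition \ref{prop:adjointrels}), the commutation of $\E_i$ with $\F_j$ (Proposition \ref{prop:EiFjcomm}), and the $[\E,\F]$ commutator relations (Propositions \ref{prop:sl2comm}, \ref{prop:othersl2comm}) — so that the adjunction argument of Lemma 4.5 of \cite{CK}, which reduces everything to $\End(\1_\l)\cong\k$ and the vanishing of negative-degree endomorphisms of $\1_\l$, applies verbatim with $\E_j$ replaced by $\E_{j,1}$. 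Your proposal instead tries to compute $\Hom$ in $\Kom(\K)$ directly from the complexes $\fC^-$, and the decisive step — the upper bound $\dim\Hom \le 1$ and $\dim\End \le 1$ — is precisely the part you defer to ``bookkeeping.'' That bookkeeping is not routine, and your description of it is not right as stated: it is false that ``among all pairs of matching terms, exactly one contributes.'' A chain map between these unbounded complexes has nonzero components on infinitely many pairs of terms (indeed the map $\alpha$ constructed later in the paper, via the diagrams (\ref{eq:diagrams1}) and (\ref{eq:diagrams2}), has a nonzero component in every cohomological degree). Lemma \ref{lem:onedimhoms} only bounds each individual component space by one dimension; it does not by itself show that the chain-map condition, modulo chain homotopy, pins all these scalars down to a single free parameter, nor does it handle the diagonal identity components in the computation of $\End$. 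Making that work requires an argument of the type used for Lemmas \ref{lem:EEtcpx} and \ref{lem:2} (rigidity of the complex up to homotopy via compositions spanning one-dimensional spaces), together with care about homotopies and about the fact that the complexes are unbounded below — none of which is supplied in the proposal.

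Two smaller points. First, your appeal to Lemma \ref{lem:onedimhoms} for terms of the shape $\P_i^{(a)}\P_j^{(1^b)}\Q_i^{(1^c)}\Q_j^{(d)}$ (mixed trivial/sign idempotents, not the exact shape in that lemma) is plausible, since the proof there only counts degree-one generators, but it is a claim you would need to state and check rather than invoke ``verbatim''; the involution $\Psi$ does not convert one shape into the other, as it flips all idempotents simultaneously. Second, even granting the term-by-term matching, the spectral-sequence/termwise computation of morphisms in the homotopy category of complexes unbounded in one direction needs justification. All of this effort is avoidable: once you know the adjunctions, $[\E_i,\F_j]=0$, and the $\E\F$ decompositions together with $\End(\1_\l)\cong\k$ and $\Hom(\1_\l,\1_\l\la\ell\ra)=0$ for $\ell<0$, the one-dimensionality follows by the purely formal adjunction computation of \cite{CK}, which is the route the paper takes.
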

\begin{proof}
This is a formal consequence of the other relations in a $(\hg,\theta)$ action together with the fact that $\End(\1_\l) \cong \k$. More precisely, in \cite{CK} Lemma 4.5 we prove that given any a catgorical $\sl_3$ action generated by $\E_i$ and $\E_j$ (and their adjoints) we have $\dim \Hom(\E_i \E_j, \E_j \E_i \la 1 \ra) = 1$ (assuming neither terms are zero). To do this we only use that $\1_{\l+r \alpha} = 0$ for $r \gg 0$ or $r \ll 0$ (where $\alpha := \alpha_i+\alpha_j$), biadjointness of $\E_i$ and $\E_j$, that $\E_i$ and $\F_j$ commute and the commutator relations of $[\E_i, \F_i]$ and $[\E_j, \F_j]$. Since $\E_i$ and $\E_{j,1}$ (together with adjoints) also generate an $\sl_3$ action the result follows.
\end{proof}

We now give an explicit description of $\alpha$ and $\beta$ as maps of complexes of 1-morphisms in $\Kom(\K)$. Let us suppose that $\la \l, \alpha_i \ra = a-1 \ge 0$ and $\la \l, \alpha_j \ra = b+1 \ge 0$. Then the general terms of $\E_{j,1} \1_\l$ and $\E_i \1_{\l+\alpha_j+\delta}$ are 
$$\P_j^{(n)} \Q_j^{(1^{b+n+1})} \la -n \ra \text{ and } \P_i^{(n)} \Q_i^{(1^{a-1+n})} \la -n \ra.$$ 
This means that $\E_i \E_{j,1} \1_\l$ is a complex which looks like 
\begin{equation}\label{eq:Eij'}
\dots \rightarrow \bigoplus_{k = 0}^{n+1} \P_i^{(n+1-k)} \Q_i^{(1^{a+n-k})} \P_j^{(k)} \Q_j^{(1^{b+1+k})} \la -n-1 \ra \rightarrow 
\bigoplus_{k = 0}^n \P_i^{(n-k)} \Q_i^{(1^{a-1+n-k})} \P_j^{(k)} \Q_j^{(1^{b+1+k})} \la -n \ra \rightarrow \dots
\end{equation}
where the right hand term is in cohomological degree $-n$. 

Similarly, the general terms of $\E_i \1_\l$ and $\E_{j,1} \1_{\l+\alpha_i}$ are 
$$\P_i^{(n)} \Q_i^{(1^{a+n})} \la -n \ra \text{ and }
\P_j^{(n)} \Q_j^{(1^{b+n})} \la -n \ra.
$$
This means that $\E_{j,1} \E_i \la 1 \ra \1_\l$ is a complex which looks like
\begin{equation}\label{eq:Ej'i}
\dots \rightarrow 
\bigoplus_{k = 0}^{n+1} \P_j^{(k)} \Q_j^{(1^{b+k})} \P_i^{(n+1-k)} \Q_i^{(1^{a+n+1-k})} \la -n \ra
\rightarrow 
\bigoplus_{k = 0}^n \P_j^{(k)} \Q_j^{(1^{b+k})} \P_i^{(n-k)} \Q_i^{(1^{a+n-k})} \la -n+1 \ra \rightarrow \dots
\end{equation}
where the right hand term is in cohomological degree $-n$. 

On the other hand, $\E_j \E_{i,1} \1_\l$ looks like 
\begin{equation}\label{eq:Eji'} 
\dots \rightarrow 
\bigoplus_{k=0}^{n+1} \P_j^{(k)} \Q_j^{(1^{b-1+k})} \P_i^{(n+1-k)} \Q_i^{(1^{a+n+2-k})} \la -n-1 \ra \rightarrow 
\bigoplus_{k=0}^{n} \P_j^{(k)} \Q_j^{(1^{b-1+k})} \P_i^{(n-k)} \Q_i^{(1^{a+n+1-k})} \la -n \ra \rightarrow \dots
\end{equation}
while $\E_{i,1} \E_j \la 1 \ra \1_\l$ looks like 
\begin{equation}\label{eq:Ei'j}
\dots \rightarrow \bigoplus_{k=0}^{n+1} \P_i^{(n+1-k)} \Q_i^{(1^{a+n+1-k})} \P_j^{(k)} \Q_j^{(1^{b+k})} \la -n \ra 
\rightarrow \bigoplus_{k=0}^{n+1} \P_i^{(n-k)} \Q_i^{(1^{a+n-k})} \P_j^{(k)} \Q_j^{(1^{b+k})} \la -n+1 \ra \rightarrow \dots.
\end{equation}

We now write down the pictures which define the map of complexes $ \E_i \E_{j,1} \rightarrow \E_{j,1} \E_i \la 1 \ra$. The chain map will take 
$$\P_i^{(n+1-k)} \Q_i^{(1^{a+n-k})} \P_j^{(k)} \Q_j^{(1^{b+1+k})} \longrightarrow 
\P_j^{(k)} \Q_j^{(1^{b+k})} \P_i^{(n+1-k)} \Q_i^{(1^{a+n+1-k})} \oplus
\P_j^{(k)} \Q_j^{(1^{b+k})} \P_i^{(n+1-k)} \Q_i^{(1^{a+n+1-k})}.$$

The map to the first summand is
\begin{equation}\label{eq:diagrams1}
\begin{tikzpicture}[>=stealth]
\draw (-.25,1) rectangle (1.75,1.5);
\draw (.75,1.25) node {$(n+1-k)_i$};
\filldraw [gray] (2.25,1) rectangle (4.25,1.5);
\draw (3.25,1.25) node {$(1^{a+n-k})_i$};
\draw (4.4,1) rectangle (7.1,1.5);
\draw (5.75,1.25) node {$(k)_j$};
\filldraw [gray] (7.5,1) rectangle (9.5,1.5);
\draw (8.5,1.25) node {$(1^{b+1+k})_j$};

\draw [shift = {+(0,2)}](-.25,1) rectangle (1.75,1.5);
\draw [shift = {+(0,2)}](.75,1.25) node {$({k+1})_j$};
\filldraw [gray] [shift = {+(0,2)}](2.25,1) rectangle (4.25,1.5);
\draw [shift = {+(0,2)}](3.25,1.25) node {$(1^{b+1+k})_j$};
\draw [shift = {+(0,2)}](4.4,1) rectangle (7.1,1.5);
\draw [shift = {+(0,2)}](5.75,1.25) node {$(n-k)_i$};
\filldraw[gray] [shift = {+(0,2)}](7.5,1) rectangle (9.5,1.5);
\draw [shift = {+(0,2)}](8.5,1.25) node {$(1^{a+n-k})_i$};

\draw (1,1.5) -- (6,3) [->][very thick];
\draw (3,1.5) -- (8,3) [<-][very thick];
\draw (6,1.5) -- (1,3) [->][very thick];
\draw (8,1.5) -- (3,3) [<-][very thick];

\draw (.5,1.5) -- (.5,3) [->][very thick];

\filldraw [blue] (.5,2.25) circle (3pt);
\end{tikzpicture}
\end{equation}
where the solid dot is a degree one $i-j$ dot. Similarly, the map to the second summand is

\begin{equation}\label{eq:diagrams2}
\begin{tikzpicture}[>=stealth]
\draw (-.25,1) rectangle (1.75,1.5);
\draw (.75,1.25) node {$(n+1-k)_i$};
\filldraw [gray] (2.25,1) rectangle (4.25,1.5);
\draw (3.25,1.25) node {$(1^{a+n-k})_i$};
\draw  (4.4,1) rectangle (7.1,1.5);
\draw (5.75,1.25) node {$(k)_j$};
\filldraw [gray] (7.5,1) rectangle (9.5,1.5);
\draw (8.5,1.25) node {$(1^{b+1+k})_j$};

\draw [shift = {+(0,2)}](-.25,1) rectangle (1.75,1.5);
\draw [shift = {+(0,2)}](.75,1.25) node {$({k})_j$};
\filldraw [gray] [shift = {+(0,2)}](2.25,1) rectangle (4.25,1.5);
\draw [shift = {+(0,2)}](3.25,1.25) node {$(1^{b+k})_j$};
\draw [shift = {+(0,2)}](4.4,1) rectangle (7.1,1.5);
\draw [shift = {+(0,2)}](5.75,1.25) node {$({n+1-k})_i$};
\filldraw [gray] [shift = {+(0,2)}](7.5,1) rectangle (9.5,1.5);
\draw [shift = {+(0,2)}](8.5,1.25) node {$(1^{a+n-k+1})_i$};

\draw (1,1.5) -- (6,3) [->][very thick];
\draw (3,1.5) -- (8,3) [<-][very thick];
\draw (6,1.5) -- (1,3) [->][very thick];
\draw (8,1.5) -- (3,3) [<-][very thick];

\draw (9,1.5) -- (9,3) [->][very thick];

\filldraw [blue] (9,2.25) circle (3pt);
\end{tikzpicture}
\end{equation}

We need to check that these diagrams define a chain map, meaning that the map above commutes with the differential. Diagrammatically, this amounts to checking that diagrams  

$$
\begin{tikzpicture}[>=stealth]
\draw (-.25,1) rectangle (1.75,1.5);
\draw (.75,1.25) node {$(n+1-k)_i$};
\filldraw [gray] (2.25,1) rectangle (4.25,1.5);
\draw (3.25,1.25) node {$(1^{a+n-k})_i$};
\draw (4.4,1) rectangle (7.1,1.5);
\draw (5.75,1.25) node {$(k)_j$};
\filldraw [gray] (7.5,1) rectangle (9.5,1.5);
\draw (8.5,1.25) node {$(1^{b+1+k})_j$};

\filldraw [gray] [shift = {+(0,2)}](-.25,1) rectangle (1.75,1.5);
\draw [shift = {+(0,2)}](.75,1.25) node {$({k+1})_j$};
\draw [shift = {+(0,2)}](2.25,1) rectangle (4.25,1.5);
\draw [shift = {+(0,2)}](3.25,1.25) node {$(1^{b+1+k})_j$};
\draw [shift = {+(0,2)}](4.4,1) rectangle (7.1,1.5);
\draw [shift = {+(0,2)}](5.75,1.25) node {$(n-k-1)_i$};
\filldraw[gray] [shift = {+(0,2)}](7.5,1) rectangle (9.5,1.5);
\draw [shift = {+(0,2)}](8.5,1.25) node {$(1^{a+n-k-1})_i$};

\draw (.2,1.5) -- (4.6,3) [->][very thick];
\draw (3,1.5) -- (8,3) [<-][very thick];
\draw (6,1.5) -- (1,3) [->][very thick];
\draw (8,1.5) -- (3,3) [<-][very thick];

\draw (.1,1.5) -- (.1,3) [->][very thick];

\filldraw [blue] (.1,2.25) circle (3pt);

\draw (1.5,1.5) arc (180:0:.5cm and .5cm)[->][very thick];

\end{tikzpicture},
$$
and
$$
\begin{tikzpicture}[>=stealth]
\draw (-.25,1) rectangle (1.75,1.5);
\draw (.75,1.25) node {$(n+1-k)_i$};
\filldraw [gray] (2.25,1) rectangle (4.25,1.5);
\draw (3.25,1.25) node {$(1^{a+n-k})_i$};
\draw (4.4,1) rectangle (7.1,1.5);
\draw (5.75,1.25) node {$(k)_j$};
\filldraw [gray] (7.5,1) rectangle (9.5,1.5);
\draw (8.5,1.25) node {$(1^{b+1+k})_j$};

\draw [shift = {+(0,2)}](-.25,1) rectangle (1.75,1.5);
\draw [shift = {+(0,2)}](.75,1.25) node {$({k+1})_j$};
\filldraw [gray] [shift = {+(0,2)}](2.25,1) rectangle (4.25,1.5);
\draw [shift = {+(0,2)}](3.25,1.25) node {$(1^{b+1+k})_j$};
\draw [shift = {+(0,2)}](4.4,1) rectangle (7.1,1.5);
\draw [shift = {+(0,2)}](5.75,1.25) node {$(n-k)_i$};
\filldraw[gray] [shift = {+(0,2)}](7.5,1) rectangle (9.5,1.5);
\draw [shift = {+(0,2)}](8.5,1.25) node {$(1^{a+n-k})_i$};

\draw (1,1.5) -- (6,3) [->][very thick];
\draw (3,1.5) -- (8,3) [<-][very thick];
\draw (6,1.5) -- (1,3) [->][very thick];
\draw (8,1.5) -- (3,3) [<-][very thick];

\draw (.5,1.5) -- (.5,3) [->][very thick];

\filldraw [blue] (.5,2.25) circle (3pt);

\draw (6.75,3.5) arc (180:0:.5cm and .5cm)[->][very thick];

\draw [shift = {+(0,3.5)}](-.25,1) rectangle (1.75,1.5);
\draw [shift = {+(0,3.5)}](.75,1.25) node {$({k+1})_j$};
\filldraw [gray] [shift = {+(0,3.5)}](2.25,1) rectangle (4.25,1.5);
\draw [shift = {+(0,3.5)}](3.25,1.25) node {$(1^{b+1+k})_j$};
\draw [shift = {+(0,3.5)}](4.4,1) rectangle (7.1,1.5);
\draw [shift = {+(0,3.5)}](5.75,1.25) node {$(n-k-1)_i$};
\filldraw[gray] [shift = {+(0,3.5)}](7.5,1) rectangle (9.5,1.5);
\draw [shift = {+(0,3.5)}](8.5,1.25) node {$(1^{a+n-k-1})_i$};

\draw [shift = {+(0,1.5)}](1,2) -- (1,3) [->][very thick];
\draw [shift = {+(0,1.5)}](3,2) -- (3,3) [<-][very thick];
\draw [shift = {+(0,1.5)}](6,2) -- (6,3) [->][very thick];
\draw [shift = {+(0,1.5)}](8,2) -- (8,3) [<-][very thick];

\end{tikzpicture}
$$
commute. To see this we simplify the second diagram. The two middle box idempotents in the middle level (the $1^{(b+1+k)}$ and the $(n-k)$) can be absorbed into the idempotents in the bottom level. Now slide the cap downwards. It moves through the first line for free. To pass it through the second line involves creating a sum of two diagrams, the first term of which just moves the cap through; the second term creates a subdiagram
$$
\begin{tikzpicture}[>=stealth]
\draw (-.25,1) rectangle (1.75,1.5);
\draw (.75,1.25) node {$(2)_j$};

\draw [shift = {+(0,2)}](-.25,1) rectangle (1.75,1.5);
\draw [shift = {+(0,2)}](.75,1.25) node {$(2)_i$};

\draw (.5,1.5) -- (.5,3) [->][very thick];
\draw (1,1.5) -- (1,3) [->][very thick];
\filldraw [blue] (.5,2.0) circle (3pt);
\filldraw [blue] (1,2.5) circle (3pt);
\draw (2,2.25) node {$= $}; 

\draw [shift = {+(3,0)}](-.25,1) rectangle (1.75,1.5);
\draw [shift = {+(3,0)}](.75,1.25) node {$(2)_j$};

\draw [shift = {+(3,0)}][shift = {+(0,2)}](-.25,1) rectangle (1.75,1.5);
\draw [shift = {+(3,0)}][shift = {+(0,2)}](.75,1.25) node {$(2)_i$};

\draw [shift = {+(3,0)}](.5,1.5) -- (1,2) [very thick];
\draw [shift = {+(3,0)}](1,1.5) -- (.5,2) [very thick];

\draw [shift = {+(3,0)}](.5,2) -- (.5,3) [->][very thick];
\draw [shift = {+(3,0)}](1,2) -- (1,3) [->][very thick];
\filldraw [shift = {+(3,0)}][blue] (.5,2.25) circle (3pt);
\filldraw [shift = {+(3,0)}][blue] (1,2.5) circle (3pt);
\draw [shift = {+(3,0)}](2,2.25) node {$= \ \ -$}; 

\draw [shift = {+(6,0)}](-.25,1) rectangle (1.75,1.5);
\draw [shift = {+(6,0)}](.75,1.25) node {$(2)_j$};

\draw [shift = {+(6,0)}][shift = {+(0,2)}](-.25,1) rectangle (1.75,1.5);
\draw [shift = {+(6,0)}][shift = {+(0,2)}](.75,1.25) node {$(2)_i$};

\draw [shift = {+(6,0)}](.5,2.5) -- (1,3) [->][very thick];
\draw [shift = {+(6,0)}](1,2.5) -- (.5,3) [->][very thick];

\draw [shift = {+(6,0)}](.5,1.5) -- (.5,2.5) [very thick];
\draw [shift = {+(6,0)}](1,1.5) -- (1,2.5) [very thick];
\filldraw [shift = {+(6,0)}][blue] (.5,2) circle (3pt);
\filldraw [shift = {+(6,0)}][blue] (1,2.25) circle (3pt);
\draw [shift = {+(6,0)}](2,2.25) node {$= \ \ -$}; 

\draw  [shift = {+(9,0)}](-.25,1) rectangle (1.75,1.5);
\draw [shift = {+(9,0)}] (.75,1.25) node {$(2)_j$};

\draw  [shift = {+(9,0)}][shift = {+(0,2)}](-.25,1) rectangle (1.75,1.5);
\draw  [shift = {+(9,0)}][shift = {+(0,2)}](.75,1.25) node {$(2)_i$};

\draw  [shift = {+(9,0)}](.5,1.5) -- (.5,3) [->][very thick];
\draw  [shift = {+(9,0)}](1,1.5) -- (1,3) [->][very thick];
\filldraw  [shift = {+(9,0)}][blue] (.5,2.0) circle (3pt);
\filldraw  [shift = {+(9,0)}][blue] (1,2.5) circle (3pt);

\end{tikzpicture}
$$
In the above graphical computation the minus sign comes from the two degree one dots passing one another with respect to the horizontal; from the above computation we see that this subdiagram is zero. 

 We conclude that we have a map of complexes $\E_i \E_{j,1} \rightarrow \E_{j,1} \E_i \la 1 \ra$.  A straightforward check shows that this map of complexes is nonzero. This defines the chain map $\alpha$,  and we define the chain map $\beta$ similarly. 

\subsubsection{Proof of Proposition \ref{prop:cones}}
Looking at equations (\ref{eq:Eij'}) and (\ref{eq:Ej'i}) we see that $\Cone(\alpha)$ is a complex where 
\begin{equation}\label{eq:cone1}
\bigoplus_{k = 0}^n \P_i^{(n-k)} \Q_i^{(1^{a-1+n-k})} \P_j^{(k)} \Q_j^{(1^{b+1+k})} \la -n \ra \bigoplus_{k = 0}^{n+1} \P_j^{(k)} \Q_j^{(1^{b+k})} \P_i^{(n+1-k)} \Q_i^{(1^{a+n+1-k})} \la -n \ra
\end{equation}
is the term appearing in cohomological degree $-n-1$. The second term above can be rewritten as
\begin{equation}\label{eq:cone1a}
\bigoplus_{k = 0}^{n} \P_j^{(k)} \Q_j^{(1^{b-1+k})} \P_i^{(n-k)} \Q_i^{(1^{a+n+1-k})} \la -n \ra \bigoplus_{k = 0}^{n+1} \P_j^{(k)} \P_i^{(n+1-k)} \Q_j^{(1^{b+k})} \Q_i^{(1^{a+n+1-k})} \la -n \ra.
\end{equation}
On the other hand, looking at equations (\ref{eq:Eji'}) and (\ref{eq:Ei'j}) we find that $\Cone(\beta)$ is a complex where 
\begin{equation}\label{eq:cone2}
\bigoplus_{k=0}^{n} \P_j^{(k)} \Q_j^{(1^{b-1+k})} \P_i^{(n-k)} \Q_i^{(1^{a+n+1-k})} \la -n \ra \bigoplus_{k=0}^{n+1} \P_i^{(n+1-k)} \Q_i^{(1^{a+n+1-k})} \P_j^{(k)} \Q_j^{(1^{b+k})} \la -n \ra
\end{equation}
is the term appearing in cohomological degree $-n-1$. The second term above can be rewritten as 
\begin{equation}\label{eq:cone2a}
\bigoplus_{k=0}^n \P_i^{(n-k)} \Q_i^{(1^{a+n-k-1})} \P_j^{(k)} \Q_j^{(1^{b+k+1})} \la -n \ra \bigoplus_{k=0}^{n+1} \P_i^{(n+1-k)} \P_j^{(k)} \Q_i^{(1^{a+n+1-k})} \Q_j^{(1^{b+k})} \la -n \ra.
\end{equation}
Let us denote by $\A^{-n-1}$ the direct sum of the first term in (\ref{eq:cone1}) and all of (\ref{eq:cone1a}) and likewise by $\B^{-n-1}$ the direct sum of the first term in (\ref{eq:cone2}) and all of (\ref{eq:cone2a}). These are the degree $-n-1$ terms in the complexes $\Cone(\alpha)$ and $\Cone(\beta)$. It is straight-forward to check that $\A^{-n-1} \cong \B^{-n-1}$ by just matching terms. 

It remains to show that the differentials in $\Cone(\alpha)$ and $\Cone(\beta)$ agree. We do this by applying Lemma \ref{lem:2}. The first and second conditions in Lemma \ref{lem:2} follow from Lemmas \ref{lem:onedimhoms} and \ref{lem:square}. The third condition follows from a computation almost identical to that used for the proof of the relations in Section \ref{sec:commE}. We include the indecomposable 1-morphism $\P_i^{(a+1)} \P_j^{(1^{b})} \Q_i^{(1^{c})} \Q_j^{(d)}$ into the appropriate term of $\Cone(\alpha)$, apply the boundary map and then project onto each indecomposable summand. The composition is a collection of diagrams which can be simplified. Doing this we find the degree one map from Lemma \ref{lem:onedimhoms}. This map can be shown to be nonzero by taking its closure as explained in the example of Section \ref{sec:Kom(H)}. 

\begin{lemma}\label{lem:2}
Consider a finite complex 
$$\A^\bullet := \dots \longrightarrow \oplus_\ell \A^{-n-1}_\ell \longrightarrow \oplus_\ell \A^{-n}_\ell \longrightarrow \dots \longrightarrow \oplus_\ell \A^0_\ell$$
in the homotopy category of some additive category. Suppose that it satisfies the following:
\begin{enumerate}
\item $\Hom(\A^{-n}_\ell, \A^{-n+1}_{\ell'})$ is either zero or one-dimensional for all $\ell, \ell',n$ 
\item for any $\ell,\ell_1,\ell_2,n$ such that  
$$\Hom(\A^{-n-1}_\ell, \A^{-n}_{\ell_1}) \cong \k \cong \Hom(\A^{-n-1}_\ell, \A^{-n}_{\ell_2})$$ 
there exists $\ell'$ and a nonzero map in $\Hom(\A^{-n-1}_\ell, \A^{-n+1}_{\ell'})$ which factors through $\A^{-n}_{\ell_1}$ and $\A^{-n}_{\ell_2}$ 
\item for any $\ell,n \ne 0$ there exists a nonzero map with domain $\A^{-n}_\ell$. 
\end{enumerate}
Now suppose $\B^\bullet$ is another complex such that $\A^{-n} \cong \B^{-n}$ for all $n$ and $\B^\bullet$ satisfies the same conditions as $\A^\bullet$ above. Then $\A^\bullet \cong \B^\bullet$. 
\end{lemma}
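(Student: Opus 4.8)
The plan is to imitate the proof of Lemma~\ref{lem:EEtcpx}: build an isomorphism $\phi\colon\A^\bullet\to\B^\bullet$ by descending induction on cohomological degree, starting from the right-most term (where $\A^0\cong\B^0$ by hypothesis), and arrange that each component $\phi^{-n}$ is diagonal with respect to the summand decompositions $\A^{-n}=\bigoplus_\ell\A^{-n}_\ell\cong\bigoplus_\ell\B^{-n}_\ell=\B^{-n}$; one may do this since the relevant summands are indecomposable, so Krull--Schmidt applies. Conditions (1)--(3) are precisely what is needed to guarantee that $\A^\bullet$ (and likewise $\B^\bullet$) is assembled out of overlapping ``little complexes'' of the form (\ref{eq:littlecpx}), so the scalar isomorphisms furnished by that observation can be grafted together from the right end inward; since each newly chosen $\phi^{-n-1}$ only rescales summands in degree $-n-1$, it does not disturb the squares already made to commute in higher degrees.

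For the inductive step, suppose $\phi^0,\dots,\phi^{-n}$ have been chosen, commuting with the differentials, with $\phi^{-m}$ acting by the nonzero scalar $\mu^{(-m)}_\ell$ on $\A^{-m}_\ell$. By condition (1) the differential $\A^{-n-1}\to\A^{-n}$ has $(\ell',\ell)$-component equal to $a_{\ell'\ell}\in\k$ times a fixed generator of $\Hom(\A^{-n-1}_\ell,\A^{-n}_{\ell'})$ whenever that space is one-dimensional, and similarly the differential $\B^{-n-1}\to\B^{-n}$ has components $b_{\ell'\ell}$. Extending $\phi$ amounts to finding nonzero scalars $\nu_\ell$ on $\A^{-n-1}_\ell$ with $\mu^{(-n)}_{\ell'}a_{\ell'\ell}=b_{\ell'\ell}\nu_\ell$ for all $\ell,\ell'$, which in turn requires checking that $a_{\ell'\ell}\ne0$ if and only if $b_{\ell'\ell}\ne0$. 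By condition (3) (valid since $n+1\ne0$) some component out of $\A^{-n-1}_\ell$ is nonzero, say to $\A^{-n}_{\ell_0}$, so we set $\nu_\ell:=\mu^{(-n)}_{\ell_0}a_{\ell_0\ell}/b_{\ell_0\ell}$; the real content is that this is forced to be independent of the choice of $\ell_0$.

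That independence is where $d^2=0$ enters, exactly as in the analysis of (\ref{eq:littlecpx}). If $a_{\ell_1\ell}\ne0$ is a second nonzero component, then the Hom spaces $\Hom(\A^{-n-1}_\ell,\A^{-n}_{\ell_0})$ and $\Hom(\A^{-n-1}_\ell,\A^{-n}_{\ell_1})$ are one-dimensional, so condition (2) produces an index $\ell''$ for which $\A^{-n-1}_\ell$ admits a nonzero map into $\A^{-n+1}_{\ell''}$ factoring through both $\A^{-n}_{\ell_0}$ and $\A^{-n}_{\ell_1}$; condition (1) then forces those to be the only terms contributing to the $(\ell'',\ell)$-entry of $d^2=0$, so $a_{\ell''\ell_0}a_{\ell_0\ell}=-a_{\ell''\ell_1}a_{\ell_1\ell}$ together with the analogous identity for $\B$. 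Combining these with the inductive commuting-square relations $\mu^{(-n+1)}_{\ell''}a_{\ell''\ell_i}=b_{\ell''\ell_i}\mu^{(-n)}_{\ell_i}$ for $i=0,1$ and solving gives $\mu^{(-n)}_{\ell_0}a_{\ell_0\ell}/b_{\ell_0\ell}=\mu^{(-n)}_{\ell_1}a_{\ell_1\ell}/b_{\ell_1\ell}$, as wanted; running the same computation with $\A$ and $\B$ interchanged shows the nonzero patterns of the two differentials agree. Taking $\phi^{-n-1}$ to be the diagonal map with entries $\nu_\ell$ completes the step, and finiteness of $\A^\bullet$ terminates the induction, producing an isomorphism of complexes and hence $\A^\bullet\cong\B^\bullet$ in the homotopy category.

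The main obstacle is the clause ``condition~(1) forces those to be the only contributing terms'': one must genuinely verify that near each summand the complex is a little complex in the sense of (\ref{eq:littlecpx}) --- two intermediate summands, all four connecting maps spanning one-dimensional Hom spaces, and the sign relation $\beta_1\circ\alpha_1=-\beta_2\circ\alpha_2$ --- rather than a larger fan of summands. In the application to $\Cone(\alpha)$ and $\Cone(\beta)$ this reduces to the partition bookkeeping used throughout Section~\ref{sec:commE}: the summands have the shape $\P_i^{(a)}\P_j^{(b)}\Q_i^{(1^c)}\Q_j^{(1^d)}$, Lemmas~\ref{lem:onedimhoms} and~\ref{lem:square} identify the relevant Hom spaces and commuting squares, and closing off diagrams as in the example of Section~\ref{sec:Kom(H)} shows the maps involved are nonzero.
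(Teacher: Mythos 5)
Your proposal reproduces the paper's own argument: the paper likewise proves this by descending induction from the right end of the complex, rescaling each summand $\A^{-n-1}_\ell$ so that one chosen nonzero component of the differential matches the corresponding component of $\B^\bullet$, and then using the ``little complex'' observation (four nonzero maps spanning one-dimensional Hom spaces with $\beta_1\circ\alpha_1=-\beta_2\circ\alpha_2$, so any three determine the fourth) together with conditions (1)--(3) and $d^2=0$ to force all remaining components out of $\A^{-n-1}_\ell$ to agree with those of $\B^\bullet$. Your more explicit scalar bookkeeping, and your flagging of the point that only the two exhibited paths may contribute to the relevant entry of $d^2$ (which in the application is checked by the partition combinatorics and the diagram-closing arguments), simply spells out what the paper's terse proof leaves implicit, so the approach is essentially identical.
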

\begin{proof}
The proof is based on the observation that given a complex
\begin{eqnarray*}
A \overset{\alpha_1}{\underset{\alpha_2}{\rightrightarrows}} \begin{matrix} B_1 \\ B_2 \end{matrix} \overset{\beta_1}{\underset{\beta_2}{\rightrightarrows}} C
\end{eqnarray*}
where $\Hom(A,B_1) \cong \Hom(A,B_2) \cong \Hom(B_1,C) \cong \Hom(B_2,C) \cong \Hom(A,C) \cong \k$ are spanned by $\alpha_1,\alpha_2,\beta_1,\beta_2$ and $\beta_1 \circ \alpha_1 = - \beta_2 \circ \alpha_2$ then any three of the maps determines uniquely the fourth.

We now apply this to our problem. Fix an isomorphism $\A^{-n} \xrightarrow{\sim} \B^{-n}$ and proceed by induction starting from the far right. For the base case we note that $\A_\ell^1=0$ so there is a unique nonzero map out of each $\A_\ell^{-1}$. Acting by a multiple of the identity on $\A_\ell^{-1}$ this map can be scaled so that it equals to that in $\B^\bullet$. For the induction step consider a nonzero map $\A_\ell^{-n-1} \rightarrow \A_{\ell'}^{-n}$ and rescale $\A_\ell^{-n-1}$ so that this map agrees with that in $\B^\bullet$. Then using the observation above (and induction) it follows that all the other maps out of $\A_\ell^{-n-1}$ must also agree with those in $\B^\bullet$. 
\end{proof}

\section{Applications, conjectures and further comments}\label{sec:applications}

In this section, we set the field $\k=\C$ to be the complex numbers. Fix $\G \subset SL_2(\C)$, a non-trivial finite subgroup.  By the McKay correspondence, such subgroups are classified by finite Dynkin diagrams of type $A,D$ or $E$. To such a diagram there are three associated Lie algebras: a finite dimensional simply-laced Lie algebra $\g$, the affine Lie algebra $\hg$ and the associated toroidal Lie algebra $\widehat{\hg}$ with $\g \subset \hg \subset \widehat{\hg}$. We now describe two (essentially equivalent) categorifications of the basic representation of $\dU_q(\widehat{\hg})$ using the finite subgroup $\G \subset SL_2(\C)$.

\subsection{2-representations via Hilbert schemes}

Let $X_\G = \widehat{\C^2/\G}$ denote the minimal resolution of the quotient singularity $\C^2/\G$ and denote by $X_\G^{[n]}$ the Hilbert scheme of $n$ points on $X_\G$. The diagonal $\C^\times$-action on $\C^2$ induces a $\C^\times$-action on $X_\G^{[n]}$. Let $DCoh^{\C^\times}(X_\G)$ denote the derived category of $\C^\times$-equivariant coherent sheaves on $X_\G$. Its Grothendieck group (tensored with $\C(q)$) is denoted $K^{\C^\times}(X_\G)$.

In \cite{CLi1} we constructed a level one integrable 2-representation of $\widehat{\h}$ on $\bigoplus_{n \in \N} DCoh(X_\G^{[n]})$ where $\widehat{\h}$ is the toroidal Heisenberg algebra. Corollary \ref{cor:main1} explains how we obtain a $(\hg,\theta)$ action from a 2-representation of $\h$. Adding the extra affine vertex to the story this immediately implies the following theorem.

\begin{Theorem}\label{thm:geometry}  
The action of $\widehat{\h}$ on $\oplus_{n \in \N} DCoh(X_\G^{[n]})$ induces a $(\widehat{\hg},\theta)$ action on $\oplus_{\alpha \in \hY, n\in \N} DCoh(X_\G^{[n]})$.
\end{Theorem}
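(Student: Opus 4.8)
The plan is to reduce Theorem \ref{thm:geometry} to the combination of two results that are already available: the 2-representation of the toroidal Heisenberg algebra $\widehat{\h}$ constructed in \cite{CLi1} on $\bigoplus_{n \in \N} DCoh^{\C^\times}(X_\G^{[n]})$, and Corollary \ref{cor:main1} together with its toroidal analogue discussed in Section \ref{sec:toroidal}. Concretely, I would first verify that the geometric 2-category $\K := \bigoplus_{n \in \N} DCoh^{\C^\times}(X_\G^{[n]})$ satisfies the hypotheses needed to apply Corollary \ref{cor:main1}: it is a graded (via the $\C^\times$-equivariant structure), triangulated, $\k$-linear, idempotent complete 2-category, and the 2-representation of $\widehat{\h}$ on it is weakly integrable. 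Integrability here is immediate because $X_\G^{[n]} = \emptyset$ for $n < 0$, so $\1_n = 0$ for $n < 0$; this is exactly the hypothesis under which Section \ref{sec:defs} begins its construction.

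Next I would invoke the construction of Section \ref{sec:defs}: given the integrable 2-representation of $\widehat{\h}$ on $\K$, the categorified vertex operators $\fC_i^{\pm}(k)$ of Section \ref{sec:vertexops} define 1-morphisms $\E_{i,m}\1_\l$, $\F_{i,m}\1_\l$ (now for $i \in \hI$, the full affine node set, using the toroidal modifications of Section \ref{sec:toroidal}) in the homotopy 2-category $\Kom(\K)$, together with the maps $\theta_i$ of Section \ref{sec:theta}. The objects are reindexed: the nonzero $\D(\l)$ correspond to pairs $(\alpha, n)$ with $\alpha$ in the affine root lattice $\hY$ and $n \in \N$, matching the weight-space decomposition $V_{\Lambda_0} = \widehat{\sF} \otimes_{\k(q)} \k(q)[\hY]$ recalled in Section \ref{sec:toroidal}. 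Theorem \ref{thm:main1} (whose proof occupies the bulk of the paper and which, as noted in Section \ref{sec:toroidal}, carries over verbatim to the toroidal case) establishes that all the defining relations of a level one 2-representation of $\widehat{\hg}$ hold in $\Kom(\K)$; the two relations not covered by Theorem \ref{thm:main1} — the divided-power relation for general $r$ and the Serre relation — follow formally from the others by \cite{Cau}, exactly as explained in the proof of Corollary \ref{cor:main1}. This gives the 2-representation of $\widehat{\hg}$ on $\Kom(\K)$, hence on $\bigoplus_{\alpha \in \hY, n \in \N} DCoh^{\C^\times}(X_\G^{[n]})$.

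There is essentially one genuine subtlety to address, and I expect it to be the main obstacle: the Dynkin-type caveat noted at the end of Section \ref{sec:toroidal}. When $\G$ is the rank-one case so that $\g = \sl_2$ and the relevant toroidal algebra is $\widehat{\widehat{\sl_2}}$, the affine Dynkin diagram of $\widehat{\sl_2}$ is not simply laced, and one must use the modified notion of Heisenberg 2-category from the appendix of \cite{CLi1} and the correspondingly adjusted definition of a 2-representation of $\widehat{\widehat{\sl_2}}$. So the proof of Theorem \ref{thm:geometry} should be stated with the understanding that in this one case the relevant definitions are the modified ones; with that replacement, all the arguments of the present paper apply without change. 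I would make this explicit in a remark accompanying the theorem.

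In summary, the proof is: \textbf{(1)} check that $\bigoplus_n DCoh^{\C^\times}(X_\G^{[n]})$ with the $\widehat{\h}$-action of \cite{CLi1} is an integrable 2-representation of $\widehat{\h}$ (immediate, using vanishing of Hilbert schemes in negative degree); \textbf{(2)} apply the construction of Sections \ref{sec:vertexops}--\ref{sec:theta} and the toroidal extension of Corollary \ref{cor:main1} to upgrade it to a 2-representation of $\widehat{\hg}$ on $\bigoplus_{\alpha \in \hY, n \in \N} DCoh^{\C^\times}(X_\G^{[n]})$; \textbf{(3)} note the $\widehat{\sl_2}$ exception and the use of the appendix of \cite{CLi1} there. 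No new calculation is required beyond what is already in the paper; the content is the bookkeeping that the geometric input of \cite{CLi1} feeds into the machine built in Sections \ref{sec:vertexops}--\ref{sec:commE}.
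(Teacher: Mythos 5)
Your proposal is correct and follows essentially the same route as the paper: the theorem is obtained by feeding the integrable Heisenberg 2-representation of \cite{CLi1} on $\bigoplus_n DCoh^{\C^\times}(X_\G^{[n]})$ into Corollary \ref{cor:main1} in its toroidal form from Section \ref{sec:toroidal} (including the $\widehat{\sl_2}$ caveat, which the paper also flags there). Your version simply spells out the hypothesis-checking that the paper leaves implicit.
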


\begin{cor}\label{cor:Ktheory}  
The quantum toroidal algebra $\dU_q(\widehat{\hg})$ acts on 
$$\bigoplus_{\alpha \in \hY, n\in \N} K^{\C^\times}(X_\G^{[n]}) = \bigoplus_{n \in \N} K^{\C^\times}(X_\G^{[n]}) \otimes_{\C(q)} \C(q)[\hY].$$  
The resulting module is the basic representation. 
\end{cor}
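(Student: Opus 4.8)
The plan is to obtain the $\dU_q(\widehat{\hg})$-action by decategorifying the $2$-representation of Theorem~\ref{thm:geometry}, and then to identify the resulting module with the basic representation by comparing it, on the Grothendieck group, with the Frenkel--Kac--Segal/Frenkel--Jing homogeneous construction.

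First I would apply the decategorification results of Section~\ref{sec:affinelies}, in their toroidal form (Section~\ref{sec:toroidal}): a level one $2$-representation of $\widehat{\hg}$ descends on Grothendieck groups to a representation of $\dU_q(\widehat{\hg})$, using Lemma~\ref{lem:decatPQ} and the fact that the distinguished triangles of Theorem~\ref{thm:main1} decategorify to the defining relations of $\dU_q(\widehat{\hg})$. Applied to the $2$-representation of Theorem~\ref{thm:geometry} on $\bigoplus_{\alpha \in \hY, n \in \N} DCoh^{\C^\times}(X_\G^{[n]})$, this yields an action of $\dU_q(\widehat{\hg})$ on the corresponding Grothendieck group, which upon tensoring $\otimes_{\Z[q,q^{-1}]} \C(q)$ becomes $\bigoplus_{\alpha \in \hY, n} K^{\C^\times}(X_\G^{[n]})$ by definition of the latter. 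Two bookkeeping points enter here, both exactly as set up in \cite{CLi1}: (i) since $X_\G$ is smooth, the Grothendieck group of $DCoh^{\C^\times}(X_\G^{[n]})$ is the equivariant $K$-group, and the autoequivalence $\la 1 \ra$ corresponds to twisting by the weight $q$ of the $\C^\times$-action, so the $\Z[q,q^{-1}]$-module structure on $K_0$ matches the $\C(q)$-structure after localization; and (ii) by the dictionary of Section~\ref{sec:defs} (toroidal version), the nonzero objects $\D(\l)$ correspond to pairs $(\alpha,n) \in \hY \times \N$ via $\l = w\cdot\Lambda_0 - n\delta \mapsto (\alpha,n)$, with $\D(\l) = \D(n)$; this reindexing is precisely what produces the product form $\bigoplus_{n \ge 0} K^{\C^\times}(X_\G^{[n]}) \otimes_{\C(q)} \C(q)[\hY]$.

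Next I would identify this $\dU_q(\widehat{\hg})$-module with the basic representation. By \cite{CLi1}, the $2$-representation of the toroidal Heisenberg algebra $\widehat{\h}$ on $\bigoplus_n DCoh^{\C^\times}(X_\G^{[n]})$ categorifies the Fock space $\widehat{\sF}$; hence on $K_0 \otimes_{\C(q)} \C(q)$ the restriction of our module to the Heisenberg subalgebra is $\widehat{\sF} \otimes_{\C(q)} \C(q)[\hY]$, which is the underlying space of the basic representation as recalled in Sections~\ref{sec:vbasic} and~\ref{sec:toroidal}. Moreover, by Lemma~\ref{lem:decatPQ} and the definition of $\E_{i,m}$, $\F_{i,m}$ as the complexes $\fC_i^{\pm}(\cdot)$ of Section~\ref{sec:vertexops}, the classes $[\E_{i,m}]$, $[\F_{i,m}]$ are exactly the homogeneous components of the $q$-vertex operators $X_q(i,z)$ and their adjoints, acting on $\widehat{\sF} \otimes_{\C(q)} \C(q)[\hY]$ by the lattice-translation recipe of \cite{FJ,FJW}. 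Thus the module we have produced is literally the Frenkel--Jing homogeneous realization of the basic representation of $\dU_q(\widehat{\hg})$. Alternatively, one can conclude by observing that our module is integrable of level one with the same graded character as $V_{\Lambda_0}$, and that $V_{\Lambda_0}$ is the unique such module.

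The main obstacle I anticipate is not the decategorification, which is formal once Theorem~\ref{thm:main1} is in hand, but the two compatibility checks in the identification step: making sure the equivariant $K$-theory conventions (the identification of $\la 1 \ra$ with the equivariant parameter $q$, and of $K_0$ with $K^{\C^\times}$) are precisely those of \cite{CLi1}, and fixing normalizations and signs so that $[\E_{i,m}]$, $[\F_{i,m}]$ agree with the Frenkel--Jing operators on the nose rather than up to an invertible scalar. One should also verify that the $2$-representation of Theorem~\ref{thm:geometry} is integrable in every root direction of $\widehat{\hg}$, not merely in the $\delta$-direction — this is needed both for the decategorification statement and for the uniqueness argument, and although it follows from the highest-weight integrable structure of $V_{\Lambda_0}$, it deserves an explicit word.
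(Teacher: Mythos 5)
Your proposal is correct and follows essentially the same route as the paper: Corollary \ref{cor:Ktheory} is obtained there by decategorifying the 2-representation of Theorem \ref{thm:geometry} (via the decategorification discussion culminating in Lemma \ref{lem:decatPQ} and Corollary \ref{cor:main1}, in toroidal form), identifying $K_0$ of $DCoh^{\C^\times}(X_\G^{[n]})$ with $K^{\C^\times}(X_\G^{[n]})$ as in \cite{CLi1}, and recognizing the resulting module as the Frenkel--Jing/Frenkel--Jing--Wang homogeneous realization $\widehat{\sF}\otimes_{\C(q)}\C(q)[\hY]$ of the basic representation. Your additional care about the equivariant parameter, normalizations, and integrability in all root directions is sensible bookkeeping but does not change the argument.
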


Theorem \ref{thm:geometry} and Corollary \ref{cor:Ktheory} were conjectured by Nakajima in \cite{Nak-moduli}. Moreover, Corollary \ref{cor:Ktheory} is both an affinization and a $q$-deformation of the work of Nakajima and Grojnowski which gives affine Lie algebra actions on cohomology of Hilbert schemes.

\subsection{2-representations via wreath products}

A 2-representation of $\widehat{\hg}$ can also be constructed using the representation theory of finite dimensional superalgebras. Let $B_\G := \C[\G] \ltimes \wedge^*(\C^2)$ and set $B_\G(n) = \C[S_n] \ltimes B_\G^{\otimes n}$ (we include $n=0$ in this definition, setting $B_\G(0) = \C$). The natural $\Z$ grading on $\wedge^*(\C^2)$ makes $B_\G(n)$ into a $\Z$-graded superalgebra. Let $\mathcal{B}_\G(n)$ denote the category of finitely generated, graded $B_\G(n)$ supermodules.

In \cite{CLi1}, a level one 2-representation of $\widehat{\h}$ was constructed on $\bigoplus_{n \in \N} \mathcal{B}_\G(n)$ categorifying the Fock space representation of $\widehat{\h}$. We do not need to use derived categories of modules since, in contrast to the action on Hilbert schemes, the Heisenberg generators $\P_i$ and $\Q_i$ act by exact functors. However, to obtain a 2-representation of $\widehat{\hg}$ we still need to pass to the homotopy category $\Kom(\mathcal{B}_\G(n))$, since $\E_i$ and $\F_i$ are given by complexes. 

\begin{Theorem}\label{thm:algebra}
The action of $\oplus_{n \in \N} \widehat{\h}$ on $ \mathcal{B}_\G(n)$ induces a $(\widehat{\hg},\theta)$ action on $\oplus_{\alpha \in \hY, n \in \N} \Kom(\mathcal{B}_\G(n))$.
\end{Theorem}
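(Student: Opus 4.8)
The plan is to obtain Theorem~\ref{thm:algebra} as a direct application of Corollary~\ref{cor:main1} to a 2-representation of $\widehat{\h}$ that is already available, with the toroidal bookkeeping of Section~\ref{sec:toroidal} layered on top. Since the substantive work — verifying the long list of relations in Theorem~\ref{thm:main1} — has already been carried out in general, proving Theorem~\ref{thm:algebra} should amount to little more than checking that the wreath-product construction of \cite{CLi1} meets the input hypotheses of Corollary~\ref{cor:main1}.

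First I would recall from \cite{CLi1} that the categories $\mathcal{B}_\G(n)$ of finitely generated graded $B_\G(n)$-supermodules, together with the exact induction and restriction functors $\P_i,\Q_i$ (for $i\in\hI$) along $B_\G(n)\hookrightarrow B_\G(n+1)$ and the graphically defined natural transformations between them, assemble into a graded additive $\k$-linear 2-category $\K$ that is a level one integrable 2-representation of $\widehat{\h}$ in the sense of Section~\ref{sec:hei}. The points to confirm here are: idempotent completeness (automatic, since each $\mathcal{B}_\G(n)$ is a module category over a finite-dimensional algebra), finite-dimensionality of 2-morphism spaces, the normalization $\Hom(\1_n,\1_n\la\ell\ra)=0$ for $\ell<0$ and $\cong\k$ for $\ell=0$, and that the defining relations of the Heisenberg 2-category hold. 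Because the objects are indexed by $\N$, we automatically have $\1_n=0$ for $n<0$, so $\K$ is integrable and we are in exactly the setting of Section~\ref{sec:defs}.

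Next I would invoke Corollary~\ref{cor:main1}, in its toroidal form described in Section~\ref{sec:toroidal}: the complexes $\fC_i^\pm(k)$ of Section~\ref{sec:vertexops} define 1-morphisms $\E_{i,m},\F_{i,m}$ in the homotopy 2-category $\Kom(\K)$, and by Theorem~\ref{thm:main1} these satisfy all the relations of a 2-representation of $\widehat{\hg}$ from Section~\ref{sec:g-action}. Two small points make the passage to $\Kom$ clean and are worth stating explicitly: since $\P_i$ and $\Q_i$ act by \emph{exact} functors, the categorified vertex operators are honest complexes of modules, so no derived category is needed — only the homotopy category $\Kom(\mathcal{B}_\G(n))$, because $\E_i,\F_i$ are complexes rather than single 1-morphisms; and integrability ($\1_n=0$ for $n<0$) ensures that, evaluated on any fixed module, only finitely many terms of each complex $\fC_i^\pm(k)\1_n$ are nonzero, so no convergence subtlety arises. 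Translating the weight-space bookkeeping as in Section~\ref{sec:defs} — setting $\D(w\cdot\Lambda_0-n\delta):=\D(n)=\mathcal{B}_\G(n)$ and identifying the $\hW$-orbit of $\Lambda_0$ with $\hY$ — then yields the 2-representation of $\widehat{\hg}$ on
\[
\bigoplus_{\alpha\in\hY,\ n\in\N}\Kom(\mathcal{B}_\G(n)),
\]
as claimed.

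The only genuinely delicate point, and hence the main obstacle, is the exceptional case $\widehat{\hg}=\widehat{\widehat{\sl_2}}$, where the affine Dynkin diagram is not simply laced: there I would follow Section~\ref{sec:toroidal} and replace the Heisenberg 2-category by the variant from the appendix of \cite{CLi1}, after which all the arguments of the present paper go through verbatim. Beyond this, there is essentially nothing new to prove — the content of Theorem~\ref{thm:algebra} is carried entirely by Theorem~\ref{thm:main1} and Corollary~\ref{cor:main1}.
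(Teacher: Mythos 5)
Your proposal is correct and follows the same route as the paper: the paper proves Theorem \ref{thm:algebra} precisely by applying Corollary \ref{cor:main1} (in its toroidal form, via Section \ref{sec:toroidal}) to the integrable level one 2-representation of $\widehat{\h}$ on $\bigoplus_n \mathcal{B}_\G(n)$ constructed in \cite{CLi1}, with the same observation that exactness of $\P_i,\Q_i$ means only the homotopy category $\Kom(\mathcal{B}_\G(n))$ is needed and the same caveat about the non-simply-laced $\widehat{\widehat{\sl_2}}$ case handled by the appendix of \cite{CLi1}. All the real work is indeed carried by Theorem \ref{thm:main1}, exactly as you say.
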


Theorem \ref{thm:algebra} is very similar in spirit to the constructions of toroidal basic representations in \cite{FJW}.

\subsection{The Kac-Moody description}\label{sec:KM-relationship}

The affine Lie algebra $\hg = \g \otimes \k[t,t^{-1}] \oplus \k c$ is also a Kac-Moody Lie algebra. This Kac-Moody presentation has generators $\{e_i,f_i,h_i\}_{i \in \hI}$. The isomorphism between the Kac-Moody and loop presentations is defined as follows. 

Let $\theta$ denote the highest root of $\g$ and $\g_\theta$ the associated root space. Choose elements $E_\theta \in \g_{-\theta}$, $F_\theta \in \g_\theta$ such that $\theta(H_\theta) = -2$, where $H_\theta := [E_\theta,F_\theta]$. Then we define
$$e_0 \mapsto E_\theta \otimes t, \ \ f_0 \mapsto F_\theta \otimes t^{-1} \ \text{ and } \ h_0 \mapsto H_\theta \otimes 1 + c$$
while $e_i \mapsto E_i$ and $f_i \mapsto F_i$ if $i \in I$.

Categorifications of Kac-Moody algebras have been defined by Khovanov-Lauda \cite{KL1,KL2,KL3} and Rouquier \cite{R}. They are given by a 2-category $\U_Q(\hg)_{KM}$ which depends on some scalars $Q$ (we use the notation from \cite{CLa}). The 2-category in \cite{KL3} corresponds to a particular choice of such scalars. The 2-categories for different choices of $Q$ are sometimes but not always isomorphic. In general the space of isomorphism classes of these 2-categories is the first homology of the associated Dynkin diagram. 

We next describe a relationship between the categorification of the basic representation constructed in the Kac-Moody setting and the categorification in the current paper. We do this from a geometric and then an algebraic point of view.

\subsubsection{Quiver varieties}

The basic representation of $\hg$ was constructed geometrically by Nakajima using the $\G$-equivariant geometry of the Hilbert scheme of points on $\C^2$ (see \cite{Nak-ICM}). More precisely, $\G \subset SL_2$ acts on $\C^2$ and hence on all the Hilbert schemes ${\C^2}^{[n]}$. The fixed point components $({\C^2}^{[n]})^\G$ are Nakajima quiver varieties of affine type. Nakajima defines an action of $\hg$ on $\bigoplus_{n \in \N} H_{mid}(({\bA^2}^{[n]})^\G,\C)$, where $H_{mid}$ denotes the middle cohomology, giving the basic representation. One can also carry out this construction by replacing homology with $\C^\times$-equivariant K-theory, in which case the quantum affine Kac-Moody algebra $\dU_q(\hg)$ acts. This action was subsequently lifted to derived categories of coherent sheaves. 

\begin{Theorem}\cite{CKL4,Cau}\label{thm:quiver}
For some choice of scalars $Q$ there exists a 2-representation of the 2-category $\U_Q(\hg)_{KM}$ on $\bigoplus_{n \in \N} DCoh^{\C^\times}(({\C^2}^{[n]})^\G)$.
\end{Theorem}

More precisely, in \cite{CKL4} we constructed a geometric categorical $\hg$ action on $\bigoplus_{n \in \N} DCoh^{\C^\times}(({\C^2}^{[n]})^\G)$. Such a geometric action induces a $(\g,\theta)$ action which, as shown in \cite{Cau}, must carry an action of the KLR algebras. Putting this together with the main result in \cite{CLa} then implies Theorem \ref{thm:quiver}. 

The Grothendieck group of $DCoh^{\C^\times}(({\C^2}^{[n]})^\G)$ contains more than just the basic representation of $\dU_q(\hg)$. This is because the entire quantum toroidal algebra $\dU_q(\widehat{\hg})$ acts and this gives its basic representation \cite{Nak-ICM}. In light of this we conjecture the following. 

\begin{conj}\label{conj:geometry}
The 2-representation from Theorem \ref{thm:quiver} extends to a 2-representation of the toroidal algebra. 
\end{conj}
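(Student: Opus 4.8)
The plan is to reduce Conjecture \ref{conj:geometry} to the main construction of this paper. The point to exploit is that, by Corollary \ref{cor:main1} together with the toroidal modifications of Section \ref{sec:toroidal}, a 2-representation of $\widehat{\hg}$ is nothing more than a 2-representation of the toroidal Heisenberg algebra $\widehat{\h}$ fed through the categorical vertex operators $\fC_i^\pm$ of Section \ref{sec:vertexops}. So the goal becomes twofold: first produce a (weakly integrable) 2-representation of $\widehat{\h}$ on $\bigoplus_n DCoh^{\C^\times}(({\C^2}^{[n]})^\G)$, and then check that the $\widehat{\hg}$ action it generates restricts, on the Kac-Moody subalgebra $\hg \subset \widehat{\hg}$, to the geometric categorical action of Theorem \ref{thm:quiver}.

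\textbf{Step 1 (the geometric toroidal Heisenberg action).} The $\G$-fixed Hilbert schemes $({\C^2}^{[n]})^\G$ carry, after Nakajima, a toroidal Heisenberg action on $\C^\times$-equivariant K-theory realised by correspondences built from nested Hilbert schemes. I would lift this to a 2-representation of $\widehat{\h}$ by taking $\P_i,\Q_i$ ($i\in\hI$) to be the Fourier--Mukai kernels supported on these correspondences and verifying the relations of the Heisenberg 2-category $\H^\G$ of \cite{CLi1} --- the exact analogue of the construction carried out there for the Hilbert schemes $X_\G^{[n]}$ of the resolution. A possibly cleaner route is to first establish a Hilbert-scheme version of the derived McKay correspondence: a $\C^\times$-equivariant derived equivalence identifying $\bigoplus_n DCoh^{\C^\times}(({\C^2}^{[n]})^\G)$, whose connected components are the quiver varieties attached to the various dimension vectors, with $\bigoplus_{\alpha\in\hY,\,n'} DCoh^{\C^\times}(X_\G^{[n']})$, and then transport the 2-representation of $\widehat{\h}$ from \cite{CLi1} across it; the affine root lattice $\hY$ is then produced for free by the component decomposition.

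\textbf{Steps 2 and 3 (apply the machine, check compatibility).} Feeding the 2-representation of $\widehat{\h}$ from Step 1 into Corollary \ref{cor:main1} and the toroidal form of Theorem \ref{thm:main1} yields a 2-representation of $\widehat{\hg}$ on $\bigoplus_{\alpha\in\hY,\,n} DCoh^{\C^\times}(({\C^2}^{[n]})^\G)$ whose generating 1-morphisms are the complexes $\fC_i^\pm$. It remains to match the slice $\{\E_{i,0},\F_{i,0}: i\in\hI\}$ with the generating functors of Theorem \ref{thm:quiver}. For the finite nodes $i\in I$ this should be a rigidity argument: the geometric functors satisfy the $\sl_2$-type relations (Propositions \ref{prop:sl2comm} and \ref{prop:theta}), the loop commutator relations, and the Serre relations, and --- once the forthcoming comparison results of \cite{Cau} (and \cite{CLa}) are available to pin down the 2-morphism data --- the two families must coincide up to the predicted grading shifts. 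For the affine node $0$ one argues similarly, using that restricting the basic representation to the adjoint copy of $\hg$ recovers the skew zig-zag construction of \cite{HK}, so that the complexes $\fC_0^\pm$ have length at most two and can be compared directly with the relevant correspondences.

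\textbf{The main obstacle.} The hard part is precisely the interface between the two notions of categorical action: Theorem \ref{thm:quiver} provides a Kac-Moody geometric categorical action (explicit 2-morphisms, nilHecke relations), whereas the 2-representations produced here are genuinely triangulated and their relations are expressed through cones and distinguished triangles, as in Section \ref{sec:g-action}. Bridging these is exactly the loop-algebra comparison the introduction flags as not yet well understood, and it has two concrete subtleties: one must know that the McKay-type equivalence (or the direct correspondence construction) intertwines all the $\C^\times$-equivariant structures and grading shifts in the precise form demanded by the definitions of Sections \ref{sec:hei} and \ref{sec:g-action}, and one must show that the vertex-operator complexes $\fC_i^\pm$ are rigid enough to be singled out among all complexes satisfying the relations of Theorem \ref{thm:main1}. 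A successful resolution would, by the same argument, also settle the algebraic analogue of Conjecture \ref{conj:geometry} for the wreath-product categories $\Kom(\mathcal{B}_\G(n))$.
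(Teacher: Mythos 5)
First, a point of framing: the statement you are addressing is a \emph{conjecture} in the paper, not a theorem --- the authors explicitly state it without proof and even explain why it was not proven in \cite{CKL4} (no definition of a toroidal 2-representation existed). Your proposal is likewise not a proof but a programme, and each of its load-bearing steps is itself unestablished: Step 1 assumes either a new verification of the relations of $\H^\G$ on the quiver varieties or a $\C^\times$-equivariant derived McKay-type equivalence compatible with all gradings and adjunction data; Steps 2--3 invoke the forthcoming rigidity results of \cite{Cau}/\cite{CLa} and an unproved uniqueness claim singling out the complexes $\fC_i^{\pm}$ among all solutions of the relations in Theorem \ref{thm:main1}. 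So even on its own terms the proposal does not close the conjecture.

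More substantively, the reduction misreads what Conjecture \ref{conj:geometry} asserts. The conjecture is about extending the \emph{given} Kac-Moody geometric categorical action of Theorem \ref{thm:quiver}, and the Remark following it specifies that in the sought toroidal action the loop/Heisenberg part acts at \emph{level zero}, not level one. The machine of Corollary \ref{cor:main1} (and its toroidal form in Section \ref{sec:toroidal}) only converts a level-one 2-representation of $\widehat{\h}$ into the level-one basic 2-representation in the loop presentation; it cannot, as stated, produce a level-zero loop structure sitting on top of a prescribed 2-representation of $\U_Q(\hg)_{KM}$. What your plan actually constructs is (a transported copy of) Theorem \ref{thm:geometry}, and the remaining ``matching'' of its slice $\{\E_{i,0},\F_{i,0}\}_{i\in\hI}$ with the functors of Theorem \ref{thm:quiver} is precisely Conjecture \ref{conj:equiv}, which the paper flags as open for a concrete reason: under the isomorphism between the loop and Kac-Moody presentations the affine Kac-Moody generator satisfies $e_0 = E_\theta \otimes t$, so it does \emph{not} correspond to the loop generator $\E_{0,0}$, and no functor lifting $E_\theta\otimes t$ is defined in this paper. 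Your suggested shortcut for the affine node --- comparing with the skew zig-zag construction of \cite{HK} because ``the complexes have length at most two'' --- does not apply either: the length-two phenomenon occurs only for the copy of the adjoint representation of $\g$ inside the basic representation, whereas the complexes $\fC_0^{\pm}$ on the full basic representation are unbounded. In short, the proposal conflates Conjecture \ref{conj:geometry} with Theorem \ref{thm:geometry} plus Conjecture \ref{conj:equiv}, and the genuinely missing idea --- a categorical lift of $e_0 = E_\theta\otimes t$, equivalently a level-zero loop extension of a Kac-Moody 2-representation --- is left untouched.
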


\begin{Remark}
The toroidal algebra in the conjecture above should be thought of as the affinization of the Kac-Moody quantum affine algebra. In particular, $\U_Q(\hg)_{KM}$ should act (for some choice of $Q$) together with the loop algebra part which acts like in the definition in Section \ref{sec:g-action} but at level zero rather than level one. This conjecture was not proven in \cite{CKL4} in part because there was no such definition of a 2-representation of toroidal algebras.
\end{Remark}

So there are two possible categorifications of the basic representation of $\dU_q(\widehat{\hg})$ using derived categories of coherent sheaves, that of Theorem \ref{thm:geometry} and that of Conjecture \ref{conj:geometry}. These categorifications are in some ways quite different. For example, in the categorification involving $({\C^2}^{[n]})^\G$, the Kac-Moody generators $\E_i$ and $\F_i$ are explicitly described, while the homogeneous Heisenberg generators $\P_i$ and $\Q_i$ are not as easily visible. On the other hand, in the categorification involving $\widehat{\C^2/\G}^{[n]}$ the Heisenberg generators $\P_i$ and $\Q_i$ acquire a simpler geometric interpretation while the Kac-Moody generators $\E_i$ and $\F_i$ are given by more complicated categorified vertex operators.  

However, the varieties $({\C^2}^{[n]})^\G$ and $\widehat{\C^2/\G}^{[k]}$ are closely related. Both can be realized as Nakajima quiver varieties, but for different stability conditions and hence are derived equivalent. Subsequently one can conjecture the following. 

\begin{conj}\label{conj:equiv}
There is an equivalence between the 2-representations of $\widehat{\hg}$ from Theorem \ref{thm:geometry} and Conjecture \ref{conj:geometry}.
\end{conj}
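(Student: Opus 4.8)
The plan is to deduce Conjecture \ref{conj:equiv} from a variation-of-GIT (VGIT) equivalence combined with a categorical incarnation of the Miki automorphism of $\dU_q(\widehat{\hg})$, taking Conjecture \ref{conj:geometry} as given. First I would realize both sides as Nakajima quiver varieties for the McKay quiver of $\G$ (equivalently, for the affine Dynkin diagram $\hI$): the fixed loci $({\C^2}^{[n]})^\G$ are the varieties $\mathfrak{M}_{\zeta}(v,w)$ for a generic stability parameter $\zeta$ in one GIT chamber, while the Hilbert schemes $X_\G^{[k]}=\widehat{\C^2/\G}^{[k]}$ arise as $\mathfrak{M}_{\zeta'}(v',w')$ for a parameter $\zeta'$ in a different chamber (with framing concentrated at the affine node, corresponding to rank-one sheaves on $X_\G$). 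The bijection on weight categories — indexed by $(\alpha,n)\in\hY\times\N$ on the Hilbert scheme side and by $n\in\N$ together with a component index on the quiver-variety side — is the one forced by matching the two descriptions of the basic representation recalled in Sections \ref{sec:vbasic} and \ref{sec:defs}, and in particular is compatible with the $\hX$-grading.

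Next, since $\mathfrak{M}_\zeta(v,w)$ and $\mathfrak{M}_{\zeta'}(v,w)$ are GIT quotients of the same smooth quotient stack by the same reductive group, wall-crossing (in the form of the window/quantization theorems of Halpern-Leistner and the quiver-variety wall-crossing functors studied by Nakajima, McGerty--Nevins and Bezrukavnikov--Losev) provides a derived equivalence
\[
\Phi:\ \bigoplus_{n\in\N} DCoh^{\C^\times}\!\big(({\C^2}^{[n]})^\G\big)\ \xrightarrow{\ \sim\ }\ \bigoplus_{\alpha\in\hY,\,k\in\N} DCoh^{\C^\times}\!\big(X_\G^{[k]}\big).
\]
One then has to check that $\Phi$ intertwines the two 2-representations of $\widehat{\hg}$. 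The subtlety is that the Kac-Moody generators $\E_i,\F_i$ act by explicit Hecke correspondences on the quiver-variety side but by the categorified vertex operators $\fC^{\pm}_i(k)$ of Section \ref{sec:vertexops} on the Hilbert scheme side, so the two categorical actions are \emph{not} matched by the identity functor; rather $\Phi$ must realize the Miki automorphism of $\dU_q(\widehat{\hg})$ exchanging the two embeddings of $\dU_q(\hg)$. I would argue the intertwining in two stages: (a) at the level of Grothendieck groups it holds because the basic representation of $\dU_q(\widehat{\hg})$ is irreducible, so any $\hX$-graded isomorphism of underlying spaces equivariant for one of the two $\dU_q(\hg)$-actions is forced, and the Miki automorphism transports the second action through it; (b) to lift to 2-morphisms one uses the one-dimensionality of the relevant spaces of degree-one 2-morphisms (Lemmas \ref{lem:onedimhoms}, \ref{lem:differentials} and \ref{lem:square}), so that a nonzero natural transformation of the correct degree between the generating complexes is unique up to scalar, and the scalars are pinned down inductively exactly as in Lemmas \ref{lem:EEcpx} and \ref{lem:2}.

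The hard part will be step (b) together with \emph{singling out the correct} $\Phi$. VGIT yields a whole groupoid of equivalences (an affine-braid-group's worth of wall-crossing functors), and one must identify the one whose effect on the tautological correspondences implements the categorified Miki automorphism; this is a genuine geometric computation rather than a formal consequence of the relations of Section \ref{sec:g-action}, because it compares Hecke correspondences with the infinite vertex-operator complexes $\fC^{\pm}_i(k)$ term by term. An alternative, and arguably cleaner, route would be to prove an abstract uniqueness theorem — the toroidal analogue of the rigidity results of Chuang--Rouquier and Rouquier for $\sl_2$-categorifications — asserting that any weakly integrable 2-representation of $\dU_q(\widehat{\hg})$ whose minimal nonzero weight category is equivalent to graded $\k$-vector spaces is equivalent to the basic one of Corollary \ref{cor:main1}; applying this to both constructions would yield Conjecture \ref{conj:equiv} without identifying $\Phi$ by hand. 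That uniqueness statement is itself a substantial open problem, and is in my view where the real content of the conjecture lies.
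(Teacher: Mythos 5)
This statement is a conjecture in the paper: no proof is given there (the discussion following it only records that both families of varieties are quiver varieties for different stability parameters, hence derived equivalent, and that the real difficulty is to assign a functor to the Kac--Moody generator $e_0$ lifting $e_0 = E_\theta\otimes t$). Your proposal is consistent with that outline, but it is a research plan rather than a proof, and you yourself defer the two decisive steps. Concretely: (i) VGIT/wall-crossing produces an affine-braid-groupoid's worth of equivalences $\Phi$, and nothing in your argument identifies the correct one or computes its effect on the generating 1-morphisms; comparing the Hecke correspondences on $({\C^2}^{[n]})^\G$ with the unbounded complexes $\fC_i^{\pm}(k)$ term by term is exactly the missing content, which is the same gap the paper isolates (no functorial lift of $e_0=E_\theta\otimes t$ is constructed). (ii) The appeal to the Miki automorphism is unjustified outside type $A$: Miki's horizontal/vertical exchange is established for quantum toroidal $\gl_1$ and $\sl_n$, and no analogue is known for $D$ or $E$, so even your decategorified step (a) is not available in the generality of the conjecture; moreover irreducibility of the basic representation pins down an intertwiner on $K_0$ only up to scalar and says nothing about naturality at the level of 2-morphisms. (iii) Step (b) misapplies the paper's lemmas: Lemmas \ref{lem:onedimhoms}, \ref{lem:differentials} and \ref{lem:square} compute Hom spaces between products of $\P_i$'s and $\Q_i$'s inside a single Heisenberg 2-representation; they do not bound the 2-morphism spaces between $\Phi$ of a Hecke correspondence and a categorified vertex operator, which live in a geometric 2-category of Fourier--Mukai kernels, so the inductive scalar-matching of Lemmas \ref{lem:EEcpx} and \ref{lem:2} does not transfer.

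Your alternative route via a toroidal rigidity theorem is attractive and is indeed where many would locate the real content, but as you concede it is itself open; note also that the weight categories here are triangulated rather than abelian and the action is level one on the loop side, so the filtration techniques behind the $\sl_2$-rigidity results of Chuang--Rouquier and Rouquier do not apply directly, and no uniqueness statement of this kind is proved or even formulated in the paper (the paper's \cite{Cau} reference addresses only formal consequences of the relations of Section \ref{sec:g-action}, such as divided powers and the Serre relation, not uniqueness of the basic 2-representation). In short, the approach is reasonable and matches the paper's own heuristics, but the proposal leaves the conjecture exactly as open as the paper does: the identification of $\Phi$, the absence of a Miki-type automorphism outside type $A$, and the 2-morphism-level intertwining are genuine gaps, not routine verifications.
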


The above conjecture is complicated by the fact that the isomorphism between the loop and Kac-Moody presentations of the quantum affine algebra is somewhat subtle. In particular, to prove Conjecture \ref{conj:equiv} one should assign endofunctors to the Kac-Moody 1-morphism $e_0$ lifting the relation $e_0 = E_\theta \otimes t$ between the Kac-Moody and loop descriptions of this operator. Such an assignment is not given in the current paper.

\subsubsection{Cyclotomic KLR algebras and wreath products}

There is a parallel algebraic version of these categorifications. In \cite{KL1,KL3,R} a family of cyclotomic quiver Hecke algebras -- also known as cyclotomic KLR algebras --  were defined. In particular, the following theorem was conjectured in \cite{KL1,KL3} and subsequently proven in \cite{KK,W}.

\begin{Theorem}\label{thm:KLR}
The 2-categories $\U_Q(\hg)_{KM}$ act on $\oplus_{\lambda} R^{\Lambda_0}_{Q,\l}-\mbox{mod}$ where $R^{\Lambda_0}_{Q,\l}$ is the cyclotomic KLR algebra for the weight space $\l$ in the basic representation $V_{\Lambda_0}$ (and for some choice of scalars $Q$).
\end{Theorem}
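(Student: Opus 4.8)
The plan is to follow the strategy of Kang--Kashiwara \cite{KK} (see also Webster \cite{W}), building the 2-representation directly on the module categories $\oplus_\l R^{\Lambda_0}_{Q,\l}\dmod$. First I would specify the 1-morphisms: for each $i \in \hI$ let $\F_i$ be the functor $R^{\Lambda_0}_{Q,\l}\dmod \to R^{\Lambda_0}_{Q,\l+\alpha_i}\dmod$ given by induction along the inclusion $R^{\Lambda_0}_{Q,\l} \hookrightarrow R^{\Lambda_0}_{Q,\l+\alpha_i}$ which appends a strand colored $i$, and let $\E_i$ be its right adjoint, realized concretely as $M \mapsto e(\cdot,i)M$ followed by the appropriate restriction, where $e(\cdot,i)$ is the sum of KLR idempotents indexed by sequences ending in $i$. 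The 2-morphisms are then supplied by the KLR (quiver Hecke) structure: the dots and crossings act on $\F_i^{\,n}$ through the natural algebra map $R_Q(n) \to \End(\F_i^{\,n})$, while the biadjunction cups and caps come from the unit and counit of $(\F_i,\E_i)$, suitably normalized by grading shifts.

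Next I would check the relations defining $\U_Q(\hg)_{KM}$. The quiver Hecke relations among dots and crossings hold by construction. Biadjointness up to grading shift follows from an explicit description of $R^{\Lambda_0}_{Q,\l+\alpha_i}$ as an $(R^{\Lambda_0}_{Q,\l},R^{\Lambda_0}_{Q,\l})$-bimodule on each side. The heart of the argument is the categorical $\sl_2$ commutation relation: for each $\l$ one must produce a short exact sequence of functors
\begin{equation*}
0 \to \F_i\E_i \to \E_i\F_i \to \bigoplus \Id\la \cdot \ra \to 0 \quad (\text{or its dual}),
\end{equation*}
with the direction and the summands on the right determined by the sign and magnitude of $\la \l,\alpha_i\ra$. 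This is obtained by filtering the bimodule computing $\E_i\F_i$ using the intertwining elements of the KLR algebra together with the cyclotomic relation $x_n^{\la\Lambda_0,\alpha_i\ra}e(\nu)=0$ for $\nu$ ending in $i$. The cyclotomic relation is precisely what truncates the third term to a finite sum of shifted copies of the identity and what makes the representation integrable, i.e. forces $\F_i$ to be locally nilpotent in the relevant sense.

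The main obstacle — and the technical core of \cite{KK} — is showing that this filtration splits off the identity summands cleanly: that the natural transformation $\F_i\E_i \to \E_i\F_i$ assembled from the KLR structure maps is injective (resp. surjective) with exactly the predicted cokernel (resp. kernel). Concretely this amounts to proving the exactness of a certain Koszul-type complex of bimodules built from powers of $x_n$ and the cyclotomic ideal; Kang--Kashiwara handle this by induction on $n$ with careful control of the associated graded of the cyclotomic quotient. Everything else is then formal: the commutation of $\E_i$ with $\F_j$ for $i\ne j$ and the Serre relations follow from integrability by the arguments of Rouquier \cite{R} (and \cite{Cau}), and identifying the resulting 2-representation with $\U_Q(\hg)_{KM}$ for the prescribed scalars $Q$ is a direct comparison of structure constants.

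Alternatively, following Webster \cite{W}, once one knows that $\oplus_\l R^{\Lambda_0}_{Q,\l}\dmod$ carries a na\"ive integrable categorical $\hg$-action whose Grothendieck group is the basic representation $V_{\Lambda_0}$, Rouquier's uniqueness theorem for minimal (unitriangular) categorifications of integrable highest weight modules upgrades this automatically to a genuine 2-representation of $\U_Q(\hg)_{KM}$. In either route the delicate point is the same: verifying the cyclotomic $\sl_2$ short exact sequence, which is where the finiteness forced by the cyclotomic quotient $R^{\Lambda_0}$ enters in an essential way.
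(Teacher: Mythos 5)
Your outline faithfully reproduces the strategy of Kang--Kashiwara \cite{KK} (with Webster's \cite{W} route as the alternative), and this is exactly how the paper treats Theorem \ref{thm:KLR}: it is not proved here but quoted as the result of \cite{KK,W}, the conjecture of \cite{KL1,KL3}. The delicate point you single out --- establishing the cyclotomic $\sl_2$ short exact sequence by analyzing the bimodule filtration coming from the cyclotomic relation --- is indeed the technical core of that cited proof, so your proposal matches the intended argument.
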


In light of the connections between KLR algebras and quiver varieties \cite{VV,R} this theorem is an algebraic analogue of Theorem \ref{thm:quiver}. Subsequently we expect that Theorem \ref{thm:KLR} can be extended to give a 2-representation of quantum toroidal algebras. Then there should be an equivalence between the categorification of the basic representation of $\dU_q(\widehat{\hg})$ using toroidal cyclotomic KLR algebras and the categorification of Theorem \ref{thm:algebra}.

However, since toroidal cyclotomic KLR algebras have not been defined, we now restrict the construction of Theorem \ref{thm:algebra} from the quantum toroidal algebra to the quantum affine algebra in order to formulate a precise conjecture relating cyclotomic KLR algebras to the algebras $B_\G(n)$ of Section \ref{sec:applications}.

The isomorphism classes of indecomposable projective $B_\G$ modules, $\{\P_i\}_{i \in \hI}$ are in bijection with the nodes of the affine Dynkin diagram. The endomorphism algebra
$$ B'_\G := \End_{B_\G} (\oplus_{i \in I} \P_i) $$
where we omit the projective module correponding to the affine node, is a subalgebra of $B_\G$.  Let $B'_\G(n) = \k[S_n] \ltimes {B'_\G}^{\otimes n}$.

\begin{conj}\label{conj:alg}
For each $n \in \N$ there is some weight $\l$ of the form $\l = w \cdot \Lambda_0 - n \delta$ (and some choice of scalars $Q$) such that $R_{Q,\lambda}^{\Lambda_0}$ is Morita equivalent to $B'_\G(n)$.
\end{conj}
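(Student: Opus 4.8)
The plan is to realize $R^{\Lambda_0}_{Q,\lambda}$ as an endomorphism algebra inside a categorification of $V_{\Lambda_0}$ built by the methods of this paper, and then to recognize that algebra as the endomorphism ring of a projective generator of $B'_\G(n)\dgmod$. Taking $D$ to be the finite Dynkin diagram of $\g$, so that the skew zigzag algebra $B^D$ discussed in the introduction is exactly $B'_\G$, the construction modeled on \cite{CLi1} yields a weakly integrable 2-representation of the homogeneous Heisenberg algebra $\h$ of $\g$ on $\bigoplus_{n\in\N} B'_\G(n)\dgmod$, categorifying the Fock space $\mathcal{F}$. Corollary \ref{cor:main1} then upgrades this to a 2-representation of $\hg$ categorifying $V_{\Lambda_0}$, whose weight space at $\lambda = w\cdot\Lambda_0 - n\delta$ is $\Kom(B'_\G(n)\dgmod)$ and whose highest weight object is the trivial module $\k$ in the $\Lambda_0$-weight space $\Kom(B'_\G(0)\dgmod)$. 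For a word $\underline{i} = (i_1,\dots,i_N)$ in $\hI$ with $\sum_k \alpha_{i_k} = \Lambda_0-\lambda$, write $\F_{\underline{i}}\,\k := \F_{i_1}\cdots\F_{i_N}\,\k$ for the corresponding complex, and $R_\lambda := \End\big(\bigoplus_{\underline{i}}\F_{\underline{i}}\,\k\big)$, the sum over all such words.

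First I would show that, for a suitable choice of scalars $Q$, the category $\Kom(B'_\G(n)\dgmod)$ together with all its neighbours carries a 2-representation of the Kac--Moody 2-category $\U_Q(\hg)_{KM}$, so that by the universality of the cyclotomic KLR categorification (Kang--Kashiwara, Rouquier) there is a functor $\Phi_\lambda: R^{\Lambda_0}_{Q,\lambda}\dgmod \to \Kom(B'_\G(n)\dgmod)$ sending the highest weight simple to $\k$, and $R^{\Lambda_0}_{Q,\lambda}\cong R_\lambda$. Verifying the defining 2-morphism relations of $\U_Q(\hg)_{KM}$ — the dot, the KLR crossing, and the nilHecke relations — uses the relations assembled in Theorem \ref{thm:main1}: the nilHecke action on $\E_{i,m}\E_{i,m}$ comes from the divided-power decomposition of Proposition \ref{prop:EE}, while the mixed crossing relations come from Propositions \ref{prop:PjEirels} and \ref{prop:cones}; this follows the template by which a geometric categorical action is shown to induce an action of Rouquier's 2-category, to be carried out in \cite{Cau}. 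The residual freedom in $Q$, a torsor over $H_1$ of the affine Dynkin diagram, is fixed by matching one further relation.

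Second, and this is the heart of the matter, I would prove that when $\lambda$ is chosen to be a \emph{Rouquier}, or RoCK, weight — meaning $w\cdot\Lambda_0$ sufficiently deep in its $\hW$-orbit relative to $n$ — each complex $\F_{\underline{i}}\,\k$ is formal and homotopy equivalent to a single shifted $B'_\G(n)$-module, in fact to an indecomposable projective one, and the collection of these over all $\underline{i}$ is a projective generator of $B'_\G(n)\dgmod$. Granting this, $\Phi_\lambda$ restricts to an equivalence $R^{\Lambda_0}_{Q,\lambda}\dgmod \xrightarrow{\sim} B'_\G(n)\dgmod$, so the two algebras are Morita equivalent. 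As a sanity check, the number of graded-simple $R^{\Lambda_0}_{Q,\lambda}$-modules equals $\dim V_{\Lambda_0}(\lambda) = \dim\mathcal{F}(n)$, the number of $|I|$-multipartitions of $n$, which is precisely the number of simple $B'_\G(n) = B'_\G\wr S_n$-modules. In type $\widehat{A}_{\ell-1}$, where $B'_\G$ is the zigzag algebra of the linear quiver $A_{\ell-1}$, the second step is essentially the (graded) RoCK-block Morita equivalence for Hecke algebras of symmetric groups at an $\ell$-th root of unity (Chuang--Kessar, Chuang--Rouquier; graded version by Evseev--Kleshchev), so the genuinely new content is types $\widehat{D}$ and $\widehat{E}$.

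The hard part is this second step. The categorified vertex operators $\fC^\pm_i$ are genuinely unbounded, so a priori $\F_{i_1}\cdots\F_{i_N}\,\k$ spreads over infinitely many homological degrees; for a generic weight $\lambda$ it does not collapse to a single degree at all, and the RoCK hypothesis must be exploited essentially — presumably through $\Hom$-vanishing estimates between the $\P\Q$-terms occurring in $\F_{\underline{i}}\,\k$, of exactly the type proved in Lemmas \ref{lem:differentials} and \ref{lem:onedimhoms}. A secondary obstacle, already flagged in Section \ref{sec:KM-relationship}, is that the word $\underline{i}$ involves the affine node $0$: before any of the above one must assign to the Kac--Moody 1-morphisms $\E_0,\F_0$ complexes built from the loop generators $\E_{i,m},\F_{i,m}$ in accordance with $e_0 = E_\theta\otimes t$, and check that they satisfy the $\U_Q(\hg)_{KM}$ relations — a point deliberately left open in the present paper.
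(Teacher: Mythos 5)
You should first note what you are comparing against: the statement you were asked to prove is Conjecture \ref{conj:alg}, which the paper itself leaves open --- there is no proof in the paper, and the surrounding discussion (Section \ref{sec:KM-relationship}, Conjectures \ref{conj:geometry} and \ref{conj:equiv}, the reference to \cite{Cau}) makes clear that the authors regard the intermediate steps as unresolved. Your text is a strategy outline in the same spirit as the paper's speculations, and it is a sensible one, but it is not a proof: each of its load-bearing steps is asserted rather than established, and two of them are precisely the points the paper flags as open. Concretely: (i) you need a 2-representation of $\U_Q(\hg)_{KM}$ on $\bigoplus_n \Kom(\mathcal{B}'_\G(n))$, which requires defining $\E_0,\F_0$ lifting $e_0 = E_\theta\otimes t$ and verifying the KLR dot, crossing and nilHecke relations for a specific $Q$. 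Theorem \ref{thm:main1} does not give this: its relations are isomorphisms and distinguished triangles between 1-morphisms (the paper explicitly remarks it does not know how to encode its 2-morphism data graphically), and Propositions \ref{prop:EE}, \ref{prop:PjEirels}, \ref{prop:cones} do not by themselves produce a nilHecke action or fix $Q$; the passage from such ``weak'' relations to a Rouquier-style action is exactly what is deferred to \cite{Cau} and is itself only announced. (ii) Even granting a Kac--Moody action, your identification $R^{\Lambda_0}_{Q,\lambda}\cong R_\lambda=\End(\bigoplus_{\underline{i}}\F_{\underline{i}}\,\k)$ invokes the cyclotomic categorification theorems of \cite{KK,W,R}, but those are statements about (graded, abelian/additive) module-category 2-representations with specific finiteness and nondegeneracy hypotheses; you have not verified that they apply to a 2-representation valued in homotopy categories of unbounded complexes, nor that the resulting comparison is an isomorphism of graded algebras rather than merely a surjection from the KLR algebra.

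The decisive gap is your second step. The claim that at a ``RoCK'' weight every $\F_{i_1}\cdots\F_{i_N}\,\k$ collapses to a single shifted indecomposable projective $B'_\G(n)$-module, and that these form a projective generator, is the entire content of the conjecture (the paper's Remark after Conjecture \ref{conj:alg} stresses that derived equivalence for all $\l$ in the orbit is easy via the braid group, and that the point is a non-derived equivalence for a good $\l$). You offer no mechanism for this collapse: Lemmas \ref{lem:differentials} and \ref{lem:onedimhoms} bound degree-one Hom spaces between individual $\P^{(\l)}\Q^{(\mu)}$ terms, which is far from the Hom-vanishing needed to contract an unbounded complex (built moreover from the affine-node functors you have not yet defined) to a single homological degree, and in types $\widehat{D},\widehat{E}$ there is no external RoCK-block theorem to import, so the appeal to Chuang--Kessar/Chuang--Rouquier only covers type $\widehat{A}$ and only after one already knows the two categorifications match. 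The dimension count at the end is a consistency check, not evidence of Morita equivalence. So the proposal should be read as a plausible research plan --- consistent with how the authors themselves expect the conjecture to be approached --- but with the essential ideas (the $\E_0/\F_0$ construction, the verification of the cyclotomic comparison in this triangulated setting, and the formality/collapse argument at a well-chosen $\l$) still missing.
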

\begin{Remark}
Note that there is a braid group acting and hence, assuming the conjecture above, there is a derived Morita equivalence between $R_{Q,\lambda}^{\Lambda_0}$ and the algebra $B'_\G(n)$ for any $\l = w \cdot \Lambda_0 - n \delta$. Part of the content of Conjecture \ref{conj:alg} is that this equivalence is non-derived for an appropriate $\l$. 
\end{Remark}

\subsection{Braid groups} 

An application of categorical actions of quantum groups in the Kac-Moody setting is that they induce actions of the associated braid group \cite{CR,CK}. More generally, we expect the following. 

\begin{conj}\label{conj:braid}
A categorical action of $\widehat{\hg}$ induces an action of the double affine braid group.
\end{conj}

In the conjecture above a categorical action means either a $(\widehat{\hg},\theta)$ action or some analogous geometric or algebraic definition.



\begin{thebibliography}{E-G-S}

\bibitem[Car]{Car}
E. Carlsson, Vertex operators and moduli spaces of sheaves; \textsf{arXiv:0906.1825}. 

\bibitem[CO]{CO}
E. Carlsson and A. Okounkov, Exts and vertex operators, \textit{Duke Math. J.} \textbf{161} (2012), no.9, 1797--1815; \textsf{arXiv:0801.2565}. 

\bibitem[Cau]{Cau}
S. Cautis, Rigidity in higher representation theory; \textsf{arXiv:1409.0827}.

\bibitem[CK]{CK}
S. Cautis and J. Kamnitzer, Braiding via geometric Lie algebra actions, \textit{Compositio Math.} \textbf{148} (2012), no. 2, 464--506. {\sf arXiv:1001.0619}.

\bibitem[CKL1]{CKL1}
S. Cautis, J. Kamnitzer and A. Licata, Categorical geometric skew Howe duality, \textit{Invent. Math.} \textbf{180} (2010), no. 1, 111--159; \textsf{arXiv:0902.1795}.

\bibitem[CKL2]{CKL2}
S. Cautis, J. Kamnitzer and A. Licata, Coherent sheaves and categorical $\sl_2$ actions, \emph{Duke Math. J.} \textbf{154} (2010), no. 1, 135--179; \textsf{arXiv:0902.1796}.

\bibitem[CKL3]{CKL4}
S. Cautis, J. Kamnitzer and A. Licata, Coherent sheaves on quiver varieties and categorification, \textit{Math. Annalen} \textbf{357} (2013), no. 3, 805--854; {\sf arXiv:1104.0352}.

\bibitem[CLa]{CLa}
S. Cautis and A. Lauda, Implicit structure in 2-representations of quantum groups; \textsf{arXiv:1111.1431}. 

\bibitem[CLi1]{CLi1}
S. Cautis and A. Licata, Heisenberg categorification and Hilbert schemes, \textit{Duke Math. J.} \textbf{161} (2012), 2469--2547; \textsf{arXiv:1009.5147}.

\bibitem[CLi2]{CLi2}
S. Cautis and A. Licata, Loop realizations of quantum affine algebras, \textit{J. Math. Phys.} \textbf{53} (2012), no. 12, 18pp; \textsf{arXiv:1112.6188}.




\bibitem[CR]{CR}
J. Chuang and R. Rouquier, Derived equivalences for symmetric groups and $\sl_2$-categorification, \textit{Ann. of Math.} \textbf{167} (2008), no. 1, 245--298; \textsf{math.RT/0407205}.

\bibitem[F]{F} 
W. Fulton, Young Tableaux, \textit{London Mathematical Society Student Texts} \textbf{35} (1997).

\bibitem[FJ]{FJ} 
I. Frenkel and N. Jing, Vertex representations of quantum affine algebras, \textit{Proc. Nati. Acad. Sci. USA} Vol. 85, 9373--9377.

\bibitem[FJW]{FJW}
I. Frenkel, N. Jing and W. Wang, Quantum vertex representations via finite groups and the McKay correspondence, \textit{Commun. Math. Phys.} \textbf{211} (2000), 365--393.

\bibitem[FK]{FK}
I. Frenkel and V. Kac, Basic representations of affine Lie algebras and dual resonance models, \textit{Invent. math.} \textbf{62} (1980), 23--66.

\bibitem[G]{Groj} 
I. Grojnowski, Instantons and affine algebras I: the Hilbert scheme and vertex operators, \textit{Math. Res. Letters} (1996), 275--291.

\bibitem[HK]{HK}
R.S. Huerfano and M. Khovanov, A category for the adjoint representation, \textit{J. Algebra} {\textbf{246}} (2001), no. 2, 514--542.

\bibitem[KK]{KK}
S. J. Kang and M. Kashiwara, Categorification of highest weight modules via Khovanov-Lauda-Rouquier algebras, \textit{Invent. Math.} \textbf{190} (2012), no. 3, 699--742; \textsf{arXiv:1102.4677}.

\bibitem[K]{K}
M. Khovanov, Heisenberg algebra and a graphical calculus; \textsf{arXiv:1009.3295}.

\bibitem[KL1]{KL1}
M. Khovanov and A. Lauda, A diagrammatic approach to categorification of quantum groups I, \textit{Represent. Theory} \textbf{13} (2009), 309--347; \textsf{arXiv:0803.4121}.

\bibitem[KL2]{KL2}
M. Khovanov and A. Lauda, A diagrammatic approach to categorification of quantum groups II, \textit{Trans. Amer. Math. Soc.} \textbf{363} (2011), 2685--2700; \textsf{arXiv:0804.2080}.

\bibitem[KL3]{KL3}
M. Khovanov and A. Lauda, A diagrammatic approach to categorification of quantum groups III, \textit{Quantum Topology} \textbf{1}, Issue 1 (2010), 1--92; \textsf{arXiv:0807.3250}.

\bibitem[Ko]{Kohno}
T. Kohno, Topological invariants for 3-manifolds using representations of mapping class groups. I. Topology 31 (1992), no. 2, 203--230. 

\bibitem[L]{L}
A. Lauda, A categorification of quantum $\sl_2$, \textit{Advances in Mathematics}, Vol. 225, Issue 6 (2010), 3327--3424; \textsf{arXiv:0803.3652v2}.

\bibitem[LS]{LS}
A. Licata and A. Savage,
Hecke algebras, finite general linear groups, and Heisenberg categorification, \textit{Quantum Topology} (to appear); \textsf{arXiv:1101.0420}.

\bibitem[N1]{Nak-ICM} 
H. Nakajima, Geometric construction of representations of affine algebras, \textit{Proceedings of the International Congress of Mathematicians,} Vol. I, Beijing (2002), 423--438.

\bibitem[N2]{Nak-book}
H. Nakajima, Lectures on Hilbert schemes of points on surfaces, \textit{AMS Univ. Lecture Series} 1999.

\bibitem[N3]{Nak-Moscow} 
H. Nakajima, Sheaves on ALE spaces and quiver varieties, \textit{Moscow Math. Journal} No. 4 (2007), 699--722.

\bibitem[N4]{Nak-moduli} 
H. Nakajima, Convolution on homology groups of moduli spaces of $K3$ surfaces. \textit{Vector bundles and representation theory (Columbia, MO, 2002)}, Contemp. Math. 322 (2003), 75--87.

\bibitem[R]{R}
R. Rouquier, 2-Kac-Moody algebras; \textsf{arXiv:0812.5023}.

\bibitem[S]{S} 
G. Segal, Unitary representations of some infinite dimensional groups, \textit{Comm. Math. Phys.} \textbf{80} (1981), 301--342.

\bibitem[VV]{VV}
M. Varagnolo and E. Vasserot, Canonical bases and KLR-algebras, \textit{J. Reine Angew. Math.} {\bf 659} (2011), 67--100; {\sf arXiv:0901.3992}.

\bibitem[W]{W}
B. Webster, Knot invariants and higher representation theory I: diagrammatic and geometric categorification of tensor products, \textsf{arXiv:1001.2020}.

\end{thebibliography}
\end{document}